\documentclass{article}

\usepackage{arxiv}

\usepackage{multirow}
\usepackage{rotating}
\usepackage[utf8]{inputenc} 
\usepackage[T1]{fontenc}    
\usepackage{hyperref}       
\usepackage{url}            
\usepackage{booktabs}       
\usepackage{amsfonts}       
\usepackage{nicefrac}       
\usepackage{microtype}      
\usepackage{lipsum}		
\usepackage{graphicx}
\usepackage{natbib}
\usepackage{doi}
\usepackage{amsmath}
\usepackage{subcaption}
\usepackage{wrapfig}
\usepackage{ragged2e}
\usepackage[ruled,vlined]{algorithm2e}
\usepackage{tabularx}
\usepackage[section]{placeins}

\usepackage[table]{xcolor}
\usepackage[english]{babel}
\usepackage{amsthm}
\usepackage{mathtools}
\usepackage[ruled,vlined]{algorithm2e}
\usepackage{algorithmic}
\SetKwRepeat{Do}{do}{while}

\theoremstyle{plain}
\newtheorem{theorem}{Theorem}[section]
\newtheorem{definition}[theorem]{Definition}
\newtheorem{corollary}[theorem]{Corollary}
\newtheorem{assumption}[theorem]{Assumption}
\newtheorem{lemma}[theorem]{Lemma}
\newtheorem{proposition}[theorem]{Proposition}
\newtheorem{remark}[theorem]{Remark}
\newtheorem{notation}[theorem]{Notation}

\theoremstyle{definition}

\newenvironment{example}
  {\pushQED{\qed}\examplex}
  {\popQED\endexamplex}

\newcommand{\norm}[1]{\left\lVert#1\right\rVert}

\DeclarePairedDelimiterX{\inp}[2]{\langle}{\rangle}{#1, #2}

\newcommand\restr[2]{{
  \left.\kern-\nulldelimiterspace 
  #1 
  \vphantom{\big|} 
  \right|_{#2} 
  }}
  



\title{Time-dependent Zermelo navigation with tacking}


\author{Steen Markvorsen \\
	DTU Compute, Mathematics \\
	Technical University of Denmark \\
	DK-2800 Kgs. Lyngby, Denmark \\
	\texttt{stema@dtu.dk} \\
	\And
	Enrique Pend\'{a}s-Recondo \\
	Departamento de Matem\'{a}ticas \\
	Universidad de Oviedo \\
    33203 Gij\'{o}n, Asturias, Spain \\
	\texttt{pendasenrique@uniovi.es} \\
    \And
	Frederik M\"{o}bius Rygaard \\
	DTU Compute, Mathematics \\
	Technical University of Denmark \\
	DK-2800 Kgs. Lyngby, Denmark \\
	\texttt{fmry@dtu.dk} \\
}

\date{}


\hypersetup{
pdftitle={Time-dependent Zermelo navigation with tacking},
pdfsubject={Zermelo Navigation with Tacking},
pdfauthor={},
pdfkeywords={},
} 

\begin{document}

\maketitle

\begin{abstract}
    We address the time- and position-dependent Zermelo navigation problem within the framework of Lorentz-Finsler geometry. Since the initial work of E. Zermelo, the task is to find the time-minimizing trajectory between two regions for a moving object whose speed profile depends on time, position and direction. We give a step-by-step review of the classical formulation of the problem, where the geometric shape generated by the velocity vectors---the speed profile indicatrix---is strongly convex at each point. We derive new global results for the cases where the indicatrix field is only time-dependent. In such (meso-scale realistic) cases, Zermelo navigation exhibits particularly favorable properties that have not been previously explored, making them especially appealing for both theoretical and numerical investigations. Moreover, motivated by real-world phenomena and examples, we obtain novel results for non-convex (or multi-convex) navigation, i.e. when the indicatrices fail to be convex. In this new setting---which is not unlike the corresponding Finsler setting for Snell's law---optimal paths may involve so-called tacking, which stems from discontinuous shifts of direction of motion. The tacking behaviour thus results in zig-zag trajectories, as observed in time-optimal sailboat navigation and surprisingly also in the flight paths of far-ranging seabirds. Finally, we provide new efficient computational algorithms and illustrate the use of them to solve the Zermelo navigation (boundary value) problem, i.e. to find numerically the time-minimizing trajectory between two fixed points in the general non-convex setting.
\end{abstract}

\keywords{Zermelo navigation \and Time-dependent Finsler metrics \and Lorentz-Finsler metrics \and Wavemap \and Fermat's principle \and Non-convex indicatrix fields \and Multi-convex norms \and Tacking \and Snell's law \and Numerical solutions \and Real-world motivated examples}

\section{Introduction}
We consider a smooth environment (represented by a Riemannian manifold) in which an object (represented by a point) is moving freely in all directions---but everywhere and at all times with a given non-zero speed depending smoothly on direction, position, and time. This dependency, i.e. such a speed profile constraint, appears naturally in a multitude of physical settings, for example already in (mountain-) biking: the maximal possible speed upward (or upwind) is usually much smaller than the downward (or downwind) speed, even when you put the same maximal amount of energy into the pedals. In the literature there is an abundance of strongly motivating appearances of similar settings, e.g. concerning applications to such diverse phenomena as sailboat maneuvering, micro-swimming, seismic waves, wildfires, and long distance flying in the wind. However, in this paper we will mainly focus on the obvious general optimization problem: How to use the given speed profile to get from $A$ to $B$ on the manifold in the shortest amount of time.

The many possible future applications of the new results that we present in this work will so far only be indicated by way of references to recent pertinent papers, which are concerned with specific real-world phenomena like the ones alluded to above.

In the following few paragraphs, we now outline the tools and concepts that we use, as well as the main results of the paper:

\paragraph{Classical Zermelo navigation} Zermelo navigation is then---still in general terms---the problem of finding the fastest trajectory (using maximum speed) between two fixed points in an ambient space where the speed is anisotropic in the above sense (i.e. in particular also direction-dependent). This problem was first formulated by Ernst Zermelo in his much cited 1931 paper \cite{Z1931}, in which he solved the two-dimensional case using calculus of variations. Shortly thereafter, the problem attracted the attention of renown mathematicians such as Levi-Civita, Mani\`{a}, and von Mises (see \cite{levi-civita1931a,mania1937,mises1931a}), and it quickly became a classical topic in the calculus of variations (see the monographs by Carath\'{e}odory, \cite{Caratheodory1965,Caratheodory1967}). More recently, it has also been studied and solved using tools from optimal control theory (see e.g. \cite{biferale2019a,bonnard2023a,piro2022a,piro2021a,serres2009}). In this work, however, we will approach this problem from a more geometric perspective, using the framework of {\em Finsler geometry}.

\paragraph{Finsler geometry} From a geometric standpoint, the key element that defines Zermelo navigation is the {\em speed profile indicatrix} of the moving object---namely, the geometric shape generated at each point by the set of velocity vectors the object can achieve in all directions. If the speed is isotropic---the same in every direction---this indicatrix is a centered sphere at each point (a circle in dimension $2$), which can be interpreted as the unit vectors of a standard Riemannian metric. When the speed is affected by an external current or wind and becomes anisotropic, the indicatrix can instead be seen (under some assumptions) as the unit vectors of a more general type of metric known as a {\em Finsler metric}---essentially, a non-symmetric norm. In this setting, it turns out that the Finsler metric directly measures the travel time: the Finslerian length of a curve coincides with the time required for the object to traverse that trajectory at maximum speed. This means that the {\em time-minimizing} trajectories---the solutions to Zermelo's problem---are Finsler (pre-) geodesics. Thus, Finsler geometry provides a natural framework to model the anisotropy introduced in Zermelo navigation. This connection was first observed in \cite{BaoRob2,shen2003a} and since then, there has been a growing interest to model this problem from a Finslerian perspective (see \cite{caponio2021a,JPS2023} and references therein). In fact, Finsler geometry has proven to be a powerful tool for modeling general physical phenomena that exhibit an intrinsic anisotropy, such as the spread of wildfires (see e.g. \cite{Javaloyes2023a,SM2016}) or the propagation of seismic waves (see e.g. \cite{antonelli2003,yajima2009}).

\paragraph{Time-dependence} When the speed of the moving object depends not only on position and direction but also on time, the Finsler metric that models its travel through space becomes time-dependent, and its geodesics can no longer be defined in the usual way. This requires working at the level of a {\em spacetime}, where time is introduced as an explicit additional dimension. In this setting, the travel of the object through the spacetime is modeled by a {\em Lorentz-Finsler metric}, which generalizes Lorentzian metrics in a similar way that Finsler metrics generalize Riemannian ones. An analogous conclusion then follows: the solutions to the time-dependent Zermelo's problem are Lorentz–Finsler (pre-) geodesics. The use of Lorentz–Finsler geometry to extend Zermelo navigation to the time-dependent setting was first developed in \cite{JS2020,javaloyes2021a}.

Throughout this work, we will pay special attention to the time-only dependent case, where the Finsler metric depends on time but not on position. This is not only a reasonable assumption in certain real-world scenarios, but also, as already mentioned in the abstract, and as we will show explicitly, a case in which Zermelo navigation exhibits particularly favorable properties that have not been previously explored, making it especially appealing for both theoretical and numerical investigations.

\paragraph{Non-convex navigation with tacking} There is a key condition that the speed profile indicatrix must satisfy in order to properly define a (time-dependent) Finsler metric: it must be {\em strongly convex}, meaning that it has positive curvature everywhere. Without this condition, the indicatrix no longer defines a proper Finsler metric, and consequently, the solution to Zermelo's problem can no longer be interpreted as a (pre-) geodesic curve. For this reason, to the best of our knowledge, this assumption has always been made in the Finslerian formulation of the problem.

However, there are real-world scenarios in which this hypothesis fails, such as the sailboat navigation in the presence of wind (see e.g. \cite[Part~III, Chapter~2]{marchaj1982a} and \cite{buell1996a, pueschl2018a}) or the flight of certain far-ranging seabirds (see the recent survey by Thorne et al., \cite[Section~4.5 and the 12 open questions in Section~7]{thorne2023a}, and \cite{Clay2023,ventura2022a,ventura2020a}). In particular, albatrosses have received a great deal of attention, see \cite{Richardson2011,Richardson2018,sachs2005a,sachs2016a,sachs2013a}. In such cases, {\em tacking} becomes an effective strategy to reduce travel time, resulting in zig-zag trajectories as optimal paths. Amazingly, this tacking, this dynamic soaring by birds in wind shear, was already observed by Leonardo da Vinci, see \cite{Richardson2017,Richardson2019}, and in fact also commented upon by Peal \cite{peal1880a} and Rayleigh \cite{rayleigh1883a} in the beginning of the 1880s. Motivated by these situations, the main goal of this paper is to extend the classical framework to the {\em non-convex Zermelo navigation}, where the speed profile indicatrix may fail to be convex. This has been previously addressed within the framework of optimal control theory (see e.g. \cite{hays2014a,mclaren2014a,pinti2020a,Pokhrel2023,techy2011a}), but never from a purely geometric perspective.

\paragraph{Our approach and main results} We begin in {\bf{Section~\ref{sec:review}}} with a complete review of the classical version of Zermelo navigation, where the goal is to minimize the travel time between two regions or points $A,B$ in $N=\mathbb{R}^n$. Step by step, we first present the mathematical modeling of all relevant elements in Section~\ref{subsec:setting}, followed in Section~\ref{subsec:finsler_metrics} by how they give rise to a (time-dependent) Finsler metric when the speed profile indicatrix is strongly convex. In the time-independent case, the Finsler metric is enough to solve Zermelo's problem, as explained in Section~\ref{subsec:time-independent}. To address the time-dependence, we introduce Lorentz-Finsler metrics in Section~\ref{subsec:lorentz-finsler}, leading to the general solution (Theorem~\ref{thm:sol_zermelo}) in Section~\ref{subsec:solution}. Then, in Section~\ref{subsec:wavemap} we introduce the {\em wavemap}, a useful tool for finding explicit solutions to Zermelo's problem. Essentially, the wavemap encodes all the time-minimizing trajectories from the initial region. The endpoints of these trajectories form the {\em wavefront} at each time, representing the spread of the moving object through space if it were to depart in all directions simultaneously. We conclude the section by stating the key results concerning the wavemap (Propositions~\ref{prop:strictly_min}, \ref{prop:cut_points} and \ref{prop:cut_function}) that will be used in the following sections.

In {\bf{Section~\ref{sec:time_only}}}, we focus on the time-only dependent case. As shown in Section~\ref{subsec:time_only_general}, the wavemap exhibits exceptionally favorable behavior in this setting: as long as the initial region is strictly convex, there are no {\em cut points} (Theorem~\ref{thm:no_cut_points})---that is, no intersections between different time-minimizing trajectories---and the wavefront remains strictly convex at all times (Theorem~\ref{thm:convexity}). These advantages are then illustrated through explicit examples in the subsequent subsections. First, in Section~\ref{subsec:ellipses}, we analyze the (well-known) two-dimensional case where the indicatrix field consists of displaced ellipses, showing that, in the time-only dependent setting, the wavemap can even be computed analytically. Second, in Section~\ref{subsec:circles}, we further simplify the setting to the special case of shifted circles, which allows for reduced computational complexity.

In {\bf{Section~\ref{sec:non-convex}}}, we begin our analysis of the non-convex navigation, initially restricting our attention to the constant case---where the speed is both time- and position-independent---and assuming that $A$ and $B$ are single points. After introducing some preliminary definitions in Section~\ref{subsec:multi-convex} to generalize the notion of a \emph{norm}, we address the challenge posed by a non-convex indicatrix by introducing the concept of a \emph{multi-convex indicatrix}, defined as a union of patches from multiple strictly convex indicatrices. Then, Theorem~\ref{thm:time_min} characterizes all (possibly infinitely many) time-minimizing trajectories in this setting. In certain cases, the optimal paths can only be realized through tacking, resulting in zig-zag trajectories, as observed in the real-world scenarios mentioned above.

In {\bf{Section~\ref{sec:restricted}}}, we pursue two main goals: first, to impose certain restrictions on the non-convex navigation in order to guarantee the existence of a unique time-minimizing trajectory; and second, to slightly reformulate the problem and its solution method to facilitate its extension to the non-constant case. To this end, we introduce the \emph{restricted non-convex navigation}, in which only one tack point is allowed, and instead of explicitly using a multi-convex indicatrix, the travel is modeled using two proper Finsler metrics---one before the tacking and another afterwards. In Section~\ref{subsec:restricted_constant}, we prove that under these restrictions, there exists a unique optimal solution in the constant case (Theorem~\ref{thm:existence} and Corollary~\ref{cor:uniqueness}), which in fact coincides with the one Theorem~\ref{thm:time_min} would provide under the same conditions (Theorem~\ref{thm:uniqueness}). Moreover, the advantage of this new formulation lies in its reliance on the strict convexity of the wavefronts generated by the wavemaps in the constant case. Since, as previously shown in Section~\ref{sec:time_only}, this property also holds in the time-only dependent case, we are able to extend most of the results to that setting in Section~\ref{subsec:restricted_time}, provided that the initial and final regions are strictly convex. However, in the general non-constant case, the uniqueness of the optimal solution is generally lost, as demonstrated by the counterexamples in Section~\ref{subsec:counter_examples}. We conclude in Section~\ref{subsec:snell} by establishing the connection between the tacking maneuvering and Snell's law of refraction---that is, the change in direction experienced by a wave as it crosses the interface between two different media---which arises from the fact that wave rays also seek to minimize the travel time, as stated by Fermat's principle. Snell's law has been studied and generalized within the framework of Finsler geometry in \cite{MP2023}.

In {\bf{Section~\ref{sec:computations}}}, we present new computational algorithms to find the solution to the general non-convex Zermelo navigation, allowing for an arbitrary number of tack points. Since these algorithms represent an entirely novel method for solving the pregeodesic boundary value problem in a general multi-convex Lorentz-Finsler manifold with any number of allowed tack points, we display all the details of the algorithms, including their pseudo-codes and associated proofs of convergence. Specifically, in Section~\ref{subsec:pregeodesics_estimation} we introduce Algorithm~\ref{al:georceh} to compute Lorentz-Finsler pregeodesics between two fixed points, proving that it exhibits global convergence (Proposition~\ref{prop:global_convergence}) as well as local quadratic convergence (Proposition~\ref{prop:quad_conv}). Then, in Section~\ref{subsec:tack_points_estimation} we present Algorithm~\ref{al:tacking_optimization}, which minimizes the total travel time when tacking is allowed and different time- and position-dependent Finsler metrics can be applied to travel between the tack points.

Finally, in {\bf{Section~\ref{sec:examples}}} we present several examples in which the non-convex (bi-metric) navigation problem is solved using the algorithms introduced in Section~\ref{sec:computations}. We consider various scenarios based on elliptic indicatrices, as described in Section~\ref{subsec:ellipses}, ranging from constant metrics to general time- and position-dependent Finsler metrics. In the last example, we introduce a conformal position-only dependent Finsler bi-metric, which could serve as an initial approximation for modeling real-world scenarios such as e.g. the long distance flights (including tacking) of the Juan Fern\'{a}ndez petrels, that have been recently documented by Clay et al. in \cite{Clay2023}. Concerning this particular application, we must also refer to the thorough survey by Thorne et al. \cite{thorne2023a} and to the work by Sachs \cite{sachs2016a}, both of which give rise to a number of further interesting modeling questions, that relate directly to the non-convex Zermelo navigation problem for multi-metric flights. 

For the convenience of the reader, Table~\ref{tab:choices} summarizes the different assumptions that define the specific settings and versions of Zermelo navigation considered in each section:

\begin{table}[ht]
    \centering
    \scriptsize
    \begin{tabular*}{\textwidth}{@{\extracolsep\fill}lccccc}
        \midrule\midrule
        \textbf{Sections} & \textbf{Dimension} & $\boldsymbol{A},\boldsymbol{B}$ & \textbf{Indicatrix} & \textbf{Dependency} & \textbf{Tacking}\\
        \midrule\midrule
        Section~\ref{sec:review} & $n$ & Hypersurfaces & One Finsler indicatrix $\Sigma_{(t,x)}$ & General & No \\
        \midrule
        Section~\ref{subsec:time_only_general} & $n$ & Hypersurfaces & One Finsler indicatrix $\Sigma_{(t,x)}$ & Time-only & No \\
        \midrule
        Section~\ref{subsec:ellipses} & $2$ & Curves & Shifted ellipses & General (eventually time-only) & No \\
        \midrule
        Section~\ref{subsec:circles} & $2$ & Curves & Shifted circles & General (eventually time-only) & No \\
        \midrule
        Section~\ref{sec:non-convex} & $n$ & Points & Multi-convex indicatrix $\Sigma$ & Constant & $n_{\mathrm{tacks}}$ \\
        \midrule
        Section~\ref{subsec:restricted_constant} & $n$ & Points & Two Finsler indicatrices $\Sigma^{\alpha}, \Sigma^{\beta}$ & Constant & One tack point \\
        \midrule
        Section~\ref{subsec:restricted_time} & $n$ & Hypersurfaces & Two Finsler indicatrices $\Sigma^{\alpha}_t, \Sigma^{\beta}_t$ & Time-only & One tack point \\
        \midrule
        Section~\ref{subsec:counter_examples} & $2$ & Points & Two Riemannian indicatrices & Time-only and position-only & One tack point \\
        \midrule
        Section~\ref{subsec:snell} & $n$ & Hypersurfaces & Two Finsler indicatrices & General & One tack point \\
        \midrule
        Section~\ref{subsec:pregeodesics_estimation} & $n$ & Points & One Finsler indicatrix & General & No \\
        \midrule
        Section~\ref{subsec:tack_points_estimation} & $n$ & Points & $n_{\mathrm{tacks}}+1$ Finsler indicatrices & General & $n_{\mathrm{tacks}}$ \\
        \midrule
        Section~\ref{sec:examples} & $2$ & Points & Two Finsler indicatrices (shifted ellipses and circles) & General & $n_{\mathrm{tacks}}$ \\
        \midrule\midrule
    \end{tabular*}
    \caption{Specific assumptions used for Zermelo navigation across the different sections of the paper. See explanations for the column headings below.}
    \label{tab:choices}
\end{table}

\begin{itemize}
\item \textbf{Dimension}: All results are valid for a space $N=\mathbb{R}^n$ of general dimension $n \geq 2$, although we set $n=2$ when computing specific examples.
\item   $\boldsymbol{A},\boldsymbol{B}$: The initial region $A$ and the arrival region $B$ can be either hypersurfaces of $N$ (curves in dimension $2$) or single points. Results established for hypersurfaces can be trivially particularized to single points (but not vice versa).
\item    \textbf{Indicatrix}: The speed profile indicatrix generated by the velocity vectors of the moving object. We may use a single Finsler (i.e., strongly convex) indicatrix---such as ellipses or circles in specific examples---or multiple Finsler indicatrices (in the non-convex navigation).
\item     \textbf{Dependency}: ``General'' indicates that the Finsler metrics considered may be both time- and position-dependent. In some sections, we focus specifically on the time-only dependent case, while in others we simplify the study to the constant case (i.e., time- and position-independent).
\item     \textbf{Tacking}: Indicates whether tacking is an effective strategy for solving Zermelo's problem, and if so, the number of tack points allowed. $n_{\mathrm{tacks}}$ means there is no restriction on the number of tack points.
\end{itemize}

\section{Review on Zermelo navigation}
\label{sec:review}

Throughout this section, we present a step-by-step review of how to model and solve the classical Zermelo navigation using Finsler and Lorentz-Finsler geometry, highlighting the most important results in the general time- and position-dependent case. Our exposition mainly follows \cite{javaloyes2021a,EPR2024}. This will not only serve as an introduction to the problem but also provide the foundation upon which the following sections are built.

\subsection{The setting}
\label{subsec:setting}
In the classical version of Zermelo's problem, a moving object ``navigates'' between two {\em regions} in {\em space} with a certain anisotropic {\em speed profile}, and the goal is to find the trajectory that minimizes the {\em travel time}. So, in order to model this problem mathematically, first we need to specify all the elements in play:
\begin{itemize}
    \item The {\em space} will be assumed to be a smooth manifold $N$ of dimension $n \geq 2$ (for real-world scenarios, $n=2,3$). Since explicit computations will be needed, for simplicity we will assume $N=\mathbb{R}^n$ and work in natural coordinates throughout the text, i.e. points will be written as $x=(x^1,\ldots,x^n)\in N$ (we will use capital letters for important fixed points), tangent vectors as $v=(v^1,\ldots,v^n)\in T_xN$ and curves as $\gamma(s)=(x^1(s),\ldots,x^n(s))$.\footnote{Throughout this work, any {\em curve} will be considered regular and piecewise smooth, unless otherwise stated.} When needed, $N=\mathbb{R}^n$ will be seen as an affine space, in which case we will explicitly write $\mathbb{R}^n$ instead of $N$, and free vectors (translations) will be written as $v \in \mathbb{R}^n$.
    
    \item The {\em initial region} $A$ and the {\em arrival region} $B$ will be either single points in $N$ or (more generally) compact embedded hypersurfaces of $N$.\footnote{The case where the initial region is a submanifold of arbitrary codimension is studied in \cite{javaloyes2021a}.} In the latter case, obviously, we will assume that the regions do not intersect.
    
    \item {\em Distances} on $N$ are measured using a Riemannian metric $\langle\cdot,\cdot\rangle$ on $N$ (i.e. a scalar product on each tangent space $T_xN$). Namely, if $\gamma(s)$ is a curve in $N$ and $\gamma'(s)$ denotes the derivative with respect to the parameter $s \in [a,b]$, the distance traveled when following $\gamma$ is given by the length of the curve computed with $\langle\cdot,\cdot\rangle$:
    \begin{equation*}
        \int_a^b \sqrt{\langle\gamma'(s),\gamma'(s)\rangle} \ \mathrm{d}s=\int_a^b ||\gamma'(s)|| \ \mathrm{d}s,
    \end{equation*}
    where $||\cdot|| \coloneqq \sqrt{\langle\cdot,\cdot\rangle}$ is the norm induced by $\langle \cdot,\cdot \rangle$. This distance is independent of the parametrization of the curve. The simplest example is the standard Euclidean metric:
    \begin{equation}
    \label{eq:euclidean}
        \langle v,u \rangle=v^1u^1+\ldots+v^nu^n, \quad ||v||=\sqrt{(v^1)^2+\ldots(v^n)^2}, \quad \forall v,u \in T_xN,
    \end{equation}
    which is independent of the position $x\in N$, so this is the natural choice when the space $N$ is flat.
    
    \item In order to measure {\em time}, we can include an additional dimension to the space and define the {\em spacetime} $M=\mathbb{R} \times N$, where the natural projection (time coordinate) $t: M \rightarrow \mathbb{R}$ represents the global absolute (non-relativistic) time, i.e. the time that every observer measures. Points in the spacetime will be written as $q=(t,x)=(x^0,x^1,\ldots,x^n) \in M$, tangent vectors as $\hat{v}=(v^0,v)=(v^0,v^1,\ldots,v^n) \in T_qM$, curves as $\hat{\gamma}(s)=(t(s),\gamma(s))=(x^0(s),x^1(s),\ldots,x^n(s))$ and constant time slices as $\{t=t_0\}\coloneqq \{t_0\}\times N$. Without loss of generality, we will always assume that $t=0$ is the {\em initial time}, i.e. the time when the moving object departs from $A$.

    \item The (anisotropic) {\em speed profile} of the moving object is given, in the general time- and position-dependent case, by a positive smooth function $V(t,x,v)$ representing the maximum speed by which the object can move in direction $v \in T_{x}N$ at each time $t\in\mathbb{R}$ and each point $x\in N$.\footnote{We will assume that the object can travel in every direction. The case where some directions are restricted due to a ``strong wind'' is addressed in detail in \cite{caponio2021a,javaloyes2021a}.} In particular, $V(t,x,v)$ must depend only on the orientation of $v$, not its length, so $V(t,x,\lambda v)=V(t,x,v)$ for all $\lambda>0$.
    
    \item The vectors whose norm (computed with $\langle\cdot,\cdot\rangle$) coincides with the corresponding maximal speed, are assumed throughout to represent the actual allowed {\em non-zero velocity vectors} of the moving object. Namely, at each $(t,x)\in M$, the set of velocity vectors (a.k.a. the {\em speed profile indicatrix}) is
    \begin{equation}
    \label{eq:vel_vectors}
        \Sigma_{(t,x)} \coloneqq \{v\in T_xN: ||v||=V(t,x,v) > 0\}.
    \end{equation}
    Physically, $\Sigma_{(t,x)}$ can be interpreted as the ``infinitesimal'' travel of the object from $p$ in all directions. In other words, $x+\Sigma_{(t,x)}\in \mathbb{R}^n$ would represent all the points that the moving object can reach after one time unit, if it departed from $x\in \mathbb{R}^n$ at time $t\in \mathbb{R}$ under constant conditions.
\end{itemize}

The final ingredient we need to properly establish Zermelo navigation is the computation of the {\em travel time}. Let $\hat{\gamma}(s)=(t(s),\gamma(s))$ be a curve in the spacetime $M$. For $\hat{\gamma}$ to represent an actual trajectory of the moving object, the time $t(s)$ and position $\gamma(s)$ cannot be independent to each other; they must instead be related by the speed profile $V$. Specifically, the velocity of the curve---the derivative of the position $\gamma(s)$ with respect to the time coordinate $t$---must be an actual velocity vector of the moving object. Namely, if we consider a time reparametrization $s(t)$ and the corresponding $t$-parametrized curve $\gamma\circ s(t)=\gamma(s(t))$, then
\begin{equation*}
    V(t(s),\gamma(s),\gamma'(s))=\left\lvert\left\lvert \frac{d(\gamma \circ s)}{dt}(t(s)) \right\rvert\right\rvert = s'(t(s)) ||\gamma'(s)|| = \frac{||\gamma'(s)||}{t'(s)},
\end{equation*}
so we obtain
\begin{equation}
\label{eq:dot_t}
    t'(s) = \frac{||\gamma'(s)||}{V(t(s),\gamma(s),\gamma'(s))}.
\end{equation}
In conclusion, $\hat{\gamma}(s)=(t(s),\gamma(s))$ represents a $V$-controlled trajectory of the moving object if and only if \eqref{eq:dot_t} holds. In this case, the first component $t(s)$ directly measures the time spent during the travel, and the total {\em travel time} can be computed by integrating \eqref{eq:dot_t}:
\begin{equation*}
    \mathcal{T}[\hat{\gamma}]\coloneqq t(b)=\int_a^b \frac{||\gamma'(s)||}{V(t(s),\gamma(s),\gamma'(s))} \ \mathrm{d}s,
\end{equation*}
where $t(a)=0$ is chosen as the initial time. This is independent of the parametrization of $\hat{\gamma}$.

In this setting, Zermelo's problem can be formulated as follows:
\begin{quote}
Given $M=\mathbb{R}\times N$, a Riemannian metric $\langle\cdot,\cdot\rangle$ on $N$, a speed profile $V$ and initial and arrival regions $A, B \subset N$, find the curve $\hat{\gamma}(s)=(t(s),\gamma(s))$, $s \in [a,b]$, with the conditions $t(a)=0$, $\gamma(a)\in A$ and $\gamma(b)\in B$, such that \eqref{eq:dot_t} holds and the travel time $\mathcal{T}[\hat{\gamma}]=t(b)$ is minimum.
\end{quote}

\subsection{Finsler metrics}
\label{subsec:finsler_metrics}
Let us take a closer look at \eqref{eq:dot_t}. The right-hand side of the equation defines a time-dependent function on $TN$:
\begin{equation}
\label{eq:F_t}
    F_t(v) \equiv F(t,x,v)\coloneqq \frac{||v||}{V(t,x,v)}, \qquad \forall t\in \mathbb{R}, \quad \forall x\in N, \quad  \forall v \in T_xN,
\end{equation}
with the following properties:
\begin{itemize}
    \item[(i)] $F_t$ is continuous everywhere (with $F_t(0)\coloneqq 0$) and smooth away from the zero vector.
    \item[(ii)] $F_t$ is positive: $F_t(v)>0$ for all $v\in T_xN \setminus \{0\}$.
    \item[(iii)] $F_t$ is positive homogeneous of degree 1: $F_t(\lambda v)=\lambda F(v)$ for all $\lambda>0$ and all $v\in T_xN$.
\end{itemize}
This tells us that $F_t$ is essentially a time-dependent (non-symmetric) norm at each tangent space $T_xN$.\footnote{$F_t$ is non-symmetric in the sense that $F_t(v) \not=F_t(-v)$ in general.} Associated with this norm, we define at each $(t,x) \in M$ the {\em indicatrix} $\Sigma_{(t,x)}$ of $F_t$ as the $F_t$-unit vectors, which coincides with the set of velocity vectors \eqref{eq:vel_vectors} of the moving object:
\begin{equation*}
    \Sigma_{(t,x)} = \{v\in T_xN: F_t(v)=1\}.
\end{equation*}

\begin{remark}
\label{rem:indicatrix}
    Under the assumptions we have established, $\Sigma_{(t,x)}$ is always a hypersurface of $T_xN$ diffeomorphic to a sphere that encloses the zero vector (see \cite[Proposition~2.6]{JS2014}). Moreover, $\Sigma \coloneqq \cup_{q\in M}\Sigma_q$ contains all the information about $F_t$. Indeed, given $\Sigma_{(t,x)}$ at each $(t,x)\in M$, we can reconstruct $F_t$ as $F_t(v)=\lambda$, where $\lambda$ is the unique positive number such that $\frac{v}{\lambda}\in \Sigma_{(t,x)}$ (see \cite[Theorem~2.14]{JS2014}).
\end{remark}

When one has a norm, the way to obtain a corresponding scalar product is through the Hessian. For instance, using the particular expression in \eqref{eq:euclidean} for the Euclidean norm, the Hessian of $\frac{1}{2}||\cdot||^2$ provides back the Euclidean metric $\langle \cdot,\cdot \rangle$. With this in mind, we define the {\em fundamental tensor} of $F_t$ in the direction $v\in T_xN\setminus\{0\}$ as the symmetric bilinear form given by the Hessian of $\frac{1}{2}F_t^2$ at $v$:
\begin{equation}
\label{eq:fund_tensor_F}
g^{F_t}_v(u,w) \coloneqq \left. \textup{Hess}\left(\frac{1}{2} F_t^2\right) \right\rvert_v(u,w) = \frac{1}{2} \left. \frac{\partial^2}{\partial \delta \partial \eta} F_t(v + \delta u + \eta w)^2 \right\rvert_{\delta=\eta=0}, \quad \forall u,w \in T_xN.
\end{equation}
It is a straightforward computation to check that
\begin{equation*}
    g^{F_t}_v(v,u)=\frac{1}{2} \left. \frac{\partial}{\partial \delta} F_t(v + \delta u)^2 \right\rvert_{\delta=0}, \qquad g^{F_t}_v(v,v)=F_t(v)^2.
\end{equation*}
In matrix form, using natural Euclidean coordinates---so that $F_t(v) \equiv F(t,x^1,\ldots,x^n,v^1,\ldots,v^n)$:
\begin{equation*}
g^{F_t}_v(u,w) =
\begin{pmatrix}
u^1 & \cdots & u^n
\end{pmatrix}
\begin{pmatrix}
\frac{1}{2}\frac{\partial^2 F_t^2}{\partial(v^1)^2}(v) & \cdots & \frac{1}{2}\frac{\partial^2 F_t^2}{\partial v^1 \partial v^n}(v) \\
\vdots & \ddots & \vdots \\
\frac{1}{2}\frac{\partial^2 F_t^2}{\partial v^n \partial v^1}(v) & \cdots & \frac{1}{2}\frac{\partial^2 F_t^2}{\partial(v^n)^2}(v)
\end{pmatrix}
\begin{pmatrix}
w^1 \\
\vdots \\
w^n
\end{pmatrix},
\end{equation*}
where
\begin{equation}
\label{eq:matrix_gF}
    g^{F_t}_{ij}(v) \coloneqq \frac{1}{2}\frac{\partial^2 F_t^2}{\partial v^i \partial v^j}(v), \quad i,j=1,\ldots,n,
\end{equation}
are the coefficients of the coordinate matrix of $g^{F_t}_v$.

However, for $g^{F_t}_v(\cdot,\cdot)$ to be a proper scalar product at each tangent space $T_xN$ and for each direction $v$, we must require it to be positive definite, i.e. $g^{F_t}_v(u,u)>0$ for all $u \in T_xN \setminus \{0\}$. This is equivalent to the following condition on the indicatrix (see \cite[Proposition~2.3(v)]{JS2014}):
\begin{itemize}
    \item[(iv)] $\Sigma_{(t,x)}$ is {\em strongly convex} as a hypersurface of $T_xN$, for all $(t,x) \in M$. This means that $\Sigma_{(t,x)}$ has positive sectional curvature everywhere (with respect to the natural Euclidean metric on $T_xN$) or, equivalently, the second fundamental form of $\Sigma_{(t,x)}$ with respect to the inner normal vector is positive definite.
\end{itemize}
This condition also ensures that the strict triangle inequality is satisfied: $F_t(v,u) \leq F_t(v)+F_t(u)$ for all $v,u \in T_xN$, with equality if and only if $v=\lambda u$ or $u=\lambda v$, for some $\lambda \geq 0$. To be more precise, this inequality holds if and only if the closed $F_t$-unit ball $\{v\in T_xN: F_t(v)\leq 1\}$ is strictly convex as a subset of $\mathbb{R}^n$ (see \cite[Proposition~2.3(iv)]{JS2014}), which is a slightly weaker condition than (iv).

\begin{definition}
\label{def:finsler_metrics}
Any function $F_t: TN \rightarrow [0,\infty)$ that varies smoothly with $t \in \mathbb{R}$ and satisfies the conditions (i), (ii), (iii) and (iv) is called a {\em (time-dependent) Finsler metric} on $N$.\footnote{See \cite{JS2014} for background regarding Finsler metrics.} When $F(t,x,v)=F(t,v)$ depends on time $t\in \mathbb{R}$ but not on position $x \in N$, we say that it is a {\em time-only dependent Finsler metric}. When $F(t,x,v)=F(v)$ is both time- and position-independent, we say that it is a {\em constant Finsler metric} or a {\em Minkowski norm}. 
\end{definition}

\begin{notation}
\label{not:F}
We will write indistinctly $F_t(v)$ or $F(t,x,v)$, depending on the context. $F_t(v)$ highlights the time-dependence---we will write $F(v)$ when time-independent---and we can omit the reference to the position $x$, as it is always the base point of the vector $v$. In contrast, $F(t,x,v)$ explicitly shows all the actual dependencies of the function, making it more suitable for working in coordinates.
\end{notation}

To our knowledge, Zermelo navigation has always been studied with the strong convexity assumption (iv), i.e. the velocity vectors of the moving object form a smooth distribution of strongly convex {\em ovals}---hypersurfaces diffeomorphic to a sphere. The main purpose of this work is to extend this study to the cases where (iv) no longer holds. For now, however, we will assume throughout this review section that $F_t$ in \eqref{eq:F_t} is a time-dependent Finsler metric. In terms of $F_t$, given a curve $\hat{\gamma}(s)=(t(s),\gamma(s))$ in $M$, \eqref{eq:dot_t} becomes
\begin{equation}
    \label{eq:dot_t_F}
    t'(s)=F_{t(s)}(\gamma'(s)) \equiv F(t(s),\gamma(s),\gamma'(s)),
\end{equation}
the travel time \eqref{eq:traveltime} can be computed as
\begin{equation}
\label{eq:traveltime_F}
    \mathcal{T}[\hat{\gamma}]=t(b)=\int_a^b F(t(s),\gamma(s),\gamma'(s)) \ \mathrm{d}s,
\end{equation}
and we can reformulate Zermelo's problem as follows:
\begin{quote}
Given $M=\mathbb{R}\times N$, a time-dependent Finsler metric $F_t$ on $N$ and initial and arrival regions $A, B\subset N$, find the curve $\hat{\gamma}(s)=(t(s),\gamma(s))$, $s \in [a,b]$, with the conditions $t(a)=0$, $\gamma(a)\in A$ and $\gamma(b)\in B$, such that \eqref{eq:dot_t_F} holds and the travel time $\mathcal{T}[\hat{\gamma}]=t(b)$ is minimum.
\end{quote}

\begin{remark}
Observe that the travel time \eqref{eq:traveltime_F} depends only on $F_t$ and, in fact, Zermelo's problem can be formulated (and solved) without making explicit reference to the background Riemannian metric $\langle\cdot,\cdot\rangle$ and the speed profile $V$. This means that the only explicit information we need about the moving object is its velocity indicatrix vectors $\Sigma$, which univocally define $F_t$ (recall Remark~\ref{rem:indicatrix}).
\end{remark}

\subsection{The time-independent case}
\label{subsec:time-independent}
When $F_t=F$ is time-independent, we can work just at the level of the space $N$. We define the {\em geodesics} of $F$ between two points $x_1,x_2 \in N$ as the critical points of the {\em $F$-energy} functional
\begin{equation}
\label{eq:F_energy}
    \mathcal{E}_F[\gamma]\coloneqq \int_a^b F(\gamma(s),\gamma'(s))^2 \ \mathrm{d}s,
\end{equation}
among all (piecewise smooth) curves $\gamma:[a,b]\rightarrow N$ with fixed endpoints $x(a)=x_1$, $x(b)=x_2$.\footnote{By {\em critical point} we mean the usual notion that the curve makes the functional stationary under any standard variation. We formalize this concept for the travel time functional in Definition~\ref{def:travel_functional} below.} Since $\mathcal{E}[x]$ depends on the parametrization of $\gamma(s)$, which is not relevant in our problem, we define the {\em pregeodesics} of $F$ as those curves that can be reparametrized as geodesics, keeping the orientation. It turns out that pregeodesics are critical points of the {\em $F$-length} functional (see e.g. \cite[Section~5]{BCS})
\begin{equation}
    \label{eq:F_lenght}
    \mathcal{L}_F[\gamma]\coloneqq \int_a^b F(\gamma(s),\gamma'(s)) \ \mathrm{d}s,
\end{equation}
which exactly coincides with the travel time \eqref{eq:traveltime_F}. This means that, in the time-independent case, the curves that solve Zermelo's problem are pregeodesics of $F$. In the time-dependent case, in contrast, we cannot define geodesics only at the level of the space $N$, because the energy and length functionals also depend on the time function $t(s)$, so we need to generalize these notions at the level of the spacetime $M$.

\subsection{Lorentz-Finsler metrics}
\label{subsec:lorentz-finsler}
The goal is to find a metric on $M$ that describes the travel of the moving object through the spacetime when the object is moving with maximal speed according to its given speed profile indicatrix. Since $t'(s)>0$ (time increases as the object travels) and $F_t$ is positive, observe that \eqref{eq:dot_t} and \eqref{eq:dot_t_F} are equivalent to
\begin{equation*}
    t'(s)^2-F_{t(s)}(\gamma'(s))^2=0.
\end{equation*}
The left-hand side of this equation now defines the following function on $TM$, which carries the same information as $F_t$:
\begin{equation}
\label{eq:lorentz_finsler_metric}
    H(\hat{v})\equiv H(q,\hat{v}) \coloneqq (v^0)^2-F_t(v)^2, \qquad \forall q=(t,x)\in M, \quad \forall \hat{v}=(v^0,v) \in T_qM,
\end{equation}
with the following properties:
\begin{itemize}
    \item $H$ is continuous everywhere and smooth away from $\textup{Span}(\partial_t)=\{\hat{v}=(\lambda,0)\in T_qM: q \in M, \ \lambda\in \mathbb{R}\}$.
    \item $H$ is positive homogeneous of degree 2: $H(\lambda \hat{v})=\lambda^2H(\hat{v})$ for all $\lambda>0$ and all $\hat{v} \in T_qM$.
\end{itemize}
As with $F_t$, we can define the {\em fundamental tensor} of $H$ in the direction $\hat{v}\in T_qM \setminus \textup{Span}(\partial_t)$ as the Hessian of $\frac{1}{2}H$ at $\hat{v}$:\footnote{Observe that $g^H$ is defined in terms of $H$ (in contrast with $g^{F_t}$, defined in terms of $F_t^2$) because $H$ is already positive homogeneous of degree 2.}
\begin{equation*}
g^{H}_{\hat{v}}(\hat{u},\hat{w}) \coloneqq \left. \textup{Hess}\left(\frac{1}{2} H\right) \right\rvert_{\hat{v}}(\hat{u},\hat{w}) = \frac{1}{2} \left. \frac{\partial^2}{\partial \delta \partial \eta} H(\hat{v} + \delta \hat{u} + \eta \hat{w}) \right\rvert_{\delta=\eta=0}, \quad \forall \hat{u},\hat{w} \in T_qM,
\end{equation*}
which satisfies
\begin{equation*}
    g^{H}_{\hat{v}}(\hat{v},\hat{u})=\frac{1}{2} \left. \frac{\partial}{\partial \delta} H(\hat{v} + \delta \hat{u}) \right\rvert_{\delta=0}, \qquad g^{H}_{\hat{v}}(\hat{v},\hat{v})=H(\hat{v}).
\end{equation*}
In matrix form, using natural Euclidean coordinates---so that $H(v) \equiv H(t,x^1,\ldots,x^n,v^0,\ldots,v^n)$:
\begin{equation*}
g^{H}_{\hat{u}}(\hat{u},\hat{w}) =
\begin{pmatrix}
u^0 & \cdots & u^n
\end{pmatrix}
\begin{pmatrix}
\frac{1}{2}\frac{\partial^2 H}{\partial(v^0)^2}(\hat{v}) & \cdots & \frac{1}{2}\frac{\partial^2 H}{\partial v^0 \partial v^n}(\hat{v}) \\
\vdots & \ddots & \vdots \\
\frac{1}{2}\frac{\partial^2 H}{\partial v^n \partial v^0}(\hat{v}) & \cdots & \frac{1}{2}\frac{\partial^2 H}{\partial(v^n)^2}(\hat{v})
\end{pmatrix}
\begin{pmatrix}
w^0 \\
\vdots \\
w^n
\end{pmatrix},
\end{equation*}
where
\begin{equation}
\label{eq:matrix_gH}
    g^H_{ij}(\hat{v}) \coloneqq \frac{1}{2}\frac{\partial^2 H}{\partial v^i \partial v^j}(\hat{v}), \quad i,j=0,\ldots,n,
\end{equation}
are the coefficients of the coordinate matrix of $g^H_{\hat{v}}$.

Note that the relationship between the fundamental tensors of $H$ and $F_t$ is
\begin{equation}
    \label{eq:relation_tensors}
    g^H_{\hat{v}}(\hat{u},\hat{w})=u^0w^0-g^{F_t}_{v}(u,w) \quad  \textrm{for} \quad \hat{v} = (v^{0}, v), \quad \hat{u} = (u^{0}, u), \quad \hat{w} = (w^{0}, w),
\end{equation}
which immediately tells us that, if $F_t$ is a time-dependent Finsler metric, so that $g^{F_t}_{v}(\cdot,\cdot)$ is positive definite, then $g_{\hat{v}}^H(\cdot,\cdot)$ has index $n$, i.e. it is a Lorentzian scalar product with signature $(+,-,\ldots,-)$ at each tangent space $T_qM$ and for each direction $\hat{v}$. The Lorentzian signature naturally distinguishes between spatial and temporal dimensions, and it is a standard way to model propagation through spacetime---e.g. this is the way general relativity models the motion of material (timelike) particles as well as the propagation of light along (lightlike) geodesics. 

\begin{definition}
\label{def:causality}
    Given a time-dependent Finsler metric $F_t$, we say that $H=\mathrm{d}t^2-F_t^2$ in \eqref{eq:lorentz_finsler_metric} is the {\em Lorentz-Finsler metric} associated with $F_t$, and we define the following notions:
    \begin{itemize}
        \item The {\em future lightcone} $\mathcal{C}_q$ at each $q\in M$ is defined as
        \begin{equation*}
        \mathcal{C}_q \coloneqq \{\hat{v}=(v^0,v)\in T_qM: H(\hat{v})=0, \ v^0>0\},
        \end{equation*}
        and we say that $-\mathcal{C}_q=\{\hat{v}\in T_qM: -\hat{v}\in\mathcal{C}_q\}$ is the {\em past lightcone}.
        
        \item The open domain $\mathcal{A}_q \subset T_qM\setminus \{0\}$ enclosed by $\mathcal{C}_q$ is called the {\em cone domain}. Note that
        \begin{equation*}
        \mathcal{A}_q \coloneqq \{\hat{v}=(v^0,v)\in T_qM: H(\hat{v})>0, \ v^0>0\}.
        \end{equation*}
        
        \item We say that a vector $\hat{v}\in T_qM$ is
            \begin{itemize}
                \item {\em timelike} if $\hat{v}$ or $-\hat{v}$ belongs to $\mathcal{A}_q$,
                \item {\em lightlike} if $\hat{v}$ or $-\hat{v}$ belongs to $\mathcal{C}_q$,
                \item {\em causal} if $\hat{v}$ or $-\hat{v}$ is timelike or lightlike,
                \item {\em spacelike} if it is not causal.
            \end{itemize}
            
        \item A causal vector $\hat{v}\in T_qM$ is {\em future-directed} if $\hat{v}\in \mathcal{C}_q \cup \mathcal{A}_q$, and {\em past-directed} if $-\hat{v}\in \mathcal{C}_q \cup \mathcal{A}_q$.
        
        \item Analogously, a curve $\hat{\gamma}:[a,b] \rightarrow M$ is {\em (future or past-directed)} {\em timelike}, {\em lightlike}, {\em causal} or {\em spacelike}, when $\hat{\gamma}'(s)$ is so for all $s\in[a,b]$, respectively.
    \end{itemize}
\end{definition}

\begin{notation}
    Following Notation~\ref{not:F}, we will write $H(\hat{v})$ or $H(p,\hat{v})$ indistinctly. In addition, from now on, any causal vector or curve that is not explicitly stated as past-directed will be implicitly assumed to be future-directed.
\end{notation}

\begin{remark}
\label{rem:ligthcones}
    The lightcones of $H$, which can be expressed as
    \begin{equation*}
        \mathcal{C} \coloneqq \cup_{q\in M} \mathcal{C}_q = H^{-1}(0)\cap \mathrm{d}t^{-1}((0,\infty)),
    \end{equation*}
    define what is called a {\em cone structure} (essentially, a smooth distribution of cones).\footnote{See \cite{JS2020} for background regarding cone structures, Lorentz-Finsler metrics and Finsler spacetimes.} In fact, note that $v\in \Sigma_q$ if and only if $(1,v)\in \mathcal{C}_q$, so the cone structure contains the same information as the speed profile indicatrix and is thus the only explicit information we need about the moving object to solve the time-dependent Zermelo's problem (see \cite{javaloyes2021a}).
\end{remark}

In terms of $H$, given a curve $\hat{\gamma}(s)=(t(s),\gamma(s))$ in $M$, \eqref{eq:dot_t} and \eqref{eq:dot_t_F} are equivalent to
\begin{equation}
    \label{eq:dot_t_H}
    H(\hat{\gamma}'(s))\equiv H(\hat{\gamma}(s),\hat{\gamma}'(s)) = 0,
\end{equation}
which implies, as $t'(s)>0$, that $\hat{\gamma}(s)$ represents an actual trajectory of the moving object if and only if it is lightlike. In fact, we can deduce that timelike curves represent trajectories traveled at speeds lower than the speed profile $V$---the maximum speed---while spacelike curves are impossible trajectories, as they would require speeds exceeding the maximum. Following this reasoning, causal curves represent "admissible" trajectories for the moving object, although in Zermelo navigation it is always optimal to follow lightlike rather than (non-lightlike) causal curves.

Summing up, $\mathcal{C}$ can be interpreted as the "infinitesimal" travel of the object through the spacetime $M$, just like $\Sigma$ represents it through the space $N$, and we have yet another equivalent formulation of Zermelo's problem:
\begin{quote}
Given $M=\mathbb{R}\times N$, a time-dependent Finsler metric $F_t$ on $N$ and initial and arrival regions $A, B \subset N$, find the lightlike curve $\hat{\gamma}(s)=(t(s),\gamma(s))$, $s \in [a,b]$, of the associated Lorentz-Finsler metric $H=\mathrm{d}t^2-F_t^2$, with the conditions $t(a)=0$, $\gamma(a)\in A$ and $\gamma(b)\in B$, such that the travel time $\mathcal{T}[\hat{\gamma}]=t(b)$ is minimum.
\end{quote}

The main advantage of this formulation, as we have already anticipated, is that we can define geodesics in this general time-dependent setting, which will play a key role in Zermelo navigation. Specifically, we define the {\em geodesics} of $H$ between two points $q_1, q_2 \in M$ as the critical points of the $H$-energy functional
\begin{equation}
\label{eq:H_energy}
    \mathcal{E}_H[\hat{\gamma}] \coloneqq \int_a^b H(\hat{\gamma}(s),\hat{\gamma}'(s)) \ \mathrm{d}s,
\end{equation}
among all (piecewise smooth) curves $\hat{\gamma}: [a,b] \rightarrow M$ with fixed endpoints $\hat{\gamma}(a)=q_1$, $\hat{\gamma}(b)=q_2$. As usual, in order to remove the dependence on the parametrization, we also define the {\em pregeodesics} of $H$ as those curves that can be parametrized as geodesics, keeping the orientation.\footnote{Now we cannot identify these pregeodesics as critical points of a length functional, as in \eqref{eq:F_lenght}, because $\sqrt{H}$ fails to be smooth on lightlike vectors.}

\subsection{The solution to Zermelo's problem}
\label{subsec:solution}
Let us first identify the curves we are looking for.
\begin{definition}
\label{def:travel_functional}
    Let $\mathcal{Q}_{A,B}$ be the set of all the candidate solutions to Zermelo's problem, i.e. lightlike curves $\hat{\gamma}:[a,b] \rightarrow M$ departing from $A \subset N$ at time $t=0$ and arriving at $B \subset N$. Then:
    \begin{itemize}
        \item $\mathcal{T}: \mathcal{Q}_{A,B} \rightarrow (0,\infty)$ given by \eqref{eq:traveltime_F} is the {\em travel time functional} on $\mathcal{Q}_{A,B}$.
        \item $\hat{\gamma} \in \mathcal{Q}_{A,B}$ is a {\em critical point} of $\mathcal{T}$ if
        \begin{equation*}
        \left. \frac{d}{d\omega}\mathcal{T}[\hat{\gamma}_{\omega}] \right\rvert_{\omega=0}=0,
        \end{equation*}
    for any variation $\hat{\gamma}_{\omega}(s)$ of $\hat{\gamma}$ through curves in $\mathcal{Q}_{A,B}$, i.e. $\hat{\gamma}_0=\hat{\gamma}$ and $\hat{\gamma}_{\omega} \in \mathcal{Q}_{A,B}$ for all $\omega \in (-\varepsilon,\varepsilon)$, with $\varepsilon > 0$. This is independent of the parametrization of $\hat{\gamma}$.
    \end{itemize}
\end{definition}

Therefore, the goal is to find a global minimum of $\mathcal{T}$. It turns out that this formulation of Zermelo's problem---in terms of a Lorentz-Finsler metric $H$---is equivalent to the Finslerian version of Fermat's principle, which characterizes the critical points of the travel time functional between two points as lightlike pregeodesics of $H$ (see \cite[Theorem~4.2]{perlick2006a}). In addition, when the initial and arrival regions $A, B$ are hypersurfaces of $N$ (not single points), then the curve must also depart and arrive ``orthogonally'' to $A$ and $B$, respectively, in order to be critical (see \cite[Section~4.1]{javaloyes2021a} and \cite{JMPS,perlick1998a}). This orthogonality is measured with respect to the time-dependent Finsler metric, in the following sense.

\begin{definition}
    Let $F_t$ be a time-dependent Finsler metric on $N$. We say that $v\in T_xN\setminus \{0\}$ is {\em $F_t$-orthogonal} to $u \in T_xN$ at time $t\in \mathbb{R}$, denoted $v\bot_{F_t}u$, when
    \begin{equation*}
        g^{F_t}_v(v,u)=0.
    \end{equation*}
    Analogously, if $S$ is a submanifold of $N$, we say that $v$ is {\em $F_t$-orthogonal} to $S$ at time $t \in \mathbb{R}$, denoted $v \bot_{F_t}S$, if $v\bot_{F_t}u$ for all $u \in T_xS$.
\end{definition}

All in all, we have the following result (see also \cite[Corollary~6.10]{JS2020}).
\begin{theorem}
\label{thm:sol_zermelo}
    In the Zermelo navigation setting presented so far:
    \begin{itemize}
        \item[(i)] For any $A,B \subset N$, a global minimum of $\mathcal{T}$ (i.e. a solution to Zermelo's problem) exists if $ F_t $ is upper bounded by any time-independent Finsler metric and lower bounded by any time-independent complete Finsler metric.\footnote{{\em Complete} means that any geodesic can be extended to be defined in $\mathbb{R}$.}

        \item[(ii)] $\hat{\gamma}(s)=(t(s),\gamma(s))$ in $\mathcal{Q}_{A,B}$ is a critical point of $\mathcal{T}$ if and only if it is a pregeodesic of $H$ such that $\gamma'(a) \bot_{F_0} A$ and $\gamma'(b) \bot_{F_{t(b)}} B$.
    \end{itemize}
\end{theorem}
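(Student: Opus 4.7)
The plan is to handle the two parts separately using variational methods at the spacetime level.

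For part (i), I would apply the direct method of the calculus of variations. Take a minimizing sequence $\{\hat{\gamma}_n\}\subset \mathcal{Q}_{A,B}$. Finiteness of $\inf \mathcal{T}$ follows from the upper bound $F_t \leq F_1$: any regular piecewise smooth curve $\gamma:[a,b]\to N$ joining $A$ and $B$ lifts to a unique lightlike curve $\hat{\gamma}\in\mathcal{Q}_{A,B}$ by integrating $t'(s)=F_{t(s)}(\gamma'(s))$, and $\mathcal{T}[\hat{\gamma}] \leq \mathcal{L}_{F_1}[\gamma]<\infty$. The lower bound $F_t\geq F_2$ with $F_2$ complete then bounds the $F_2$-length of the spatial projections $\gamma_n$ uniformly; combined with compactness of $A$ and the Hopf-Rinow theorem for the complete Finsler metric $F_2$, the $\gamma_n$ all take values in a fixed compact subset of $N$. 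Reparametrizing by $F_2$-arclength produces an equicontinuous family, so Arzelà-Ascoli yields a $C^0$-limit $\gamma_\infty$. Lifting $\gamma_\infty$ to $\hat{\gamma}_\infty\in\mathcal{Q}_{A,B}$ and using lower semicontinuity of the $F_t$-length (which comes from the convexity of $F_t$ in the velocity variable, condition (iv) on the indicatrix) gives $\mathcal{T}[\hat{\gamma}_\infty]=\inf \mathcal{T}$.

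For part (ii), the plan is to compute the first variation of $\mathcal{T}$. Given $\hat{\gamma}\in\mathcal{Q}_{A,B}$ and a variation $\hat{\gamma}_\omega=(t_\omega,\gamma_\omega)$ through curves in $\mathcal{Q}_{A,B}$, using \eqref{eq:traveltime_F} I would write
\begin{equation*}
\mathcal{T}[\hat{\gamma}_\omega] = \int_a^b F(t_\omega(s),\gamma_\omega(s),\gamma_\omega'(s))\,\mathrm{d}s
\end{equation*}
and differentiate at $\omega=0$. Integration by parts splits the result into an interior term and boundary terms. The interior term is an integral of the variational vector against the Euler-Lagrange one-form of the time-dependent Lagrangian $F_t$, and the standard identification of these Euler-Lagrange equations with the lightlike pregeodesic equations of $H=\mathrm{d}t^2-F_t^2$ (essentially the content of the Lorentz-Finsler Fermat principle \cite[Theorem~4.2]{perlick2006a}) shows that its vanishing for all compactly supported variations is equivalent to $\hat{\gamma}$ being a lightlike pregeodesic of $H$. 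The boundary terms reduce, via the identity $g^{F_t}_{\gamma'}(\gamma',\cdot)=\frac{1}{2}\partial_\delta F_t(\gamma'+\delta\,\cdot)^2|_{\delta=0}$, to
\begin{equation*}
\tfrac{1}{F_{t(b)}(\gamma'(b))}\,g^{F_{t(b)}}_{\gamma'(b)}(\gamma'(b),V_b)-\tfrac{1}{F_0(\gamma'(a))}\,g^{F_0}_{\gamma'(a)}(\gamma'(a),V_a),
\end{equation*}
where $V_a\in T_{\gamma(a)}A$ and $V_b\in T_{\gamma(b)}B$ are arbitrary. Their vanishing for all such $V_a,V_b$ gives exactly $\gamma'(a)\bot_{F_0}A$ and $\gamma'(b)\bot_{F_{t(b)}}B$, and the converse (critical implies these conditions plus pregeodesic) is read off from the same computation.

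The main obstacle I expect is in part (i): ensuring the Arzelà-Ascoli limit is still a regular piecewise smooth curve compatible with lifting, and that no travel time is lost in the limit. Convexity of $F_t$ in the velocity variable is essential here—without strong convexity one would have to contend with failure of lower semicontinuity. A secondary subtlety in part (ii) is the implicit dependence of $t(s)$ on $\gamma$ through the lightlike constraint, which is exactly what makes the spacetime reformulation $H=\mathrm{d}t^2-F_t^2$ indispensable: it absorbs the constraint into the geometry. Both difficulties are handled cleanly by invoking \cite[Corollary~6.10]{JS2020} and \cite[Theorem~4.2]{perlick2006a}, which together package precisely the existence-plus-Fermat statement being claimed.
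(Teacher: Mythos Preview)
The paper does not supply its own proof of this theorem: it is presented in the review section as a consequence of results in the literature, with explicit pointers to \cite[Theorem~4.2]{perlick2006a} for the Fermat-principle characterization in (ii), to \cite[Section~4.1]{javaloyes2021a} and \cite{JMPS,perlick1998a} for the orthogonality conditions at the endpoints, and to \cite[Corollary~6.10]{JS2020} for the full package including existence. Your final paragraph lands on exactly the same references, so at that level your proposal and the paper coincide.

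Where your sketch diverges is in the route to (i). You outline a direct-method argument on the spatial projections (Arzel\`a--Ascoli plus lower semicontinuity of $F_t$-length), and you correctly flag the regularity of the limit as the weak point. The references the paper invokes do not argue this way: the bounds on $F_t$ are used to show that $(M,H)$ is globally hyperbolic with the slices $\{t=t_0\}$ as Cauchy hypersurfaces (this is exactly what the paper records in Remark~\ref{rem:global_hyp}), and existence of a time-minimizing lightlike curve then follows from the Finslerian Avez--Seifert type machinery in \cite{JS2020,JP}. That route sidesteps the regularity issue entirely, because limits of causal curves in a globally hyperbolic spacetime are handled at the level of the causal structure rather than by patching up a $C^0$ spatial limit. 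Your sketch for (ii), by contrast, is essentially the computation carried out in \cite{perlick2006a,javaloyes2021a}, so there the approaches agree.
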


\begin{remark}
\label{rem:global_hyp}
    We will assume throughout this work that the condition in (i) holds, in order to ensure that the solution to Zermelo's problem always exists. In particular, this implies that the spacetime $(M,H)$ is {\em globally hyperbolic}, with each constant time slice $\{t=t_0\}$ being a {\em Cauchy hypersurface}, i.e. it is crossed exactly once by any inextendible causal curve (see \cite{JP} for the importance of this condition when looking for trajectories that minimize the travel time).
\end{remark}

Note that any solution to Zermelo's problem is, in particular, a critical point of $\mathcal{T}$ on $\mathcal{Q}_{A,B}$. However, the converse is obviously not true, as not every critical point of $\mathcal{T}$ must be a global minimum. Therefore, Theorem~\ref{thm:sol_zermelo} narrows the set of curves where we should look for solutions, but we still need to distinguish the global minima among the critical points.

\subsection{Wavemap and cut points}
\label{subsec:wavemap}
Following \cite[Section~4]{javaloyes2021a}, the basic idea we will implement to identify global minima of $\mathcal{T}$ is to include all the candidate critical points in a single map, so that we can easily compare their travel times. For this, it will be more convenient to consider $t$-parametrized trajectories---so that the parameter of the curves directly indicates the travel time.

\begin{definition}
\label{def:wavemap}
    Given an initial region $A \subset N$ and a time-dependent Finsler metric $F_t$, we define the {\em wavemap} from $A$ as
    \begin{equation}
    \label{eq:wavemap}
    \begin{array}{rrll}
    f \colon & [0,\infty)\times A & \longrightarrow & N\\
    & (t,z) & \longmapsto & f(t,z)=(x^1(t,z),\ldots,x^n(t,z)),
    \end{array}
    \end{equation}
    where, for each fixed $z_0 \in A$, the curve $t \mapsto \hat{f}(t,z_0) \coloneqq (t,f(t,z_0))$ in $M$ is the unique $t$-parametrized lightlike pregeodesic of $H=\mathrm{d}t^2-F_t^2$ with $f(0,z_0)=z_0\in A$ and $\partial_tf(0,z_0)\bot_{F_0}A$ pointing outward from $A$.\footnote{The wavemap was originally introduced in \cite{javaloyes2021a} to model the propagation of anisotropic waves (thence its name), since individual wave trajectories are solutions to Zermelo's problem (see also \cite{EPR2024}).} Any curve $t \mapsto f(t,z)$ in $N$ will be called a {\em spatial trajectory} of the wavemap, whereas the corresponding lightlike curve $t \mapsto \hat{f}(t,z) \coloneqq (t,f(t,z))$ in $M$ will be called a {\em spacetime trajectory}.
\end{definition}

\begin{notation}
\label{not:vel_wavemap}
    We will write
    \begin{equation*}
        \partial_tf(t_0,z_0) \coloneqq \left. \frac{\partial f(t,z_0)}{\partial t}\right\rvert_{t=t_0} \in T_{f(t_0,z_0)}N, \qquad \partial_t\hat{f}(t_0,z_0)\coloneqq (1,\partial_tf(t_0,z_0))\in T_{\hat{f}(t_0,z_0)}M,
    \end{equation*}
    to denote the velocity vectors of the spatial and spacetime trajectories of the wavemap, respectively, at any time $t=t_0$.
\end{notation}

\begin{remark}
\label{rem:wavemap}
    Some properties of the wavemap are worth mentioning:
    \begin{itemize}
        \item Assuming that condition (i) in Theorem~\ref{thm:sol_zermelo} holds, every trajectory of the wavemap can be extended to be defined in $[0,\infty)$ and, in fact, the image of the wavemap is $N \setminus \textup{Int}(A)$.
        
        \item Every spacetime trajectory of the wavemap is a (smooth) lightlike curve by definition, so it effectively represents a trajectory of the moving object, i.e. the following equivalent conditions hold (recall \eqref{eq:dot_t_F} and \eqref{eq:dot_t_H}):
        \begin{equation*}
            H(\partial_t\hat{f}(t,z))=0 \Leftrightarrow \partial_t\hat{f}(t,z) \in \mathcal{C}_{\hat{f}(t,z)} \Leftrightarrow F_t(\partial_tf(t,z))=1 \Leftrightarrow \partial_tf(t,z) \in \Sigma_{\hat{f}(t,z)}.
        \end{equation*}

        \item Given a point $z_0\in A$, there are exactly two directions $F_0$-orthogonal to $A$ at $z_0$: one points to the exterior of $A$, and the other to the interior. Here we are selecting the outward trajectories---we are assuming that $B$ is not contained inside $A$---although one can also define the analogous ``inward'' wavemap.

        \item When $A$ is a single point, then every vector is assumed to be $F_0$-orthogonal to $A$, so the parameter $z=\theta \in (0,2\pi]$ parametrizes directions in $T_AN$, rather than points in $A$ (see the last part of Section~\ref{subsec:circles} below).
    \end{itemize}
\end{remark}

The comparison of the travel times gives rise to the following definition, which distinguishes the solutions to Zermelo's problem among all the trajectories in the wavemap.
\begin{definition}
\label{def:time_min_wavemap}
    A spatial trajectory $t \mapsto f(t,z_0)$ of the wavemap is {\em time-minimizing} (resp. {\em strictly time-minimizing}) from $A$ at time $\tau > 0$ if, for every $t_0 \in (0,\tau]$, any $t$-parametrized causal curve $\hat{\varphi}(t)=(t,\varphi(t))$ such that $\varphi(0)\in A$ and $\varphi(t_1)=f(t_0,z_0)$ satisfies $t_0 \leq t_1$ (resp. $t_0<t_1$). We define the {\em wavefront} at $t=\tau$ as
    \begin{equation*}
        \mathcal{W}_{\tau} \coloneqq \{f(\tau,z)\in N: z \in A, \ t\mapsto f(t,z) \text{ is time-minimizing at } t=\tau\},
    \end{equation*}
    with $\mathcal{W}_0 \coloneqq A$.
\end{definition}

Essentially, if $t \mapsto f(t,z_0)$ is time-minimizing at $t=\tau$, it means that the corresponding spacetime trajectory is a solution to Zermelo's problem for the travel from the initial region $A$ to the arrival point $f(t_0,z_0)$, for any $t_0 \in (0,\tau]$. However, a time-minimizing trajectory might lose this property after some time, hence the following notion.

\begin{definition}
    We define the {\em cut function} of the wavemap as
    \begin{equation*}
    \begin{array}{rrll}
    c \colon & A & \longrightarrow & (0,+\infty] \\
    & z & \longmapsto & c(z) := \textup{Sup}\{ t > 0: f(t,z) \in \mathcal{W}_t \}.
    \end{array}
    \end{equation*}
    If $c(z_0) < \infty$ for some $z_0 \in A$, we call $c(z_0) \in (0,\infty)$ and $f(c(z_0),z_0)) \in N$ the {\em cut instant} and {\em cut point}, respectively, of the corresponding spatial trajectory $t\mapsto f(t,z_0)$.
\end{definition}

With all these ingredients, we can now establish the following results, which are key for our purposes here (see \cite[Proposition~A.1]{Javaloyes2023a}, \cite[Section~4.1]{javaloyes2021a} and \cite[Theorem~3]{JP}).

\begin{proposition}
\label{prop:strictly_min}
    Every spatial trajectory of the wavemap is time-minimizing (resp. strictly time-minimizing) at its cut instant (resp. before its cut instant). Moreover, any $t$-parametrized curve $\varphi(t)$ in $N$ such that $(t,\varphi(t))$ is causal, $\varphi(0)\in A$ and $\varphi(t_0)\in \mathcal{W}_{t_0}$ for some $t_0>0$, must be a time-minimizing spatial trajectory of the wavemap.
\end{proposition}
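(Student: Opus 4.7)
The plan is to deduce the proposition from three tools developed earlier in the excerpt: (a) Theorem~\ref{thm:sol_zermelo}(ii), which characterizes critical points of the travel time functional as lightlike pregeodesics of $H$ departing $F_0$-orthogonally to $A$; (b) the standard uniqueness of the pregeodesic initial value problem for $H$, which pins down $t\mapsto\hat f(t,z_0)$ from the pair $(z_0,\partial_tf(0,z_0))$; and (c) the existence of global minima of $\mathcal T$ on $\mathcal Q_{A,\{x\}}$ guaranteed by Remark~\ref{rem:global_hyp}. The strategy is to use a concatenation-and-uniqueness argument: whenever a causal curve could ``tie'' the wavemap, gluing it to a later piece of the wavemap produces a critical curve of $\mathcal T$, which then must be the wavemap trajectory itself.

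First I would establish the preparatory fact that the set of times at which $t\mapsto f(t,z_0)$ is time-minimizing is downward closed, so that this trajectory is already time-minimizing at every $t\in(0,c(z_0))$. If minimality failed at some $t_*<c(z_0)$, one could find a causal curve $\hat\varphi$ reaching $f(t_*,z_0)$ in time $t_1<t_*$. Since $c(z_0)$ is a supremum, we may pick $t'\in(t_*,c(z_0))$ with $f(t',z_0)\in\mathcal W_{t'}$; concatenating $\hat\varphi$ with $\hat f(\cdot,z_0)|_{[t_*,t']}$ yields a causal curve from $A$ to $f(t',z_0)$ of travel time $t'-(t_*-t_1)<t'$, contradicting time-minimality at $t'$.

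For strict minimality before the cut instant, fix $t<c(z_0)$ and suppose some causal $\hat\varphi$ reaches $f(t,z_0)$ in time $t_1=t$. Choose $t'\in(t,c(z_0))$ and concatenate $\hat\varphi|_{[0,t]}$ with $\hat f(\cdot,z_0)|_{[t,t']}$; this produces a causal curve of travel time $t'$ from $A$ to $f(t',z_0)$. Since the wavemap trajectory is itself time-minimizing at $t'$, the concatenation is a global minimum of $\mathcal T$ on $\mathcal Q_{A,\{f(t',z_0)\}}$, hence a critical point. Theorem~\ref{thm:sol_zermelo}(ii) then forces this concatenation to be a smooth lightlike pregeodesic of $H$ departing $F_0$-orthogonally to $A$; uniqueness of the pregeodesic initial value problem identifies it with $\hat f(\cdot,z_0)$ on the whole interval, and in particular with $\hat\varphi$ on $[0,t]$. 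Thus the only causal curve reaching $f(t,z_0)$ in time $t$ is the wavemap trajectory itself, which is the content of strict time-minimality. Time-minimality at $t=c(z_0)$ then follows by an analogous concatenation argument: if $\hat\varphi$ reached $f(c(z_0),z_0)$ in time $t_1<c(z_0)$, gluing it to the wavemap on a small interval beyond $c(z_0)$ would violate the minimality already established at some $t<c(z_0)$.

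For the second assertion, write $\varphi(t_0)=f(t_0,z_1)$ for some $z_1\in A$ whose spatial trajectory is time-minimizing at $t_0$. The curve $\hat\varphi$ is causal and reaches this point in exactly time $t_0$, so it is itself a global minimizer of $\mathcal T$ from $A$ to $\{f(t_0,z_1)\}$. Theorem~\ref{thm:sol_zermelo}(ii) makes $\hat\varphi$ a lightlike pregeodesic $F_0$-orthogonal to $A$; future-directedness together with $\varphi(0)\in A$ selects the outward orthogonal, and uniqueness of solutions to the pregeodesic initial value problem yields $\hat\varphi=\hat f(\cdot,z_1)|_{[0,t_0]}$. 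The main obstacle is the ``no corners'' step which converts a piecewise-smooth causal minimizer into a smooth lightlike pregeodesic: one must check that the concatenation really sits in $\mathcal Q_{A,\{f(t',z_0)\}}$ so that the criticality clause of Theorem~\ref{thm:sol_zermelo}(ii) applies, and that the matching condition at the corner forces the two tangent directions to coincide. This is the standard Fermat-type rigidity in Lorentz--Finsler geometry, and is the only genuinely technical point in the proof.
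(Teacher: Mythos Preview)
The paper does not supply its own proof of Proposition~\ref{prop:strictly_min}; it is stated as a review result with references to \cite{Javaloyes2023a,javaloyes2021a,JP}. So there is no in-paper argument to compare against, and your proposal must be judged on its own merits. The overall architecture (concatenation plus Fermat-type rigidity via Theorem~\ref{thm:sol_zermelo}(ii)) is the right one and matches the standard approach in the cited literature.

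There is, however, a genuine gap in your argument for time-minimality \emph{at} the cut instant $c(z_0)$. You write that if some causal $\hat\varphi$ reached $f(c(z_0),z_0)$ in time $t_1<c(z_0)$, then ``gluing it to the wavemap on a small interval beyond $c(z_0)$ would violate the minimality already established at some $t<c(z_0)$.'' But the glued curve reaches $f(c(z_0)+\varepsilon,z_0)$ at time $t_1+\varepsilon$, which is a point \emph{past} the cut instant; this contradicts nothing you have established, since your downward-closure argument only gives minimality on $(0,c(z_0))$. The concatenation trick works when you can push the contradiction \emph{backwards} into the region where minimality is known, not forwards into the region where it is not. The correct argument here uses global hyperbolicity (Remark~\ref{rem:global_hyp}) to obtain continuity (or at least lower semicontinuity via limit curves) of the arrival-time function $x\mapsto\inf\{\tau:\exists\text{ causal curve from }A\text{ to }x\text{ in time }\tau\}$: since this equals $t$ along $f(t,z_0)$ for all $t<c(z_0)$, it equals $c(z_0)$ at the limit point, and the supremum defining $c(z_0)$ is attained.

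A second, smaller point: in your strict-minimality step you feed a concatenation of a merely \emph{causal} $\hat\varphi$ with a lightlike segment into Theorem~\ref{thm:sol_zermelo}(ii), which is stated for $\hat\gamma\in\mathcal Q_{A,B}$, i.e.\ lightlike curves. You do flag this as ``the only genuinely technical point,'' but it needs an explicit bridge: a causal curve that realizes the infimum of arrival times must in fact be lightlike (any timelike portion could be shortened), and then the first-variation argument behind Theorem~\ref{thm:sol_zermelo}(ii) forces smoothness at the corner. Without at least a one-line justification of this, the appeal to Theorem~\ref{thm:sol_zermelo}(ii) is formally circular.
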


\begin{proposition}
\label{prop:cut_points}
    $t_0 \in (0,\infty)$ is the cut instant of $t \mapsto f(t,z_0)$ if and only if at least one (but possibly both) of the following conditions holds:
    \begin{itemize}
        \item[(a)] $\hat{f}(t_0,z_0)\in M$ is the first intersection point of the spacetime trajectory $t \mapsto \hat{f}(t,z_0)$ with another spacetime trajectory of the wavemap.
        \item[(b)] $\hat{f}(t_0,z_0)\in M$ is the first {\em focal point} of $A$ along $t \mapsto \hat{f}(t,z_0)$, i.e. there exists a non-trivial variation $\hat{\gamma}_{\omega}(t)$ of $t \mapsto \hat{f}(t,z_0)$ through lightlike pregeodesics of $H$ departing $F_0$-orthogonally from $A$, such that $\frac{\partial}{\partial \omega}\hat{\gamma}_\omega(t_0)|_{\omega=0}=0$ (and this does not occur for $t<t_0$).
    \end{itemize}
\end{proposition}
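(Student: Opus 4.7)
The plan is to prove both implications by combining Proposition~\ref{prop:strictly_min}, a local diffeomorphism argument for the wavemap near non-focal points, and a standard second-variation argument adapted to lightlike pregeodesics of the Lorentz-Finsler metric $H$.

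For the ``if'' direction, suppose (a) holds, so that two distinct spacetime trajectories $t\mapsto\hat{f}(t,z_0)$ and $t\mapsto\hat{f}(t,z_1)$ of the wavemap meet at $\hat{f}(t_0,z_0)=\hat{f}(t_0,z_1)$ (they share the same time coordinate because both are $t$-parametrized lightlike curves departing at $t=0$). Concatenating the first piece of $t\mapsto\hat{f}(t,z_1)$ with the tail of $t\mapsto\hat{f}(t,z_0)$ after $t_0$ yields a piecewise-lightlike future-directed causal curve from $A$ to $f(t,z_0)$ for any $t>t_0$. Since this broken curve is not a smooth pregeodesic of $H$, Theorem~\ref{thm:sol_zermelo}(ii) (together with a standard smoothing-of-corner argument for causal curves in a globally hyperbolic spacetime, cf.\ Remark~\ref{rem:global_hyp}) produces a strictly faster causal curve reaching $f(t,z_0)$, so $f(t,z_0)\notin\mathcal{W}_t$ for $t>t_0$, hence $c(z_0)\leq t_0$. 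If (b) holds, the focal variation $\hat{\gamma}_\omega$ yields a one-parameter family of lightlike pregeodesics emanating $F_0$-orthogonally from $A$ and meeting $t\mapsto\hat{f}(t,z_0)$ to first order at $t=t_0$; a standard second-variation/Morse-index argument for lightlike pregeodesics in Lorentz-Finsler spacetimes (developed in \cite{perlick2006a,JS2020}) then constructs a genuinely distinct nearby spacetime trajectory of the wavemap that crosses $t\mapsto\hat{f}(t,z_0)$ at a time arbitrarily close to $t_0$, reducing to case (a) and again giving $c(z_0)\leq t_0$.

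For the ``only if'' direction, suppose $t_0=c(z_0)<\infty$ but neither (a) nor (b) holds up to and including $t_0$. Since (b) fails on $[0,t_0]$, the wavemap $f$ is a local diffeomorphism at $(t_0,z_0)$, so there exists a neighborhood $U$ of $(t_0,z_0)$ on which $f|_U$ is injective. Since (a) fails, the spacetime trajectory $t\mapsto\hat{f}(t,z_0)$ does not meet any other spacetime trajectory for $t\in[0,t_0]$; using compactness of $A$ together with continuity of $\hat{f}$ and global hyperbolicity of $(M,H)$, this non-intersection extends to some small time interval $[0,t_0+\varepsilon]$, on which $f$ also remains a local diffeomorphism. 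Combining these two facts with Proposition~\ref{prop:strictly_min} and the fact that distinct $t$-parametrized causal curves from $A$ reaching $f(t,z_0)$ for $t$ slightly past $t_0$ would either force a crossing or violate the lightlike character, we conclude that $f(t,z_0)\in\mathcal{W}_t$ for $t\in(t_0,t_0+\varepsilon)$, contradicting the supremum definition of $c(z_0)$.

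The main obstacle will be making the focal-point implication rigorous in the Lorentz-Finsler setting: starting from a Jacobi-type variation field that degenerates at the focal point, one must construct an honest competing spacetime trajectory (or a strictly faster causal curve) through a controlled perturbation of $\hat{f}(\cdot,z_0)$ past $t_0$. The other delicate point is the compactness/extension argument in the converse: one needs to upgrade the absence of crossings and focal points at $t_0$ to their absence on a whole neighborhood, which requires careful use of global hyperbolicity (Remark~\ref{rem:global_hyp}) and of the $F_0$-orthogonality initial condition to control the wavemap globally along the trajectory, not merely pointwise at $t_0$. Both ingredients are precisely those developed for Fermat's principle and focal theory in Finsler spacetimes in \cite{perlick2006a,JS2020,javaloyes2021a}, and the proof is essentially a bookkeeping of those results in the present wavemap language.
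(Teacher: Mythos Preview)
The paper does not give its own proof of this proposition; it simply cites \cite[Proposition~A.1]{Javaloyes2023a}, \cite[Section~4.1]{javaloyes2021a} and \cite[Theorem~3]{JP}. So there is no in-paper argument to compare against directly, only the standard proof in those references.

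Your sketch has the right overall architecture, but two points deserve comment. First, in the ``if'' direction you reduce case (b) to case (a) by claiming the focal variation produces an honest nearby wavemap trajectory that \emph{crosses} $t\mapsto\hat f(t,z_0)$ near $t_0$. That is not what the standard second-variation argument gives: a focal point yields a variation field making the index form indefinite, and one concludes directly that the pregeodesic is not time-minimizing past $t_0$ (by constructing a strictly faster causal curve), without producing an actual intersection with another wavemap trajectory. Your reduction to (a) is not justified and not needed.

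Second, and more substantially, your ``only if'' argument is not the standard one and has a real gap. You argue that absence of focal points gives a local diffeomorphism and absence of crossings extends by continuity, and then conclude time-minimality past $t_0$. But local injectivity of $f$ near $(t_0,z_0)$ and no crossings on $[0,t_0+\varepsilon]$ do not by themselves rule out a \emph{faster} causal curve from some distant point of $A$ to $f(t,z_0)$ for $t$ just past $t_0$; you would still need a global comparison. The argument in the cited references goes the other way: take $t_n\downarrow t_0$ with the trajectory not minimizing at $t_n$; global hyperbolicity (Remark~\ref{rem:global_hyp}) provides another time-minimizing wavemap trajectory $t\mapsto f(t,z_n)$ with $f(t_n,z_n)=f(t_n,z_0)$; compactness of $A$ lets you extract $z_n\to z_\infty$, and the dichotomy $z_\infty\neq z_0$ versus $z_\infty=z_0$ yields exactly (a) versus (b). That limiting argument is what you are missing, and it is also what closes the equality $c(z_0)=t_0$ (your ``if'' part only shows $c(z_0)\le t_0$).
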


\begin{proposition}
\label{prop:cut_function}
    There exists a time $\tau > 0$ such that $c(z) \geq \tau$ for all $z \in A$, i.e. every spatial trajectory of the wavemap is time-minimizing (resp. strictly time-minimizing) at $t=\tau$ (resp. at any $t<\tau$). Moreover, for any $t \in (0,\tau)$, the wavefront $\mathcal{W}_t=\{f(t,z):z\in A\}$ is a smooth hypersurface of $N$.
\end{proposition}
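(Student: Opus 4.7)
The plan is to use Proposition~\ref{prop:cut_points} to control the two potential sources of cut points---focal points of $A$ and first intersections of distinct spacetime trajectories---and show that both are pushed uniformly away from $A$ using compactness.

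First, I would analyze the differential of the wavemap at $t=0$. Since $f(0,z)=z$ for all $z\in A$, the partial derivative $\partial_z f(0,z_0)$ at any $z_0\in A$ spans $T_{z_0}A$, while $\partial_t f(0,z_0)$ is the initial $F_0$-orthogonal velocity pointing outward. This velocity cannot lie in $T_{z_0}A$: if it did, setting $u=v=\partial_t f(0,z_0)$ in the orthogonality relation $g^{F_0}_v(v,u)=0$ would yield $F_0(v)^2=0$, contradicting $F_0(v)=1$. Hence the full differential $Df(0,z_0)$ is an isomorphism, so the inverse function theorem gives a neighborhood of $(0,z_0)$ on which $\hat{f}$ is a diffeomorphism. (When $A$ is a single point one instead parametrizes by directions in $\Sigma_{(0,A)}\cong S^{n-1}$, where the analogous exponential-type map is a local diffeomorphism near $t=0$ by the standard argument for pregeodesic flows.)

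Next, I would upgrade this local property to a uniform one via the compactness of $A$ (or of $\Sigma_{(0,A)}$ in the single-point case). Covering $\{0\}\times A$ by finitely many such diffeomorphism neighborhoods yields $\tau_1>0$ for which $f$ is a local diffeomorphism on $[0,\tau_1)\times A$; this rules out focal points in that range, since a focal point at $(t_0,z_0)$ corresponds by Proposition~\ref{prop:cut_points}(b) to a non-trivial kernel of $Df(t_0,z_0)$. To exclude intersections of distinct spacetime trajectories (condition (a) of Proposition~\ref{prop:cut_points}), I would argue by contradiction: if there existed sequences $t_n\downarrow 0$ and $z_n\neq w_n$ in $A$ with $f(t_n,z_n)=f(t_n,w_n)$, compactness of $A$ would produce convergent subsequences $z_n\to z^*$, $w_n\to w^*$, and continuity together with $f(0,z)=z$ would force $z^*=w^*$. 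But then $z_n,w_n$ would eventually lie in a common diffeomorphism neighborhood of $(0,z^*)$, contradicting injectivity there. This yields $\tau_2>0$ ruling out trajectory intersections before $\tau_2$; setting $\tau=\min(\tau_1,\tau_2)$, Proposition~\ref{prop:cut_points} gives $c(z)\geq \tau$ for every $z\in A$, and Proposition~\ref{prop:strictly_min} then delivers the (strict) time-minimizing conclusion.

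Finally, for any $t\in(0,\tau)$ the map $z\mapsto f(t,z)$ is a smooth injective immersion from the compact hypersurface $A$ into $N$, hence an embedding, so $\mathcal{W}_t$ is a smooth embedded hypersurface of $N$. The main obstacle is this uniformity step: turning the pointwise inverse-function-theorem statement and the pointwise injectivity of the wavemap near $t=0$ into uniform bounds $\tau_1,\tau_2>0$. Both crucially rely on compactness of $A$; without it, trajectories starting far apart on a non-compact initial region could in principle develop focal points or meet each other at arbitrarily small times.
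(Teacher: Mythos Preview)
The paper does not give its own proof of this proposition; together with Propositions~\ref{prop:strictly_min} and \ref{prop:cut_points}, it is quoted from the external references cited in the sentence immediately preceding these three results. So there is no in-paper argument to compare against.

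Your outline is correct and is essentially the standard proof one would find in those references: show $Df$ is invertible along $\{0\}\times A$, use compactness of $A$ (via the tube lemma) to make the inverse-function-theorem neighborhoods uniform and thereby exclude focal points up to some $\tau_1$, and run the limit argument to exclude early intersections of distinct trajectories up to some $\tau_2$. Two cosmetic points: where you invoke the inverse function theorem you write that $\hat f$ is a local diffeomorphism near $(0,z_0)$, but you mean $f$, since $\hat f$ maps an $n$-dimensional domain into the $(n{+}1)$-dimensional spacetime $M$; and since $t=0$ is a boundary point of $[0,\infty)\times A$, you should say a word about extending the pregeodesic flow slightly to negative times before applying the inverse function theorem. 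Neither affects the substance.
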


Then, if $\tau>0$ is the time provided by Proposition~\ref{prop:cut_function}, the wavemap $f(t,z)=(x^1(t,z),\ldots,x^n(t,z))$ satisfies (and is characterized by) the following {\em orthogonality conditions} in $[0,\tau) \times A$ (see \cite[Theorem~4.14]{javaloyes2021a}):
\begin{equation}
    \label{eq:ort_F}
    \left\lbrace{
    \begin{array}{l}
    F_t(\partial_tf(t,z)) = 1, \\
    \partial_tf(t,z) \bot_{F_t} \mathcal{W}_t, \quad \text{with } \partial_tf \text{ pointing outward},
    \end{array}
    }\right.
\end{equation}
and the following {\em geodesic equations} in $[0,\infty) \times A$ (see \cite[Theorem~4.11]{javaloyes2021a}):
\begin{equation}
    \label{eq:geodesics_H}
    \left\lbrace{
    \begin{array}{l}
    \partial_t^2 x^k = -\gamma_{\ ij}^k(\partial_t\hat{f})\partial_t x^i \partial_t x^j + \gamma_{\ ij}^0(\partial_t\hat{f}) \partial_t x^i \partial_t x^j \partial_t x^k, \quad k=1,\ldots,n.\\
    \text{Initial position: } f(0,z)=z \in A.\\
    \text{Initial velocity: } \partial_tf(0,z) \text{ satisfying \eqref{eq:ort_F} at } t=0.
    \end{array}
    }\right.
\end{equation}
Here, Einstein's summation convention is used, summing the indices that appear up and down from $0$ to $n$ (with $x^0=t$), and $\gamma_{\ ij}^k(\hat{v})$ are the {\em formal Christoffel symbols} of $H=\mathrm{d}t^2-F_t^2$, defined for every $\hat{v}\in \mathcal{C}_q$ as
\begin{equation*}
    \gamma_{\ ij}^k(\hat{v}) \coloneqq \frac{1}{2}(g^H)^{kr}(\hat{v})\left(\frac{\partial g^H_{rj}}{\partial x^i}(\hat{v})+\frac{\partial g^H_{ri}}{\partial x^j}(\hat{v})-\frac{\partial g^H_{ij}}{\partial x^r}(\hat{v})\right), \quad i,j,k = 0,\ldots,n,
\end{equation*}
where $g^H_{ij}(\hat{v})$ are given by \eqref{eq:matrix_gH}, and $(g^H)^{ij}(\hat{v})$ denote the coefficients of the inverse matrix.\footnote{Using \eqref{eq:relation_tensors}, the geodesic equations of $H$ can be expressed only in terms of the fundamental tensor of $F_t$ (see \cite[Equation~(3)]{JPS2025}).}

\section{The time-only dependent case}
\label{sec:time_only}
There is a specific situation of great theoretical interest, as first observed by Richards in \cite{richards1993a}. This is the case when the Finsler metric $F_t$ is time-only dependent, i.e. the velocity vectors of the moving object $\Sigma_{(t,x)}=\Sigma_t$ depend on time $t\in \mathbb{R}$ but not on position $x\in N$. Our goal in this section is to show that, in this case, the wavemap exhibits exceptionally favorable properties and can even be determined analytically when the expression of $F_t$ is sufficiently simple.

In practice, although time-only dependence might initially seem overly restrictive, it is often a reasonable assumption in most of the realistic scenarios we are interested in here. The paradigmatic example is sailboat navigation, where the speed profile is mainly determined by the wind, which can be assumed to be approximately uniform across a small region, varying only with time.\footnote{In contrast, this assumption is not appropriate in situations where the velocity depends on properties of the space. For instance, in wildfire propagation, the velocity of the fire depends on the slope (see \cite{Javaloyes2023a}), making it strongly position-dependent.}

\subsection{New properties of the wavemap}
\label{subsec:time_only_general}
First, we present new results that show the remarkably good behavior of the wavemap in the time-only dependent case. This not only enhances the theoretical interest of the scenario, but also provides key properties that will be essential for some of the theoretical developments in the subsequent sections. We begin with an observation taken from \cite[Remark~4.2]{Javaloyes2023a}.

\begin{remark}
\label{rem:killing}
    When $F_t$ is time-only dependent, every constant spatial vector field $v$ is Killing for $H$, which implies that the orthogonality conditions \eqref{eq:ort_F} reduce to
    \begin{equation}
        \label{eq:ort_F_killing}
        \left\lbrace{
        \begin{array}{l}
        F_t(\partial_tf(t,z)) = 1, \\
        \partial_tf(t,z) \bot_{F_t} v, \quad \forall v \in T_{z}A, \quad \text{with } \partial_tf \text{ pointing outward}.
    \end{array}
    }\right.
\end{equation}
\end{remark}

In the following results, by {\em convexity} we mean the usual notion in $\mathbb{R}^n$. In particular, $A$ or any wavefront $\mathcal{W}_t$ is {\em convex} (resp. {\em strictly convex}) when the region they enclose is convex (resp. strictly convex) as a subset of $N=\mathbb{R}^n$.

\begin{theorem}
\label{thm:no_cut_points}
In the time-only dependent case, if $A$ is convex (or a single point), then the wavemap has no cut points, i.e. every spatial trajectory of the wavemap is strictly time-minimizing for all time.
\end{theorem}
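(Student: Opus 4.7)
My plan is to rule out both of the alternatives in Proposition~\ref{prop:cut_points}, exploiting the Killing property of spatial translations in the time-only dependent case (Remark~\ref{rem:killing}). First, I would use the Killing conservation of $g^H_{\hat\gamma'}(\hat\gamma',\partial_{x^i})$ along affinely parametrized null geodesics, combined with the relation \eqref{eq:relation_tensors} and Finsler Legendre duality, to show that the momentum covector $p(t,z) := g^{F_t}_{\partial_t f(t,z)}(\partial_t f(t,z),\cdot)$ stays parallel to its initial value $p_0 := p(0,z)$ along each spacetime trajectory, with normalization $F_t^*(p(t,z)) = 1$. This yields $p(t,z) = p_0/F_t^*(p_0)$, so the velocity $\partial_t f(t,z) = V_s(p_0)$ depends only on $s$ and $p_0$; integrating produces the explicit wavemap formula
\[ f(t,z) = z + \int_0^t V_s(p_0(z))\,ds, \]
where by \eqref{eq:ort_F_killing}, $p_0(z) \in \Sigma_0^*$ is the outward $F_0^*$-unit conormal of $A$ at $z$.

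To rule out alternative (a), I would suppose $f(t_0,z_1) = f(t_0,z_2)$ for distinct $z_1, z_2 \in A$ and set $p_i := p_0(z_i)$, so that $z_2 - z_1 = \int_0^{t_0}[V_s(p_1) - V_s(p_2)]\,ds$. Pairing with $p_1$ and using the Finsler duality inequality $p_1(V_s(p_2)) \leq F_s^*(p_1)\,F_s(V_s(p_2)) = F_s^*(p_1) = p_1(V_s(p_1))$ --- which is strict unless $p_1$ and $p_2$ are positive multiples of each other, and hence equal since both lie in $\Sigma_0^*$ --- gives $p_1(z_2 - z_1) > 0$ whenever $p_1 \neq p_2$. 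This contradicts the supporting-functional inequality $p_1(z_2 - z_1) \leq 0$ from convexity of $A$ at $z_1$. So $p_1 = p_2$; but then $V_s(p_1) = V_s(p_2)$ for all $s$, the integrand vanishes, and $z_2 = z_1$, a contradiction.

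To rule out alternative (b), a nontrivial variation $z(\omega) \subset A$ with $z'(0) = u \neq 0$ yields the variation field $J(t) = u + \int_0^t D_p V_s(p_0)\bigl[Dp_0(z_0)(u)\bigr]\,ds = (I + L(t))u$, where $L(t)$ is a linear endomorphism of $T_{z_0} A$ (after identifying parallel hyperplanes by translation). Both factors of $L(t)$ are positive semidefinite in the appropriate convex-geometric sense: $Dp_0(z_0)$ is, up to Finsler normalization, the Weingarten operator of $A$ at $z_0$, nonnegative by convexity of $A$; and the derivative of $p \mapsto \int_0^t V_s(p)\,ds$ (which parametrizes the boundary of the time-$t$ reachable set from the origin --- a smooth strongly convex body, as the Minkowski integral of the smooth strongly convex Finsler unit balls) is nonnegative by strong convexity of each indicatrix $\Sigma_s$ and monotonicity of Legendre duality. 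Hence $L(t)$ has real nonnegative spectrum and $I + L(t)$ is invertible for every $t > 0$, so $J(t) \neq 0$ (on flat parts of $A$ one trivially has $Dp_0(u) = 0$ and $J(t) = u \neq 0$). Combining the two rulings yields $c(z) = +\infty$ for every $z \in A$, and Proposition~\ref{prop:strictly_min} then delivers strict time-minimization for all time. The main obstacle is formalizing the positive-semidefinite composition for $L(t)$ cleanly when $A$ is only convex (with possible flat faces), and verifying strong convexity of the reachable set $R_t$ in the Minkowski-integral sense --- both of which force one to argue separately on the flat and strictly-convex portions of $A$.
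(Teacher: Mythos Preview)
Your approach is correct and genuinely different from the paper's. The paper never writes down the explicit wavemap formula in the proof; instead it compares with the affine hyperplane $\eta = z_0 + T_{z_0}A$ as an auxiliary initial region, observes that all wavemap trajectories from $\eta$ are parallel spatial translates of one another (same position-independent ODE, same initial velocity), hence $\eta$ has no cut points, and then uses convexity of $A$ together with a causal-curve construction (as in \eqref{eq:causal_curve}) to transfer this to $A$. Your route---derive the closed form $f(t,z)=z+\int_0^t V_s(p_0(z))\,ds$ from the Killing conservation and then verify Proposition~\ref{prop:cut_points}(a),(b) by hand---is more analytic and has the side benefit of producing the explicit formula that the paper only works out later for the elliptic/circular examples in Sections~\ref{subsec:ellipses}--\ref{subsec:circles}.

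Your argument for (a) is clean and complete. For (b), the framing ``$L(t)$ is a composition of two PSD operators, hence has nonnegative real spectrum'' is not quite right as stated, because $Dp_0$ and $M(t)$ go between \emph{different} spaces ($\ker p_0$ and $T_{p_0}\Sigma_0^*$) and the usual $A^{1/2}BA^{1/2}$ trick does not apply directly. The clean fix is to bypass the spectrum entirely: suppose $(I+L(t))u=0$, set $w:=Dp_0(u)$, and pair both sides with $w$ to get
\[
0 \;=\; w(u) + w(M(t)w) \;=\; Dp_0(u)(u) + \int_0^t w\bigl(D_pV_s(p_0)[w]\bigr)\,ds.
\]
The first term is $\ge 0$ by convexity of $A$ (it is the second fundamental form divided by $F_0^*(\nu)$). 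For the second, differentiating $\ell_s(V_s(p))=p/F_s^*(p)$ and pairing with $\xi:=D_pV_s(p_0)[w]\in T_{V_s(p_0)}\Sigma_s$ gives $w(\xi)=F_s^*(p_0)\,g^{F_s}_{V_s(p_0)}(\xi,\xi)\ge 0$, with equality iff $\xi=0$, which forces $w\parallel p_0$ and hence $w=0$ (since $w\in T_{p_0}\Sigma_0^*$). So both terms vanish, $w=0$, and then $u=-M(t)\cdot 0=0$. This is exactly the ``positive in the appropriate convex-geometric sense'' statement you were reaching for, and it handles the flat-face case automatically (there $w=0$ and $J(t)=u\neq 0$ trivially). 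The single-point case follows by the same pairing with $z_1=z_2=A$ and $p_0(\theta)$ ranging over all of $\Sigma_0^*$. With this adjustment your proof goes through; the paper's hyperplane-comparison argument is shorter, but yours yields strictly more information.
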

\begin{proof}
Let $\gamma: t \mapsto f(t,z_0)$ be a spatial trajectory of the wavemap, and $\hat{\gamma}: t \mapsto \hat{f}(t,z_0) = (t,f(t,z))$ its corresponding spacetime trajectory. Consider the affine hyperplane $\eta \coloneqq f(0,z_0)+T_{z_0}A = \{f(0,z_0)+\lambda v \in \mathbb{R}^n: v\in T_{z_0}A, \ \lambda\in \mathbb{R}\}$, tangent to $A$ at $f(0,z_0)$, and assume first, as a limit case, that $\eta$ is the initial region, so that every $v \in T_{z_0}A$ is also tangent to $\eta$ at every point. This means that every spacetime trajectory of the wavemap from $\eta$ (pointing to the same side of $\eta$ as $\gamma$) has exactly the same shape as $\hat{\gamma}$ in $M=\mathbb{R}^{n+1}$ but displaced from each other by a spatial translation tangent to $\eta$, as they all satisfy the same position-independent differential equations \eqref{eq:geodesics_H} with the same initial velocity---determined by \eqref{eq:ort_F_killing} at $t=0$---but different initial points in $\eta$. In fact, all the wavefronts from $\eta$ are affine hyperplanes parallel to $\eta$. Therefore, applying Proposition \ref{prop:cut_points}, $\gamma$ cannot have a cut point because, on the one hand, $\hat{\gamma}(t) \not= \hat{f}(t,z)$ for all $t \in [0,\infty)$ and $z \in \eta \setminus \{z_0\} $, and on the other hand, if we consider a non-trivial variation $\hat{\gamma}_{\omega}(t)=(t,\gamma_{\omega}(t))$ of $\hat{\gamma}$ by ($t$-parametrized) lightlike pregeodesics of $H$ departing $F_0$-orthogonally from $\eta$, it is clear by definition that $t \mapsto \hat{\gamma}_{\omega}(t)$ is a spacetime trajectory of the wavemap from $\eta$, which only differs from $\hat{\gamma}$ by a constant spatial translation, so
\begin{equation*}
    \hat{\gamma}_{\omega}(t)=\hat{f}(t,\gamma_{\omega}(0)) = \hat{\gamma}(t)+ (0,v_{\omega}),
\end{equation*}
where $v_{\omega} \coloneqq \gamma_{\omega}(0)-\gamma(0) \in \mathbb{R}^n$, and $\gamma_{\omega}(0)$ is a regular curve in $\eta$ (since the variation is non-trivial). Therefore, $\left. \frac{\partial}{\partial\omega}\hat{\gamma}_{\omega}(t)\right\rvert_{\omega=0} = \left. \frac{\partial}{\partial\omega}\hat{\gamma}_{\omega}(0)\right\rvert_{\omega=0} \not=0$.

Assume now that $A$ is the initial region. If $\gamma(t_0)$ is the cut point of $\gamma$, then the global hyperbolicity of the spacetime (recall Remark~\ref{rem:global_hyp}) ensures that, for any $\gamma(t_0+\delta)$, $\delta > 0$, there exists another spatial trajectory $\varphi: t \mapsto f(t,z_1)$ of the wavemap such that $\varphi(t_1) = \gamma(t_0+\delta)$, with $t_1 < t_0+\delta$. Now, notice that $\eta$ splits $N=\mathbb{R}^n$ into two half-spaces, one containing $A$ (due to its convexity) and the other $\gamma$. So, $\varphi$ must spend some time $\varepsilon \geq 0$ to reach $\eta$. However, this means that we can construct the (non-lightlike) causal curve
\begin{equation}
    \label{eq:causal_curve}
    \hat \rho(t) :=
    \left\lbrace 
    \begin{array}{l}
    (t,\varphi(\varepsilon)), \quad t\in[0,\varepsilon), \\
    (t,\varphi(t)), \quad t\in[\varepsilon,t_1],
    \end{array}
    \right.
\end{equation}
whose projection on $N$ departs from $\eta$ at time $t=0$ and arrives at $\gamma(t_0+\delta)$ earlier than $\gamma$, which is a contradiction because we have seen that $\gamma$ is strictly time-minimizing from $\eta$ for all time.

Finally, if $A$ is a single point, at least for a small time the wavefront must be convex (in fact, strongly convex; see \cite{JMPS}), as so is the indicatrix $\Sigma_A$---which represents the infinitesimal propagation. Taking this wavefront as the new initial region, we are back to the previous case and, therefore, we conclude that the wavemap has no cut points.
\end{proof}

\begin{theorem}\label{thm:convexity}
In the time-only dependent case, if $A$ is strictly convex or a single point (resp. $A$ is convex), then the wavefront $\mathcal{W}_t$ is strictly convex (resp. convex), for all $t > 0$.
\end{theorem}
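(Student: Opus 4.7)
The plan is to reuse the tangent-hyperplane construction from the proof of Theorem~\ref{thm:no_cut_points} and upgrade it from a non-cut-point statement to a supporting-hyperplane statement. Fix $t_0 > 0$ and a point $P_0 = f(t_0, z_0) \in \mathcal{W}_{t_0}$. Let $\eta_0 := z_0 + T_{z_0}A$ be the hyperplane tangent to $A$ at $z_0$ and let $n_{z_0}$ be its outward Euclidean unit normal. As in the proof of Theorem~\ref{thm:no_cut_points}, all trajectories of the wavemap from $\eta_0$ share the same initial velocity $v_{z_0}$, evolve as rigid spatial translates of a single curve $s \mapsto z_0 + \phi(s)$, have wavefronts given by the parallel hyperplanes $\eta_0^s := \eta_0 + \phi(s)$, and are strictly time-minimizing from $\eta_0$ for all time; in particular $P_0 \in \eta_0^{t_0}$. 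I will show that $\eta_0^{t_0}$ is a supporting hyperplane to the region enclosed by $\mathcal{W}_{t_0}$ at $P_0$, and that $\mathcal{W}_{t_0} \cap \eta_0^{t_0} = \{P_0\}$ when $A$ is strictly convex; since this holds at every $P_0$, convexity (resp.\ strict convexity) of $\mathcal{W}_{t_0}$ follows.

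The auxiliary fact powering this is strict monotonicity of $s \mapsto \phi(s) \cdot n_{z_0}$. By Remark~\ref{rem:killing} the velocity $\phi'(s)$ is $F_s$-orthogonal to $T_{z_0}A$ for every $s \geq 0$, and it cannot itself lie in $T_{z_0}A$: plugging $u = \phi'(s)$ into the orthogonality relation would give $0 = g^{F_s}_{\phi'(s)}(\phi'(s), \phi'(s)) = F_s(\phi'(s))^2 = 1$. Continuity of the pregeodesic flow thus traps $\phi'(s)$ inside the open outward half-space of $T_{z_0}\mathbb{R}^n$ bounded by $T_{z_0}A$, whence $\phi'(s) \cdot n_{z_0} > 0$ for all $s \geq 0$; integrating from $s=0$ yields the strict monotonicity.

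Now assume for contradiction that some $Q = f(t_0, z_1) \in \mathcal{W}_{t_0}$ lies strictly forward of $\eta_0^{t_0}$, i.e.\ $(Q - z_0) \cdot n_{z_0} > \phi(t_0) \cdot n_{z_0}$. By monotonicity there is a unique $t'' > t_0$ with $Q \in \eta_0^{t''}$. Convexity of $A$ places $z_1$ on the back side of $\eta_0$, so $s \mapsto f(s, z_1)$ crosses $\eta_0$ at some first time $\varepsilon \in [0, t_0]$; splicing a stationary timelike segment at $f(\varepsilon, z_1)$ on $[0, \varepsilon]$ with the trajectory itself on $[\varepsilon, t_0]$, exactly as in~\eqref{eq:causal_curve}, produces a causal curve from $\eta_0$ to $Q$ of duration $t_0 < t''$, contradicting strict time-minimality of the wavemap from $\eta_0$. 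Hence $\mathcal{W}_{t_0}$ sits in the closed back half-space of $\eta_0^{t_0}$, giving convexity. For strict convexity, if instead $Q \in \eta_0^{t_0} \cap \mathcal{W}_{t_0}$ with $Q \neq P_0$, the same spliced causal curve has duration exactly $t_0$; strict time-minimality from $\eta_0$ forces it to coincide with the unique smooth lightlike pregeodesic of the wavemap from $\eta_0$ arriving at $Q$, which rules out the timelike segment, so $\varepsilon = 0$ and $z_1 \in A \cap \eta_0$; strict convexity of $A$ gives $A \cap \eta_0 = \{z_0\}$, contradicting $z_1 \neq z_0$. Finally, the single-point case reduces to the hypersurface case by picking a small $t_1 > 0$ at which $\mathcal{W}_{t_1}$ is already strictly convex (by strong convexity of $\Sigma_0$, as invoked in the proof of Theorem~\ref{thm:no_cut_points}) and replaying the preceding argument with $\mathcal{W}_{t_1}$ as a new strictly convex initial region.

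I expect the main obstacle to be the borderline \emph{equality case} in the strict-convexity step, where one must argue that a spliced causal curve of minimum duration from $\eta_0$ must actually be a smooth lightlike pregeodesic of the wavemap from $\eta_0$, and then convert this into the conclusion $z_1 = z_0$ using strict convexity of $A$ through $A \cap \eta_0 = \{z_0\}$. The monotonicity ingredient is otherwise quite robust, and the reduction of the single-point case passes through verbatim once strong convexity of a small-time wavefront is in hand.
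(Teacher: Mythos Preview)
Your argument is correct and is, in fact, more direct than the paper's for the convexity statement. Both proofs hinge on the same tangent-hyperplane machinery from Theorem~\ref{thm:no_cut_points}: the wavemap from the hyperplane $\eta_0$ tangent to $A$ at $z_0$ has parallel hyperplanes $\eta_0^s$ as wavefronts, and your monotonicity computation $\phi'(s)\cdot n_{z_0}>0$ is the clean way to see that these wavefronts foliate the forward half-space. From there your supporting-hyperplane argument---every $P_0\in\mathcal{W}_{t_0}$ lies on $\eta_0^{t_0}$ with $\mathcal{W}_{t_0}$ in the back half-space---delivers convexity immediately via the standard characterization. The paper instead argues by contradiction: if some $\mathcal{W}_{t_0}$ were non-convex, one freezes $F_t$ to a constant metric beyond $t_0+\varepsilon$, so that the (still non-convex) wavefront then emits straight-line trajectories that must intersect, producing cut points and contradicting Theorem~\ref{thm:no_cut_points}. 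Your route avoids this metric-modification trick entirely.

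For strict convexity the two arguments are essentially the same, only packaged differently: the paper assumes a flat segment $\sigma\subset\mathcal{W}_{t_0}$ and shows $\sigma$ must lie in the single-point intersection $\tilde{\mathcal{W}}_{t_0}\cap\mathcal{W}_{t_0}$; you instead take any $Q\neq P_0$ on $\eta_0^{t_0}\cap\mathcal{W}_{t_0}$ and force $z_1=z_0$ by invoking Proposition~\ref{prop:strictly_min} (your anticipated ``main obstacle'') on the spliced causal curve, which must then be a genuine lightlike trajectory of the wavemap from $\eta_0$, forcing $\varepsilon=0$ and $z_1\in A\cap\eta_0=\{z_0\}$. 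Both handle the single-point initial region identically. The net effect: your proof is a bit more self-contained and treats convexity and strict convexity in one pass, while the paper's convexity step is slicker in that it reduces everything to the already-proved Theorem~\ref{thm:no_cut_points} without reopening the hyperplane analysis.
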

\begin{proof}
Let $f(t,z)$ and $\mathcal{W}_t$ be the wavemap and wavefront (from $A$) associated with $F_t$, respectively. Assume first that $A$ is convex but $\mathcal{W}_{t_0}$ is non-convex at some time $t_0 > 0$. We can construct another time-only dependent Finsler metric $\tilde{F}_t$ such that $F_t = \tilde{F}_t$ for all $t \in [0,t_0]$, but $\tilde{F}_t$ is constant for all $t \geq t_0+\varepsilon$, with $\varepsilon > 0$ arbitrarily small. This means that the wavefront $\tilde{\mathcal{W}}_{t}$ (from $A$) associated with $\tilde{F}_t$ is non-convex at $t_0$ and thus, choosing a sufficiently small $\varepsilon$, it also remains non-convex at $t_0+\varepsilon$. But since $\tilde{F}_t$ is constant beyond the time $t_0+\varepsilon$, cut points are guaranteed to appear at later times (because the trajectories of the wavemap are straight lines). Therefore, we have constructed a time-only dependent Finsler metric $\tilde{F}_t$ with a convex initial region $A$ whose wavemap has cut points, in contradiction with Theorem \ref{thm:no_cut_points}. We conclude that $\mathcal{W}_t$ must be convex for all $t > 0$.

Now, assume that $A$ is strictly convex. Then, in particular, $A$ is convex, so the above applies and $\mathcal{W}_t$ must be also convex for all $t > 0$. Assume, however, that $\mathcal{W}_{t_0}$ is not strictly convex at some time $t_0 > 0$. This means that $\mathcal{W}_{t_0}$ contains a straight segment $\sigma$. Let $\varphi: [c,d] \rightarrow A$ be a curve in $A$ such that $\sigma=f(t_0,\varphi[c,d])$ and take $s_0 \in (c,d)$. Consider the affine hyperplane $\eta$, tangent to $A$ at $\varphi(s_0)$, and the corresponding wavemap $\tilde{f}$ and wavefronts $\tilde{\mathcal{W}}_t$ from $\eta$. Since $\eta$ only intersects $A$ at $\varphi(s_0)$, due to the strict convexity of $A$, obviously $\tilde{\mathcal{W}}_{t_0}$ must enclose $\mathcal{W}_{t_0}$ (i.e. $\mathcal{W}_{t_0}$ must be on the side of $\tilde{\mathcal{W}}_{t_0}$ that contains $A$) and
\begin{equation}
\label{eq:intersection}
    \tilde{\mathcal{W}}_{t_0} \cap \mathcal{W}_{t_0}=\{f(t_0,\varphi(s_0))\}
\end{equation}
because, otherwise, we could construct a (non-lightlike) causal curve (as in \eqref{eq:causal_curve}) going from $\eta$ at $t=0$ to $\tilde{\mathcal{W}}_{t_0}$ at $t=t_0$, in contradiction with Proposition~\ref{prop:strictly_min}. Now, since $\tilde{\mathcal{W}}_{t_0}$ is an affine hyperplane parallel to $\eta$ (see the first part of the proof of Theorem~\ref{thm:no_cut_points}) that passes through $f(t_0,\varphi(s_0))\in \sigma$, then $\sigma \subset \tilde{\mathcal{W}}_{t_0}$ (otherwise, $\sigma \subset \mathcal{W}_{t_0}$ would be transverse to $\tilde{\mathcal{W}}_{t_0}$, and $\mathcal{W}_{t_0}$ would not be entirely enclosed by $\tilde{\mathcal{W}}_{t_0}$). However, this means that $\sigma \subset \tilde{\mathcal{W}}_{t_0} \cap \mathcal{W}_{t_0}$ and then, by \eqref{eq:intersection}, $\sigma$ must be a single point, which contradicts the assumption that $\mathcal{W}_{t_0}$ is not strictly convex.

Finally, if $A$ is a single point, at least for a small time the wavefront generated by the wavemap must be strictly convex (in fact, strongly convex; see \cite{JMPS}), as so is the indicatrix $\Sigma_A$. Taking this wavefront as the new initial region, we are back to the previous case and, therefore, we conclude that $\mathcal{W}_t$ must be strictly convex for all $t>0$.
\end{proof}

\subsection{A natural example: Shifted ellipses as indicatrices}
\label{subsec:ellipses}
To illustrate the concepts discussed so far, we will consider the simplest and most natural example of Zermelo navigation (very well-known in the literature; see \cite{BaoRob2,CJS2011,caponio2021a,JPS2023,javaloyes2021a,Garcia2025,richards1993a,shen2003a}). As we shall see, the time-only dependence in this setting will allow us to obtain the analytic expression of the wavemap, making this example an excellent choice for both theoretical and numerical analysis. In fact, the expressions derived here and in the following subsection will be used explicitly in Sections~\ref{subsec:counter_examples} and \ref{sec:examples} below.

\paragraph{General description}
Let $N=\mathbb{R}^2$ with the standard Euclidean metric $\langle \cdot,\cdot \rangle$ and norm $||\cdot||$. Suppose that the moving object has a self-propelled velocity $\Sigma^0$ that takes the form of a smooth field of (time-dependent) centered ellipses on $TN$. Then, these self-propelled velocities are displaced by a ``wind'' $W=(w_1,w_2)$ represented by a (time-dependent) vector field on $N$, generating the actual speed profile indicatrix $\Sigma$. Specifically:
\begin{itemize}
    \item Each $\Sigma^0_{(t,x)} \subset T_xN$ is an ellipse of semi-axes $a(t,x)$ and $b(t,x)$, rotated by an angle $\theta(t,x)$ in the clockwise direction, i.e. $\Sigma^0_{(t,x)} = \{v=(v^1,v^2)\in T_xN: E_t(v)=1\}$, where
    \begin{equation*}
        E_t(v^1,v^2)=\left(\frac{v^1\cos\theta(t,x)-v^2\sin\theta(t,x)}{a(t,x)}\right)^2 + \left(\frac{v^1\sin\theta(t,x)+v^2\cos\theta(t,x)}{b(t,x)}\right)^2.
    \end{equation*}

    \item Note that $E_t$ can be seen as the square of a (time-dependent) norm whose unit vectors are $\Sigma^0$. Since the Hessian of $E_t$ is direction-independent (namely, non-Finslerian), the corresponding scalar product is a (time-dependent) Riemannian metric $h_t$ on $N$:
    \begin{equation*}
        h_t(u,v) \coloneqq \textup{Hess}\left(\frac{1}{2} E_t\right) (u,v), \quad \forall u,v \in T_xN.
    \end{equation*}
    In matrix form, using natural Euclidean coordinates:
    \begin{equation}
    \label{eq:ht_ellipses}
    \{(h_t)_{ij}\} = \frac{1}{a^2 b^2}
    \left(\begin{array}{cc}
    a^2\sin^2\theta+b^2\cos^2\theta & (a^2-b^2)\sin\theta\cos\theta \\
    (a^2-b^2)\sin\theta\cos\theta & a^2\cos^2\theta+b^2\sin^2\theta
    \end{array}\right),
    \end{equation}
    and the following holds for any $t \in \mathbb{R}$ and $v \in T_xN$:
    \begin{itemize}
        \item $||v||_{h_t} \coloneqq \sqrt{h_t(v,v)} = \sqrt{E_t(v)}$,
        \item $||v||_{h_t} = 1$ if and only if $v \in \Sigma^0_{(t,x)}$.
    \end{itemize}
    
    \item The total velocity vectors, i.e the speed profile indicatrix field $\Sigma$, is then given by the translation of $\Sigma^0$ by $W$:
    \begin{equation*}
        \Sigma_{(t,x)} = \Sigma^0_{(t,x)}+W(t,x) = \{v+W(t,x) \in T_xN: v\in \Sigma_{(t,x)}^0\},
    \end{equation*}
    where we assume that the wind is never stronger than the self-propelled speed, i.e. $||W||_{h_t} <1$, so that $\Sigma$ always encloses the zero vector. Since $v-W(t,x) \in \Sigma^0_{(t,x)}$ for any $v \in \Sigma_{(t,x)}$, the time-dependent Finsler metric $F_t$ with indicatrix $\Sigma$ can be obtained by imposing the condition
    \begin{equation*}
        h_t \left( \frac{v}{F_t(v)}-W(t,x),\frac{v}{F_t(v)}-W(t,x) \right) = 1, \qquad \forall t \in \mathbb{R}, \quad \forall x \in N, \quad \forall v\in T_xN\setminus\{0\}.
    \end{equation*}
    Solving for $F_t(v)$, we obtain
    \begin{equation}
    \label{eq:zermelo_metric}
        F_t(v) = \frac{\omega_t(v)+\sqrt{\Lambda(t,x)h_t(v,v)+\omega_t(v)^2}}{\Lambda(t,x)},
    \end{equation}
    where $\omega_t \coloneqq -h_t(\cdot,W)$ is a (time-dependent) one-form on $N$, and $\Lambda \coloneqq 1-h_t(W,W)$ is a real function on $M$. This is usually referred to as the {\em Zermelo metric} in the literature (see e.g. \cite[Equation~(6)]{JPS2023}).
\end{itemize}

Let us assume that the initial region $A$ is a compact embedded hypersurface of $N$ (not a single point). Since $n=2$, this means that we can explicitly parametrize $A$ by a simple closed curve $\alpha: I \subset \mathbb{R} \rightarrow A$, with $\alpha(I)=A$ and $\alpha'(s)\not=0$ for all $s\in I$. Then, the wavemap \eqref{eq:wavemap} can be expressed as 
\begin{equation}
\label{eq:wavemap_alpha}
    \begin{array}{rrll}
    f \colon & [0,\infty)\times I & \longrightarrow & N\\
    & (t,s) & \longmapsto & f(t,s)=(x^1(t,s),x^2(t,s)),
    \end{array}
\end{equation}
with $f(0,s)=\alpha(s)\in A$, and the curves $s \mapsto f(t,s)$ parametrize the wavefronts $\mathcal{W}_t$ for any $t < \tau$, where $\tau$ is the time given by Proposition~\ref{prop:cut_function}. Following Notation~\ref{not:vel_wavemap}, we will denote by $\partial_tf(t,s)=(\partial_tx^1,\partial_tx^2)$ and $\partial_sf(t,s)=(\partial_sx^1,\partial_sx^2)$ the velocity vectors of the curves $t \mapsto f(t,s)$ and $s \mapsto f(t,s)$, respectively. In this case, it is straightforward to check that the orthogonality conditions \eqref{eq:ort_F}, written in terms of the so-called {\em Zermelo data} $(h_t,W)$---which contain all the information about this particular Zermelo's problem---become the following conditions in $[0,\tau) \times I$ (see \cite[Propostion~5.4]{javaloyes2021a}):
\begin{equation}
\label{eq:orth_W}
    \left\lbrace{
    \begin{array}{l}
    ||\partial_tf-W||_{h_t} = 1, \\
    \partial_tf-W \bot_{h_t} \partial_sf, \quad \textup{ with } \partial_tf \textup{ pointing outward},
    \end{array}
    }\right.
\end{equation}
and they can be explicitly expressed as the following PDE system (see \cite[Theorem~5.5]{javaloyes2021a}):
\begin{equation}
    \label{eq:pde_ellipses}
    \begin{split}
    \partial_tx^1 & = \pm \frac{a^2\cos\theta(\partial_sx^1\sin\theta+\partial_sx^2\cos\theta)-b^2\sin\theta(\partial_sx^1\cos\theta-\partial_sx^2\sin\theta)}{\sqrt{a^2(\partial_sx^1\sin\theta+\partial_sx^2\cos\theta)^2+b^2(\partial_sx^1\cos\theta-\partial_sx^2\sin\theta)^2}} + w_1, \\
    \partial_tx^2 & = \pm \frac{-a^2\sin\theta(\partial_sx^1\sin\theta+\partial_sx^2\cos\theta)-b^2\cos\theta(\partial_sx^1\cos\theta-\partial_sx^2\sin\theta)}{\sqrt{a^2(\partial_sx^1\sin\theta+\partial_sx^2\cos\theta)^2+b^2(\partial_sx^1\cos\theta-\partial_sx^2\sin\theta)^2}} + w_2,
    \end{split}
\end{equation}
where $\pm$ has to be chosen so that $\partial_tf$ points outward ($+$ when $A$ is counter-clockwise parametrized, and $-$ otherwise).

\paragraph{Time-only dependent case} When $a(t,x)=a(t)$, $b(t,x)=b(t)$, $\theta(t,x)=\theta(t)$ and $W(t,x)=W(t)$, the orthogonality conditions \eqref{eq:orth_W} are valid in $[0,\infty) \times I$, thanks to Theorem~\ref{thm:no_cut_points}. Moreover, Remark~\ref{rem:killing} tells us that each $\partial_sf(0,s)=\alpha'(s)$ is Killing (when extended as a constant vector field on $N$), which implies that the second condition in \eqref{eq:orth_W} reduces to $\partial_tf(t,s)-W(t) \bot_{h_t} \partial_sf(0,s)$. Therefore, we can replace $\partial_sf(t,s)$ with $\partial_sf(0,s)=\alpha'(s)$ in \eqref{eq:pde_ellipses} and the resulting PDE system can be integrated directly (see \cite[Equations (21) and (22)]{richards1993a}):
\begin{equation}
\label{eq:solution_ellipses}
    \begin{split}
        x^1(t,s) & = x^1(0,s) \pm \int_0^t \Bigl( a(r)\cos{\theta(r)}\cos{\phi(r,s)} + b(r)\sin{\theta(r)}\sin{\phi(r,s)} + w_1(r) \Bigr) \ \mathrm{d}r, \\
        x^2(t,s) & = x^2(0,s) \pm \int_0^t \Bigl( -a(r)\sin{\theta(r)}\cos{\phi(r,s)} + b(r)\cos{\theta(r)}\sin{\phi(r,s)} + w_2(r) \Bigr) \ \mathrm{d}r,
    \end{split}
\end{equation}
where
\begin{equation*}
    \cos{\phi(t,s)} \coloneqq \frac{C(t,s)}{\sqrt{C(t,s)^2+D(t,s)^2}}, \quad \sin{\phi(t,s)} \coloneqq \frac{C(t,s)}{\sqrt{C(t,s)^2+D(t,s)^2}},
\end{equation*}
and
\begin{equation*}
\begin{split}
    C(t,s) & \coloneqq a(t)(\partial_s x^1(0,s)\sin{\theta(t)} + \partial_s x^2(0,s)\cos{\theta(t)}), \\
    D(t,s) & \coloneqq b(t)(-\partial_s x^1(0,s)\cos{\theta(t)} + \partial_s x^2(0,s)\sin{\theta(t)}).
\end{split}
\end{equation*}

In order to obtain a simplified version of these solutions, we consider next a particular case.

\subsection{A particular case: Shifted circles as indicatrices}
\label{subsec:circles}
When the self-propelled velocity is isotropic, the indicatrix $\Sigma$ is a smooth distribution of displaced circles, which greatly simplifies the expressions in Section~\ref{subsec:ellipses}.

\paragraph{General description} The self-propelled velocity $\Sigma_{(t,x)}^0 \subset T_xN$ is now a centered circle of radius $R(t,x)$---representing the self-propelled isotropic speed. Choosing $R(t,x)=a(t,x)=b(t,x)$ and $\theta(t,x)=0$ in $\eqref{eq:ht_ellipses}$, we obtain
\begin{equation*}
    \{(h_t)_{ij}\} = \frac{1}{R(t,x)^2}
    \left(\begin{array}{cc}
    1 & 0 \\
    0 & 1
    \end{array}\right),
\end{equation*}
which means that $h_t = \frac{1}{R^2}\langle\cdot,\cdot\rangle$ is conformal to the standard Euclidean metric. Then, the PDE system \eqref{eq:pde_ellipses} that characterizes the wavemap \eqref{eq:wavemap_alpha} (with $A$ parametrized by a closed curve $\alpha: I \rightarrow A$) takes the following form:
\begin{equation*}
    \begin{split}
    \partial_tx^1 & = \pm \frac{R \ \partial_sx^2}{\sqrt{(\partial_sx^1)^2+(\partial_sx^2)^2}} + w_1, \\
    \partial_tx^2 & = \pm \frac{-R \ \partial_sx^1}{\sqrt{(\partial_sx^1)^2+(\partial_sx^2)^2}} + w_2,
    \end{split}
\end{equation*}
where, as above, $+$ is chosen when $A$ is counter-clockwise parametrized, and $-$ otherwise.

\paragraph{Time-only dependent case} When $R(t,x)=R(t)$ and $W(t,x)=W(t)$, the analytic solution of the wavemap \eqref{eq:solution_ellipses} reduces to
\begin{equation*}
    \begin{split}
        x^1(t,s)=x^1(0,s) \pm \frac{\partial_sx^2(0,s)}{\sqrt{(\partial_sx^1(0,s))^2+(\partial_sx^2(0,s))^2}}\int_0^t R(r) \ \mathrm{d}r + \int_0^t w_1(r) \ \mathrm{d}r, \\
        x^2(t,s)=x^2(0,s) \pm \frac{-\partial_sx^1(0,s)}{\sqrt{(\partial_sx^1(0,s))^2+(\partial_sx^2(0,s))^2}}\int_0^t R(r) \ \mathrm{d}r + \int_0^t w_2(r) \ \mathrm{d}r.
    \end{split}
\end{equation*}
We can simplify this even further by selecting a counter-clockwise parametrization $\alpha$ of $A$ such that $(\partial_sx^1(0,s))^2+(\partial_sx^2(0,s))^2=1 $. In this case we obtain
\begin{equation}
\label{eq:time_only_W}
    \begin{split}
        x^1(t,s) & =x^1(0,s) + \partial_sx^2(0,s)\int_0^t R(r) \ \mathrm{d}r + \int_0^t w_1(r) \ \mathrm{d}r, \\
        x^2(t,s) &= x^2(0,s) - \partial_sx^1(0,s)\int_0^t R(r) \ \mathrm{d}r + \int_0^t w_2(r) \ \mathrm{d}r,
    \end{split}
\end{equation}
with $f(0,s) = (x^1(0,s),x^2(0,s)) = \alpha(s) \in A$ and $\partial_sf(0,s) = (\partial_sx^1(0,s),\partial_sx^2(0,s)) = \alpha'(s) \in T_{\alpha(s)}A$ such that $||\alpha'(s)||=1$.

\paragraph{Point-like initial region}
When the initial region $A$ is a single point in $N$, we can replace $\alpha'(s)=\partial_sf(0,s)$ in \eqref{eq:time_only_W} with any curve that parametrizes all the unitary directions in $T_AN$. For instance, we can choose the parametrization $(-\sin(\theta),\cos(\theta))$, with $\theta \in [0,2\pi)$, to obtain
\begin{equation}
\label{eq:time_only_point}
    \begin{split}
        x^1(t,\theta) & =x_0^1 + \cos(\theta)\int_0^t R(r) \ \mathrm{d}r + \int_0^t w_1(r) \ \mathrm{d}r, \\
        x^2(t,\theta) &= x_0^2 + \sin(\theta)\int_0^t R(r) \ \mathrm{d}r + \int_0^t w_2(r) \ \mathrm{d}r,
    \end{split}
\end{equation}
where $x_0 \coloneqq (x^1_0,x^2_0) = A \in N$. However, sometimes it is more convenient to write these equations in terms of the initial velocity $v_0(\theta) \coloneqq \partial_tf(0,\theta)$. Taking derivatives with respect to $t$ in \eqref{eq:time_only_point}, we get
\begin{equation*}
\begin{split}
    v_0^1(\theta) \coloneqq \partial_tx^1(0,\theta) & = \cos(\theta) R(0)+w_1(0), \\
    v_0^2(\theta) \coloneqq \partial_tx^2(0,\theta) & = \sin(\theta) R(0)+w_2(0),
\end{split}
\end{equation*}
which we can solve for $\cos(\theta)$ and $\sin(\theta)$, and then substitute in \eqref{eq:time_only_point}, obtaining
\begin{equation*}
    \begin{split}
        x^1(t,\theta) & = x^1_0 + \frac{v_0^1(\theta)-w_1(0)}{R(0)} \int_0^t R(r) \ \mathrm{d}r + \int_0^t w_1(r) \ \mathrm{d}r, \\
        x^2(t,\theta) & = x^2_0 + \frac{v_0^2(\theta)-w_2(0)}{R(0)} \int_0^t R(r) \ \mathrm{d}r + \int_0^t w_2(r) \ \mathrm{d}r.
    \end{split}
\end{equation*}
These equations are valid for any parametrization of the initial velocities $v_0(\theta)=(v_0^1(\theta),v_0^2(\theta))\in T_AN$ such that $F_0(v_0(\theta))=1$ or, equivalently, $||v_0(\theta)-W(0)||_{h_0}=1$.

\section{Non-convex Zermelo navigation}
\label{sec:non-convex}

When introducing Zermelo navigation theoretically in Section~\ref{sec:review}, the strong convexity of the speed profile indicatrix---condition (iv) from Section~\ref{subsec:finsler_metrics}---emerged as a key property that allowed us to define geodesic curves in the usual way and, through them, obtain solutions to Zermelo's problem. In fact, this condition has always been assumed in the Finslerian modeling of Zermelo navigation. However, as pointed out in the introduction, there are real-world scenarios---particularly in sailboat navigation and in the flight of seabirds such as albatrosses---where this condition fails. In such cases, the optimal paths are not smooth Finsler or Lorentz-Finsler pregeodesics but rather zig-zag trajectories with tacking points (see e.g. \cite[Figures~5 and 6]{buell1996a} and \cite[Figures~181 and 243]{marchaj1982a} for sailboat navigation, or \cite[Figure~1]{Richardson2018}, \cite[Figure~1]{sachs2005a} and \cite[Figures~2 and 3]{sachs2016a} for the albatross flight).

Motivated by this, our goal here is to generalize Zermelo navigation to situations where the geometric shape formed by the velocity vectors is non-convex. To the best of our knowledge, this has not been previously studied within the Finslerian framework, so we begin our analysis in this section by restricting ourselves to the simplest case:
\begin{itemize}
    \item Every element of Zermelo navigation will be assumed to be constant, i.e. time- and position-independent.
    \item Accordingly, we will work in $ N=\mathbb{R}^n $ as an affine space, with its standard Euclidean norm $ ||\cdot|| $. In particular, there will be no need to work at the level of the spacetime $M$.
    \item Both the initial and arrival regions $A,B$ will be assumed to be single points.
\end{itemize}

\subsection{Norms and travel time}
\label{subsec:norms}
In general, norms on real vector spaces are assumed to satisfy the triangle inequality, which allows them to be defined by selecting a convex indicatrix (see e.g. \cite{JS2014}). The following definitions relax this notion.

\begin{definition}
\label{def:topological}
Let $ \Sigma $ be a topological $ (n-1) $-sphere in $ \mathbb{R}^n $ such that every ray departing from the origin intersects $ \Sigma $ exactly once. Then, the continuous function $ ||\cdot||_{\Sigma}: \mathbb{R}^n \rightarrow \mathbb{R} $, where $ ||v||_{\Sigma} $ is the unique positive number such that $ v/||v||_{\Sigma} \in \Sigma $, will be called a {\em norm} with {\em indicatrix} $ \Sigma $ (see Figure~\ref{fig:norm1}).
\end{definition}

\begin{remark}
Note that $ \Sigma = \{ v \in \mathbb{R}^n: ||v||_{\Sigma} = 1 \} $, and $ ||\cdot||_{\Sigma} $ is
\begin{itemize}
\item Positive: $ ||v||_{\Sigma} \geq 0 $, with equality if and only if $ v = 0 $.
\item Positive homogeneous of degree 1: $ ||\lambda v||_{\Sigma} = \lambda ||v||_{\Sigma} $ for all $ \lambda > 0 $ and all $v \in \mathbb{R}^n$.
\end{itemize}
\end{remark}

As we have seen in Section~\ref{subsec:finsler_metrics}, the convexity of $\Sigma$ plays a fundamental role in Zermelo navigation. We say that $ \Sigma $ is {\em convex} or {\em strictly convex} if the closed unit ball $ \{ v \in \mathbb{R}^n: ||v||_{\Sigma} \leq 1 \} $ is convex or strictly convex, respectively, as a subset of $ \mathbb{R}^n $.

\begin{definition}
A norm $ ||\cdot||_{\Sigma} $ with indicatrix $ \Sigma $ will be called
\begin{itemize}
\item A {\em convex norm} if $ \Sigma $ is convex. This is equivalent to saying that $ ||\cdot||_{\Sigma} $ satisfies the triangle inequality: $ ||v+u||_{\Sigma} \leq ||v||_{\Sigma}+||u||_{\Sigma} $ for all $v,u \in \mathbb{R}^n$.
\item A {\em strictly convex norm} if $ \Sigma $ is strictly convex. This is equivalent to saying that $ ||\cdot||_{\Sigma} $ satisfies the strict triangle inequality: $ ||v+u||_{\Sigma} \leq ||v||_{\Sigma}+||u||_{\Sigma} $, with equality if and only if $ v = \lambda u $ or $ u = \lambda v $ for some $ \lambda \geq 0 $.
\item A {\em Minkowski norm} if $ \Sigma $ is smooth (at least $ C^2 $) and strongly convex (i.e. it has positive sectional curvature everywhere).
\end{itemize}
\end{definition}

\begin{remark}
    Observe that a Minkowski norm satisfies all the conditions (i)-(iv) from Section~\ref{subsec:finsler_metrics}, so it is effectively a constant Finsler metric, as we have anticipated in Definition~\ref{def:finsler_metrics}.
\end{remark}

Assume that $ \Sigma $ represents the velocity vectors in Zermelo's problem (one specific velocity for each direction), i.e. the moving object travels a distance $ ||v|| $ in one time unit by following a straight line in the direction of $ v \in \Sigma $. If $ A, B $ are two points in $ \mathbb{R}^n $, observe then that $ ||B-A||_{\Sigma} $ represents the travel time going straight from $ A $ to $ B $ with velocity $ \Sigma $. In general, if $\gamma: [a,b] \rightarrow \mathbb{R}^n$ is a (piecewise smooth) curve from $A$ to $B$, the travel time is given by
\begin{equation*}
    \mathcal{T}[\gamma] = \int_a^b ||\gamma'(s)||_{\Sigma} \ \mathrm{d}s.
\end{equation*}
Note that, in contrast with the time-dependent case, where the travel time had to be defined for a specific set of curves in the spacetime (recall Definition~\ref{def:travel_functional}), now every curve in $\mathbb{R}^n$ is a possible trajectory for the moving object. This provides a direct way to define the optimal paths in this setting, analogously to Definition~\ref{def:time_min_wavemap}.

\begin{definition}
    Let $A, B \in \mathbb{R}^n$. Choosing a norm $||\cdot||_{\Sigma}$, we say that a (regular) piecewise smooth curve $\gamma$ from $A$ to $B$ is {\em time-minimizing} (respectively, {\em strictly time-minimizing}) if $\mathcal{T}[\gamma] \leq \mathcal{T}[\varphi]$ (respectively, $\mathcal{T}[\gamma] < \mathcal{T}[\varphi]$) for any other piecewise smooth curve $\varphi$ from $A$ to $B$.
\end{definition}

When $ \Sigma $ is strictly convex, the strict triangle inequality ensures that the straight line between two points is always strictly time-minimizing (i.e. any other trajectory with velocity $ \Sigma $ takes more travel time). This is also an obvious conclusion of the classical Zermelo navigation, since straight lines are Finsler pregeodesics in the constant case. When $ \Sigma $ is convex, the straight line is still time-minimizing, but unicity is lost: there might be other trajectories with the same travel time. When $ \Sigma $ is non-convex, then we cannot ensure that the straight path is the fastest one. We will now focus our study on this last case.

\subsection{Multi-convex norms}
\label{subsec:multi-convex}
In principle, Definition~\ref{def:topological} allows for shapes that are too general for our purposes here. To endow the indicatrix with a more concrete structure---one that captures the real-world cases we aim to model---we will restrict ourselves to the following notion. In what follows, $ \widehat{\Sigma} \coloneqq \partial CH(\Sigma) $ will denote the boundary of the convex hull of $ \Sigma $.

\begin{definition}
A topological $ (n-1) $-sphere $ \Sigma $ in $ \mathbb{R}^n $ is {\em multi-convex} if it is the union of a finite number of hypersurfaces, each one a compact $C^1$ patch of the indicatrix of a strictly convex norm. This defines a norm $ ||\cdot||_{\Sigma} $ that will be called {\em multi-convex} (see Figure~\ref{fig:norms}).
\end{definition}

\begin{figure}
\centering
\begin{subfigure}{0.32\textwidth}
\includegraphics[width=\textwidth]{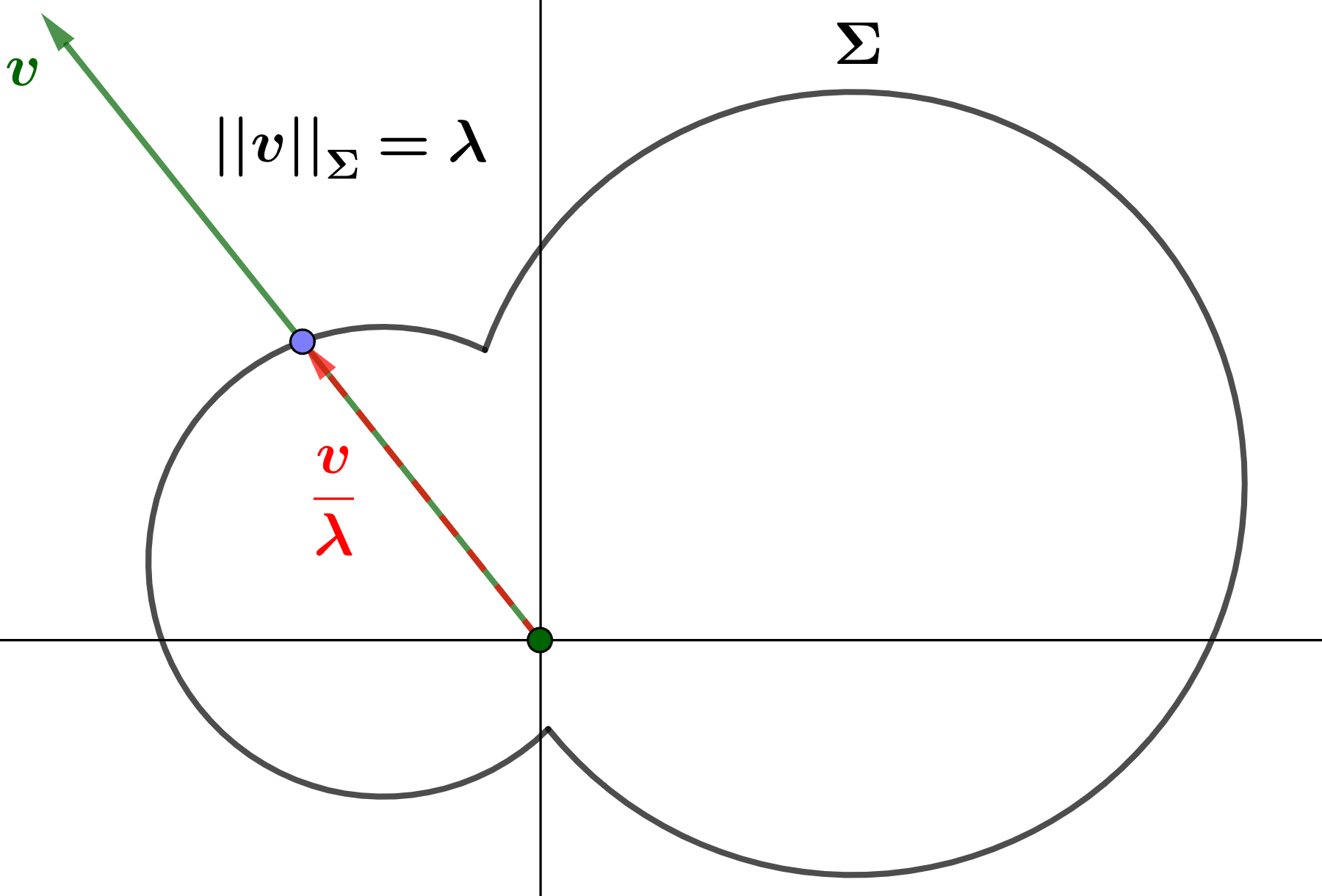}
\caption{Norm with indicatrix $\Sigma$.}
\label{fig:norm1}
\end{subfigure}
\begin{subfigure}{0.32\textwidth}
\includegraphics[width=\textwidth]{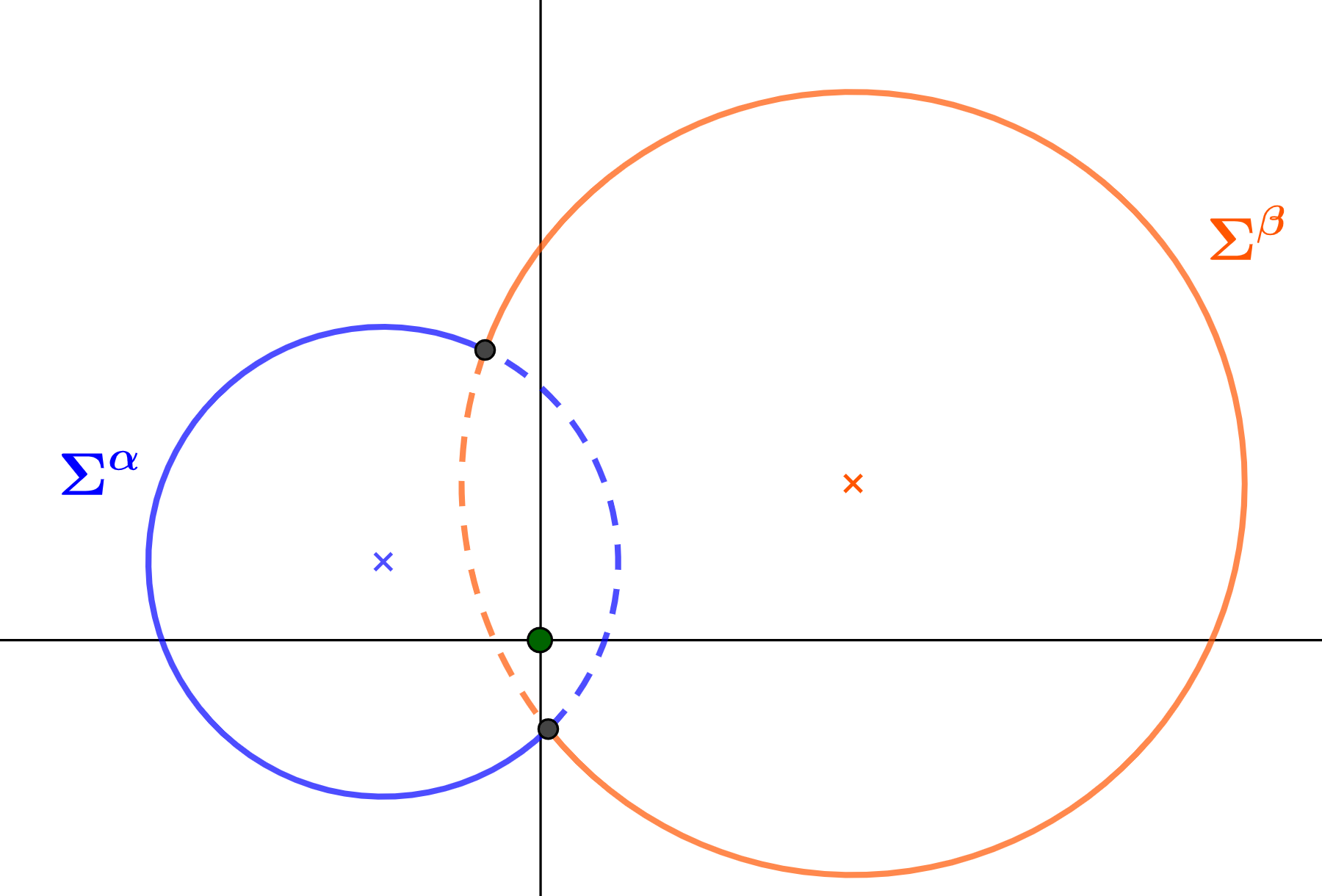}
\caption{Multi-convex indicatrix.}
\label{fig:norm2}
\end{subfigure}
\begin{subfigure}{0.32\textwidth}
\includegraphics[width=\textwidth]{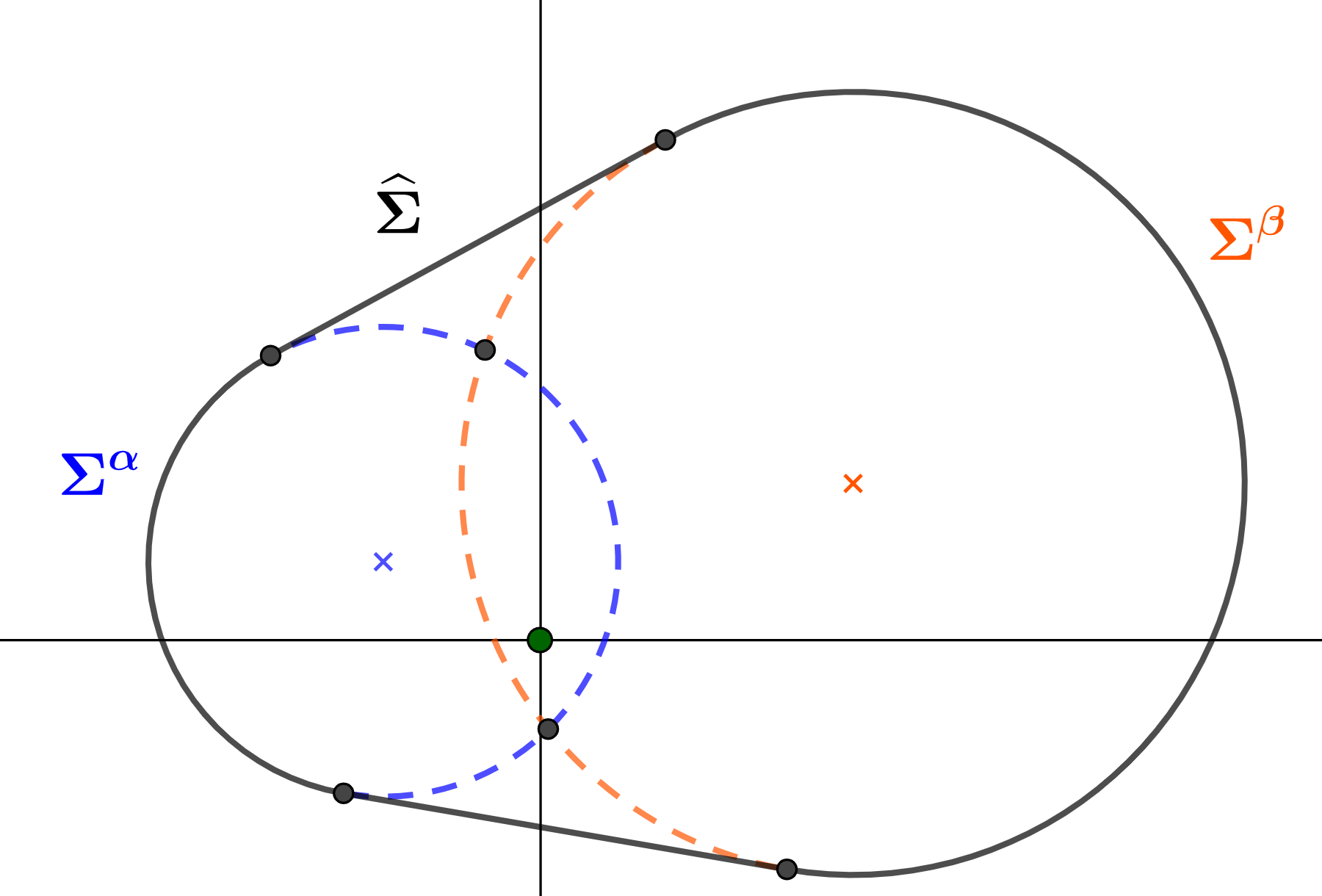}
\caption{Boundary of the convex hull.}
\label{fig:norm3}
\end{subfigure}
\caption{A norm $||\cdot||_{\Sigma}$ in $\mathbb{R}^2$ with a multi-convex indicatrix composed of two strictly convex indicatrices $\Sigma^{\alpha},\Sigma^{\beta}$. The boundary of the convex hull of $\Sigma$, denoted $\widehat{\Sigma}$, is shown in the last figure.}
\label{fig:norms}
\end{figure}

\begin{remark}
    Given the indicatrix $ \Sigma $ of a multi-convex norm, note that $ ||\cdot||_{\widehat{\Sigma}} $ is a convex norm. Moreover, since the (finitely many) indicatrices that make up $ \Sigma $ are smooth, $ \widehat{\Sigma} $ can only have a finite number of non-smooth points. These points will be called {\em cusps}.
\end{remark}

When the velocity is multi-convex, the solution to Zermelo's problem is always a piecewise segment, in the following sense.

\begin{definition} \label{def:Tacking}
A continuous curve $\gamma$ in $\mathbb{R}^n$ is called a {\em piecewise segment} if it is composed of a finite number of straight segments. Any break along $\gamma$ (i.e. any non-smooth point at the intersection of two segments) will be called a {\em tack point}.\footnote{We borrow the notion of {\emph{tacking}} from the ancient art of maneuvering a sailboat by zig-zagging in upwind (and downwind) directions.}
\end{definition}

\begin{lemma}
\label{lem:segments}
    Let $||\cdot||_{\Sigma}$ be a multi-convex norm, and fix $A,B \in \mathbb{R}^n$. If $\gamma$ is a time-minimizing trajectory from $A$ to $B$, then it must be a piecewise segment.
\end{lemma}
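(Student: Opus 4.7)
The plan is to sandwich the travel time of any trajectory between two quantities involving the convex norm $||\cdot||_{\widehat{\Sigma}}$ associated with the boundary of the convex hull, and then extract the piecewise-segment structure from the equality case. First I would observe that since $CH(\Sigma)$ contains $\Sigma$, the vector $v/||v||_\Sigma \in \Sigma \subset CH(\Sigma)$ has $||\cdot||_{\widehat{\Sigma}}$-norm at most $1$, giving the pointwise inequality $||v||_{\widehat{\Sigma}} \leq ||v||_\Sigma$ for every $v \in \mathbb{R}^n$, with equality precisely when the direction of $v$ lies in the ``convex part'' $\Sigma \cap \widehat{\Sigma}$. Applying the (ordinary) triangle inequality for the convex norm $||\cdot||_{\widehat{\Sigma}}$ to any piecewise smooth curve $\gamma$ from $A$ to $B$ yields the universal lower bound
\begin{equation*}
\mathcal{T}[\gamma] = \int_a^b ||\gamma'(s)||_\Sigma \ \mathrm{d}s \geq \int_a^b ||\gamma'(s)||_{\widehat{\Sigma}} \ \mathrm{d}s \geq ||B-A||_{\widehat{\Sigma}}.
\end{equation*}

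Next I would show this bound is attained by an explicit piecewise segment, thereby identifying the infimum of travel times. Since $(B-A)/||B-A||_{\widehat{\Sigma}}$ lies on $\widehat{\Sigma}=\partial CH(\Sigma)$, Carath\'eodory's theorem expresses it as a convex combination $\sum_i \lambda_i u_i$ of extremal points $u_i$ of $CH(\Sigma)$, which are automatically in $\Sigma \cap \widehat{\Sigma}$. Concatenating straight segments in the directions $u_i$ of parameter duration $\lambda_i\,||B-A||_{\widehat{\Sigma}}$ (in any order) produces a piecewise segment from $A$ to $B$ whose total travel time equals $||B-A||_{\widehat{\Sigma}}$, since each segment contributes $\lambda_i\,||B-A||_{\widehat{\Sigma}}$ and $||u_i||_\Sigma = 1$. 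Hence $\inf \mathcal{T} = ||B-A||_{\widehat{\Sigma}}$, and it is realized by piecewise segments.

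For a time-minimizing $\gamma$, both inequalities above must be equalities. The pointwise comparison collapses only when $\gamma'(s)$ points in a ``good'' direction, forcing $\gamma'(s)/||\gamma'(s)||_\Sigma \in \Sigma \cap \widehat{\Sigma}$ for almost every $s$. The second equality is the equality case of the triangle inequality for the convex norm $||\cdot||_{\widehat{\Sigma}}$; choosing a dual unit vector $\xi^\ast$ supporting $\widehat{\Sigma}$ at $(B-A)/||B-A||_{\widehat{\Sigma}}$, we get $||\gamma'(s)||_{\widehat{\Sigma}} = \xi^\ast(\gamma'(s))$ a.e., which forces every $\gamma'(s)$ to lie in the cone over a single face $F$ of $\widehat{\Sigma}$ through $(B-A)/||B-A||_{\widehat{\Sigma}}$. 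Combining these two constraints, almost every $\gamma'(s)$ must be a positive multiple of one of the finitely many extremal points of $F$, all of which lie in $\Sigma \cap \widehat{\Sigma}$.

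The final step is to upgrade this ``a.e.~direction $\in$ finite set'' statement to the geometric conclusion. On each maximal smooth piece of $\gamma$, the velocity $\gamma'$ is continuous and takes values in a finite union of rays; by connectedness it must be confined to a single ray throughout that piece, so the piece is a straight segment. Since $\gamma$ has only finitely many smooth pieces, it is a piecewise segment as claimed. The main obstacle I anticipate is the careful treatment of the equality case in the convex triangle inequality when $(B-A)/||B-A||_{\widehat{\Sigma}}$ lies in the interior of a higher-dimensional flat face of $\widehat{\Sigma}$; the supporting-hyperplane argument must be tied precisely to the multi-convex structure in order to rule out hypothetical ``curved'' minimizers tracing out such a face.
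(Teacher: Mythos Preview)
Your argument is correct, but it takes a genuinely different route from the paper's. The paper gives a short \emph{local shortening} argument: if $\gamma$ is not a piecewise segment, pick a small smooth sub-arc $\tilde{\gamma}=\gamma|_{[c,d]}$ whose (continuous) velocity stays in a single strictly convex patch of $\Sigma$, replace it by its chord (whose direction, being a convex average of the velocities, lies in the same directional patch), and invoke the strict triangle inequality for that one strictly convex norm to get a strictly shorter travel time---contradicting minimality. No convex hull, no duality.

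Your approach is \emph{global and dual}: you compare $||\cdot||_\Sigma$ with the convex-hull norm $||\cdot||_{\widehat{\Sigma}}$, identify the infimum $||B-A||_{\widehat{\Sigma}}$, realize it by an explicit piecewise segment via Carath\'eodory, and then analyze the equality case through a supporting functional. This is heavier machinery but buys you more: you essentially prove the characterization of all minimizers (the content of Theorem~\ref{thm:time_min}) along the way, not just that minimizers are piecewise segments. Conversely, the paper's proof is more elementary and does not even need $\widehat{\Sigma}$ to exist.

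Two small remarks on your write-up. First, your stated worry about higher-dimensional faces $F$ is not a real gap: the finiteness of $\Sigma\cap F$ follows directly from the multi-convex hypothesis, since each strictly convex patch meets a supporting hyperplane in at most one point, and there are only finitely many patches---this holds regardless of $\dim F$. Second, you phrase the finite set as ``extremal points of $F$'', but what you actually use (and what is true) is that $\gamma'(s)$ normalizes into the finite set $\Sigma\cap F$; these sets need not coincide exactly, but the finiteness of the latter is all your connectedness argument needs.
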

\begin{proof}
    Assume that $\gamma$ is a time-minimizing trajectory which is not a piecewise segment. Since $\gamma$ is piecewise smooth by definition, this means that $\gamma$ contains a smooth curve which is not a straight segment. In fact, by continuity, this smooth curve must contain in turn another (possibly shorter) smooth curve $\tilde{\gamma}=\gamma|_{[c,d]}$ whose velocity vector $\tilde{\gamma}'$ always belongs to one (and only one) of the strictly convex patches that compose $\Sigma$. Then, take a new curve $\varphi$ to be exactly the same as $\gamma$ from $A$ to $\gamma(c)$ and from $\gamma(d)$ to $B$, but replacing $\tilde{\gamma}$ with the straight segment $\tilde{\varphi}$ from $\gamma(c)$ to $\gamma(d)$. Note that, since $\Sigma$ is constant, $\tilde{\varphi}'$ must belong to the same strictly convex patch as $\tilde{\gamma}'$, so by the strict triangle inequality $\mathcal{T}[\tilde{\varphi}]<\mathcal{T}[\tilde{\gamma}]$ and thus, $\mathcal{T}[\varphi]<\mathcal{T}[\gamma]$, which contradicts the fact that $\gamma$ is time-minimizing.
\end{proof}

\begin{theorem}
\label{thm:time_min}
Let $ ||\cdot||_{\Sigma} $ be a multi-convex norm composed of $m \geq 1$ strictly convex norms, and fix $ A,B \in \mathbb{R}^n $. The ray from $A$ to $B$ intersects $ A+\widehat{\Sigma} = \{A+v\in \mathbb{R}^n: v\in \widehat{\Sigma}\} $ at a unique point $ Q \in \widehat{\Sigma} $. Then, if we travel from $A$ to $B$ with velocity $\Sigma$:
\begin{itemize}
    \item[(i)] If $ Q $ is a cusp, then $Q \in \Sigma$ and the straight line is strictly time-minimizing.
    \item[(ii)] Otherwise, the tangent plane $ T_Q\widehat{\Sigma} $ intersects $ \Sigma $ at a finite number of points $ \{Q_i\}_{i=1}^k $, with $ 1 \leq k \leq m $, and a curve from $A$ to $B$ is time-minimizing if and only if it is a piecewise segment with velocities given by the position vectors of $ \{Q_i\}_{i=1}^k \subset \Sigma $.
\end{itemize}
\end{theorem}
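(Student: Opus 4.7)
The plan is to sandwich the travel-time functional $\mathcal{T}_{\Sigma}$ from below by a quantity that depends only on $A$, $B$, and the convex hull of $\Sigma$, and then to identify precisely which curves saturate the bound. Since $\Sigma\subset\mathrm{conv}(\Sigma)$, the definition of each norm yields $||v||_{\widehat{\Sigma}}\leq ||v||_{\Sigma}$ for all $v$. Combined with the triangle inequality for the convex norm $||\cdot||_{\widehat{\Sigma}}$ applied to $B-A=\int_a^b\gamma'(s)\,\mathrm{d}s$, this gives the universal lower bound
\[
||B-A||_{\widehat{\Sigma}}\leq\mathcal{T}_{\widehat{\Sigma}}[\gamma]\leq\mathcal{T}_{\Sigma}[\gamma]
\]
for every admissible $\gamma$ from $A$ to $B$. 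One also notes that $Q$ exists and is unique because $\widehat{\Sigma}$ bounds a convex body containing the origin, so every ray out of the origin hits it exactly once.

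Next I would analyse when equality $\mathcal{T}_{\Sigma}[\gamma]=||B-A||_{\widehat{\Sigma}}$ is attained. It forces two things at once: $||\gamma'(s)||_{\Sigma}=||\gamma'(s)||_{\widehat{\Sigma}}$ almost everywhere, so that each unit velocity direction lies in $\Sigma\cap\widehat{\Sigma}$, and equality in the convex triangle inequality for $||\cdot||_{\widehat{\Sigma}}$. The latter will be handled via a supporting-functional argument: any linear functional $\ell$ with $\ell\leq||\cdot||_{\widehat{\Sigma}}$ and $\ell(Q)=1$ satisfies $\ell(B-A)=||B-A||_{\widehat{\Sigma}}$, and equality in the triangle inequality then forces $\ell(\gamma'(s))=||\gamma'(s)||_{\widehat{\Sigma}}$ almost everywhere. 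Intersecting over all such $\ell$ pins every unit velocity direction to the smallest exposed face $F_Q$ of $\mathrm{conv}(\Sigma)$ containing $Q$.

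For case (i), cusps are by definition non-smooth boundary points of $\mathrm{conv}(\Sigma)$; such points are extreme, and extreme points of the convex hull of a compact set must lie in that set, so $Q\in\Sigma$. This yields $||B-A||_{\Sigma}=||B-A||_{\widehat{\Sigma}}$, so the straight line achieves the lower bound; moreover $F_Q=\{Q\}$, which via the characterization above forces every minimizer to have velocity proportional to $B-A$, giving strict time-minimization. For case (ii), the tangent hyperplane $T_Q\widehat{\Sigma}$ is a supporting hyperplane of $\mathrm{conv}(\Sigma)$ and $F_Q=T_Q\widehat{\Sigma}\cap\mathrm{conv}(\Sigma)$ is a convex polytope whose extreme points lie in $\Sigma$. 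Since $T_Q\widehat{\Sigma}$ is then tangent to any strictly convex patch of $\Sigma$ that it touches, and strict convexity of each patch rules out multiple contacts, it meets each of the $m$ patches in at most one point, giving the finite set $\{Q_i\}_{i=1}^k$ with $k\leq m$. The two equality conditions together then require velocity directions to lie in $F_Q\cap\Sigma=\{Q_i\}$, and by Lemma~\ref{lem:segments} minimizers are piecewise segments with velocities proportional to the $Q_i$. Conversely, $Q\in F_Q$ admits a convex combination decomposition $Q=\sum\lambda_iQ_i$ with $\lambda_i\geq 0$, and this lets one realize $B-A$ as a non-negative combination of the $Q_i$ and build a piecewise-segment minimizer.

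The main obstacle will be the rigorous characterization of the equality case in the convex triangle inequality in terms of the face $F_Q$, because $||\cdot||_{\widehat{\Sigma}}$ is convex but neither smooth nor strictly convex in general. The supporting-functional argument above is the cleanest route, but it must be carried out carefully for piecewise smooth $\gamma$ and at all directions $Q$ on $\widehat{\Sigma}$, including those in relative interiors of higher-dimensional facets. Once that is in place, the remaining ingredients—pointwise comparison of the norms, finiteness of $\{Q_i\}$ from strict convexity of each patch, and explicit construction of minimizers via convex combinations—are routine.
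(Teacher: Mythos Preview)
Your proposal is correct and takes a genuinely different route from the paper. The paper proves (ii) by a direct elementary-geometric argument: it fixes the affine hyperplane $T_Q\widehat{\Sigma}$ as the ``velocity set'' and shows, first in $\mathbb{R}^2$ via similar triangles (Figure~\ref{fig:proof}), that any two piecewise segments from $A$ to $B$ with velocities on this hyperplane have identical travel time; it then handles multiple tack points by induction and lifts the result to $\mathbb{R}^n$ by restricting to the $2$-plane through $A$, $B$ and the first tack point. Case (i) is deduced by picking a supporting hyperplane $\Pi$ with $\Pi\cap\Sigma=\{Q\}$ and rerunning the same argument. Your approach instead sandwiches $\mathcal{T}_\Sigma$ below by $\|B-A\|_{\widehat{\Sigma}}$ and analyses equality through a supporting linear functional $\ell$ with $\ell(Q)=1$, which pins all admissible velocity directions to the exposed face $F_Q$, then to $F_Q\cap\Sigma=\{Q_i\}$. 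What your route buys is a dimension-free, structurally clean argument that simultaneously yields the lower bound, the full equality characterization, and the existence of minimizers (via the convex decomposition $Q=\sum\lambda_iQ_i$); what the paper's route buys is a completely elementary proof requiring no convex-analysis machinery and a very visual picture of why all zig-zag paths share the same travel time. One small caution: your claim that a cusp satisfies $F_Q=\{Q\}$ amounts to $Q$ being an exposed point of $\mathrm{conv}(\Sigma)$, which in $n\ge 3$ is not an automatic consequence of non-smoothness alone; the paper makes the equivalent claim (existence of $\Pi$ with $\Pi\cap\Sigma=\{Q\}$) by appealing to $Q$ lying on two of the strictly convex patches, and you should invoke the same structural fact rather than extremality from non-smoothness in general.
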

\begin{proof}
Assume first that $Q$ is not a cusp and note that the properties of the convex hull guarantee that the intersection between $ T_Q\widehat{\Sigma} $ and $ \Sigma $ is never empty, and the strict convexity of the (finitely many) indicatrices that form $ \Sigma $ ensures that this intersection is composed of a finite number of isolated points (at most $m$, since each intersection point must be on a different indicatrix). Observe also that $ T_Q\widehat{\Sigma} $ divides $ \mathbb{R}^n $ into two closed sets, one containing $ \widehat{\Sigma} $ entirely (since it is convex). Thanks to Lemma~\ref{lem:segments}, in order to prove (ii) it suffices to show that if we travel from $A$ to $B$ with velocity $ T_Q\widehat{\Sigma} $, then any piecewise segment spends the same travel time. This way, all the piecewise segments with velocity $ T_Q\widehat{\Sigma} \cap \Sigma $ share the same travel time, and any other (piecewise smooth) curve with velocity $\Sigma$ is slower.
 
Let us show this first for $ n = 2 $. In this case, $ T_Q\widehat{\Sigma} $ is a straight line. Let $ \gamma $ be the straight trajectory from $A$ to $B$, with velocity $v$ given by the position vector of $Q$, and let $ \varphi $ be any other piecewise segment with velocity $ T_Q\widehat{\Sigma} $. Assume that $ \varphi $ is composed of two segments with velocities $ v_1, v_2 $. This means that there is only one tack point $ p $ and let $ \tilde{p} $ be the intersection point between $ \gamma $ and the straight line parallel to $ T_Q\widehat{\Sigma} $ at $ p $ (see Figure \ref{fig:proof}). The travel time of $ \varphi $ is then
\begin{equation*}
\mathcal{T}[\varphi] = \frac{||p-A||}{||v_1||} + \frac{||B-p||}{||v_2||} = \frac{||\tilde{p}-A||}{||v||} + \frac{||B-\tilde{p}||}{||v||} = \mathcal{T}[\varphi],
\end{equation*}
and since $ \varphi $ is arbitrary, we conclude that every piecewise segment with one tack point and velocity $ T_Q\widehat{\Sigma} $ coincides in travel time with the straight line. Moreover, assume now that $ \{p_i\}_{i=1}^l $ are the tack points of $ \varphi $ ($ l+1 $ segments), with $ l > 1 $. Applying the above result for one tack point, the travel time of the straight line from $ A $ to $ p_2 $ coincides with the travel time of the part of $ \varphi $ from $ A $ to $ p_2 $. So, we can reduce $ \varphi $ to a trajectory $ \tilde{\varphi} $ with tack points $ \{p_i\}_{i=2}^l $ (one less than $ \varphi $) and $ \mathcal{T}[\varphi] = \mathcal{T}[\tilde{\varphi}] $. Applying this inductively to every successive tack point, we obtain that $ \mathcal{T}[\varphi] = \mathcal{T}[\gamma] $.

For $ n > 2 $, again let $ \gamma $ be the straight line and $ \varphi $ any piecewise segment with only one tack point $ p $ and velocities $ v_1, v_2 \in T_Q\widehat{\Sigma} $. Then, the points $ A, p, \tilde{p} $ form a two-dimensional plane containing $ \gamma $ and $ \varphi $, whose intersection with $ T_Q\widehat{\Sigma} $ is a straight line that contains the points $ A+v_1, A+v_2 $ and $ Q $. Therefore, we can apply the result for $ n = 2 $ and conclude that $ \mathcal{T}[\varphi] = \mathcal{T}[\gamma] $. If $ \varphi $ has more than one tack point, we can proceed inductively as before, reducing $ \varphi $ each time to a trajectory with one less tack point but the same travel time. Therefore, all the piecewise segments from $ A $ to $ B $ with velocity $ T_Q\widehat{\Sigma} $ share the same travel time.

Finally, if $Q$ is a cusp, then $Q$ is the intersection point of two of the strictly convex indicatrices that make up $\Sigma$, so $Q \in \Sigma$. Also, being $Q$ a cusp and taking into account the convexity of $\widehat{\Sigma}$, we can always select a hyperplane $\Pi$ such that $\Pi \cap \Sigma = Q$. So, $\Pi$ can effectively play the role of $T_Q\widehat{\Sigma}$ above, which means, following the same reasoning, that all the piecewise segments from $A$ to $B$ with velocity $\Pi$ share the same travel time. Since now $\Pi \cap \Sigma = Q$, the straight line (with velocity given by the position vector of $Q$) is faster than any other (piecewise smooth) curve with velocity $\Sigma$.
\end{proof}

\begin{figure}
\centering
\includegraphics[width=0.6\textwidth]{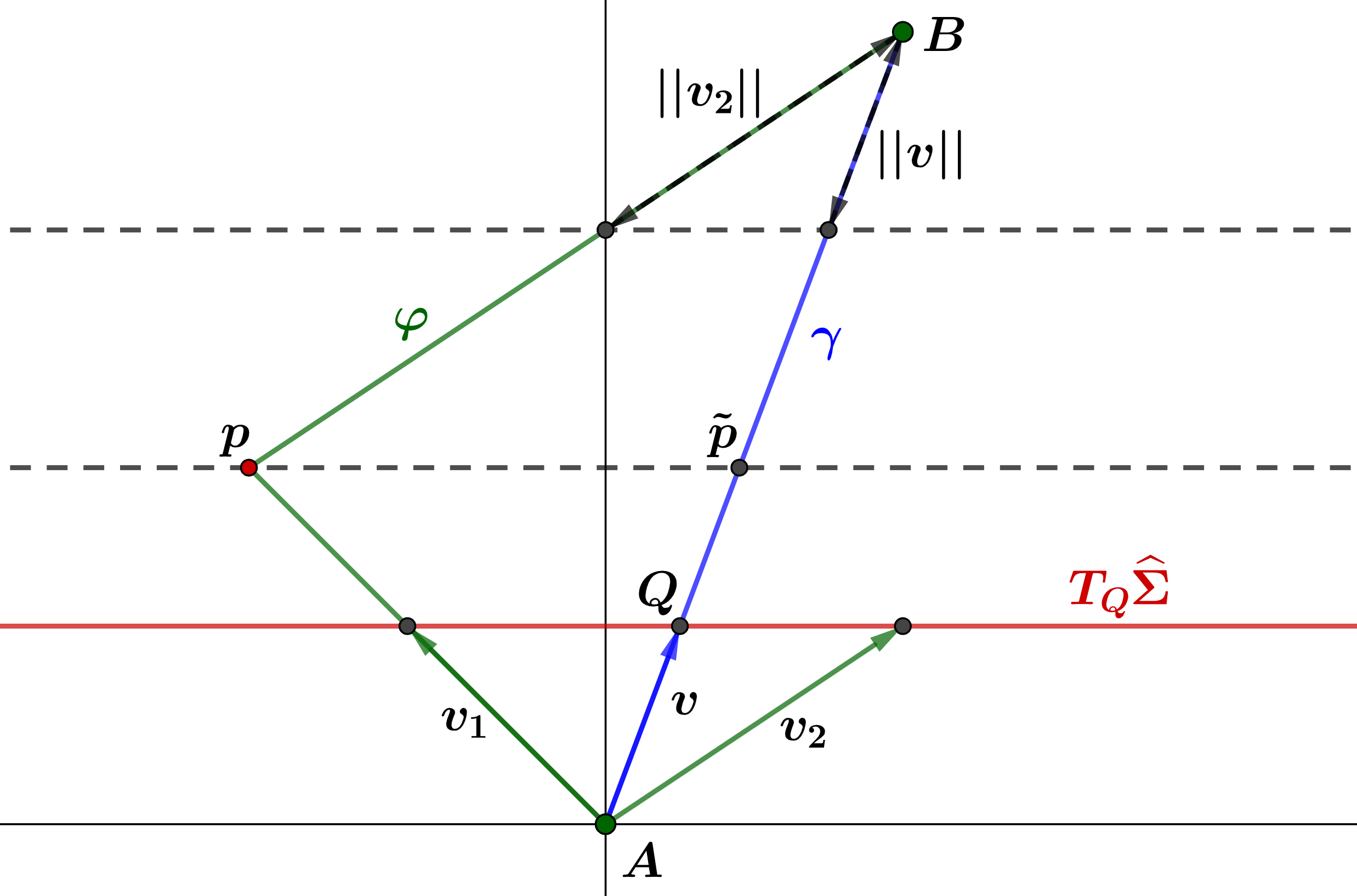}
\caption{Graphic representation of the proof of Theorem \ref{thm:time_min} for $ n = 2 $ and one tack point $ p $. When the velocity is given by $ T_Q\widehat{\Sigma} $, the travel times along the piecewise segment $ \varphi $ and the straight line $ \gamma $ coincide.}
\label{fig:proof}
\end{figure}

In particular, note that if $ k = 1 $ in (ii), then $ Q_1 = Q \in \Sigma $ and the straight line is strictly time-minimizing. Figures \ref{fig:sailboat} and \ref{fig:albatross} show this result in $ \mathbb{R}^2 $ and $ \mathbb{R}^3 $, respectively. These type of indicatrices can effectively model, for instance, the speed polar diagrams typical of sailboat navigation (see e.g. \cite[Figure~16.2]{larsson2000a}, \cite[Figures~179, 180 and 182]{marchaj1982a} and \cite[Figures~16 and 17]{pueschl2018a}) or the two-dimensional speed profiles of the wandering albatross (see \cite[Figures~5 and 8]{Richardson2018}), as well as its three-dimensional flight (see \cite[Figure~1]{Richardson2018}, \cite[Figure~1]{sachs2005a} and \cite[Figure~2]{sachs2016a}).

\begin{figure}
\centering
\includegraphics[width=0.6\textwidth]{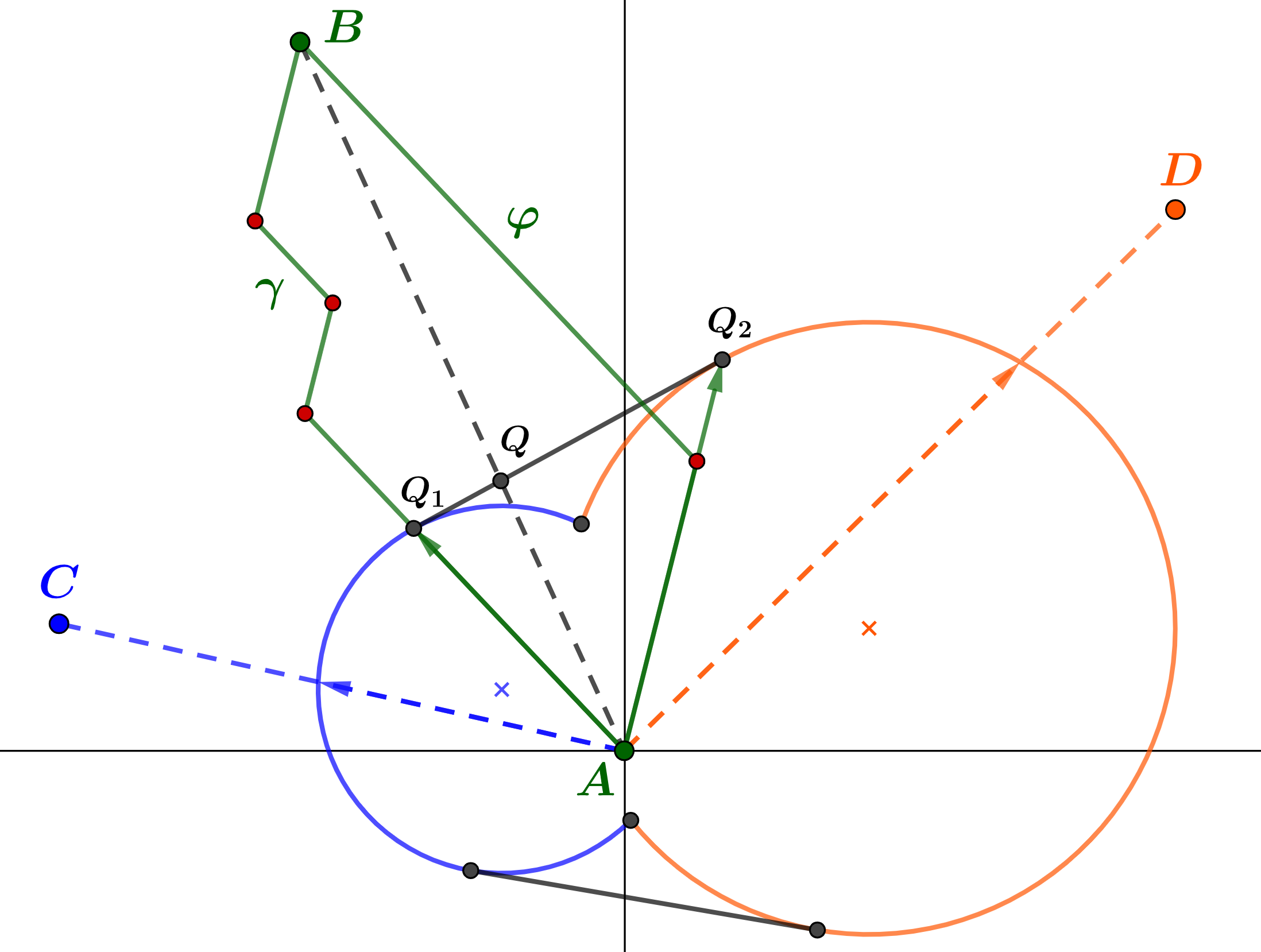}
\caption{Illustration of Theorem~\ref{thm:time_min} in $\mathbb{R}^2$ for the multi-convex indicatrix in Figure~\ref{fig:norms}. The strictly time-minimizing trajectory from $A$ to $ C $ or $ D $ is the straight line. However, $ \gamma $ and $ \varphi $ are two examples of time-minimizing trajectories from $A$ to $ B $ (their travel times are the same), the former with three tack points, the latter with only one (depicted as red dots). Each segment of $ \gamma $ and $ \varphi $ is traveled with velocities given by the position vectors of $ \{Q_1, Q_2\} $, which are the intersection points between the tangent line $ T_Q\widehat{\Sigma} $ and the multi-convex indicatrix. Any other combination of segments in these directions from $A$ to $B$ coincides in travel time with $ \gamma $ and $ \varphi $. In particular, there exist time-minimizing trajectories from $A$ to $B$ with any number of tack points.}
\label{fig:sailboat}
\end{figure}

\begin{figure}
\centering
\includegraphics[width=0.6\textwidth]{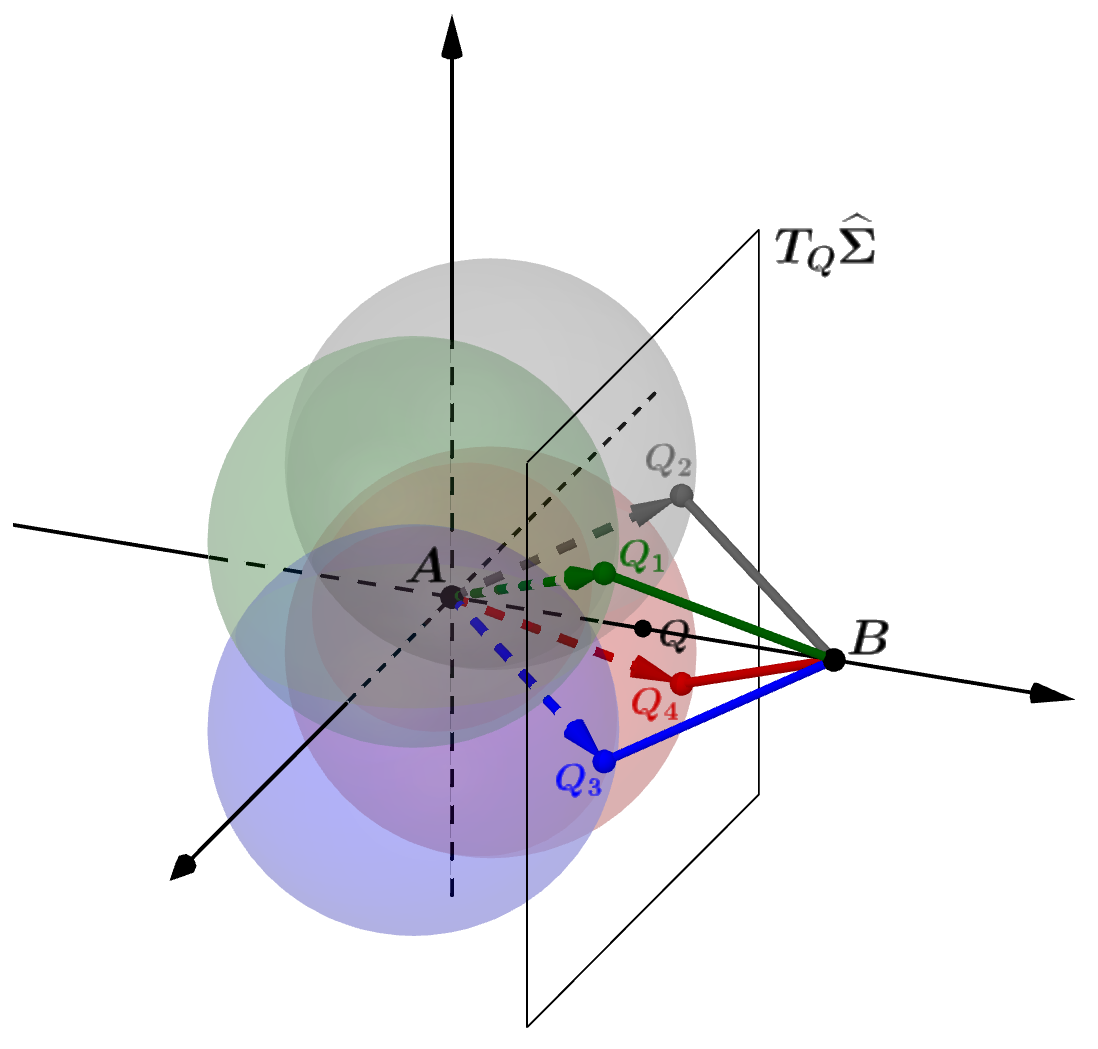}
\caption{Illustration of Theorem~\ref{thm:time_min} in $\mathbb{R}^3$ for a multi-convex indicatrix composed of patches from four spherical indicatrices, each with displaced centers with respect to the origin. The intersection between the tangent plane $ T_Q\widehat{\Sigma} $ and the multi-convex indicatrix gives four points $\{Q_i\}_{i=1}^4$. Any combination of segments in the direction of $\{Q_i\}_{i=1}^4$ from $A$ to $B$ coincides in travel time with the four trajectories depicted, which are examples of time-minimizing trajectories from $A$ to $B$.}
\label{fig:albatross}
\end{figure}

\begin{remark}
\label{rem:integral}
    At first glance, one might consider the following natural approach to generalizing Theorem~\ref{thm:time_min} to the time- and position-dependent case: the optimal velocities $\{Q_i\}_{i=1}^k$ provided by the theorem, when varying with time and position, would define time-dependent vector fields whose $t$-parametrized integral curves could serve as candidate paths to be concatenated---via tack points---to go from $A$ to $B$. However, in general, we have no control over the evolution of the non-convex parts of the indicatrix. For instance, the indicatrix may rotate over time, or its non-convex parts may transform into convex ones. As a result, the optimal velocities may change abruptly and even vary in number across time and space, so the associated vector fields---and hence their integral curves---may not even be well-defined. For this reason, we propose a simplified approach in the next section.
\end{remark}

\section{Restricted non-convex navigation}
\label{sec:restricted}
When we travel using a multi-convex norm, there are so many degrees of freedom available---e.g. the number of tack points or the choice of the initial direction---that, as seen in Theorem~\ref{thm:time_min}, the optimal trajectory might be highly non-unique. In practice, however, this is rarely the case: for instance, one is usually interested in tacking as little as possible, since this maneuver involves a temporary loss of the speed---one cannot instantly change direction, as we are assuming in the idealized theoretical situation. In addition, as noted in Remark~\ref{rem:integral}, the direct generalization of the non-convex navigation to the non-constant case becomes extremely difficult to handle.

Therefore, in this section we will introduce certain restrictions to the previous setting, with two main goals: first, to identify the minimal and simplest conditions under which we can ensure the existence and uniqueness of the optimal trajectory in the constant case; and second, to reformulate the problem so that it becomes more tractable in the non-constant case, and even when the initial and arrival regions are not single points.

\subsection{Single tacking with two Minkowski norms}
\label{subsec:restricted_constant}
Assume we have two constant Finsler metrics $F^{\alpha}, F^{\beta}$ (i.e. Minkowski norms) on $N=\mathbb{R}^n$, with their corresponding indicatrices $\Sigma^{\alpha}, \Sigma^{\beta}$, wavemaps $f^{\alpha}, f^{\beta}$ (recall Definition~\ref{def:wavemap}) and wavefronts $\mathcal{W}_t^{\alpha}, \mathcal{W}_t^{\beta}$ (recall Definition~\ref{def:time_min_wavemap}). Given an initial and arrival point $A,B \in N$, the goal is to find the fastest trajectory from $A$ to $B$, with the following restrictions:
\begin{itemize}
    \item[(I)] Only one tack point $p = (x^1,\ldots,x^n)$ is allowed.\footnote{This is not overly restrictive for modeling purposes, as any path with multiple tack points can be decomposed into a sequence of single-tack-point trajectories. However, in this case one would minimize the individual travel times between tack points instead of the total travel time.}
    \item[(II)] One must travel from $A$ to $p$ with velocity $\Sigma^{\alpha}$, and from $p$ to $B$ with velocity $\Sigma^{\beta}$.
\end{itemize}

Obviously, the fastest trajectory from $A$ to $p$ and from $p$ to $B$ are both straight line segments---because these are the Finsler pregeodesics in the constant case---represented shorthand by the Euclidean vector $p-A$ (from $A$ to $p$) and the vector $B-p$ (from $p$ to $B$), respectively. Hence, we can just work with these piecewise segments and express the total travel time as a positive function of $p = (x^1,\ldots,x^n)$:
\begin{equation}
    \label{eq:traveltime}
    \mathcal{T} = \mathcal{T}(p) = F^{\alpha}(p-A)+F^{\beta}(B-p) = ||p-A||_{\Sigma^{\alpha}} + ||B-p||_{\Sigma^{\beta}}.
\end{equation}
Now, since we have no control over the tack point $p$ but $A$ and $B$ are fixed, it is more convenient to express the travel time in terms of the (constant) Finsler metric $F^{\beta^{-}}$, defined as the {\em reverse Finsler metric} of $F^{\beta}$:
\begin{equation*}
    F^{\beta^{-}}(v) \coloneqq F^{\beta}(-v), \quad \forall v \in \mathbb{R}^n,
\end{equation*}
so that $\mathcal{T}(p) = F^{\alpha}(p-A) + F^{\beta^-}(p-B)$, i.e. we can compute the travel time going forward from $A$ to $p$ (using $\Sigma^\alpha$) and backward from $B$ to $p$ (using the velocity given by the indicatrix $\Sigma^{\beta^-}$ of $F^{\beta^-}$). We denote by $f^{\beta^-}$ the wavemap from $B$ associated with $F^{\beta^-}$.

\begin{definition}
    In the setting above:
    \begin{itemize}
        \item We define the {\em curve of candidate tack points} as the unique curve from $A$ to $B$ where a wavefront of $f^{\alpha}$ from $A$ is tangential to a wavefront of $f^{\beta^{-}}$ from $B$.
        \item An {\em optimal tack point} for the travel from $A$ to $B$ is a point $p \in N$ where $\mathcal{T}$ attains its global minimum.
    \end{itemize}
\end{definition}

\begin{remark}
    The uniqueness of the curve of candidate tack points follows directly from the strict convexity of the wavefronts inherited (by constancy of the metrics $F^{\alpha}$ and $F^{\beta^{-}}$) from the strong convexity of the corresponding indicatrices.
\end{remark}

The goal, of course, is to find the optimal tack point, for which the curve of candidate tack points will play a key role. To show this, let us restrict the problem even further for a moment. Let $T = F^{\alpha}(B)$ be the time $f^{\alpha}$ spends going from $A$ (at $t=0$) to $B$, i.e. $B \in \mathcal{W}_T^{\alpha}$. Suppose we fix a time $\tau \in [0,T]$ at which we want to tack and define a {\em $\tau$-optimal tack point} as a point $p \in N$ where $\mathcal{T}$ is minimum among all the possible tack points at time $t=\tau$.

\begin{lemma}\label{lem:tau_optimal}
    For any $\tau \in [0,T]$, there exists a unique $\tau$-optimal tack point. Moreover, the set $\{p \in N: p \text{ is } \tau\text{-optimal}\}_{\tau=0}^{T}$ is exactly the image of the curve of candidate tack points.
\end{lemma}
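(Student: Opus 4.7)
The plan is to reinterpret a $\tau$-optimal tack point as the solution of a constrained one-variable optimization on the wavefront $\mathcal{W}_{\tau}^{\alpha}$, and then to exploit strict convexity to obtain existence, uniqueness, and the tangency that links the construction to the curve of candidate tack points.

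First I would observe that any tack point occurring at time $\tau$ must satisfy $F^{\alpha}(p-A)=\tau$, i.e.\ $p\in\mathcal{W}_{\tau}^{\alpha}$. Plugging this into \eqref{eq:traveltime} reduces the travel time on the admissible set to $\mathcal{T}(p)=\tau+F^{\beta^-}(p-B)$, so $p$ is $\tau$-optimal if and only if it minimizes $g := F^{\beta^-}(\,\cdot\,-B)$ on $\mathcal{W}_{\tau}^{\alpha}$. Since $F^{\alpha}$ is a Minkowski norm, $\mathcal{W}_{\tau}^{\alpha}=A+\tau\,\Sigma^{\alpha}$ is a topological hypersphere in $\mathbb{R}^n$ and hence compact, while $g$ is continuous; therefore a minimizer $p^{*}=p^{*}(\tau)$ exists, with value $s^{*}:=g(p^{*})$, the degenerate endpoints $\tau=0$ and $\tau=T$ giving $p^{*}=A$ and $p^{*}=B$, respectively.

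For uniqueness, the closed $\alpha$-ball $\{p:F^{\alpha}(p-A)\le\tau\}$ and the closed $\beta^{-}$-ball $\{p:F^{\beta^-}(p-B)\le s^{*}\}$ are both strictly convex with boundaries $\mathcal{W}_{\tau}^{\alpha}$ and $\mathcal{W}_{s^{*}}^{\beta^-}$. By the extremal condition defining $s^{*}$, no point of $\mathcal{W}_{\tau}^{\alpha}$ lies in the open $\beta^-$-ball, so near a minimizer the former sits on one side of the latter. A standard supporting-hyperplane argument for two strictly convex bodies that touch without their interiors overlapping then forces $\mathcal{W}_{\tau}^{\alpha}\cap\mathcal{W}_{s^{*}}^{\beta^-}$ to consist of a single point, which is the unique $p^{*}$; at this point the two wavefronts share a common tangent hyperplane. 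For the second claim, letting $\tau$ range over $[0,T]$ produces a map $\tau\mapsto p^{*}(\tau)$ whose image is a curve from $A$ to $B$ all of whose points are, by the tangency just obtained, tangential meeting points of an $\alpha$-wavefront from $A$ with a $\beta^{-}$-wavefront from $B$. Conversely, any such tangential meeting point $\tilde{p}$ with $F^{\alpha}(\tilde{p}-A)=\tau$ minimizes $g$ on $\mathcal{W}_{\tau}^{\alpha}$, hence equals $p^{*}(\tau)$. Invoking the uniqueness of the curve of candidate tack points noted in the preceding remark then identifies the image of $\tau\mapsto p^{*}(\tau)$ with that of the curve of candidate tack points.

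The main obstacle is the uniqueness step, in which the several possible geometric configurations (whether $B$ lies inside the $\alpha$-ball, whether one ball contains the other, and so on) must be accommodated; in every case, however, the strict convexity of the two balls prevents boundary contact without interior overlap from occurring at more than one point, so the argument goes through uniformly.
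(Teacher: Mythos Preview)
Your argument is correct and follows essentially the same approach as the paper: both reduce $\tau$-optimality to minimizing $F^{\beta^-}(\,\cdot\,-B)$ over the compact wavefront $\mathcal{W}_\tau^\alpha$ and then invoke strict convexity of the two families of wavefronts to obtain uniqueness together with the tangency that identifies the minimizer with a point on the curve of candidate tack points. The only difference is organizational: the paper starts from the already-defined curve of candidate tack points and checks that its intersection with $\mathcal{W}_\tau^\alpha$ is $\tau$-optimal, whereas you build the $\tau$-optimal point first (with an explicit compactness argument for existence and a supporting-hyperplane argument for uniqueness) and then match it to the curve; your version is more detailed but not substantively different.
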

\begin{proof}
    Notice that the possible tack points at $t=\tau$ are those in the wavefront $\mathcal{W}^{\alpha}_{\tau}$. The intersection point $p$ between this wavefront and the curve of candidate tack points provides the unique $\tau$-optimal tack point, since the wavefront of $f^{\beta^-}$ must spend less time reaching $p$ from $B$ than any other point in $\mathcal{W}^{\alpha}_{\tau}$, due to the fact that both wavefronts are strictly convex and tangential at $p$. Conversely, any point $p$ on the curve of candidate tack points is reached by $f^{\alpha}$ after a certain time $\tau \in [0,T]$ and therefore, by the previous reasoning, $p$ must be $\tau$-optimal.
\end{proof}

\begin{remark}
\label{rem:tau_optimal}
    Each $\tau$-optimal tack point solves the classical Zermelo's problem for the travel from $\mathcal{W}_{\tau}^{\alpha}$ to $B$ with velocity $\Sigma^{\beta}$. Therefore, by Theorem~\ref{thm:sol_zermelo}, the $\tau$-optimal tack point is the unique point $p \in \mathcal{W}_{\tau}^{\alpha}$ such that $B-p \bot_{F^{\beta}} \mathcal{W}_{\tau}^{\alpha}$ or, equivalently, $p-B \bot_{F^{\beta^-}} \mathcal{W}_{\tau}^\alpha$.
\end{remark}

This provides a way to explicitly compute the curve of candidate tack points: the unique $\tau$-optimal tack points, when varying $\tau \in [0,T]$, provide the curve of candidate tack points $\varphi: [0,T] \rightarrow N$ parametrized by the tacking time $\tau$, i.e. $\varphi(\tau) \in \mathcal{W}_{\tau}^{\alpha}$. Then, the optimal tack point can be obtained simply by looking for the minimum of $\mathcal{T}$ among all the points on $\varphi$.

\begin{theorem}\label{thm:existence}
    Let $\varphi$ be the curve of candidate tack points. Then, the global minimum of $\mathcal{T} \circ \varphi$ always exists and is an optimal tack point.
\end{theorem}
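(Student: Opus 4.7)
The plan is to combine a compactness argument for existence with Lemma~\ref{lem:tau_optimal} for the global-minimum property.

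First, I would argue that $\varphi : [0,T] \to N$ is continuous. This should follow from the strict convexity of the wavefronts $\mathcal{W}_\tau^\alpha$ and $\mathcal{W}_t^{\beta^-}$ (inherited from the strong convexity of the Minkowski indicatrices $\Sigma^\alpha, \Sigma^{\beta^-}$), combined with the smoothness of the wavemaps $f^\alpha, f^{\beta^-}$, via an implicit function theorem argument applied to the characterization in Remark~\ref{rem:tau_optimal}: $\varphi(\tau)$ is the unique $p \in \mathcal{W}_\tau^\alpha$ with $p - B \bot_{F^{\beta^-}} \mathcal{W}_\tau^\alpha$. Then $\mathcal{T} \circ \varphi$ is a continuous function on the compact interval $[0,T]$ and attains a global minimum at some $\tau^* \in [0,T]$; set $p^* \coloneqq \varphi(\tau^*)$.

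Next I would show that $p^*$ in fact minimizes $\mathcal{T}$ over all of $N$, not merely over the image of $\varphi$. For any candidate tack point $p \in N$, define $\tau_p \coloneqq F^\alpha(p-A) \geq 0$, so that $p \in \mathcal{W}_{\tau_p}^\alpha$. Two cases arise. If $\tau_p \in [0,T]$, Lemma~\ref{lem:tau_optimal} says that $\varphi(\tau_p)$ is the unique $\tau_p$-optimal tack point, whence
\[
\mathcal{T}(p^*) \leq \mathcal{T}(\varphi(\tau_p)) \leq \mathcal{T}(p).
\]
If instead $\tau_p > T$, I would use that $\varphi(T) = B$: since $B \in \mathcal{W}_T^\alpha$ trivially satisfies $F^{\beta^-}(B-B) = 0$, Remark~\ref{rem:tau_optimal} identifies $B$ as the unique $T$-optimal tack point. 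Consequently $\mathcal{T}(\varphi(T)) = F^\alpha(B-A) = T$, and so
\[
\mathcal{T}(p) = F^\alpha(p-A) + F^\beta(B-p) = \tau_p + F^\beta(B-p) \geq \tau_p > T \geq \mathcal{T}(p^*).
\]
In either case $\mathcal{T}(p^*) \leq \mathcal{T}(p)$, proving that $p^*$ is an optimal tack point.

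The main obstacle I expect is the rigorous verification of the continuity (really, of the well-definedness) of $\varphi$ at the endpoints $\tau = 0$ and $\tau = T$, where the wavefronts $\mathcal{W}_\tau^\alpha$ and $\mathcal{W}_{T-\tau}^{\beta^-}$ degenerate to the single points $A$ and $B$, respectively. Interior continuity follows from the smooth dependence of the wavefronts on $\tau$ together with the strict convexity mentioned above; extending continuously so that $\varphi(0) = A$ and $\varphi(T) = B$ requires a limiting argument based on the fact that wavefronts emanating from a single point converge smoothly to that point as the parameter tends to zero (cf.\ the closing arguments in the proofs of Theorems~\ref{thm:no_cut_points} and \ref{thm:convexity}).
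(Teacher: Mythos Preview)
Your proposal is correct and follows essentially the same approach as the paper: compactness of $[0,T]$ gives a minimum of $\mathcal{T}\circ\varphi$, and Lemma~\ref{lem:tau_optimal} upgrades this to a global minimum of $\mathcal{T}$ over all of $N$. The paper phrases the second step by contradiction (assume some $p$ beats $\varphi(\tau_0)$, then $\tau_1 \coloneqq F^{\alpha}(p-A) \leq \mathcal{T}(p) < \mathcal{T}(\varphi(\tau_0)) \leq \mathcal{T}(\varphi(T)) = T$, so $\varphi(\tau_1)$ already contradicts minimality of $\tau_0$), which collapses your two cases into one line; your direct case split is equivalent. Your additional discussion of the continuity of $\varphi$ and its endpoint behavior is more explicit than the paper, which simply invokes compactness without comment---so your version is, if anything, slightly more careful, and the ``obstacle'' you anticipate is real but minor in the constant (Minkowski) setting where the wavefronts are just rescaled copies of the indicatrices.
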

\begin{proof}
    Consider the curve of candidate tack points $\varphi$ parametrized by the tacking time $\tau$, as above. The compactness of the interval $[0,T]$ ensures that $\mathcal{T}\circ \varphi$ attains its minimum at some (maybe non-unique) $\tau_0 \in [0,T]$. Suppose $\varphi(\tau_0)$ is non-optimal, i.e. there exists a point $p \in N$ such that $\mathcal{T}(p) < \mathcal{T}(\varphi(\tau_0))$. Let $\tau_1 \geq 0$ be the time $f^{\alpha}$ needs to reach $p$. Clearly $\tau_1 < T$ because $\mathcal{T}(\varphi(\tau_0)) \leq \mathcal{T}(\varphi(T)) = T$. Then, since $\varphi(\tau_1)$ is $\tau_1$-optimal by Lemma~\ref{lem:tau_optimal}, we obtain $\mathcal{T}(\varphi(\tau_1)) \leq \mathcal{T}(p) < \mathcal{T}(\varphi(\tau_0))$, which contradicts the fact that $\tau_0$ is a minimum of $\mathcal{T} \circ \varphi$.
\end{proof}

Finally, we show that, although we have slightly reformulated the Zermelo navigation from Section~\ref{sec:non-convex}---using separately $\Sigma^\alpha$ and $\Sigma^\beta$ instead of the multi-convex indicatrix $\Sigma$ they would form---the optimal trajectory obtained here is in fact a particular solution of Theorem~\ref{thm:time_min}, i.e. a solution for the travel with (non-convex) velocity $\Sigma$. Therefore, the problem proposed in this subsection is merely a restricted version of the one in Section~\ref{subsec:multi-convex}, rather than a different one---e.g. there is no difference between using $\Sigma$ from Figure~\ref{fig:norm1} or use separately $\Sigma^{\alpha}$ and $\Sigma^\beta$ from Figure~\ref{fig:norm2}. In fact, unlike in Theorem~\ref{thm:time_min}, the restrictions now ensure that the solution is unique.

\begin{theorem}
\label{thm:uniqueness}
    Let $p$ be an optimal tack point and consider the multi-convex indicatrix $\Sigma$ formed by (the outermost parts of) $\Sigma^{\alpha}$ and $\Sigma^{\beta}$. Then, the piecewise segment joining $A$, $p$ and $B$ is the unique time-minimizing trajectory provided by Theorem~\ref{thm:time_min} that satisfies the restrictions (I) and (II).
\end{theorem}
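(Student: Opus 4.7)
The plan is to derive the theorem in two main steps. First, I will show that the restricted optimal travel time $\mathcal{T}(p)$ equals the unrestricted multi-convex minimum $c\coloneqq||B-A||_{\widehat{\Sigma}}$; then I will use this equality to identify the velocities of the two segments with the distinguished directions provided by Theorem~\ref{thm:time_min}(ii). Uniqueness under (I), (II) will then follow from the strict convexity of the individual indicatrices $\Sigma^{\alpha}$ and $\Sigma^{\beta}$.

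For the first step, the pointwise identity $||v||_{\Sigma}=\min(F^{\alpha}(v),F^{\beta}(v))$---immediate from the fact that $\Sigma$ is the outermost envelope of $\Sigma^{\alpha}\cup\Sigma^{\beta}$---combined with Theorem~\ref{thm:time_min} gives $\mathcal{T}(q)\geq||q-A||_{\Sigma}+||B-q||_{\Sigma}\geq c$ for every $q\in N$, hence $\mathcal{T}(p)\geq c$. For the reverse inequality, I would construct an explicit admissible tack point attaining $c$: in the non-cuspidal case, Theorem~\ref{thm:time_min}(ii) provides points $v_{1}\in T_{Q}\widehat{\Sigma}\cap\Sigma\cap\Sigma^{\alpha}$ and $v_{2}\in T_{Q}\widehat{\Sigma}\cap\Sigma\cap\Sigma^{\beta}$ with $Q$ on the segment between them, so writing $B-A=cQ=t_{1}v_{1}+t_{2}v_{2}$ with $t_{1},t_{2}\geq 0$ and $t_{1}+t_{2}=c$ yields the admissible $\tilde{p}\coloneqq A+t_{1}v_{1}$ satisfying $\mathcal{T}(\tilde{p})=t_{1}F^{\alpha}(v_{1})+t_{2}F^{\beta}(v_{2})=c$; optimality of $p$ then forces $\mathcal{T}(p)\leq c$.

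For the second step, the equality $\mathcal{T}(p)=c$ saturates both inequalities above. This forces $v_{1}'\coloneqq(p-A)/F^{\alpha}(p-A)\in\Sigma^{\alpha}\cap\Sigma$ and $v_{2}'\coloneqq(B-p)/F^{\beta}(B-p)\in\Sigma^{\beta}\cap\Sigma$, and it makes the multi-convex travel time of the piecewise segment $A\to p\to B$ equal to $c$. The ``only if'' direction of Theorem~\ref{thm:time_min}(ii) then locates $v_{1}',v_{2}'$ inside $T_{Q}\widehat{\Sigma}\cap\Sigma$, certifying that $A\to p\to B$ is itself one of the time-minimizing trajectories of Theorem~\ref{thm:time_min}. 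For uniqueness under (I), (II), the strict convexity of $\Sigma^{\alpha}$ (resp.\ $\Sigma^{\beta}$) ensures that the supporting hyperplane $T_{Q}\widehat{\Sigma}$ meets it in at most one point of $\Sigma$, so $v_{1}'$ and $v_{2}'$ are uniquely determined; then (in the generic case of linearly independent $v_{1}',v_{2}'$) the decomposition $B-A=t_{1}v_{1}'+t_{2}v_{2}'$ has a unique non-negative solution, pinning down $p=A+t_{1}v_{1}'$. The degenerate subcase $v_{1}'\parallel v_{2}'$ falls into the cuspidal case (i) of Theorem~\ref{thm:time_min}, in which the straight line is already the unique time-minimizer.

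The main obstacle I anticipate is the bookkeeping around outer versus inner patches of $\Sigma^{\alpha}$ and $\Sigma^{\beta}$: the inequality $||v||_{\Sigma}\leq F^{\alpha}(v)$ is strict precisely when the direction of $v$ is covered by the $\Sigma^{\beta}$-patch of $\Sigma$, and the saturation case is exactly what upgrades $v_{1}'$ from merely lying on $\Sigma^{\alpha}$ to lying on $\Sigma$. Once this dichotomy is handled, the remainder of the argument is a clean matching between the structure theorem (Theorem~\ref{thm:time_min}(ii)) and the strict-convexity uniqueness of the tangential contact point on each of the two indicatrices.
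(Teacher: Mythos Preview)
Your argument is correct and proceeds by a genuinely different route from the paper. The paper first argues \emph{uniqueness} abstractly: since $m=2$, Theorem~\ref{thm:time_min} yields at most two optimal directions, one on each patch, and the ordering constraint (II) then singles out exactly one admissible one-tack trajectory $\gamma$. Only afterwards does the paper show that the segment $\sigma$ through the optimal $p$ must equal $\gamma$, by contradiction: if $\sigma$ used a velocity off $\widehat{\Sigma}$, then $\gamma$ would beat $\sigma$ in travel time and supply a better tack point than $p$. You instead begin by proving the quantitative identity $\mathcal{T}(p)=\lVert B-A\rVert_{\widehat{\Sigma}}$ via a two-sided bound, then read off from saturation that the two normalized velocities of $\sigma$ lie in $T_Q\widehat{\Sigma}\cap\Sigma$, certifying $\sigma$ as a Theorem~\ref{thm:time_min} minimizer; uniqueness then follows because the supporting hyperplane $T_Q\widehat{\Sigma}$ meets each strictly convex $\Sigma^{\alpha},\Sigma^{\beta}$ in at most one point. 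Your approach is longer but yields the explicit value of the optimal travel time as a byproduct, while the paper's is shorter but leans entirely on the structure already established in Theorem~\ref{thm:time_min}. One minor correction: the degenerate situation $v_1'=v_2'$ need not be the cuspidal case~(i); it also covers case~(ii) with $k=1$. Your conclusion there---that the straight segment is the unique minimizer---remains valid in both subcases, so the gap is purely terminological.
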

\begin{proof}
    Consider the travel from $A$ to $B$ with velocity $\Sigma$ (in particular, if $\Sigma^{\alpha}, \Sigma^{\beta}$ do not intersect transversally, then $\Sigma=\Sigma^{\alpha}$ or $\Sigma=\Sigma^{\beta}$). By Theorem \ref{thm:time_min}, the number of possible velocities one must use to achieve the time-minimizing trajectory is either one or two. In the former case, the straight line from $A$ to $B$ is strictly time-minimizing. In the latter case, one of the velocities belong to $\Sigma \cap \Sigma^{\alpha}$, and the other to $\Sigma \cap \Sigma^{\beta}$, so there are exactly two time-minimizing trajectories with one tack point, but only one of them is valid if we impose the restriction that one must travel first with $\Sigma^{\alpha}$, then with $\Sigma^{\beta}$. Therefore, considering the restrictions (I) and (II), there is a unique time-minimizing (in fact, strictly time-minimizing) trajectory $\gamma$ for the travel from $A$ to $B$ with velocity $\Sigma$.

    Now, let $\sigma$ be the piecewise segment joining $A$, $p$ and $B$. If $\sigma'$ always belongs to $\widehat{\Sigma}$, in particular it also belongs to $\Sigma$ (because it is composed of the outermost parts of $\Sigma^{\alpha}$ and $\Sigma^{\beta}$), so it is clear that $\sigma=\gamma$. Otherwise, $\gamma$ uses velocities that are not in $\widehat{\Sigma}$, so the travel is suboptimal by the proof of Theorem~\ref{thm:time_min} and we have that $\mathcal{T}[\sigma]<\mathcal{T}[\gamma]$, contradicting the fact that $p$ is an optimal tack point.
\end{proof}

As a direct consequence of the mentioned restrictions, the optimal tack point is also essentially unique, in the following sense.

\begin{corollary}
\label{cor:uniqueness}
An optimal tack point is either unique or else the set of optimal tack points is the whole straight segment from $A$ to $B$. The latter case occurs if and only if the straight segment is strictly time-minimizing and $F^{\alpha}(B) = F^{\beta}(B)$.
\end{corollary}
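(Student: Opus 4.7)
The plan is to leverage the convexity of the travel time functional $\mathcal{T}(p)=F^{\alpha}(p-A)+F^{\beta}(B-p)$ together with the strict convexity of the unit balls of the Minkowski norms $F^{\alpha}$ and $F^{\beta}$ to force any two distinct optimal tack points onto the closed segment from $A$ to $B$, and then to derive the advertised equivalence. Existence of an optimal tack point is already guaranteed by Theorem~\ref{thm:existence}; write $P\subseteq N$ for the (nonempty) set of optimal tack points and $\mathcal{T}^{*}$ for the minimum travel time. First I would note that $\mathcal{T}$ is convex in $p$, being a sum of two convex norms composed with affine maps, so $P$ is a convex subset of $N$.

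Taking any two distinct points $p_{1}\neq p_{2}$ in $P$ and setting $p_{\lambda}:=(1-\lambda)p_{1}+\lambda p_{2}$, the inequality $\mathcal{T}(p_{\lambda})\leq\mathcal{T}^{*}$ must hold with equality by minimality, which forces equality in both triangle inequalities (for $F^{\alpha}$ and $F^{\beta}$) at every $\lambda\in(0,1)$. By strict convexity of the unit balls (inherited from the strong convexity of $\Sigma^{\alpha},\Sigma^{\beta}$), the vectors $p_{1}-A$ and $p_{2}-A$ must therefore be positive scalar multiples of a single unit vector, and similarly for $B-p_{1}$ and $B-p_{2}$. A brief case analysis on these two unit vectors rules out every collinear configuration except $p_{1},p_{2}\in\overline{AB}$.

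Next, parametrizing the segment as $p_{s}=A+s(B-A)$ with $s\in[0,1]$, positive homogeneity reduces the travel time to the affine function
\[
\mathcal{T}(p_{s})\;=\;s\,F^{\alpha}(B-A)\;+\;(1-s)\,F^{\beta}(B-A).
\]
An affine function of $s$ attaining its minimum at two distinct values is necessarily constant, forcing $F^{\alpha}(B-A)=F^{\beta}(B-A)$ and hence $\mathcal{T}\equiv\mathcal{T}^{*}$ on the entire segment $\overline{AB}$, so $\overline{AB}\subseteq P$. Repeating the convexity/collinearity argument between any $p_{1}\in\overline{AB}$ and a hypothetical $p_{3}\in P\setminus\overline{AB}$ forces $p_{3}\in\overline{AB}$, a contradiction, so in fact $P=\overline{AB}$ whenever $|P|>1$. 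This establishes the dichotomy between unique optimum and whole-segment optimum.

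For the ``iff'' characterization I would then assemble the pieces: the forward implication combines the equality $F^{\alpha}(B-A)=F^{\beta}(B-A)$ just derived with the observation that, were the straight segment not strictly time-minimizing in the sense of Theorem~\ref{thm:time_min}, that same theorem would supply a genuine zig-zag time-minimizing trajectory compatible with restrictions~(I)--(II), whose (necessarily off-segment) tack point would land in $P\setminus\overline{AB}$ against $P=\overline{AB}$; the converse is a direct computation plugging both hypotheses into the formula above and invoking Theorem~\ref{thm:uniqueness} to exclude off-segment optima. The subtlest step is the collinearity/sign analysis that pins $p_{1}$ and $p_{2}$ to the closed segment $\overline{AB}$ rather than to its unbounded extensions past $A$ or past $B$: equality in the two triangle inequalities separately is compatible with several configurations on the line through $A$ and $B$, and the ``beyond endpoint'' layouts must be eliminated by a careful sign check on the scalar parameters arising in the collinearity relations.
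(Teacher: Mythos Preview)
Your argument is correct and takes a genuinely different route from the paper. The paper's proof is a two-line appeal to Theorem~\ref{thm:uniqueness}: either the straight line is strictly time-minimizing for the multi-convex indicatrix $\Sigma$ (in which case a simple three-way split on the sign of $F^{\alpha}(B-A)-F^{\beta}(B-A)$ gives $A$, $B$, or the whole segment as the optimal set), or it is not, and then Theorem~\ref{thm:uniqueness} hands you a unique non-straight minimizer with a unique tack point. You instead derive the dichotomy from first principles, using only convexity of $\mathcal{T}$ and the strict triangle inequality for $F^{\alpha},F^{\beta}$; this makes the uniqueness-or-segment alternative independent of Theorem~\ref{thm:uniqueness} and hence of Theorem~\ref{thm:time_min}. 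The cost is that you must carry out the collinearity/sign analysis you flag as subtle: once $p_{1},p_{2}$ are forced onto the line through $A,B$, parametrize as $p_{s}=A+s(B-A)$ and note that for $s>1$ one has $\mathcal{T}(p_{s})=sF^{\alpha}(B-A)+(s-1)F^{\beta}(A-B)>F^{\alpha}(B-A)=\mathcal{T}(B)\geq\mathcal{T}^{*}$, and symmetrically for $s<0$, so no optimal point lies beyond either endpoint. For the ``iff'' characterization both approaches ultimately lean on Theorem~\ref{thm:time_min} (you via the off-segment zig-zag contradiction, the paper via Theorem~\ref{thm:uniqueness}), so your gain in self-containedness is confined to the dichotomy itself.
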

\begin{proof}
If the straight line from $A$ to $B$ is strictly time-minimizing, then, for each case $F^{\alpha}(B) > F^{\beta}(B)$, $F^{\alpha}(B) < F^{\beta}(B)$ and $F^{\alpha}(B) = F^{\beta}(B)$, the optimal tack points are, respectively, $A$ (it is faster to use only $F^{\beta}$), $B$ (it is faster to use only $F^{\alpha}$) or the whole segment (it does not matter where we put the tack point because the travels with $F^{\alpha}$ and $F^{\beta}$ are the same). Otherwise, the unique (non-straight) solution of Theorem~\ref{thm:uniqueness} corresponds to a unique optimal tack point.
\end{proof}

\subsection{Single tacking with two time-only dependent Finsler metrics}
\label{subsec:restricted_time}
The advantage of the formulation introduced in Section~\ref{subsec:restricted_constant} lies in its strong reliance on the uniqueness of the curve of candidate tack points, which, in turn, follows from the strict convexity of the wavefronts when two constant Finsler metrics are involved. Since this strict convexity persists over time when the metrics are time-only dependent and even when $A$ and $B$ are strictly convex, as established in Theorem~\ref{thm:convexity}, one may wonder whether the previous results concerning the curve of candidate tack points and the optimal tack point still hold in this setting.

So, assume we have two time-only dependent Finsler metrics $F^{\alpha}_t, F^{\beta}_t$ on $N=\mathbb{R}^n$ with their corresponding indicatrices $\Sigma^{\alpha}_t, \Sigma^{\beta}_t$, and the initial and arrival regions $A,B \subset N$ are strictly convex (of course, in particular they can be single points). The goal, as above, is to find the fastest trajectory between $A$ and $B$, providing we have to travel from $A$ to a single tack point $p$ with velocity $\Sigma^{\alpha}_t$, and from $p$ to $B$ with velocity $\Sigma^{\beta}_t$.

The first obvious observation is that, due to the time-dependence, the wavemaps $f^{\alpha}, f^{\beta}$ now have to be computed using the corresponding Lorentz-Finsler metrics $H^{\alpha} \coloneqq \mathrm{d}t^2-(F^{\alpha}_t)^2$, $H^{\beta} \coloneqq \mathrm{d}t^2-(F^{\beta}_t)^2$ on the spacetime $M = \mathbb{R} \times N$ (recall Section \ref{subsec:wavemap}). This way, the associated wavefronts $\mathcal{W}_t^{\alpha}, \mathcal{W}_t^{\beta}$ are given by the projection on $N$ of (future-directed) lightlike pregeodesics of $H^{\alpha}$ and $ H^{\beta}$, respectively. Also, as above, we will make use of the (time-only dependent) Finsler metric $F^{\beta^-}_t$, defined as the reverse of $F^{\beta}_t$:
\begin{equation*}
    F^{\beta^-}_t(v) \coloneqq F^{\beta}_t(-v), \qquad \forall t \in \mathbb{R}, \quad \forall v \in \mathbb{R}^n.
\end{equation*}

\begin{remark}
    The reverse metric $F^{\beta^-}_t$ provides the Lorentz-Finsler metric $H^{\beta^-} \coloneqq \mathrm{d}t^2 - (F^{\beta^-}_t)^2$ on $M$. In order to understand the relationship between $H^{\beta}$ and $H^{\beta^-}$, we need to introduce some notions regarding the future and past lightcones:
    \begin{itemize}
        \item For $H^{\beta}$, recall from Definition~\ref{def:causality} and Remark~\ref{rem:ligthcones} that its future lightcone is $\mathcal{C}^{\beta} \coloneqq (H^{\beta})^{-1}(0) \cap \mathrm{d}t^{-1}((0,\infty))$ and the past lightcone is given by $-\mathcal{C}^{\beta}$, i.e. $\hat{v}$ is past-directed for $H^{\beta}$ if and only if $-\hat{v}$ is future-directed.
        \item On the other hand, for $H^{\beta^-}$ we now define its past lightcone as $\mathcal{C}^{\beta^-} \coloneqq (H^{\beta^-})^{-1}(0) \cap \mathrm{d}t^{-1}((-\infty,0))$, being its future lightcone $-\mathcal{C}^{\beta^-}$. Again, $\hat{v}$ is past-directed for $H^{\beta^-}$ if and only if $-\hat{v}$ is future-directed.
    \end{itemize}
\end{remark}

Since $H^{\beta^-}(\hat{v}) = H^{\beta}(-\hat{v})$, it turns out that $\mathcal{C}^{\beta^-} = -\mathcal{C}^{\beta}$, i.e. $\hat{v}$ is future-directed (resp. past-directed) for $H^\beta$ if and only if it is also future-directed (resp. past-directed) for $H^{\beta^-}$. As a consequence, both metrics share the same causal curves. Moreover, we have the following relationship between their pregeodesics.

\begin{lemma}\label{lem:reverse_geod_H}
    A curve $\hat{\gamma}$ in $M$ is a future-directed pregeodesic of $H^{\beta}$ if and only if, when traveled in reverse, is a past-directed pregeodesic of $H^{\beta^-}$.
\end{lemma}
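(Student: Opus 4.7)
The strategy is to reduce the equivalence to a change-of-variables computation at the level of the energy functional, plus a direct check that future- and past-directedness swap under the identifications $H^{\beta^-}(\hat{v})=H^\beta(-\hat{v})$ and $\mathcal{C}^{\beta^-}=-\mathcal{C}^{\beta}$. Concretely, given $\hat{\gamma}:[a,b]\to M$, define its time-reversal by $\tilde{\gamma}(s):=\hat{\gamma}(a+b-s)$, so that $\tilde{\gamma}'(s)=-\hat{\gamma}'(a+b-s)$. The plan is to show that (i) $\tilde{\gamma}$ is a geodesic of $H^{\beta^-}$ iff $\hat{\gamma}$ is a geodesic of $H^{\beta}$; (ii) the two notions of causal orientation are exchanged under this reversal; and (iii) these two statements pass from geodesics to pregeodesics via reparametrization, since the pregeodesic property is invariant under (orientation-preserving) reparametrizations of the reversed curve.

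For (i), the key observation is that, by the definition $H^{\beta^-}(q,\hat{v})=H^\beta(q,-\hat{v})$, one has
\begin{equation*}
H^{\beta^-}\bigl(\tilde{\gamma}(s),\tilde{\gamma}'(s)\bigr) = H^{\beta}\bigl(\hat{\gamma}(a+b-s),\hat{\gamma}'(a+b-s)\bigr),
\end{equation*}
and the change of variable $r=a+b-s$ in the $H^{\beta^-}$-energy functional \eqref{eq:H_energy} yields $\mathcal{E}_{H^{\beta^-}}[\tilde{\gamma}]=\mathcal{E}_{H^{\beta}}[\hat{\gamma}]$. Furthermore, fixed-endpoint variations $\tilde{\gamma}_\omega$ of $\tilde{\gamma}$ are in bijection with fixed-endpoint variations $\hat{\gamma}_\omega$ of $\hat{\gamma}$ via $\tilde{\gamma}_\omega(s)=\hat{\gamma}_\omega(a+b-s)$, and the same identity of energies holds variation-wise. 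Taking the derivative in $\omega$ at $\omega=0$ shows that $\tilde{\gamma}$ is a critical point of $\mathcal{E}_{H^{\beta^-}}$ with fixed endpoints iff $\hat{\gamma}$ is a critical point of $\mathcal{E}_{H^{\beta}}$, i.e., they are $H^{\beta^-}$- and $H^{\beta}$-geodesics simultaneously.

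For (ii), the identity $\mathcal{C}^{\beta^-}=-\mathcal{C}^{\beta}$ recalled in the preceding remark gives that $\hat{v}\in\mathcal{C}^{\beta}\cup\mathcal{A}^{\beta}$ (future-directed for $H^{\beta}$) if and only if $-\hat{v}\in\mathcal{C}^{\beta^-}\cup(-\mathcal{A}^{\beta})$, which by definition is the set of past-directed causal vectors of $H^{\beta^-}$. Applying this pointwise to $\tilde{\gamma}'(s)=-\hat{\gamma}'(a+b-s)$ shows that $\hat{\gamma}$ is future-directed for $H^{\beta}$ iff $\tilde{\gamma}$ is past-directed for $H^{\beta^-}$. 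Combining (i) with (ii), and noting that any smooth reparametrization preserving the orientation of $\tilde{\gamma}$ preserves both its pregeodesic character for $H^{\beta^-}$ and its past-directedness, the equivalence at the level of pregeodesics follows. There is no serious obstacle here—the only mildly delicate point is the careful bookkeeping of orientation conventions for $H^{\beta^-}$ (where ``past-directed'' means lying in $\mathcal{C}^{\beta^-}$ or the cone domain enclosed by it), which is handled entirely by the identification $\mathcal{C}^{\beta^-}=-\mathcal{C}^{\beta}$ already established in the preceding remark.
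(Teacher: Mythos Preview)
Your proposal is correct and follows essentially the same approach as the paper: both arguments reverse the parameter via $s\mapsto a+b-s$, use the identity $H^{\beta^-}(\hat v)=H^{\beta}(-\hat v)$ to transform the $H^{\beta}$-energy of $\hat\gamma$ into the $H^{\beta^-}$-energy of the reversed curve, deduce the equivalence of critical points (geodesics), and then pass to pregeodesics by reparametrization. Your treatment is slightly more explicit than the paper's in spelling out the bijection of fixed-endpoint variations and the orientation swap via $\mathcal{C}^{\beta^-}=-\mathcal{C}^{\beta}$, but this is a matter of exposition rather than a different method.
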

\begin{proof}
    By definition (recall Section~\ref{subsec:lorentz-finsler}), $\hat{\gamma}: [a,b] \rightarrow M$ is a geodesic of $H^{\beta}$ if it is a critical point of the $H^\beta$-energy functional
    \begin{equation}
    \label{eq:energy}
        \mathcal{E}_{H^{\beta}}[\hat{\gamma}] \coloneqq \int_a^b H^{\beta}(\hat{\gamma}'(s)) \ \mathrm{d}s,
    \end{equation}
    defined in the set of (piecewise smooth) curves with fixed with fixed endpoints $\hat{\gamma}(a), \hat{\gamma}(b)$. Now, consider the reparametrization $s(r) = a+b-r$, so that the curve $\tilde{\gamma}: [a,b] \rightarrow M$ given by $\tilde{\gamma}(r) = \hat{\gamma}(s(r))$ travels the same path as $\hat{\gamma}$, but in reverse---from $\hat{\gamma}(b)$ to $\hat{\gamma}(a)$. Then, applying the change of variables $s(r)$ in \eqref{eq:energy} and noting that $H^{\beta^-}(\hat{v}) = H^{\beta}(-\hat{v})$, we obtain
    \begin{equation*}
        \mathcal{E}_{H^{\beta}}[\hat{\gamma}] = -\int_{b}^a H^{\beta}(-\tilde{\gamma}'(r)) \ \mathrm{d}r = \int_a^b H^{\beta^-}(\tilde{\gamma}'(r)) \ \mathrm{d}r = \mathcal{E}_{H^{\beta^-}}[\tilde{\gamma}],
    \end{equation*}
    where $\mathcal{E}_{H^{\beta^-}}$ is the energy functional of $H^{\beta^-}$, defined in the set of (piecewise smooth) curves with fixed endpoints $\tilde{\gamma}(a)=\hat{\gamma}(b), \tilde{\gamma}(b)=\hat{\gamma}(a)$. Moreover, observe that the set of curves where $\mathcal{E}_{H^{\beta}}$ and $\mathcal{E}_{H^{\beta^-}}$ are defined coincide (up to a reverse reparametrization). This means that $\hat{\gamma}$ is a critical point of $\mathcal{E}_{H^{\beta}}$ if and only if $\tilde{\gamma}$ is a critical point of $\mathcal{E}_{H^{\beta^-}}$, i.e. $\hat{\gamma}$ is a future-directed geodesic of $H^{\beta}$ if and only if $\tilde{\gamma}$ is a past-directed geodesic of $H^{\beta^-}$.

    Finally, in the more general case when $\hat{\gamma}$ is a future-directed pregeodesic of $H^{\beta}$, we can reparametrize it as a geodesic. Then, applying the above, we can obtain a past-directed geodesic of $H^{\beta^-}$ and any further reparametrization that keeps the orientation (so that it travels the same path as $\hat{\gamma}$ but in reverse) makes it a past-directed pregeodesic of $H^{\beta^-}$. The converse holds analogously, i.e. any past-directed pregeodesic of $H^{\beta^-}$ can be reversely reparametrized to be a future-directed pregeodesic of $H^{\beta}$.
\end{proof}

In order to find the fastest trajectory in this setting, we need a way to compute the travel time. Observe that the expression \eqref{eq:traveltime} is, of course, not valid here. Instead, for any tack point $p \in N$, we have to proceed with the following steps:
\begin{enumerate}
    \item Take the wavemap $f^{\alpha}$ of $H^\alpha$ departing from $A$ at time $t=0$, and let $t_0 \geq 0$ be the time $f^{\alpha}$ needs to arrive at $p$.
    \item Take the wavemap $f^{\beta}$ of $H^{\beta}$ departing from $p$ at time $t=t_0$, and let $\mathcal{T} \geq t_0$ be arrival time at $B$ (when the wavefront is tangent to $B$), i.e. $f^{\beta}$ takes a time $\mathcal{T}-t_0$ to go from $p$ to $B$.
    \item The time $\mathcal{T}$ is the total travel time for the path $A-p-B$. This way, we obtain a smooth function $\mathcal{T}=\mathcal{T}(p)=\mathcal{T}(x^1,\ldots,x^n)$ and an {\em optimal tack point} is a point where if $\mathcal{T}$ attains its global minimum.
\end{enumerate}

Now, fix a tack point $p \in N$ with its corresponding total travel time $\mathcal{T}(p)$ and consider the (reverse) wavemap $f^{\beta^-}$ of $H^{\beta^-}$ given by the projection of its past-directed lightlike pregeodesics departing from $B$ at time $t=\mathcal{T}(p)$. By Lemma~\ref{lem:reverse_geod_H}, $f^{\beta^-}$ arrives at $p$ at time $t_0$, i.e. $f^{\beta^-}$ spends the same (backward) time going from $B$ to $p$ as $f^{\beta}$ does going (forward in time) from $p$ to $B$. This allows us to construct the curve of candidate tack points in a similar way as in Section~\ref{subsec:restricted_constant} and derive analogous results.

\begin{proposition}
    Let $T > 0$ be the time $f^{\alpha}$ spends going from $A$ at $t=0$ to $B$. For any $\tau \in [0,T]$, there exists a unique time $\mathcal{T} \in [\tau,T]$ such that the wavefront of $f^{\alpha}$ departing from $A$ and running from $t=0$ to $t=\tau$ is tangential to the wavefront of $f^{\beta^-}$ departing from $B$ and running (backward in time) from $t=\mathcal{T}$ to $t=\tau$. Obviously, $\mathcal{T}$ coincides with the total travel time of the corresponding tangent point---when used as a tack point. The set of all these tangent points defines a unique {\em curve of candidate tack points} from $A$ to $B$.
\end{proposition}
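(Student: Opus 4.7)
My plan is to generalize Lemma~\ref{lem:tau_optimal} to the time-only dependent setting, replacing the role of the constancy of the Finsler metrics with the strict convexity of the wavefronts furnished by Theorem~\ref{thm:convexity} and the absence of cut points guaranteed by Theorem~\ref{thm:no_cut_points}. Fix $\tau \in [0,T]$. By Theorem~\ref{thm:convexity} applied to $F^{\alpha}_t$ (with strictly convex $A$), the forward wavefront $\mathcal{W}^\alpha_\tau$ is a strictly convex, hence compact, hypersurface in $\mathbb{R}^n$. I introduce the function $g_\tau : \mathcal{W}^\alpha_\tau \to (0,\infty)$ sending $p$ to the $F^\beta_t$-travel time from $(p,\tau)$ to $B$, i.e.\ the unique $r > 0$ such that the wavemap of $F^\beta_t$ started from $p$ at time $\tau$ reaches $B$ at time $\tau+r$. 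Continuity and finiteness of $g_\tau$ follow from the smooth dependence of the $H^\beta$-geodesic flow on initial data together with the completeness hypothesis inherited from Remark~\ref{rem:global_hyp}. Compactness then yields a minimizer $p^\ast \in \mathcal{W}^\alpha_\tau$, and I define $\mathcal{T} := \tau + g_\tau(p^\ast)$, with $\mathcal{T} \in [\tau,T]$ in the regime of interest (using the $\alpha$-only reference trajectory of total time $T$ as a comparison).

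Next, I would verify that the minimality of $g_\tau$ at $p^\ast$ forces tangency between $\mathcal{W}^\alpha_\tau$ and $\mathcal{W}^{\beta^-}_{\mathcal{T},\tau}$. The point $p^\ast$ solves the classical time-dependent Zermelo problem of reaching $B$ from the hypersurface $\mathcal{W}^\alpha_\tau$ using the metric $F^\beta_t$ started at time $\tau$, so by Theorem~\ref{thm:sol_zermelo}(ii) the optimal $\beta$-trajectory from $p^\ast$ to $B$ departs $F^\beta_\tau$-orthogonally from $\mathcal{W}^\alpha_\tau$ at $p^\ast$. Via Lemma~\ref{lem:reverse_geod_H}, this is equivalent to the reverse $\beta^-$-trajectory arriving $F^{\beta^-}_\tau$-orthogonally at $p^\ast$, and by the orthogonality conditions~\eqref{eq:ort_F} characterizing wavefronts, this shared orthogonality is precisely the statement that $\mathcal{W}^{\beta^-}_{\mathcal{T},\tau}$ and $\mathcal{W}^\alpha_\tau$ share the same tangent hyperplane at $p^\ast$, i.e.\ are tangential there.

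The main obstacle is uniqueness. Here I invoke Theorem~\ref{thm:convexity} applied to $F^{\beta^-}_t$ (with $\{B\}$ as strictly convex initial region) to conclude that every backward wavefront $\mathcal{W}^{\beta^-}_{s,\tau}$ is strictly convex, and Theorem~\ref{thm:no_cut_points} to conclude that the family $\{\mathcal{W}^{\beta^-}_{s,\tau}\}_{s\geq\tau}$ is a strictly monotone nested family of hypersurfaces expanding from $B$. Thus $\mathcal{T}$ is characterized as the smallest $s \in [\tau,T]$ for which the growing region bounded by $\mathcal{W}^{\beta^-}_{s,\tau}$ touches $\mathcal{W}^\alpha_\tau$, and a standard convex-separation argument---two distinct tangent contact points would force the segment joining them to lie simultaneously on both strictly convex boundaries, a contradiction---shows that this first-touch contact is unique, so $p^\ast(\tau)$ and hence $\mathcal{T}$ are uniquely determined. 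Finally, letting $\tau$ range over $[0,T]$, the map $\tau \mapsto p^\ast(\tau)$ is continuous by the implicit function theorem applied to the tangency condition, begins at $A$ when $\tau = 0$, ends at $B$ when $\tau = T$, and its image is the asserted unique curve of candidate tack points from $A$ to $B$.
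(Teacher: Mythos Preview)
Your proof is correct and follows essentially the same approach as the paper: both rely on the strict convexity of the forward $\alpha$-wavefronts and backward $\beta^-$-wavefronts furnished by Theorem~\ref{thm:convexity}, together with the reverse-metric correspondence of Lemma~\ref{lem:reverse_geod_H}. The paper's proof is a one-liner citing these ingredients, whereas you spell out the minimization/compactness argument for existence, the orthogonality-to-tangency translation via Theorem~\ref{thm:sol_zermelo}(ii), the convex-separation argument for uniqueness, and the implicit-function-theorem step for continuity of $\tau\mapsto p^\ast(\tau)$---all of which are the natural details behind the paper's ``follows immediately.''
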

\begin{proof}
    This follows immediately from the previous observation and the strict convexity of the wavefronts of $f^{\alpha}$ and $f^{\beta^-}$, given by Theorem~\ref{thm:convexity}, due to the fact that $F^\alpha_t, F^{\beta^-}_t$ are time-only dependent and $A,B$ are strictly convex.
\end{proof}

Then, observe that Lemma~\ref{lem:tau_optimal} and Theorem~\ref{thm:existence} hold exactly the same in this time-only dependent setting. In contrast, Theorem~\ref{thm:uniqueness} and Corollary~\ref{cor:uniqueness} cannot be directly generalized, as they strongly rely on Theorem~\ref{thm:time_min}. In particular, the uniqueness of the optimal tack point is not guaranteed now, as we will see in the following section.

\begin{remark}
\label{rem:trajectories}
    Once we know the optimal tack point $p \in N$, the solution to Zermelo's problem---the fastest trajectory between $A$ and $B$---is the unique spatial trajectory of $f^{\alpha}(z,t)$ from $A$ at $t=0$ such that $\partial_t f^{\alpha}(z_0,0) \bot_{F^{\alpha}_0} A$ and $f^\alpha(z_0,t_0) = p$ for some $t_0 \geq 0$ (thanks to Theorems~\ref{thm:no_cut_points} and \ref{thm:convexity}, $z_0 \in A$ and $t_0$ are unique), concatenated with the unique spatial trajectory of $f^{\beta}(\theta,t)$ from $p$ at $t=t_0$ (as noted in Remark~\ref{rem:wavemap} and showed in Section~\ref{subsec:circles}, $\theta \in [0,2\pi)$ parametrizes directions in $T_pN$) such that $f^{\beta}(\theta_0,\mathcal{T})\bot_{F^{\beta}_{\mathcal{T}}}$ for some $\mathcal{T}\geq t_0$ (again, $\theta_0$ and $\mathcal{T}$ are unique). Note that each trajectory is, respectively, the solution to Zermelo's problem for the travel from $A$ at $t=0$ to $p$, and from $p$ at $t=t_0$ to $B$ (in particular, they satisfy Theorem~\ref{thm:sol_zermelo}(ii)).
\end{remark}

Finally, as an immediate generalization, observe that all the results in this subsection hold true if we replace the time-only dependent Finsler metrics with general time- and position-dependent ones, as long as their wavefronts always remain strictly convex.

\subsection{Non-uniqueness counterexamples}
\label{subsec:counter_examples}
In contrast to the results presented in Theorem~\ref{thm:uniqueness} and Corollary~\ref{cor:uniqueness}, when we consider two time-only dependent Finsler metrics $F^{\alpha}_t, F^{\beta}_t$, an optimal tack point $p$ for the bi-metric travel from $A$ to $B$, i.e. first using $F^{\alpha}_t$ and then $F^{\beta}_t$, need not be unique. Indeed, as already indicated, time- and position-dependent metrics do not in general have convex wavefronts, so two wavefronts can meet and be identical along any open subset of the two wavefronts and thence produce a large system of positions for optimal tack points. We now observe via simple examples, that even time-only dependent (as well as position-only dependent) \emph{Riemannian metrics} can produce such non-uniqueness of optimal tack points.

\begin{example}
Following the example presented in Section~\ref{subsec:circles}, we consider Riemannian norms $F^{\alpha}$ and $F^{\beta}_t$ in $\mathbb{R}^{2}$ with circular indicatrix fields and both without wind, i.e. $W^{\alpha}(t) = 0$ and $W^{\beta}(t) = 0$ for all $t$. We assume constant radius $R^{\alpha}$ for $F^{\alpha}$, but a time-varying radius $R^{\beta}$ for $F^{\beta}_t$ as follows, where $\lambda \in (0,1)$ is any constant:
\begin{equation*}
\begin{aligned}
R^{\alpha}(t) &=  1, \quad \forall t \in \mathbb{R},\\
R^{\beta}(t) &= 1 - \lambda \sin(t), \quad \forall t \in \mathbb{R}. 
\end{aligned}
\end{equation*}
By symmetry, the time-optimal track between $A = (0,0)$ and $B = (5\pi, 0)$ is the straight line segment on the $x^{1}$-axis connecting these two points. For this trajectory, the travel time with $F^{\alpha}$ from $A$ to $B$ is $T = 5\pi$, but the travel time with $F^{\beta}_t$ from $A$ to $B$ is larger, because the ``reach'' from $A$ towards $B$ by using $F^{\beta}_t$ during the total time $T= 5\pi$ is only:
\begin{equation*}
\int_{0}^{5\pi} R^{\beta}(r) \ \mathrm{d}r  < 5\pi.
\end{equation*}

Suppose now that we force $(c,0)$ to be a tack point, where $c \in [0, 5\pi]$. Then, the time to reach $(c,0)$ from $A$ by $F^{\alpha}$ is $c$ and the time to reach $B$ from $(c,0)$ by $F^{\beta}_t$ is some time $\tilde{T} - c$. We claim that $\tilde{T} \geq 5\pi$ and that $T = 5\pi$ is the optimal travel time with tacking at specific tack points for the total travel from $A$ to $B$.

First, the travel time $T=5\pi$ can be realized by using the endpoint $B = (5\pi,0)$ as tack point, i.e. by using $c=5\pi$. Second, for any (other) tack point $(c,0)$, the $t$-parametrized track from $A$ to $B$ is $(x^{1}(t), 0)$, where, by the explicit integral expression \eqref{eq:time_only_point} (in the direction $\theta=0$):
\begin{equation*}
x^{1}(t) = c + \int_{c}^{t} R^{\beta}(r) \ \mathrm{d}r, \quad \forall t\geq c.
\end{equation*}
Since we want to reach the target point $B = (5\pi,0)$, we therefore need a travel time $\tilde{T}$ satisfying:
\begin{equation*}
x^{1}(\tilde{T}) = c + \int_{c}^{\tilde{T}} R^{\beta}(r) \ \mathrm{d}r = 5\pi.
\end{equation*}
However, suppose we only have total time $T=5\pi$ at our disposal. Then, by the specific periodicity of $R^{\beta}$, we get the following smaller reach $\ell$ of the track as a function of $c \in [0, 5\pi]$, using the time $T$:
\begin{equation*}
    x^{1}(5\pi) = \ell(b) = c + \int_{c}^{5\pi} R^{\beta}(r) \ \mathrm{d}r \leq  5\pi,
\end{equation*}
and equality holds if and only if $c \in \{\pi, 3\pi, 5\pi \}$. In consequence, there are two optimal tack points along the interior of the line segment from $A$ to $B$, namely $(\pi,0)$ and $(3\pi,0)$, each giving the total travel time $T = 5\pi$. This shows that the optimal tack point for a one-tack-point travel from $A$ to $B$, using first the metric $F^{\alpha}$ and then $F^{\beta}_t$, is not unique.
\end{example}

\begin{example}
In a vein similar to the example above, choosing $R^{\alpha}(x^{1}, x^{2}) = 1/(2+\cos(x^{1}))$ and $R^{\beta}(x^{1} , x^{2})= 1/2$, they now provide the position-only dependent Riemannian norms $F^{\alpha} = \Vert \cdot \Vert(2+\cos(x^{1})) $ and $ F^{\beta} = 2\Vert \cdot \Vert$, where $\Vert \cdot \Vert$ is the standard Euclidean norm in $\mathbb{R}^{2}$.  It is again clear that the optimal path for a travel from $A=(0,0)$ to $B=(5\pi,0)$ is the corresponding straight line segment on the $x^{1}$-axis. By \eqref{eq:traveltime_F}, the total travel time when choosing $(c,0)$ as the tack point---parametrizing the path as $\gamma(s) = (s,0)$, $s \in [0,5\pi]$---is
\begin{equation*}
\mathcal{T}(c) = \int_0^c (2+\cos(s)) \ \mathrm{d}s + \int_c^{5\pi} 2 \  \mathrm{d}s = \sin(c)+10\pi.
\end{equation*}
The travel time therefore attains its minimum when $\sin(c) = -1$, i.e. at $c\in \{\frac{3\pi}{2},\frac{7\pi}{2}\}$, which again shows the non-uniqueness of the optimal tack point, this time in the position-only dependent case.
\end{example}

\subsection{Connection with Snell's law}
\label{subsec:snell}
There is an interesting connection between the non-convex navigation we have introduced here and the law of refraction in physics, commonly known as {\em Snell's law}. In the usual Snell's setting, the space $N$ is divided in two by an interface $\eta$ (a hypersurface of $N$), representing the separation between two different media where a wave is spreading. By Fermat's principle, the wave rays passing through the interface from one medium to another follow the path that minimizes (or more generally, makes stationary) the travel time between their endpoints. Since, in general, the speed profile of the wave is discontinuous along $\eta$, the wave ray must undergo a specific ``break'' at the interface, resulting in a ``refracted'' trajectory. The condition for this break (which we can associate with a tack point) to satisfy Fermat's principle is what we properly call Snell's law. This law has been generalized for anisotropic time-independent waves in \cite{MP2023}, and for the general time-dependent case in \cite{JMPS}.

So, essentially, the setting in Snell's law is a more restricted version of our non-convex Zermelo navigation, in the sense that in the Snell setting you are allowed to tack only along the interface $\eta$. However, observe that the model is completely analogous. Indeed, the wave has a different speed profile at each side of $\eta$, resulting in two different Finsler metrics $F^\alpha_t, F^\beta_t$, and the goal is to find the ray path between two points (or regions) $A,B$ separated by $\eta$, i.e. to find the solution to Zermelo's problem for the travel from $A$ to $B$, with the restrictions that we must use $F^\alpha_t$ on one side of $\eta$ and $F^\beta_t$ on the other side, and we are allowed to cross $\eta$ only once. As a consequence, we have the following straightforward connection.

\begin{theorem}
    Let $p$ be a (possibly non-unique) optimal tack point at time $t_0 \geq 0$ for the one-tack-point travel from $A$ to $B$, using first $F^\alpha_t$ and then $F^\beta_t$ (possibly time- and position-dependent Finsler metrics). Let $\gamma_\alpha, \gamma_\beta$ be the optimal trajectories from $A$ (at $t=0$) to $p$ and from $p$ (at $t=t_0$) to $B$, respectively. Then, the complete concatenated trajectory $\gamma_\alpha \cup \gamma_\beta$ satisfies Snell's law:
    \begin{equation}
    \label{eq:snell}
        \left( \frac{\partial F^{\alpha}_{t_0}}{\partial v^i}(\gamma_\alpha'(t_0^-)) - \frac{\partial F^{\beta}_{t_0}}{\partial v^i}(\gamma_\beta'(t_0^+)) \right) u^i = 0, \quad \forall u = (u^1,\ldots,u^n) \in T_p\eta,
    \end{equation}
    for any interface $\eta$ (passing through $p$) that divides $\gamma_\alpha$ and $\gamma_\beta$.
\end{theorem}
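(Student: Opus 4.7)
The plan is to derive Snell's law from the first-order optimality of the tack point $p$. As an unconstrained optimum of the smooth function $p' \mapsto \mathcal{T}(p')$ on $N$, the point $p$ satisfies $d\mathcal{T}|_p \cdot u = 0$ for every $u \in T_p N$. I will show that this differential takes the form $d\mathcal{T}|_p \cdot u = \lambda\,(p^\alpha - p^\beta) \cdot u$ for some $\lambda > 0$, where
\[
p^\alpha_i := \frac{\partial F^\alpha_{t_0}}{\partial v^i}(\gamma_\alpha'(t_0^-)), \qquad p^\beta_i := \frac{\partial F^\beta_{t_0}}{\partial v^i}(\gamma_\beta'(t_0^+))
\]
are the Finsler momenta of the arriving and departing velocities. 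Optimality will then force $p^\alpha = p^\beta$ as covectors in $T_p^*N$, and \eqref{eq:snell} will follow trivially on every $T_p\eta$.

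To compute $d\mathcal{T}$, I would write $\mathcal{T}(p) = U(T^\alpha(p),\,p)$, where $T^\alpha : N \to [0,\infty)$ is the $F^\alpha$-arrival-time function from $A$ (at $t=0$) and $U : M \to \mathbb{R}$ is the $F^\beta$-arrival-time function at $B$; both are smooth near the optimal trajectory by the strict convexity of the associated wavefronts and global hyperbolicity (Remark~\ref{rem:global_hyp}). The first building block, $dT^\alpha|_p = p^\alpha$, follows from $T^\alpha$ being constant on each $\alpha$-wavefront and equal to $t$ along $\gamma_\alpha$: the resulting covector is the unique one that annihilates $T_p \mathcal{W}^\alpha_{t_0}$ and takes the value $1$ on $\gamma_\alpha'(t_0^-)$, which is precisely $p^\alpha$ by Euler's relation $p^\alpha \cdot \gamma_\alpha'(t_0^-) = F^\alpha_{t_0}(\gamma_\alpha'(t_0^-)) = 1$ and the $F^\alpha_{t_0}$-orthogonality $\gamma_\alpha'(t_0^-)\perp_{F^\alpha_{t_0}}\mathcal{W}^\alpha_{t_0}$ of Theorem~\ref{thm:sol_zermelo}(ii).

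The second building block, $dU|_{(t_0,p)} = \lambda\,(1,-p^\beta)$ with $\lambda > 0$, is the main obstacle and genuinely needs the spacetime viewpoint. The idea is that $U$ is constant on the past $H^\beta$-lightcone of $(\mathcal{T},B)$, whose tangent hyperplane at $(t_0,p)$ is the direct sum of the generator $(1,\gamma_\beta'(t_0^+))$ and of the spatial tangent hyperplane at $p$ to the corresponding past $\beta$-wavefront from $B$, i.e., a wavefront of the reverse wavemap $f^{\beta^-}$. Applying Theorem~\ref{thm:sol_zermelo}(ii) to this reverse wavemap and using the elementary identity $g^{F^{\beta^-}_t}_{-v}(-v,u) = -g^{F^\beta_t}_v(v,u)$ shows that this spatial hyperplane is precisely $\ker p^\beta$; hence $dU|_{(t_0,p)}$ is a scalar multiple of $(-1,p^\beta)$. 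The positivity of $\partial U/\partial t$---which follows from global hyperbolicity, since delaying the start of the $\beta$-leg cannot advance its arrival at $B$---then fixes the multiple to be $-\lambda$ with $\lambda > 0$.

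With both differentials identified, the chain rule closes the argument: writing the spacetime variation induced by $u$ as $(dT^\alpha|_p(u),\,u)\in T_{(t_0,p)}M$,
\[
d\mathcal{T}|_p \cdot u \;=\; dU|_{(t_0,p)}\bigl(dT^\alpha|_p(u),\,u\bigr) \;=\; \lambda\,p^\alpha \cdot u - \lambda\,p^\beta \cdot u \;=\; \lambda\,(p^\alpha - p^\beta)\cdot u,
\]
so optimality combined with $\lambda > 0$ yields $p^\alpha = p^\beta$, as required. The reason the argument genuinely needs the Lorentz-Finsler framework is that in the general time- and position-dependent case the $\beta$-travel time is not a plain Finsler length functional; one cannot identify $\partial U/\partial q$ by naively invoking the first-variation formula for Finsler length on the $\beta$-segment alone, but must instead pin down $dU$ through the lightcone structure of $H^\beta$ in spacetime.
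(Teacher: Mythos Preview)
Your argument is correct and takes a genuinely different route from the paper's. The paper does not compute $d\mathcal{T}$ at all; instead it invokes \cite[Equation~(9)]{MP2023}, which establishes \eqref{eq:snell} as the first-order necessary condition for criticality in the \emph{restricted} Snell setting (tack point constrained to lie on a fixed interface~$\eta$). The proof then observes that the Snell setting is a sub-problem of the free-tack-point problem: any variation that moves the crossing point along $\eta$ is in particular a variation in the unconstrained tacking problem, so if \eqref{eq:snell} failed for some separating $\eta$, the cited result would supply a travel-time-reducing variation, contradicting optimality of~$p$.

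Your direct Hamilton--Jacobi computation is more self-contained and actually proves something strictly stronger: $p^\alpha = p^\beta$ as covectors in $T_p^*N$, so \eqref{eq:snell} holds on \emph{every} hyperplane through $p$, not only on separating interfaces. The price is that you need smoothness of $T^\alpha$ and $U$ near $(t_0,p)$, and your stated justification (``strict convexity of the associated wavefronts'') is only available in the time-only dependent case (Theorem~\ref{thm:convexity}); in the general time- and position-dependent setting you should instead argue that $p$ lies strictly before the $\alpha$-cut locus of $A$ and that $(t_0,p)$ lies strictly before the relevant $\beta^-$-cut locus of $B$, which follows from Propositions~\ref{prop:strictly_min}--\ref{prop:cut_function} together with the minimality of $\gamma_\alpha,\gamma_\beta$ (modulo the usual boundary case where $p$ sits exactly at a cut instant). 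The paper's reduction argument, by contrast, needs only the first variation of travel time along a one-parameter family of crossings of~$\eta$, a slightly weaker regularity requirement that is absorbed into the cited reference.
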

\begin{proof}
    Snell's formula \eqref{eq:snell} comes directly from \cite[Equation~(9)]{MP2023} at $t=t_0$. If $\gamma_\alpha \cup \gamma_\beta$ does not satisfy this condition, there is a variation in Snell's setting that locally reduces the travel time, in contradiction with the optimality of $\gamma_\alpha \cup \gamma_\beta$ in the more general setting of Zermelo navigation with tacking.
\end{proof}

\section{Computational estimation of the tacking point}
\label{sec:computations}
Although the theoretical analysis of the non-convex Zermelo navigation becomes significantly more complex when the wavefronts are not strictly convex---and this will not be addressed in the present work---we will show in this section that, from a computational standpoint, it is still possible to find solutions in the most general case, involving any number of time-dependent Finsler metrics and tack points. However, in situations not supported by theoretical results (e.g., when the wavefronts are not strictly convex), it is important to note that we cannot ensure that the computational solution is truly optimal---it may merely correspond to a local minimum of the travel time.

To compute solutions numerically, we will focus on the formulation of Zermelo’s problem introduced in Section~\ref{subsec:finsler_metrics}. Specifically, our goal is to minimize the (discretized) travel time functional~\eqref{eq:traveltime_F} over (discretized) curves $\hat{\gamma}(s) = (t(s), \gamma(s))$ satisfying the constraint~\eqref{eq:dot_t_F}---i.e., $\hat{\gamma} \in \mathcal{Q}_{A,B}$ as in Definition~\ref{def:travel_functional}---where the initial and arrival regions $A, B$ will be assumed to be single points.\footnote{Point-like initial and arrival regions constitute the most relevant case in practice and will allow us to reduce the computational complexity of the algorithms.} In practice though, for computational convenience, it is preferable to work with the \emph{squared travel time functional}:
\begin{equation}
\label{eq:squared_traveltime}
    \widehat{\mathcal{T}}[\hat{\gamma}] \coloneqq \int_a^b F(t(s),\gamma(s),\gamma'(s))^2 \ \mathrm{d}s = \int_a^b g_{\gamma'(s)}^{F_{t(s)}}(\gamma'(s),\gamma'(s)) \ \mathrm{d}s,
\end{equation}
where $g^{F_t}$ is the fundamental tensor of $F_t$, given by \eqref{eq:fund_tensor_F}. This is because being a critical point of $\widehat{\mathcal{T}}$ imposes a specific parametrization, thereby reducing the degrees of freedom and making the numerical computation more robust. At the same time, critical points of $\widehat{\mathcal{T}}$ remain critical for $\mathcal{T}$ and thus, they are also pregeodesics of $H=\mathrm{d}t^2-F_t^2$ (defined in Section~\ref{subsec:lorentz-finsler}), as shown in the following result.

\begin{proposition} \label{prop:h_geodesics}
    Let $F_t$ be a time-dependent Finsler metric. Then, given two points $A,B \in N$ and a curve $\hat{\gamma}(s)=(t(s),\gamma(s))$ in $ \mathcal{Q}_{A,B}$, the following statements are equivalent:
    \begin{itemize}
        \item[(i)] $\hat{\gamma}$ is a critical point of the squared travel time functional \eqref{eq:squared_traveltime} on $\mathcal{Q}_{A,B}$.
        \item[(ii)] $\hat{\gamma}$ is a critical point of the travel time functional \eqref{eq:traveltime_F} on $\mathcal{Q}_{A,B}$ such that $t''(s)=0$.
        \item[(iii)] $\hat{\gamma}$ is a pregeodesic of the Lorentz-Finsler metric $H=\mathrm{d}t^2-F_t^2$ such that $t''(s)=0$.\footnote{In particular, this means that $\hat{\gamma}$ can be reparametrized to be a critical point of the $H$-energy functional \eqref{eq:H_energy} among any curve from $\hat{\gamma}(a)$ to $\hat{\gamma}(b)$.}
    \end{itemize}
\end{proposition}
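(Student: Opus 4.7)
The plan is to exploit the lightlike constraint \eqref{eq:dot_t_F} on curves in $\mathcal{Q}_{A,B}$: since $t'(s) = F(t(s),\gamma(s),\gamma'(s))$ along any such $\hat{\gamma}$, both functionals admit the compact rewriting
\begin{equation*}
\mathcal{T}[\hat{\gamma}] = \int_a^b t'(s)\,\mathrm{d}s = t(b), \qquad \widehat{\mathcal{T}}[\hat{\gamma}] = \int_a^b t'(s)^2\,\mathrm{d}s.
\end{equation*}
The Cauchy--Schwarz inequality then gives $\mathcal{T}[\hat{\gamma}]^2 \leq (b-a)\,\widehat{\mathcal{T}}[\hat{\gamma}]$ with equality if and only if $t'(s)$ is constant, i.e.\ $t''(s)\equiv 0$. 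This inequality, together with the fact that $\mathcal{T}$ is parametrization-invariant while $\widehat{\mathcal{T}}$ is not, drives the equivalence (i)$\Leftrightarrow$(ii).

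For (ii)$\Rightarrow$(i), given any variation $\hat{\gamma}_\omega$ in $\mathcal{Q}_{A,B}$ and using that $t'(s) = c$ is a positive constant along $\hat{\gamma}_0 = \hat{\gamma}$, differentiation under the integral sign yields
\begin{equation*}
\left.\frac{d}{d\omega}\widehat{\mathcal{T}}[\hat{\gamma}_\omega]\right|_{\omega=0} = \int_a^b 2\,t'(s)\left.\frac{\partial t'_\omega}{\partial \omega}(s)\right|_{\omega=0}\mathrm{d}s = 2c\left.\frac{d}{d\omega}\mathcal{T}[\hat{\gamma}_\omega]\right|_{\omega=0},
\end{equation*}
so criticality of $\mathcal{T}$ passes to criticality of $\widehat{\mathcal{T}}$. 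For (i)$\Rightarrow$(ii), I would first run reparametrization variations $\hat{\gamma}_\omega \coloneqq \hat{\gamma}\circ\sigma_\omega$, where $\sigma_\omega:[a,b]\to[a,b]$ fixes the endpoints and $\sigma_0=\mathrm{id}$; the positive $1$-homogeneity of $F$ (property (iii) of Section~\ref{subsec:finsler_metrics}) guarantees these remain in $\mathcal{Q}_{A,B}$, since it preserves lightlikeness and future-directedness. Setting $u_\omega \coloneqq t\circ\sigma_\omega$, a direct computation (using the same homogeneity and the lightlike condition) gives $\widehat{\mathcal{T}}[\hat{\gamma}_\omega] = \int_a^b u_\omega'(s)^2\,\mathrm{d}s$ with fixed boundary values $u_\omega(a)=0$, $u_\omega(b)=t(b)$; the unique critical point of this Dirichlet-type energy is the affine map, so criticality of $\widehat{\mathcal{T}}$ restricted to this family already forces $t$ itself to be affine, hence $t''(s)\equiv 0$. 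With $t''=0$ secured, the identity above reads in reverse and delivers the criticality of $\mathcal{T}$ from that of $\widehat{\mathcal{T}}$.

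For (ii)$\Leftrightarrow$(iii), I would invoke the Finslerian version of Fermat's principle (see \cite{perlick2006a}) already cited in Section~\ref{subsec:solution}: critical points of $\mathcal{T}$ on $\mathcal{Q}_{A,B}$ are exactly the lightlike pregeodesics of $H=\mathrm{d}t^2-F_t^2$. The extra condition $t''(s)=0$ appears identically on both sides and simply selects the same specific affine-in-$s$ parametrization along the pregeodesic.

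The main obstacle will be the reparametrization step in (i)$\Rightarrow$(ii): one must confirm that reparametrization variations form a rich enough subfamily of $\mathcal{Q}_{A,B}$-variations to force $t''=0$, and that they genuinely lie in $\mathcal{Q}_{A,B}$ (lightlikeness, future-direction, endpoints all preserved), both of which rest on the $1$-homogeneity of $F$ in the velocity variable. Everything else amounts to bookkeeping around the Cauchy--Schwarz identity and quoting Fermat's principle.
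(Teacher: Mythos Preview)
Your proposal is correct and takes a somewhat different route from the paper's own proof for the equivalence (i)$\Leftrightarrow$(ii).

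The paper performs a single integration-by-parts computation: writing $\widehat{\mathcal{T}}[\hat\gamma_\omega]=\int_a^b t'_\omega(s)^2\,\mathrm{d}s$ and using $\int \partial_\omega F_\omega\,\mathrm{d}s=\partial_\omega t_\omega$, it obtains
\[
\left.\frac{d}{d\omega}\widehat{\mathcal{T}}[\hat\gamma_\omega]\right|_{\omega=0}
= 2t'(b)\left.\frac{d}{d\omega}t_\omega(b)\right|_{\omega=0}
- 2\int_a^b t''(s)\left.\frac{\partial}{\partial\omega}t_\omega(s)\right|_{\omega=0}\mathrm{d}s,
\]
and then argues that both the boundary term and the interior term must vanish separately, yielding simultaneously the criticality of $\mathcal{T}$ and $t''\equiv 0$. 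Your argument instead splits the two conclusions: you first restrict to reparametrization variations (which keep $\mathcal{T}$ fixed and reduce $\widehat{\mathcal{T}}$ to the Dirichlet energy $\int u_\omega'^2$) to force $t''=0$, and only then use the resulting identity $\delta\widehat{\mathcal{T}}=2c\,\delta\mathcal{T}$ to pass criticality in either direction. Your route is slightly longer but more transparent about \emph{which} variations are responsible for each conclusion---in fact, your reparametrization family is precisely what is implicitly needed to justify the ``both terms vanish separately'' step in the paper's argument (these are exactly the variations with $\delta t(b)=0$, and since $t'>0$ the induced $\delta t$ ranges over all compactly supported functions). The Cauchy--Schwarz remark you open with is good motivation but is not actually invoked in your formal steps; you may want to either drop it or make explicit that it only foreshadows why the affine-in-$s$ parametrization is distinguished.

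For (ii)$\Leftrightarrow$(iii) both you and the paper simply appeal to Theorem~\ref{thm:sol_zermelo}(ii) (Fermat's principle), so there is no difference there.
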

\begin{proof}
    Let us first prove that (i) and (ii) are equivalent. Consider a variation $\hat{\gamma}_{\omega}(s)=(t_{\omega}(s),\gamma_{\omega}(s))$ of $\hat{\gamma}$ through curves in $\mathcal{Q}_{A,B}$ (as in Definition~\ref{def:travel_functional}). Then, writing $F_{\omega}(s) \equiv F(t_\omega(s),\gamma_{\omega}(s),\gamma'_{\omega}(s))$ for short:
    \begin{equation*}
    \begin{split}
        \left. \frac{d}{d\omega} \widehat{\mathcal{T}}[\hat{\gamma}_{\omega}] \right\rvert_{\omega=0} & = \int_a^b 2t'(s) \left. \frac{\partial}{\partial \omega} F_{\omega}(s) \right\rvert_{\omega=0} \ \mathrm{d}s = 2t'(b) \left.\frac{d}{d \omega} t_{\omega}(b)\right\rvert_{\omega=0} - 2\int_a^b t''(s) \left.\frac{\partial}{\partial \omega} t_{\omega}(s)\right\rvert_{\omega=0} \ \mathrm{d}s,
    \end{split}
    \end{equation*}
    where we have used \eqref{eq:dot_t_F}, integration by parts with $\int \left.\frac{\partial}{\partial \omega} F_{\omega}(s)\right\rvert_{\omega=0} \ \mathrm{ds} = \left.\frac{d}{d \omega} t_{\omega}(s)\right\rvert_{\omega=0}$, and the fact that $t_{\omega}(a)=0$ for all $\omega$. Therefore, $\hat{\gamma}$ is a critical point of $\widehat{\mathcal{T}}$ on $\mathcal{Q}_{A,B}$ if and only if the right-hand side of the above equation vanishes for any arbitrary variation $\hat{\gamma}_{\omega} \in \mathcal{Q}_{A,B}$ of $\hat{\gamma}$. This means that
    \begin{equation*}
    \begin{split}
        0 & = t'(b) \left.\frac{d}{d \omega} t_{\omega}(b)\right\rvert_{\omega=0}, \quad \forall \hat{\gamma}_{\omega}\\
        0 & = t''(s) \left.\frac{\partial}{\partial \omega} t_{\omega}(s)\right\rvert_{\omega=0}, \quad \forall s\in[a,b], \quad \forall \hat{\gamma}_{\omega}.
    \end{split}
    \end{equation*}
    Since $t'(b)=F(t(b),\gamma(b),\gamma'(b)) \not= 0$, the first equation is equivalent to $0=\left.\frac{d}{d \omega} t_{\omega}(b)\right\rvert_{\omega=0}=\left.\frac{d}{d \omega} \mathcal{T}[\hat{\gamma}_{\omega}]\right\rvert_{\omega=0}$ for any variation, i.e. $\hat{\gamma}$ is a critical point of $\mathcal{T}$ on $\mathcal{Q}_{A,B}$. The second equation, since it must hold at any $s \in [a,b]$ and for any variation, is equivalent to $t''(s)=0$. Finally, the equivalence between (ii) and (iii) is immediate by Theorem~\ref{thm:sol_zermelo}.
\end{proof}

\begin{remark}
Observe that, for time-independent Finsler metrics, the squared travel time functional \eqref{eq:squared_traveltime} is just the standard energy functional \eqref{eq:F_energy}, whose critical points are Finsler geodesics (i.e. pregeodesics with the appropriate affine parametrization). In the time-dependent case, as Proposition~\ref{prop:h_geodesics} shows, the parametrization that makes a curve a critical point of the squared travel time functional (namely, $t''(s) = 0$) does not coincide with the one required to be an actual geodesic of $H$. In any case, this distinction will not be relevant here, as we are only concerned with the path itself, rather than the specific parametrization used to describe it.
\end{remark}

Since the travel time \eqref{eq:traveltime_F} is invariant under reparametrizations, any minimum of $\mathcal{T}$ can be reparametrized so that $t''(s)=0$ while still remaining a minimum. Therefore, when searching for time-minimizing trajectories, working with $\widehat{\mathcal{T}}$ is actually equivalent to working with $\mathcal{T}$, as long as there is only one (time-dependent) Finsler metric in play. In the following, we will say in general that we are looking for ``pregeodesics'' when looking for minima of $\widehat{\mathcal{T}}$, due to the correspondence of these solutions with Lorentz-Finsler pregeodesics. However, when tack points are included and thus, more than one Finsler metric is used, Proposition~\ref{prop:h_geodesics} no longer holds. In this case, we can still compute the trajectories between tack points as pregeodesics---because these paths have to be solutions of Zermelo's problem themselves---but the tack points need to be found by minimizing the travel time (not the squared travel time) of the total trajectory. We will say in general that we are looking for the ``time-minimizing'' tack points and tack curves when looking for minima of the total travel time.

\subsection{Estimating pregeodesics for time-dependent Finsler metrics}
\label{subsec:pregeodesics_estimation}
At first, we propose an algorithm for computing pregeodesics connecting two boundary points for a time-dependent Finsler metric, in order to estimate the time-minimizing trajectory (without tacking). We aim to extend the \textit{GEORCE} algorithm \citep{georce}, which computes geodesics for Riemannian and Finsler manifolds, but does not take time-dependence into account. Following the computations in \citep{georce}, we consider the discretized version of the squared travel time functional \eqref{eq:squared_traveltime} as a control problem in the form
\begin{equation} \label{eq:disc_energy}
    \begin{split}
        \min_{(t_{s},x_{s},v_{s})} \sum_{s=0}^{T-1}\left(\Delta t_{s}\right)^{2} &= \min_{(t_{s},x_{s},v_{s})} \sum_{s=0}^{T-1}v_{s}^{\top}G(t_{s},x_{s},v_{s})v_{s}, \\
        x_{s+1} &= x_{s}+v_{s}, \quad s=0,\dots,T-1, \\
        t_{s+1}-t_{s} &= F(t_{s},x_{s},v_{s}), \quad s=0,\dots,T-1, \\
        t_{0}&=0, x_{0} = A, x_{T-1}=B,
    \end{split}
\end{equation}
where $G(t,x,v) \coloneqq \{g^{F_t}_{ij}(v)\}$ denotes the coordinate matrix of $g^{F_{t}}_v$, given by \eqref{eq:matrix_gF}. The control formulation allows us to decompose the minimization problem in \eqref{eq:disc_energy} into convex problems, which allows us to derive a system of equations of necessary conditions for the minimum. However, since the state equation in time is potentially non-linear and therefore not convex, we will instead consider a first-order Taylor approximation of $F_{t}$ to derive the necessary conditions for a minimum.
\begin{proposition} \label{prop:disc_energy}
    Assume the following first-order Taylor approximation of $F_t$ for the state equation in time:
    \begin{equation*} \label{eq:f_metric_taylor}
        F(t,x,u) = L^{(0)}+L^{(v)}v+L^{(x)}x+L^{(t)}t,
    \end{equation*}
    where $L^{(v)} \coloneqq \frac{\partial F}{\partial u}\left(t^{(0)},x^{(0)},v^{(0)}\right)$, $L_{s}^{(x)} \coloneqq \frac{\partial F}{\partial x}\left(t^{(0)},x^{(0)},v^{(0)}\right)$, $L^{(t)} \coloneqq \frac{\partial F}{\partial t}\left(t^{(0)},x^{(0)},v^{(0)}\right)$ and $L^{(0)} \coloneqq F\left(t^{(0)},x^{(0)},v^{(0)}\right)$ for some point $\left(t^{(0)},x^{(0)},v^{(0)}\right)$. The necessary conditions for a minimum in \eqref{eq:disc_energy} with the first-order Taylor approximation are then
    \begin{equation} \label{eq:necessary_cond}
        \begin{split}
            &2G(t_{s},x_{s},v_{s})v_{s}+\restr{\nabla_{y}\left[v_{s}^{\top}G(t_{s},x_{s},y)v_{s}\right]}{y=v_{s}}+\mu_{s}+\pi_{s}L_{s}^{(u)}=0, \quad s=0,\dots,T-1, \\
            &v_{s}^{\top}\frac{\partial G\left(t_{s}^{(i)},x_{s}^{(i)},v_{s}^{(i)}\right)}{\partial t}v_{s}+\pi_{s}L_{s}^{(t)}+\pi_{s}=\pi_{s-1}, \quad s=1,\dots,T-1, \\
            &0=\pi_{T-1}, \\
            &\restr{\nabla_{y}\left[v_{s}^{\top}G(t_{s},y,v_{s})v_{s}\right]}{y=x_{s}}+\pi_{s}L_{s}^{(x)}+\mu_{s}=\mu_{s-1}, \quad s=1,\dots,T-1, \\
            &x_{s+1}=x_{s}+v_{s}, \quad s=0,\dots,T-1, \\
            &t_{s+1}=t_{s}+F(t_{s},x_{s},v_{s}),\quad s=0,\dots,T-1, \\
            &\sum_{s=0}^{T-1}v_{s}=B-A, \\
            &t_{0}=0,x_{0}=A,
        \end{split}
    \end{equation}
    where $\mu_{s} \in \mathbb{R}^{n}$ and $\pi_{s} \in \mathbb{R}$ denote the dual prices corresponding to position and time, respectively, for $s=0,\dots,T-1$.
\end{proposition}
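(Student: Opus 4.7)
The plan is to derive the stated optimality system via the standard Lagrangian/KKT framework applied to the discretized control problem \eqref{eq:disc_energy}, after substituting the first-order Taylor approximation of $F$ into the time state equation. First I would introduce multipliers $\mu_s \in \mathbb{R}^n$ for the position transitions $x_{s+1}=x_s+v_s$ and $\pi_s \in \mathbb{R}$ for the linearized time transitions $t_{s+1}-t_s = L^{(0)}+L^{(v)}v_s+L^{(x)}x_s+L^{(t)}t_s$, and form
\begin{equation*}
\mathcal{L} = \sum_{s=0}^{T-1} v_s^{\top} G(t_s,x_s,v_s)\, v_s \;-\; \sum_{s=0}^{T-1} \mu_s^{\top}(x_{s+1}-x_s-v_s) \;-\; \sum_{s=0}^{T-1} \pi_s\bigl(t_{s+1}-t_s-L^{(0)}-L^{(v)}v_s-L^{(x)}x_s-L^{(t)}t_s\bigr).
\end{equation*}
Since the Taylor-approximated problem is smooth and subject only to equality constraints, the KKT theorem guarantees that any (regular) minimizer satisfies $\nabla \mathcal{L}=0$. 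What remains is to identify the stationarity conditions with the equations in \eqref{eq:necessary_cond}.

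Next I would compute the partial derivatives with respect to each primal variable. The key observation is that the cost has a genuinely quadratic dependence on $v_s$ through both the outer quadratic form and the argument of $G$, so the product rule yields
\begin{equation*}
\partial_{v_s}\bigl[v_s^{\top} G(t_s,x_s,v_s)\, v_s\bigr] = 2\,G(t_s,x_s,v_s)\, v_s + \nabla_y\bigl[v_s^{\top} G(t_s,x_s,y)\, v_s\bigr]\bigr|_{y=v_s},
\end{equation*}
while the derivatives with respect to $x_s$ and $t_s$ act only through $G$, producing $\nabla_y[v_s^{\top} G(t_s,y,v_s)\, v_s]|_{y=x_s}$ and $v_s^{\top}(\partial G/\partial t)(t_s,x_s,v_s)\, v_s$, respectively. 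Adding the contributions from the multiplier terms---noting that each interior $x_s$ appears in both the $\mu_{s-1}$ and $\mu_s$ constraints, and each interior $t_s$ appears in both the $\pi_{s-1}$ and $\pi_s$ constraints, which produces the backward costate recursions---and then collecting signs gives the first, second and fourth equations of \eqref{eq:necessary_cond} after rearranging.

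The transversality condition $\pi_{T-1}=0$ is obtained from the fact that the terminal time is unconstrained: it enters $\mathcal{L}$ only through $-\pi_{T-1}t_T$ (the final time-transition constraint), hence $\partial\mathcal{L}/\partial t_T=-\pi_{T-1}=0$. The remaining four equations in \eqref{eq:necessary_cond} are simply the primal feasibility statements---the original position and time recursions, the telescoped endpoint equality $\sum_s v_s=B-A$ obtained by summing the position constraint, and the initial conditions $t_0=0$, $x_0=A$.

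The main delicate point is purely bookkeeping: tracking which of $\mu_{s-1}$, $\mu_s$, $\pi_{s-1}$, $\pi_s$ appear when differentiating at an interior stage, and being consistent with the chosen sign convention for the Lagrangian. There is no deep obstacle here, since the Taylor linearization is precisely what removes the nonconvexity of the original time-state equation and places the problem squarely within the scope of classical first-order KKT analysis; the cost, though not linearized, remains smooth so its partial derivatives are well-defined and the stationarity conditions are necessary.
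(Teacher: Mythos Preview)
Your proposal is correct and follows essentially the same approach as the paper's proof. The only difference is terminological: the paper forms the approximated Hamiltonian $\tilde{H}(t_s,x_s,v_s) = v_s^\top G_s v_s + \mu_s^\top(x_s+v_s) + \pi_s(t_s + L^{(0)}_s + L^{(v)}_s v_s + L^{(x)}_s x_s + L^{(t)}_s t_s)$ and invokes a discretized Pontryagin maximum principle, whereas you write the Lagrangian directly and apply KKT---but these are the same computation, and your stage-by-stage derivative calculations and the transversality argument for $\pi_{T-1}=0$ match the paper's result line by line.
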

\begin{proof}
    See Appendix~\ref{ap:necessary_cond}.
\end{proof}
The problem with the necessary conditions in \eqref{eq:necessary_cond} is that they are not easily solved for a general time-dependent Finsler metric $F_{t}$. We will instead consider an iterative scheme to solve \eqref{eq:necessary_cond}, where at iteration $i$ we consider the variables $\left(t_{s}^{(i)},x_{s}^{(i)},v_{s}^{(i)}\right)$. Inspired by \citep{georce}, we fix the following variables at each iteration $i$:
\begin{equation} \label{eq:georceh_fixed_variables}
    \begin{split}
        \xi_{s} &\coloneqq v_{s}^{\top}\frac{\partial G\left(t_{s}^{(i)},x_{s}^{(i)},v_{s}^{(i)}\right)}{\partial t}v_{s}, \quad s=1,\dots,T-1,\\
        \nu_{s} &\coloneqq \restr{\nabla_{x}v_{s}^{\top}G(t_{s},x,v_{s})v_{s}}{t_{s}=t_{s}^{(i)},x=x_{s}^{(i)},v_{s}=v_{s}^{(i)}}, \quad s=1,\dots,T-1, \\
        \zeta_{s} &\coloneqq \restr{\nabla_{y}v_{s}^{\top}G(t_{s},x_{s},y)v_{s}}{t_{s}=t_{s}^{(i)},x_{s}=x_{s}^{(i)}, v_{s}=v_{s}^{(i)}, y=v_{s}^{(i)}}, \quad s=1,\dots,T-1, \\
        L_{s}^{(v)} &\coloneqq \frac{\partial F}{\partial v}\left(t_{s}^{(i)},x_{s}^{(i)},v_{s}^{(i)}\right), \quad s=1,\dots,T-1, \\
        L_{s}^{(x)} &\coloneqq \frac{\partial F}{\partial x}\left(t_{s}^{(i)},x_{s}^{(i)},v_{s}^{(i)}\right), \quad s=1,\dots,T-1, \\
        L_{s}^{(t)} &\coloneqq \frac{\partial F}{\partial t}\left(t_{s}^{(i)},x_{s}^{(i)},v_{s}^{(i)}\right), \quad s=1,\dots,T-1, \\
        L_{s}^{(0)} &\coloneqq F\left(t_{s}^{(i)},x_{s}^{(i)},v_{s}^{(i)}\right), \quad s=1,\dots,T-1, \\
        G_{s} &\coloneqq G\left(t_{s}^{(i)},x_{s}^{(i)},v_{s}^{(i)}\right), \quad s=0,\dots,T-1, \\
    \end{split}
\end{equation}
such that the system of equations in \eqref{eq:necessary_cond} reduces to
\begin{equation} \label{eq:georceh_equations}
    \begin{split}
        &2G_{s}v_{s}+\zeta_{s}+\mu_{s}+\pi_{s}L_{s}^{(u)}=0, \quad s=0,\dots,T-1, \\
        &\xi_{s}+\pi_{s}L_{s}^{(t)}+\pi_{s}=\pi_{s-1}, \quad s=1,\dots,T-1, \\
        &0=\pi_{T-1}, \\
        &\nu_{s}+\pi_{s}L_{s}^{(x)}+\mu_{s}=\mu_{s-1}, \quad s=1,\dots,T-1, \\
        &x_{s+1}=x_{s}+v_{s},\quad s=0,\dots,T-1, \\
        &t_{s+1}=t_{s}+F(t_{s},x_{s},v_{s}),\quad s=0,\dots,T-1, \\
        &\sum_{s=0}^{T-1}v_{s}=B-A, \\
        &t_{0}=0,x_{0}=A.
    \end{split}
\end{equation}
For the system of equations in \eqref{eq:georceh_equations}, we see that the time dual prices $\{\pi_{s}\}$ can be determined as
\begin{equation} \label{eq:dual_prices}
    \begin{split}
        \pi_{T-1} &= 0, \\
        \pi_{s-1} &= \xi_{s}+\pi_{s}L_{s}^{(t)}+\pi_{s}, \quad s=T-1,\dots,1.
    \end{split}
\end{equation}
Using this, the position dual prices $\{\mu_{s}\}$ can be found exactly as in \citep{georce}:
\begin{equation} \label{eq:georceh_update_scheme}
    \begin{split}
        \mu_{T-1} &= \left(\sum_{s=0}^{T-1}G_{s}^{-1}\right)^{-1}\left(2(A-B)-\sum_{s=0}^{T-1}G_{s}^{-1}\left(\sum_{j>s}^{T-1}\left(\nu_{j}+\pi_{j}L_{j}^{(x)}\right)+\zeta_{s}+\pi_{s}L_{s}^{(v)}\right)\right), \\
        v_{s} &= -\frac{1}{2}G_{s}^{-1}\left(\mu_{T-1}+\sum_{j>t}^{T-1}\left(\nu_{j}+\pi_{j}L_{j}^{(x)}\right)+\zeta_{s}+\pi_{s}L_{s}^{(v)}\right), \quad s=T-1,\dots,0,
    \end{split}
\end{equation}
with the forward equations
\begin{equation*}
    \begin{split}
        &t_{0}=0,x_{0}=A, \\
        &x_{s+1}=x_{s}+v_{s}, \quad s=0,\dots,T-1, \\
        &t_{s+1}=t_{s}+F(t_{s},x_{s},v_{s}), \quad s=0,\dots,T-1.
    \end{split}
\end{equation*}
Using the update scheme in \eqref{eq:dual_prices} and \eqref{eq:georceh_update_scheme}, we denote the algorithm \textit{GEORCE-H} and display it in pseudo-code in Algorithm~\ref{al:georceh}, following the same approach as in \citep{georce} with soft line search to determine the step-size using backtracking with $\rho=0.5$.
\begin{algorithm}[!ht]
    \caption{GEORCE-H for Time-Dependent Finsler Metrics}
    \label{al:georceh}
    \begin{algorithmic}[1]
        \STATE{\textbf{Input}: $\mathrm{tol}$, $T$, $\rho$}
        \STATE{\textbf{Output}: Pregeodesic estimate $x_{0:T}$}
        \STATE{Set $x_{s}^{(0)} \leftarrow A+\frac{B-A}{T}s$, $v_{s}^{(0)} \leftarrow \frac{B-A}{T}$, $t_{s+1} \leftarrow t_{s}+F(t_{s},x_{s},v_{s})$ for $s=0,\dots,T$}
        \WHILE{stop criteria > $\mathrm{tol}$}
        \STATE{$\xi_{s} \leftarrow v_{s}^{\top}\frac{\partial G\left(t_{s}^{(i)},x_{s}^{(i)},v_{s}^{(i)}\right)}{\partial t}v_{s}$ for $s=1,\dots,T-1$} \\
        \STATE{Compute $\nu_{s}, \xi_{s}, \zeta_{s}, L_{s}^{(v)}, L_{s}^{(x)}, L_{s}^{(t)}, L_{s}^{(0)}, G_{s}$ using \eqref{eq:georceh_fixed_variables} for $\left(t_{s}^{(i)},x_{s}^{(i)}, v_{s}^{(i)}\right)$} \\
        \STATE{$\pi_{T-1} \leftarrow 0$} \\
        \STATE{$\pi_{s-1} \leftarrow \xi_{s}+\pi_{s}L^{(t)}+\pi_{s}$ for $s=T-1,\dots,1$} \\
        \STATE{$\mu_{T-1} \leftarrow \left(\sum_{s=0}^{T-1}G_{s}^{-1}\right)^{-1}\left(2(A-B)-\sum_{s=0}^{T-1}G_{s}^{-1}\left(\sum_{j>s}^{T-1}\left(\nu_{j}+\pi_{j}L_{j}^{(x)}\right)+\zeta_{s}+\pi_{s}L_{s}^{(v)}\right)\right)$} \\
        \STATE{$v_{s} \leftarrow -\frac{1}{2}G_{s}^{-1}\left(\mu_{T-1}+\sum_{j>t}^{T-1}\left(\nu_{j}+\pi_{j}L_{j}^{(x)}\right)+\zeta_{s}+\pi_{s}L_{s}^{(v)}\right)$ for $s=T-1,\dots,0$}
        \STATE{$t_{s+1} \leftarrow t_{s}+F(t_{s},x_{s},v_{s})$ for $s=0,\dots,T-1$}
        \STATE{$x_{s+1} \leftarrow x_{s}+v_{s}$ for $s=0,\dots,T-1$}
        \STATE{$\tilde{\alpha} \leftarrow 1$}
        \WHILE{$E(t_{0:T}, x_{0:T}, v_{0:T}) >  E\left(t^{(i)}_{0:T}, x^{(i)}_{0:T}, v^{(i)}_{0:T}\right)$}
        \STATE{$\tilde{\alpha} \leftarrow \rho \tilde{\alpha}$}
        \STATE{$x_{s+1} \leftarrow x_{s}+\tilde{\alpha} v_{s}+(1-\tilde{\alpha})v_{s}^{(i)}$ for $s=0,\dots,T-1$}
        \STATE{$u_{s+1} \leftarrow \tilde{\alpha} v_{s}+(1-\tilde{\alpha})v_{s}^{(i)}$ for $s=0,\dots,T-1$}
        \STATE{$t_{s+1} = t_{s}+F(t_{s},x_{s},v_{s})$ for $s=0,\dots,T-1$}
        \ENDWHILE
        \STATE{Set $v_{s}^{(i+1)} \leftarrow v_{s}$ for $s=0,\dots,T-1$}
        \STATE{Set $x_{s}^{(i+1)} \leftarrow x_{1}$ for $s=1,\dots,T-1$}
        \STATE{Set $t_{s}^{(i+1)} \leftarrow t_{s}$ for $s=0,\dots,T-1$}
        \STATE{$i \leftarrow i+1$}
        \ENDWHILE
        \STATE{return $\left(t_{s}^{(i)}, x_{s}^{(i)}\right)$ for $t=0,\dots,T-1$}
    \end{algorithmic}
\end{algorithm}

\paragraph{Convergence results} For \textit{GEORCE-H} in Algorithm~\ref{al:georceh}, we prove global convergence and local quadratic convergence similar to the original \textit{GEORCE}-algorithm \citep{georce} by generalizing the proofs to a time-dependent Finsler metric.

\begin{proposition}[Global Convergence]\label{prop:global_convergence}
    Let $E^{(i)}$ be the value of the discretized squared travel time functional in \eqref{eq:disc_energy} for the solution after iteration $i$ (with line search) in \textit{GEORCE-H}. If the starting point $\left(t^{(0)}, x^{(0)}, v^{(0)}\right)$ is feasible in the sense that the pregeodesic candidate has the correct start and end points, then the series $\left\{E^{(i)}\right\}_{i>0}$ will converge to a (local) minimum. 
\end{proposition}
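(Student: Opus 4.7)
The plan is to adapt the global convergence argument for the original GEORCE algorithm to the time-dependent setting, handling the extra coordinate by exploiting the linearization of $F$ and the resulting backward recursion for the time dual prices $\pi_s$ in \eqref{eq:dual_prices}. I would organize the argument around the three classical pillars of convergence proofs for descent methods: (a) feasibility preservation, (b) descent property of the update direction, and (c) monotone convergence of the objective to a stationary point.

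First, I would check that every iterate produced by the scheme \eqref{eq:georceh_update_scheme} is feasible. The closed-form expression for $\mu_{T-1}$ is explicitly constructed so that $\sum_s v_s = B-A$, which together with $x_0 = A$ and the forward recursion $x_{s+1} = x_s + v_s$ guarantees that $x_T = B$ at every iteration. The time coordinates $t_s$ are then recovered deterministically by the forward propagation $t_{s+1} = t_s + F(t_s,x_s,v_s)$ with $t_0 = 0$, so they remain consistent with the state equation. Backtracking with factor $\rho = 0.5$ along the convex combination $\tilde\alpha v_s + (1-\tilde\alpha)v_s^{(i)}$ preserves the affine constraint $\sum_s v_s = B-A$, so feasibility is maintained throughout line search.

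Second, I would argue that the direction produced at iteration $i$ is a descent direction for $E$. Fixing the coefficients \eqref{eq:georceh_fixed_variables} at $(t_s^{(i)},x_s^{(i)},v_s^{(i)})$ and solving the linearized KKT system \eqref{eq:georceh_equations} is precisely the minimization of a convex quadratic model of $E$ whose Hessian block in $v$ is $2G_s$; positive definiteness of $G_s$ is guaranteed by condition (iv) of Section~\ref{subsec:finsler_metrics} (strong convexity of the indicatrix). Hence the minimizer of the model strictly decreases the model unless its gradient vanishes, in which case the current iterate already satisfies the necessary conditions \eqref{eq:necessary_cond} and is a critical point. Away from such a point, smoothness of $F$ and of $G$ implies that the model agrees with $E$ up to first order, so for all sufficiently small $\tilde\alpha$ one has $E < E^{(i)}$, and backtracking terminates in finitely many halvings.

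Third, I would combine steps one and two: the sequence $\{E^{(i)}\}_{i \geq 0}$ is monotonically non-increasing by the acceptance rule of the line search, and it is bounded below by $0$ since $E^{(i)} = \sum_s v_s^\top G_s v_s \geq 0$. Therefore $\{E^{(i)}\}$ converges. Standard arguments for descent methods with Armijo-type (or pure sufficient-decrease) line search then show that any limit point of the iterate sequence must be a stationary point of \eqref{eq:disc_energy}, i.e. must satisfy \eqref{eq:necessary_cond}. Because the descent property is strict away from such stationary points, and $\{E^{(i)}\}$ converges, the iterates accumulate at critical points of $E$ in the feasible set, which completes the proof.

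The main obstacle, and the place where the time-dependent case differs nontrivially from \cite{georce}, is verifying that the update direction remains a descent direction for the true (non-linearized) $E$ despite the extra coupling introduced through the time variables $t_s$ and the backward recursion for $\pi_s$. The key point to make rigorous is that the linearization used in Proposition~\ref{prop:disc_energy} is an exact first-order expansion of $F$ at the current iterate, so the quadratic model shares the gradient of $E$ with respect to $(v_0,\dots,v_{T-1})$ at that iterate; together with the positive definiteness of the block-diagonal Hessian $\mathrm{diag}(2G_s)$, this yields the required strict decrease for small step sizes and hence acceptance by the backtracking scheme in finitely many iterations.
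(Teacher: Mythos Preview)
Your three-pillar structure (feasibility preservation, descent, monotone convergence to a stationary point) matches the paper's proof exactly; the paper packages the first two as Lemmas~A.1 and~A.2 and then concludes via the same bounded-monotone argument. The only substantive difference is in how step~(b) is justified: you frame the update as the minimizer of a convex quadratic model and assert that this model shares the gradient of $E$ at the current iterate, whereas the paper verifies descent by a direct and fairly lengthy Taylor expansion of $\Delta E$, tracking how the dual-price recursions for $\mu_s$ and $\pi_s$ interact with the $L_s^{(\cdot)}$ terms until everything collapses to $\Delta E = -2\sum_s \Delta v_s^\top G_s \Delta v_s + \mathcal{O}(\|\Delta v\|^2)$. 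Your claim that the gradients match is exactly what that computation establishes, and it is the genuinely nontrivial part in the time-dependent case because of the implicit coupling of $E$ to $v_s$ through the forward-propagated $t_s$; you correctly flag this as the main obstacle, but carrying it out is essentially all of Lemma~A.2. One minor caution: the line search in Algorithm~\ref{al:georceh} is pure decrease, not Armijo, so the ``standard arguments for descent methods with Armijo-type line search'' you invoke do not apply verbatim; the paper instead argues (somewhat informally) that if the limiting energy value were not a local minimum, one further GEORCE-H step from the limit would strictly decrease it, contradicting convergence.
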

\begin{proof}
    See Appendix~\ref{ap:global_convergence}.
\end{proof}

\begin{proposition}[Local Quadratic Convergence] \label{prop:quad_conv}
    Under sufficient regularity conditions of the discretized squared travel time functional (see Appendix~\ref{ap:assumptions}), it holds that if the discretized squared travel time functional in \eqref{eq:disc_energy} has a strongly unique (local) minimum point $z^{*}=\left(t^{*}, x^{*}, v^{*}\right)$ and locally the step-size satisfies $\alpha^{*}=1$, then \textit{GEORCE-H} has local quadratic convergence, i.e.
    \begin{equation*}
            \exists \epsilon>0:\quad \exists c>0: \quad \forall z^{(i)} \in B_{\epsilon}(z^{*}): \quad \norm{z^{(i+1)}-z^{*}} \leq c \norm{z^{(i)}-z^{*}}^{2},
    \end{equation*}
    where $z^{(i)}=(t^{(i)}, x^{(i)}, v^{(i)})$ is the solution from \textit{GEORCE-H} at iteration $i$, and $B_{\epsilon}\left(z^{*}\right)$ is the ball with radius $\epsilon>0$ and center $z^{*}$.
\end{proposition}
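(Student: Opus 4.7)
The plan is to recognize \textit{GEORCE-H} as a (damped) Newton iteration applied to the system \eqref{eq:necessary_cond} of necessary conditions, and then invoke the classical local quadratic convergence theorem for Newton's method at a strongly regular root. Let $z = (t_{0:T}, x_{0:T}, v_{0:T}, \mu_{0:T-1}, \pi_{0:T-1})$ collect the primal and dual variables, and let $\Phi(z) = 0$ denote the full system \eqref{eq:necessary_cond}. Since $z^{*}$ is a strongly unique local minimum of the discretized squared travel time functional \eqref{eq:disc_energy}, the associated Lagrangian has a positive definite reduced Hessian at $z^{*}$, so the Jacobian $D\Phi(z^{*})$ is non-singular (this is exactly the standard second-order sufficient condition for constrained optimization, combined with linear independence of the discretized state-equation gradients).

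First I would show that each outer iteration of \textit{GEORCE-H}, with the freeze rules \eqref{eq:georceh_fixed_variables} and the first-order Taylor approximation of $F$, produces exactly the Newton step for $\Phi$ at $z^{(i)}$. The point is that fixing the quantities in \eqref{eq:georceh_fixed_variables} at $(t_{s}^{(i)}, x_{s}^{(i)}, v_{s}^{(i)})$ replaces every nonlinear term in \eqref{eq:necessary_cond} by its first-order expansion around $z^{(i)}$, while the explicit solution of the reduced linear system---namely the backward recursion \eqref{eq:dual_prices} for $\{\pi_{s}\}$, the closed-form for $\mu_{T-1}$ and $\{v_{s}\}$ in \eqref{eq:georceh_update_scheme}, and the forward sweep in $t_{s}, x_{s}$---is precisely the block elimination that solves the Newton system $D\Phi(z^{(i)})\,\Delta z = -\Phi(z^{(i)})$. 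This mirrors the argument in \citep{georce} for the time-independent case; the only new ingredient is the additional Riccati-like scalar recursion for the time dual prices $\pi_{s}$, which is well-defined because the coefficient $1+L_{s}^{(t)}$ of $\pi_{s}$ is close to $1$ near $z^{*}$ under the regularity assumptions listed in Appendix~\ref{ap:assumptions}.

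Next I would import the hypothesis $\alpha^{*}=1$ to remove the damping locally. Because the backtracking line search starts from $\tilde{\alpha}=1$ and only decreases it when the objective fails to decrease, in a sufficiently small neighborhood $B_{\epsilon}(z^{*})$ the sufficient-decrease test is passed on the first try, so the iteration reduces to the undamped Newton update $z^{(i+1)} = z^{(i)} - D\Phi(z^{(i)})^{-1}\Phi(z^{(i)})$. At this point the standard Kantorovich / Newton-Mysovskikh theorem gives the quadratic bound
\begin{equation*}
    \norm{z^{(i+1)} - z^{*}} \leq c\,\norm{z^{(i)} - z^{*}}^{2},
\end{equation*}
with $c$ controlled by $\norm{D\Phi(z^{*})^{-1}}$ and a local Lipschitz constant of $D\Phi$; both are finite under the regularity assumptions of Appendix~\ref{ap:assumptions} (which guarantee $C^{2}$ dependence of $F$, and hence of $G$, in $(t,x,v)$).

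The main obstacle I anticipate is the first step: verifying cleanly that the explicit sweep \eqref{eq:dual_prices}--\eqref{eq:georceh_update_scheme}, together with the forward integration of $t_{s+1} = t_{s} + F(t_{s},x_{s},v_{s})$ (note: the forward sweep uses the \emph{exact} $F$, not its linearization), truly coincides with the Newton step on the full nonlinear system $\Phi$. The cleanest route is to observe that the time variables enter $\Phi$ only through $F$ and $G$, so using the exact $F$ in the forward sweep produces a residual of order $\norm{z^{(i)}-z^{*}}^{2}$ relative to the pure Newton step; this residual is absorbed into the quadratic term and does not spoil the convergence rate. The rest of the argument is then a direct translation of the Riemannian-case proof in \citep[Proposition on local quadratic convergence]{georce}, with the extra time block handled via the Riccati recursion described above.
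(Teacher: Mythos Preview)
Your proposal takes a genuinely different route from the paper, and the central step contains a real gap.

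\textbf{What the paper does.} The paper never identifies \textit{GEORCE-H} as a Newton iteration. Instead it argues directly with the energy: from the strongly-unique-minimum assumption $E(z)-E(z^{*})\ge K\lVert z-z^{*}\rVert$ one gets $K\lVert z^{(i+1)}-z^{*}\rVert\le E(z^{(i+1)})-E(z^{*})$. The right-hand side is split as $(E(z^{(i+1)})-E(z^{(i)}))+(E(z^{(i)})-E(z^{*}))$ and each piece is expanded via the first-order formula \eqref{eq:energy_taylor} from Lemma~\ref{lemma:global_taylor}. After cancellation the leading term is $\sum_{s}\langle -2G_{s}\Delta v_{s},\,v_{s}^{(i+1)}-v_{s}^{*}\rangle$, and Lemma~\ref{lemma:quad_conv_bound} (applied with $-2G_{s}\Delta v_{s}$ playing the role of the gradient) makes this negative. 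What remains is the $\mathcal{O}$-residue, which is $\mathcal{O}(\lVert z^{(i)}-z^{*}\rVert^{2})$ by the Lipschitz assumption, yielding the quadratic bound. The argument is energy-based throughout and leans essentially on the \emph{first-order} growth condition in Assumption~\ref{assum:quad_conv_assumptions}(a), not on any KKT-Jacobian invertibility.

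\textbf{Where your argument breaks.} Your key claim---that the freeze rules \eqref{eq:georceh_fixed_variables} turn \eqref{eq:georceh_equations} into the Newton linearization of \eqref{eq:necessary_cond}---is not correct as stated. Freezing $G_{s}=G(z^{(i)})$ and then solving $2G_{s}v_{s}+\zeta_{s}+\mu_{s}+\pi_{s}L_{s}^{(v)}=0$ for the \emph{new} $v_{s}$ is a fixed-point (sequential convex approximation) step, not a Newton step: the true linearization of $2G(z)v$ around $z^{(i)}$ contains the cross-term $2[\partial G(z^{(i)})\cdot\Delta z]\,v_{s}^{(i)}$, which is $O(\lVert\Delta z\rVert)$ and is simply absent from \eqref{eq:georceh_equations}. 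The same happens with the frozen $\zeta_{s}$, $\nu_{s}$, $\xi_{s}$ and $L_{s}^{(\cdot)}$ terms. Unless you can show that all these dropped first-order terms combine to something of size $O(\lVert z^{(i)}-z^{*}\rVert^{2})$---which is not automatic and is never argued---the standard Newton--Kantorovich theorem does not apply and the quadratic rate does not follow from your route. A second, smaller issue: you read ``strongly unique minimum'' as the usual second-order sufficient condition with positive definite reduced Hessian, but in Appendix~\ref{ap:assumptions} it is the \emph{first-order} growth bound $E(z)-E(z^{*})\ge K\lVert z-z^{*}\rVert$; this is precisely what drives the paper's proof and is not the hypothesis you are using to conclude Jacobian nonsingularity.
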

\begin{proof}
    See Appendix~\ref{ap:quadratic_convergence} for the proof and Appendix~\ref{ap:assumptions} for assumptions.
\end{proof}

\subsection{Estimating tack points for time-dependent Finsler metrics}
\label{subsec:tack_points_estimation}
We consider a fixed number of tack points $n_{\mathrm{tacks}}$ and a fixed order of time-dependent Finsler metrics $\{F_{t}^{i}\}_{i=0}^{n_{\mathrm{tacks}}}$. We consider the pregeodesics between each tack point parameterized by the unit interval, $s \in [0,1]$, where we split the unit interval by the number of tack points, i.e. $[0,s_{1}) \,\cup\, [s_{1},s_{2}) \, \cup \, \dots \, \cup \, [s_{i},s_{i+1}) \, \cup \, \dots \, \cup \, [s_{n_{\mathrm{tacks}}},1]$. Estimating time-minimizing tack points and curves can be formulated as
\begin{equation} \label{eq:min_tacking}
    \begin{split}
        \min_{z_{1:n_{\mathrm{tacks}}}} \quad &t(1) \\
        \text{s.t.} \quad & t'(s)= F^{i}(t(s),\gamma_{i}(s),\gamma'_{i}(s)), \quad \forall s \in [s_{i},s_{i+1}], \\
        &\gamma(0)=A,\gamma(1)=B,t(0)=0.
    \end{split}
\end{equation}
where $t$ denotes the travel time of the curve $\gamma$. To minimize travel time, each curve connecting two tack points must be a pregeodesic. We propose an iterative estimation of the pregeodesics between each tack point using Algorithm~\ref{al:georceh} for fixed tack points and then update the tack points by minimizing the integrated travel time along the pregeodesics, using standard optimization solvers. We present the algorithm in pseudo-code in Algorithm~\ref{al:tacking_optimization}. We use \textit{ADAM} \citep{kingma2017adammethodstochasticoptimization} to minimize travel time with respect to tack points for the integrated time along the estimated pregeodesics found by Algorithm~\ref{al:georceh}.
\begin{algorithm}[H]
    \SetAlgoLined
    \textbf{Input}: tol \\
    \textbf{Output}: Estimation of tack points $z_{1:n_{\mathrm{tacks}}}^{(i)}$, pregeodesics $\gamma_{1:(n_{\mathrm{tacks}}+1)}^{(i)}$ and travel time $t(1)$ \\
    $i = 0$ \\
    \While{$\mathrm{stop}$ $\mathrm{criteria}$ > $\mathrm{tol}$} {
        Compute pregeodesics, $\gamma_{1:(n_{\mathrm{tacks}}+1)}^{(i)}$, connecting each tack point using Algorithm~\ref{al:georceh} \\
        Update the tack points, $z_{1:n_{\mathrm{tacks}}}^{(i)}$, by minimizing \eqref{eq:min_tacking} \\ 
        $i \leftarrow i+1$
    }
    return $z_{1:n_{\mathrm{tacks}}}^{(i)}, \gamma_{1:(n_{\mathrm{tacks}}+1)}^{(i)}, t(1)$ \\
    \caption{Tack Points Estimation}
    \label{al:tacking_optimization}
\end{algorithm}

We note that in Algorithm~\ref{al:tacking_optimization} we estimate pregeodesics by minimizing the squared travel time functional, and the tack points by minimizing the total travel time. Overall, this will give a (local) minimum of the travel time. Unless the minimum is unique, there might be multiple solutions to the optimization problem. To investigate numerically other possible local minima, the optimization scheme can be run for different initialization of the candidate tack points. In Algorithm~\ref{al:tacking_optimization} we estimate the time-minimizing tack curves by sequentially estimating pregeodesics to a candidate tack point, and then update the tack point by minimizing the travel time. The entire tack curve, including the tack points, can be found by minimizing only the squared travel time using \textit{GEORCE-H} directly on both the curve points and tack points. Although the tack points and pregeodesics can be estimated in this way in one iteration, which significantly decreases the runtime, minimizing the squared travel time of the tack curve will not correspond in general to minimizing the total travel time. This is due to the fact that Proposition~\ref{prop:h_geodesics} is not valid when there is more than one Finsler metric in play, as explained at the beginning of the section.

\section{Numerical examples of Zermelo navigation with tacking}
\label{sec:examples}
In this section, we apply Algorithm~\ref{al:georceh} and Algorithm~\ref{al:tacking_optimization} to illustrate the Zermelo navigation tracks with tacking for a wide range of different time-dependent Finsler metrics. Since we will work in $\mathbb{R}^{2}$ throughout, for simplicity we will adopt the notation $(x,y) \coloneqq \left(x^{1},x^{2}\right)$ for the position coordinates, and $(u,v) \coloneqq \left(v^{1},v^{2}\right)$ for the velocity coordinates.

To construct explicit examples, we will consider the Zermelo metric \eqref{eq:zermelo_metric} derived in Section~\ref{subsec:ellipses}, whose indicatrices are shifted ellipses. We denote by $a,b$ the semi-axes of the ellipses, which are rotated by an angle $\theta$ in the clockwise direction and displaced by a ``wind'' vector field $W=(w_1,w_2)$, as in Section~\ref{subsec:ellipses}. In this context, however, $W$ will not represent an actual wind---we will use two different vector fields to illustrate the non-convex navigation---and should merely be interpreted as the displacement of the indicatrix field. For our purposes here, it is more convenient to write the components of $W$ with respect to the (orthonormal) basis defined by the axes of the ellipses. These components $c_1,c_2$ (relative to the $ab$-axes) relate to the Euclidean ones $w_1,w_2$ (relative to the $xy$-axes) via
\begin{equation*}
    \begin{split}
        w_1 & = c_1\cos{\theta}+c_2\sin{\theta}, \\
        w_2 & = -c_1\sin{\theta}+c_2\cos{\theta}.
    \end{split}
\end{equation*}
That is, $(w_1,w_2)$ is the result of rotating the vector $(c_1,c_2)$ clockwise by an angle $\theta$. Then, in terms of $a,b,c_{1},c_{2},\theta$, the expression of the Zermelo metric \eqref{eq:zermelo_metric} takes the expanded form
\begin{equation} \label{eq:shiftetd_ellipses_metric}
    \begin{split}
        F(t,x,y,u,v) &= \left(\frac{1}{a^{2}b^{2}-a^{2}c_{2}^{2}-b^{2}c_{1}^{2}}\right) \cdot \\
        &\phantom{xxxxx}\Biggl(-a^{2}c_{2}\left(u\sin\theta+v\cos\theta\right)-b^{2}c_{1}\left(u\cos\theta-v\sin\theta\right)\\
    &\phantom{xxxxxxxxxx}+\Bigl(a^{4}b^{2}\left(u\sin\theta+v\cos\theta\right)^{2}+a^{2}b^{4}\left(u\cos\theta-v\sin\theta\right)^{2} \\
        &\phantom{xxxxxxxxxxxxx}-a^{2}b^{2}c_{1}^{2}\left(u\sin\theta+v\cos\theta\right)^{2} \\
        &\phantom{xxxxxxxxxxxxx}+2a^{2}b^{2}c_{1}c_{2}\left(u\cos\theta-v\sin\theta\right)\left(u\sin\theta+v\cos\theta\right) \\
        &\phantom{xxxxxxxxxxxxx}-a^{2}b^{2}c_{2}^{2}\left(u\cos\theta-v\sin\theta\right)^{2}\Bigr)^{1/2}\Biggr)\quad .
    \end{split}
\end{equation}
Note that $a,b,c_{1},c_{2},\theta$ can be simple constants, but also functions of time and position. We summarize in Table~\ref{tab:example_params} the various choices of bi-metrics $(F^{\alpha}_t, F^{\beta}_t)$, i.e. the various choices of corresponding parameter functions, that we will use for the numerical experiments below.


\begin{table}[ht!]
\centering
\begin{tabular}{>{\raggedright\arraybackslash}p{2.5cm} *{5}{>{\centering\arraybackslash}p{2.5cm}}}
\midrule \midrule
{\bf{Parameters \  $\longrightarrow$}} & $a(t,x,y)$ & $b(t,x,y)$ & $c_{1}(t,x,y)$ & $c_{2}(t,x,y)$ & $\theta(t,x,y)$ \\
\midrule \midrule
{\bf{Constant:}}  \\ \midrule  $\phantom{xxxxx}F^{\alpha} $              & $2$          & $2$          & $(3/2)\cos(\pi/10)$          & $(3/2)\sin(\pi/10)$          & $\pi$          \\
\midrule    $\phantom{xxxxx}F^{\beta} $                 & $1$          & $1$          & $3/4$          & $0$          & $0$          \\
\midrule \midrule
{\bf{Time-only:}}   \\  \midrule   $\phantom{xxxxx}F_{t}^{\alpha} $        & $2 + (1/2)\sin(t)$ & $(3/4)a$ & $-\cos(t \pi/4)$ & $-\sin(t \pi/4)$ & $0$ \\
\midrule    $\phantom{xxxxx}F^{\beta} $                 & $7$          & $7/4$          & $-(3/2)$          & $0$          & $\pi/4$          \\
\midrule \midrule
{\bf{Time \& position:}}  &For $t \geq 0$:\\    \midrule $\phantom{xxxxx}F_{t}^{\alpha} $       & $1+t+x^{2} + y^{2}$ & $1+t+x^{2} + y^{2}$ & $1/2$ & $1/2$ & $0$ \\
 \midrule   $\phantom{xxxxx}F_{t}^{\beta} $             & $1+t+x^{2} + y^{2}$ & $1+t+x^{2} + y^{2}$ & $1/2$ & $-1/2$ & $0$ \\
\midrule \midrule
{\bf{Position-only:}}  & For $R(y) > 0$: \\  \midrule   $\phantom{xxxxx}F^{\alpha} $       & $R(y)$ & $R(y)$ & $(1/2)R(y)$ & $(1/2)R(y)$ & $0$ \\
 \midrule   $\phantom{xxxxx}F^{\beta} $                & $R(y)$ & $R(y)$ & $(1/2)R(y)$ & $-(1/2)R(y)$ & $0$ \\
\midrule \midrule
\end{tabular}
\caption{Parameters for \eqref{eq:shiftetd_ellipses_metric} to obtain the different, fairly simple, explicit  Finsler bi-metrics $(F_{t}^{\alpha}, F_{t}^{\beta})$ that will be used in the numerical experiments below.}
\label{tab:example_params}
\end{table}

\paragraph{Constant bi-metric} As a first simple example, we consider two Finsler metrics $F^\alpha, F^\beta$ independent of position and time, using the ``Constant'' parameters in Table~\ref{tab:example_params} for the expression \eqref{eq:shiftetd_ellipses_metric}.
In this case, the metrics are Minkowski norms and therefore the pregeodesics will be straight lines. We illustrate the indicatrix field in Figure~\ref{fig:direction_only_indicatrix}, which is similar to the speed profiles seen in sailing \citep{miles2022a}, modeled as double indicatrices, where here we consider two Finsler metrics with different scaling.

\begin{figure}
    \centering
    \includegraphics[width=0.4\textwidth]{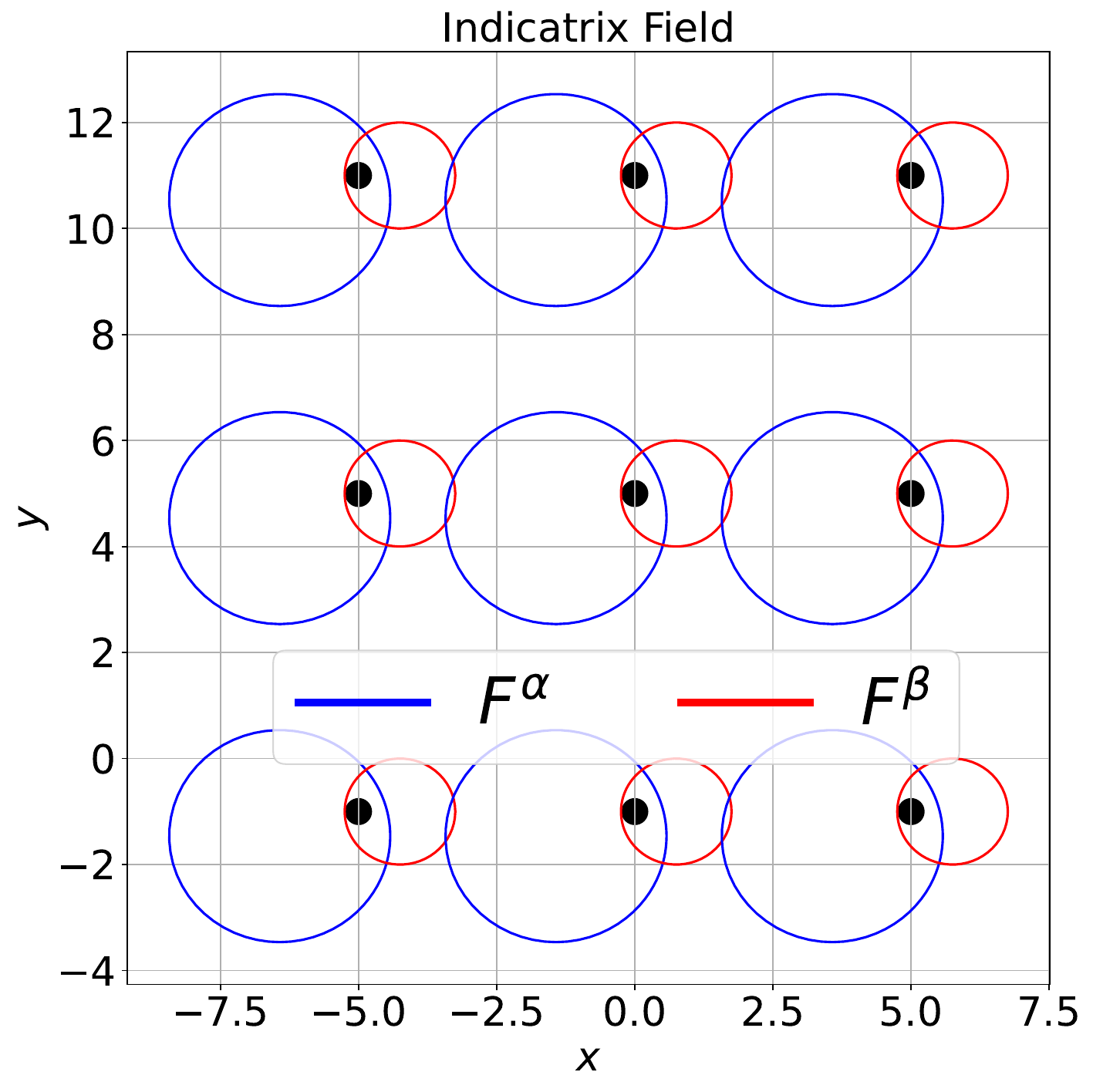}
    \caption{The indicatrix field for the ``Constant'' bi-metric.}
    \label{fig:direction_only_indicatrix}
\end{figure}

We estimate the corresponding tacking curves and points using Algorithm~\ref{al:tacking_optimization} for $n_{\mathrm{tacks}}=1,2,3,4$ tack points between $A=(0,0)$ and $B=(2,8)$, as illustrated in Figure~\ref{fig:direction_only}. The blue straight segment from $A$ to $B$ corresponds to the optimal trajectory using only $F^{\alpha}$, while the red straight segment corresponds to the optimal trajectory using only $F^{\beta}$. When there are multiple tack points, the metrics change at each tacking, which is marked by a dotted point. As expected from the theoretical results in the previous sections, Figure~\ref{fig:direction_only} shows that it is not optimal to go directly towards the target, but rather to go in the largest directions of the indicatrix fields---technically, the directions prescribed by Theorem~\ref{thm:time_min}---and change metric similar to a sailboat traveling upwind (see e.g. \citep{miles2022a}). We further see that as the number of tack points increases, the tacking curve gets closer to the straight line. However, the total travel time remains the same for any number of tack points (as predicted by Theorem~\ref{thm:time_min}) and it does not converge to the travel time of the straight line pregeodesic using either $F^{\alpha}$ or $F^{\beta}$.

\begin{figure}
    \centering
    \includegraphics[width=1.0\textwidth]{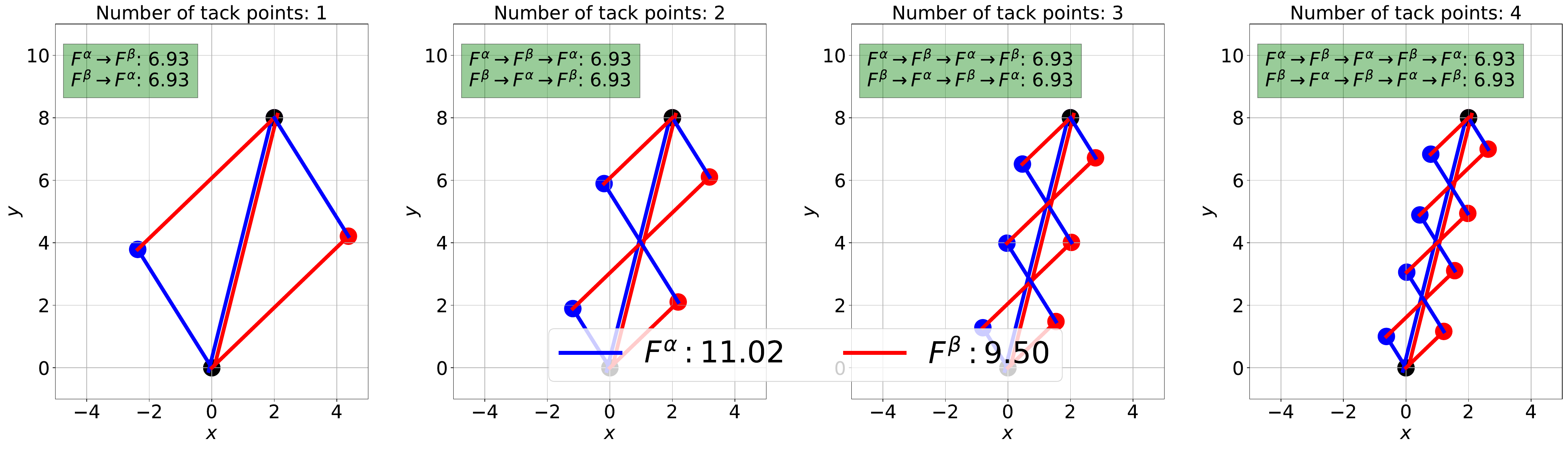}
    \caption{The figure shows the corresponding tacking curves and points using Algorithm~\ref{al:tacking_optimization} for $n_{\mathrm{tacks}}=1,2,3,4$ tack points from left to right for the ``Constant'' bi-metric between the points $(0,0)$ and $(2,8)$. The blue curves are computed using $F^{\alpha}$, while the red curves are computed using $F^{\beta}$. The blue and red labels at the bottom indicate the travel time without tacking, while the green boxes indicate the common total travel time for the different tack curves. Tack points are highlighted by circular dots with the same color as the ``incoming'' segment. Note also, that the common turning angle at the tack points is not $\pi/2$.}
    \label{fig:direction_only}
\end{figure}

\paragraph{Time-only dependent bi-metric} To see the application of Algorithm~\ref{al:tacking_optimization} in the non-constant case, we consider now a time-only dependent Finsler metric, whose pregeodesics are not necessarily straight lines. We consider again the general shifted ellipse indicatrix field in \eqref{eq:shiftetd_ellipses_metric}, with the ``Time-only'' parameters from Table~\ref{tab:example_params}.
Note that $F^{\alpha}_{t}$ is a time-only dependent Finsler metric, while $F^{\beta}$ is a constant metric. We show the indicatrix field for different times in Figure~\ref{fig:time_only_indicatrix}, where we see that the indicatrix field of $F^{\alpha}_{t}$ changes in time, while the indicatrix field of $F^{\beta}$ is constant.

\begin{figure}
    \centering
    \includegraphics[width=1.0\textwidth]{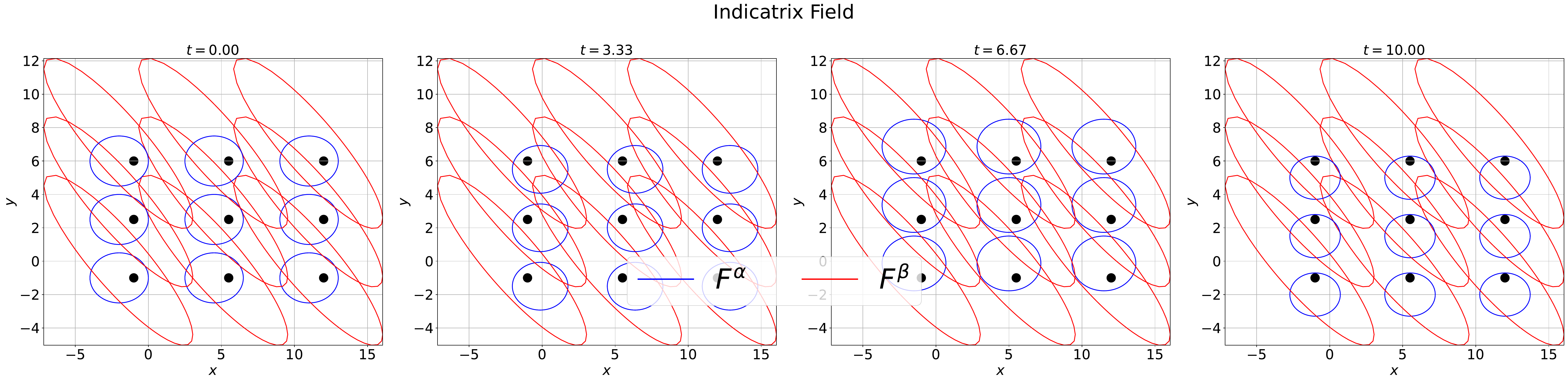}
    \caption{The indicatrix field for the ``Time-only'' bi-metric.}
    \label{fig:time_only_indicatrix}
\end{figure}

In Figure~\ref{fig:time_only} we display the corresponding tack curves for $n_{\mathrm{tacks}}=1$, where $F^{\alpha}_{t}$ is used for the blue curve segments, while $F^{\beta}$ is used for the red curve segments. We see again that the tack curves have shorter travel times than the direct pregeodesics without tacking. We also see that the travel time is not invariant to the order of the metrics, where in this case using first $F^{\beta}$ and then $F^{\alpha}_{t}$ gives the tack curve with the lowest total travel time.

\begin{figure}
    \centering
    \includegraphics[width=0.4\textwidth]{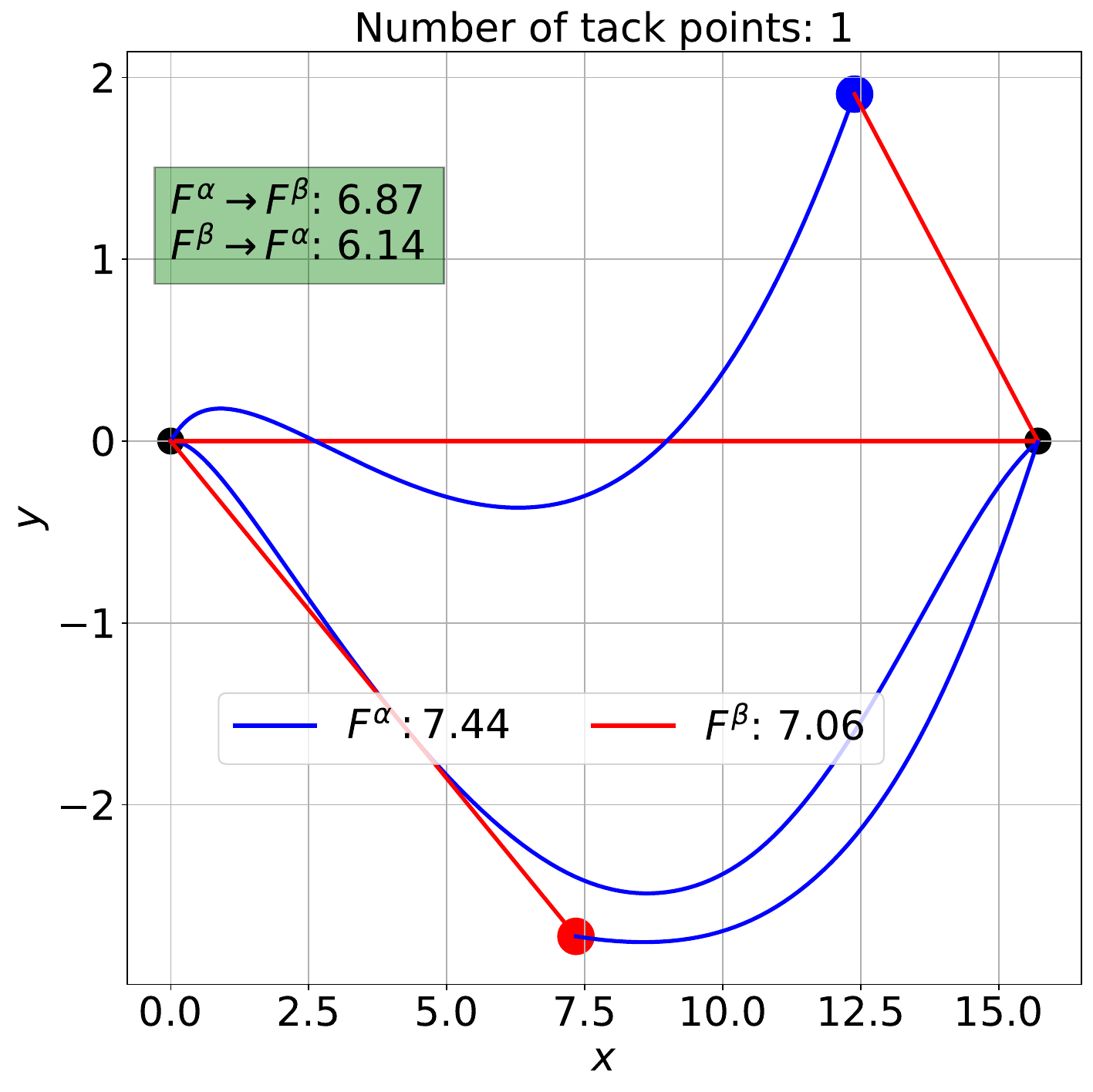}
    \caption{Tacking curves and points using Algorithm~\ref{al:tacking_optimization} for the ``Time-only'' bi-metric between the points $(0,0)$ and $(5\pi,0)$. The travel time without tacking can be seen in the label, while the green box shows the total travel time for the tack curves. It is clearly an advantage here to choose the metric $F^{\beta}$ first. Moreover, the corresponding turning angle at the tack point is conveniently smaller.}
    \label{fig:time_only}
\end{figure}

\paragraph{Time- and position-dependent  bi-metric} In this example, we will consider the most general case with time- and position-dependent Finsler metrics. We set the ``Time \& position'' parameters from Table~\ref{tab:example_params} in \eqref{eq:shiftetd_ellipses_metric}.
Observe that both $F^{\alpha}_t$ and $F^{\beta}_t$ depend on time and position and are symmetric with respect to each other.

In Figure~\ref{fig:time_position_indicatrix} we display the indicatrix field of $F^{\alpha}_t$ for different values of time. In Figure~\ref{fig:time_position} we show the computed direct pregeodesics using Algorithm~\ref{al:georceh}, as well as the tack curves with one tack point using Algorithm~\ref{al:tacking_optimization}. We see that the tack curves and pregeodesics are symmetric, and the tack curves have the lowest travel time as expected.

\begin{figure}
    \centering
    \includegraphics[width=1.0\textwidth]{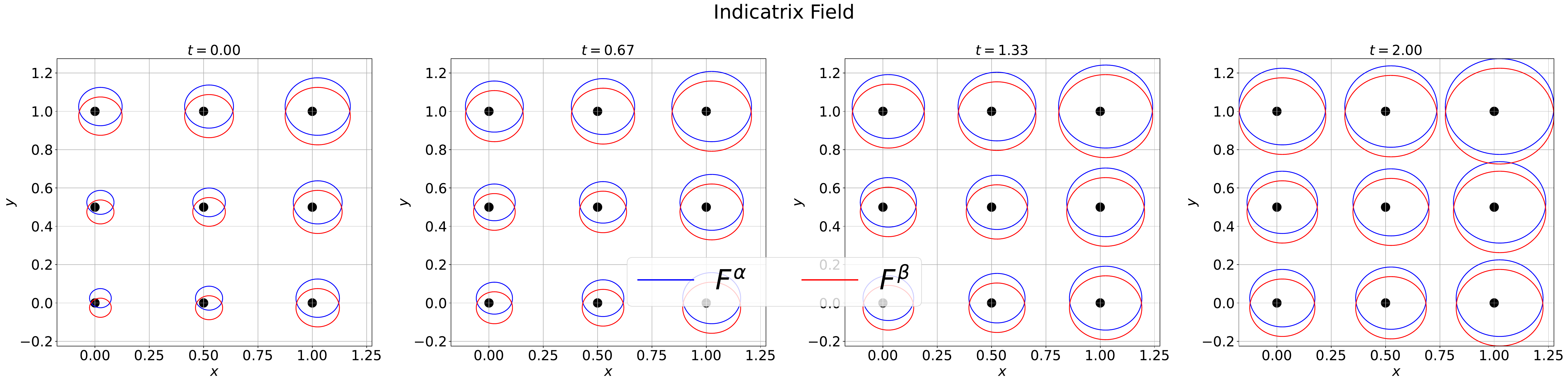}
    \caption{The indicatrix field for the ``Time \& position'' bi-metric.}
    \label{fig:time_position_indicatrix}
\end{figure}

\begin{figure}
    \centering
    \includegraphics[width=0.4\textwidth]{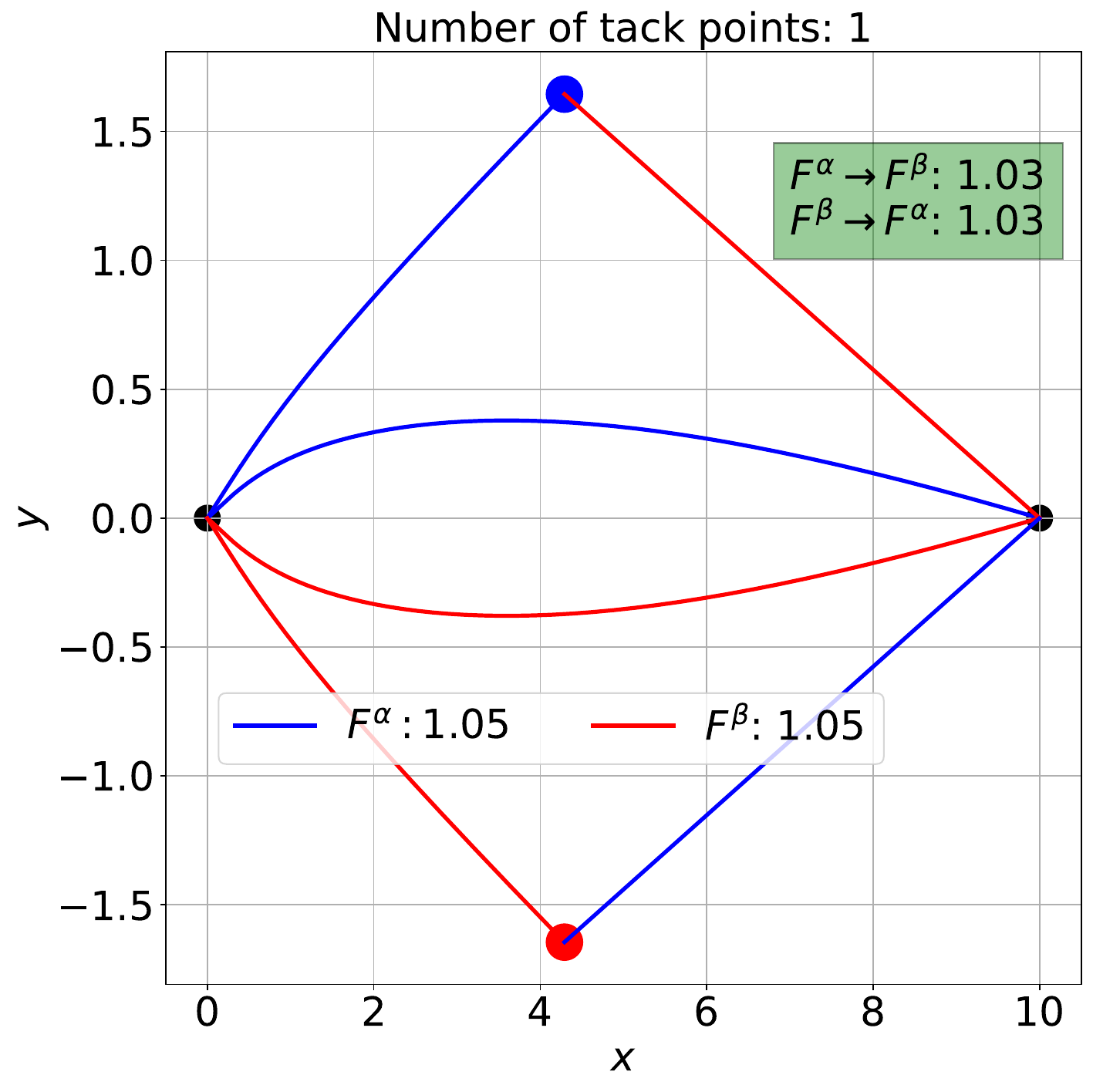}
    \caption{Tacking curves and points using Algorithm~\ref{al:tacking_optimization} for the ``Time \& position'' bi-metric. The (common) travel time for the direct pregeodesics from $(0,1)$ to $(10,1)$ without tacking can be seen in the label of the figure, while the green box shows the (common) total travel time for the tack curves. The displayed symmetry follows from the symmetry of the two metrics that we  use for the bi-metric construction.}
    \label{fig:time_position}
\end{figure}


\paragraph{Position-only dependent bi-metric}
As a final example, we consider the bi-metric expressed in \eqref{eq:poincarre_north} below. It is directly motivated by the long distance flight paths of the Juan Fernandez petrels (west of Chile)  that have been GPS-monitored and documented by Clay et al. in \cite[Figure 2]{Clay2023}. In this simplistic model the bi-metric indicatrix field, i.e. the ground speed profile of the petrels, consists of two shifted circles in each tangent plane (as in Section~\ref{subsec:circles}). The radius $R(y)$ of the circles depends only on the position coordinate $y$ and is modeled by $R(y)= 3\arctan(y)$. This is at the same time a model (modulo a constant factor and orientation) of the wind speed profile in the domain of the petrels' flight paths seen in \cite[Figure 2]{Clay2023}. We apply now the modeling assumption that the ground speed profile indicatrix is a polar curve which is everywhere homothetically scaled by the wind speed on each given location---an assumption, that is made plausible (for the similar flights of the wandering albatross) by Richardson et al. in \cite[Figure 8]{Richardson2018}.

In the last row of Table~\ref{tab:example_params} we thus choose $R(y)= 3\arctan(y)$ and obtain the following compact expressions for the Zermelo metric in \eqref{eq:shiftetd_ellipses_metric}:
\begin{equation} \label{eq:poincarre_north}
    \begin{split}
        F^{\alpha}(x,y,u,v) &= \frac{\sqrt{3u^{2}+3v^{2}-2uv}+v-u}{3\arctan(y)}, \\
        F^{\beta}(x,y,u,v) &= \frac{\sqrt{3u^{2}+3v^{2}+2uv}-v-u}{3\arctan(y)}. \\
    \end{split}
\end{equation}
We illustrate the indicatrix field of the bi-metric in Figure~\ref{fig:poincarre_north_indicatrix}. Note that as $y$ increases, the indicatrix field increases correspondingly in size. Since $R(y)$ is small in the domain where $y$ is small, it takes a long travel time to move from $A$ to $B$ in that domain. 

For explicit comparison, suppose the track chosen from $A=(0,1)$ to $B=(40,1)$ is the straight line $\gamma(s) = (s, 1)$, $s\in [0, 40]$, irrespective of both the Zermelo navigation advantage and the tacking advantage. Then the travel time would be---using any one of the two metrics $F^{\alpha}$ or $F^{\beta}$, because $F^{\alpha}(s,1, 1,0) = F^{\alpha}(s,1, 1,0)$ for all $s$:
\begin{equation}\label{eq:StraightT}
\mathcal{T}[\gamma] =\mathcal{T}^{\alpha}[\gamma] = \mathcal{T}^{\beta}[\gamma] = \int_{0}^{40}   \frac{1}{F^{\alpha}(s,1, 1,0)}\, \mathrm{d}s = 128.7.
\end{equation}

In accordance with our main results, this travel time can be reduced considerably by taking full advantage of the Zermelo navigation option with tacking---as we illustrate explicitly below. 

\begin{figure}
    \centering
    \includegraphics[width=0.4\textwidth]{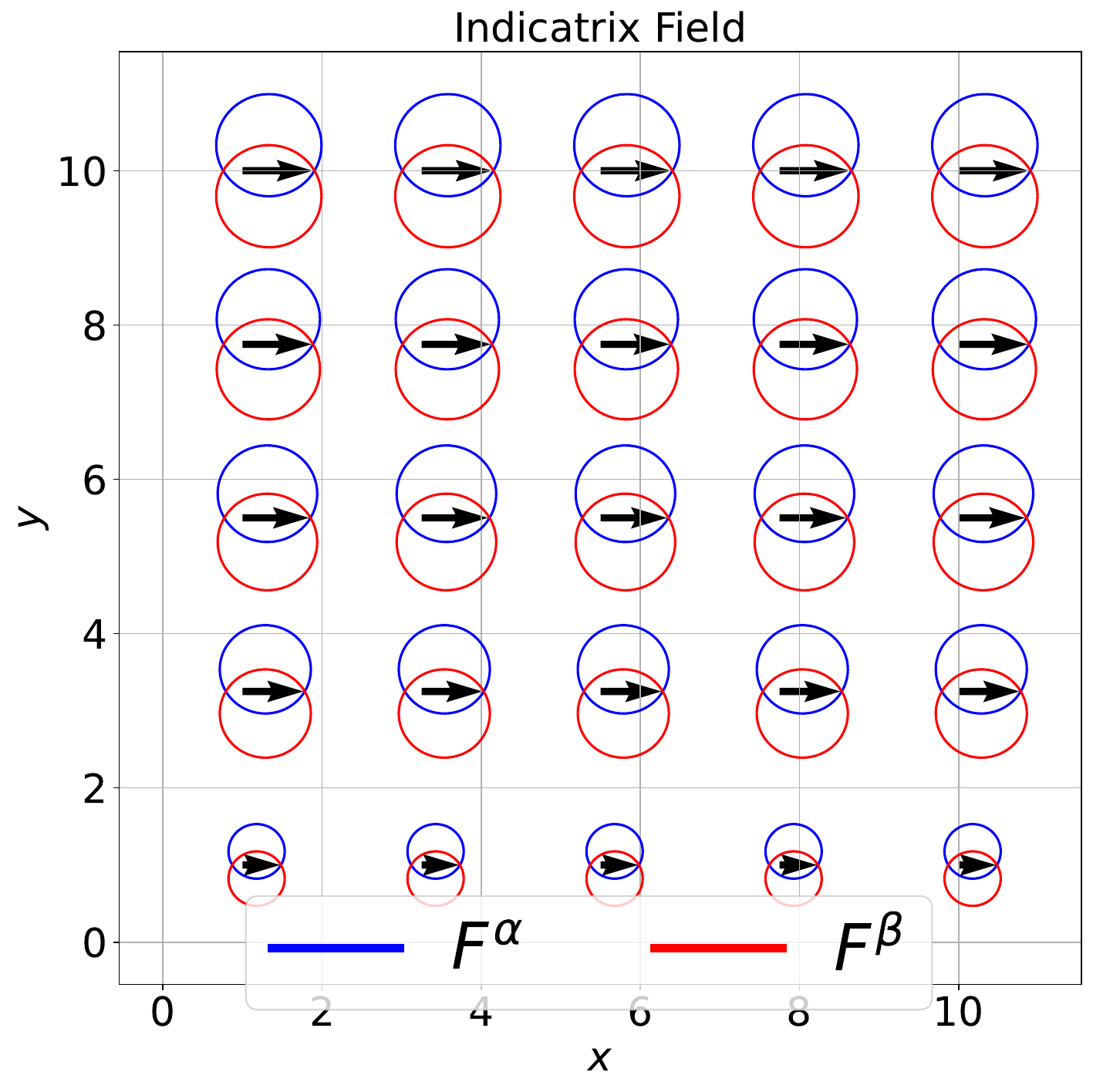}
    \caption{The indicatrix field for the ``Position-only'' bi-metric in \eqref{eq:poincarre_north}.}
    \label{fig:poincarre_north_indicatrix}
\end{figure}

In Figure~\ref{fig:poincarre_north} we show the travel time optimal pregeodesics connecting $A=(0,1)$ and $B=(40,1)$, using Algorithm~\ref{al:georceh}, and the corresponding estimated tack curves given by Algorithm~\ref{al:tacking_optimization}. The various allowed number of tack points give essentially the same time-optimal tracks and the same optimal travel time as the case with only one tack point. In all cases, as expected, this total travel time, $6.73$, is significantly shorter than the travel time $\mathcal{T}[\gamma]$ along the straight line, and is indeed also shorter than the (in this case identical) travel time, $8.24$, obtained by the direct pregeodesic tracks without tack points from $A$ to $B$.

Observe that allowing more tack points should never increase the total travel time, since doing so increases the degrees of freedom. When additional tack points do not reduce the travel time---as in this example---we see a coalescence of some of the tack points, effectively reducing the trajectory to one with fewer tackings. Specifically, Figure~\ref{fig:poincarre_north} shows that the algorithm either compresses two tack points into a single one---in the third and fourth subfigures---or selects the endpoint $B$ as a tack point---in the second and fourth subfigures. The latter is due to the fact that---in this scenario---with an even number of tack points, the last segment of the track would otherwise be forced to be an $F^{\alpha}$-pregeodesic, which is more time-consuming around $B$ than a corresponding end-segment consisting of an $F^{\beta}$-pregeodesic.

A more refined analysis shows a much greater variation in the set of \emph{almost-time-optimal paths} for the bi-metric under consideration---in particular among the tracks with many tack points in the region with high values of $y$, where $R(y)$, and hence the bi-metric, is essentially constant, as in Figure \ref{fig:direction_only}. In fact, if we allow---still in the very concrete and simple setting of \eqref{eq:poincarre_north}---a $7\%$ increase in total travel time from the optimal value, then the possibility (and relative advantage) of applying tacking shows a significant variation in the ensuing set of relaxed tack curves. This is explicitly clear from the displays of selected such tack curves in Figure \ref{fig:NewPoincare}, which, as expected, shows tacking phenomena that are qualitatively very similar to those observed in the constant bi-metric case in Figure \ref{fig:direction_only}.

As a final remark, and as noted at the beginning of Section~\ref{sec:computations}, in general we cannot guarantee that the numerical solutions obtained are true global minima. This naturally raises the interesting standard question: When is a local minimum (or a local stationary point) a global minimum? In our present setting, one first step in the direction of an answer is to consider the second variation of the travel time functional---as done in the general-relativistic Lorentzian case by Perlick in \cite{perlick1998a}. The extra dependence of the second variation on not only the flag curvatures of the metrics involved but also on the positioning of the tack points is clearly an interesting topic for future theoretical and numerical work on time- and position-dependent Zermelo navigation with tacking.


\begin{figure}
    \centering
    \includegraphics[width=1.0\textwidth]{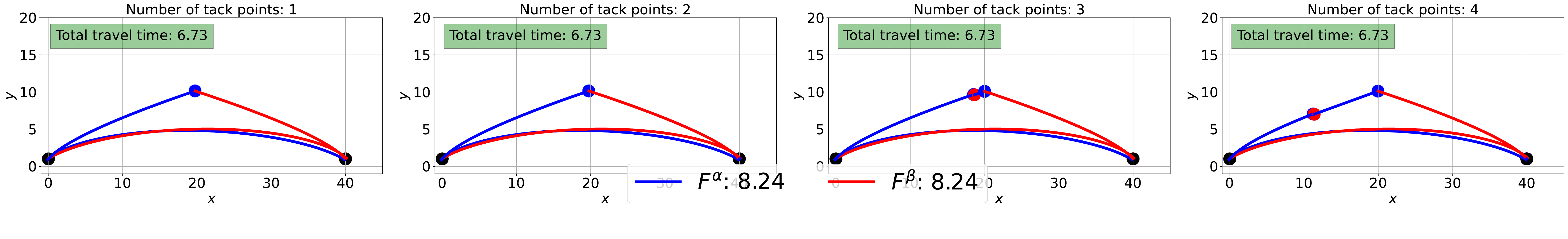}
    \caption{The computed tack curves with $n_{\mathrm{tacks}}=1,2,3,4$ tack points using Algorithm~\ref{al:tacking_optimization} for the ``Position-only'' bi-metric in \eqref{eq:poincarre_north} between the points $(0,1)$ and $(40,1)$. The blue curves are computed using $F^{\alpha}$, while the red curves are computed using $F^{\beta}$. The (common) total travel time of the respective tack curves is shown in the green boxes. Note that the travel time $\mathcal{T}[\gamma] = 128.7$ found in \eqref{eq:StraightT} for the direct straight line travel from $A$ to $B$ is significantly larger than the (common) travel time along the $F^{\alpha}$- and $F^{\beta}$-pregeodesics, and that these travel times are reduced even further when tacking is allowed.} 
    \label{fig:poincarre_north}
\end{figure}

\begin{figure}
    \centering
    \includegraphics[width=1.0\textwidth]{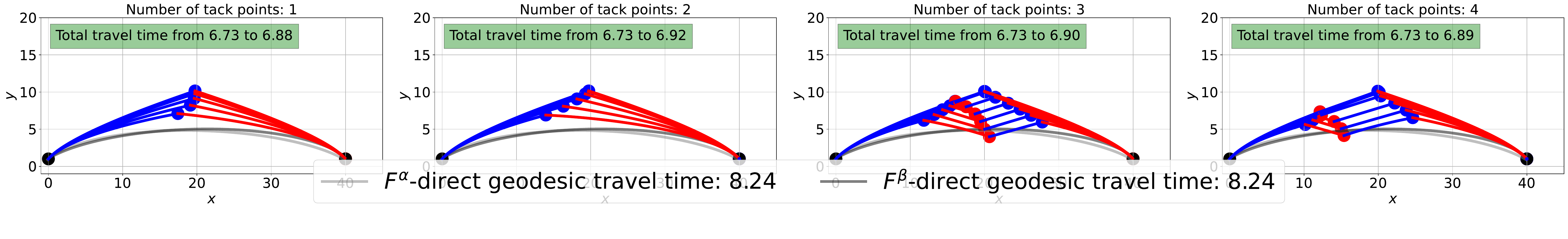}
    \caption{Tack curves with only close-to-optimal travel times for the same bi-metric \eqref{eq:poincarre_north} as in Figure \ref{fig:poincarre_north}. The large increase in the span of possible track curves comes with only a small extra cost in travel time. In this case, the tracks take advantage of the flexibility that is automatically given in the domain where the bi-metric is almost constant---a flexibility that is already revealed by the constant bi-metric in Figure \ref{fig:direction_only}.} 
    \label{fig:NewPoincare}
\end{figure}


\section*{Acknowledgments}
EPR was supported by: Project PID2021-124157NB-I00, funded by MCIN/AEI/10.13039/501100011033/ and ``ERDF A way of making Europe"; Project PID2020-116126GB-I00, funded by MCIN/AEI/10.13039/501100011033/; The framework IMAG-Mar\'{i}a de Maeztu grant CEX2020-001105-M/AEI/10.13039/501100011033; ``Ayudas a proyectos para el desarrollo de investigaci\'{o}n cient\'{i}fica y t\'{e}cnica por grupos competitivos (Comunidad Aut\'{o}noma de la Regi\'{o}n de Murcia)'', included in ``Programa Regional de Fomento de la Investigaci\'{o}n Cient\'{i}fica y T\'{e}cnica (Plan de Actuaci\'{o}n 2022)'' of Fundaci\'{o}n S\'{e}neca-Agencia de Ciencia y Tecnolog\'{i}a de la Regi\'{o}n de Murcia, REF. 21899/PI/22; and ``Ayudas para la Formación del Profesorado Universitario (FPU)'' from the Spanish Government.

This research was carried out during EPR’s stays at DTU Compute in the summers of 2023–2025, partially funded by ``Ayudas de Movilidad para Beneficiarios del Subprograma FPU'' from the Spanish Government (2023), DTU (2024), and ``Ayudas de Movilidad de Excelencia para Docentes e Investigadores de la Universidad de Oviedo'', cofinanced by Banco Santander (2025).

The authors warmly thank Professor Miguel Ángel Javaloyes (Universidad de Murcia) for his valuable comments and insightful discussions during the early stages of this work.


\clearpage
\bibliographystyle{plain}
\bibliography{main} 

\begin{thebibliography}{10}

\bibitem{antonelli2003}
P.~L. Antonelli, A.~B\'{o}na, and M.~A. Slawi\'{n}ski.
\newblock Seismic rays as {F}insler geodesics.
\newblock {\em Nonlinear Anal. Real World Appl.}, 4(5):711--722, 2003.

\bibitem{BCS}
D.~Bao, S.-S. Chern, and Z.~Shen.
\newblock {\em An introduction to {R}iemann-{F}insler geometry}, volume 200 of
  {\em Graduate Texts in Mathematics}.
\newblock Springer-Verlag, New York, 2000.

\bibitem{BaoRob2}
D.~Bao, C.~Robles, and Z.~Shen.
\newblock Zermelo navigation on {R}iemannian manifolds.
\newblock {\em J. Differential Geom.}, 66(3):377--435, 2004.

\bibitem{biferale2019a}
L.~Biferale, F.~Bonaccorso, M.~Buzzicotti, P.~Clark Di~Leoni, and
  K.~Gustavsson.
\newblock Zermelo's problem: {O}ptimal point-to-point navigation in 2{D}
  turbulent flows using reinforcement learning.
\newblock {\em Chaos}, 29(10):103138, 2019.

\bibitem{bonnard2023a}
B.~Bonnard, O.~Cots, and B.~Wembe.
\newblock Zermelo navigation problems on surfaces of revolution and geometric
  optimal control.
\newblock {\em ESAIM Control Optim. Calc. Var.}, 29:376--814, 2023.

\bibitem{buell1996a}
I.~Büll, T.~Ramcke, and U.~Mares.
\newblock Per {C}omputer hart am {W}ind oder die {P}hysik des {S}egelns.
\newblock {\em Physik in Unserer Zeit}, 27(4):154--160, 1996.

\bibitem{CJS2011}
E.~Caponio, M.~\'{A}. Javaloyes, and M.~S\'{a}nchez.
\newblock On the interplay between {L}orentzian causality and {F}insler metrics
  of {R}anders type.
\newblock {\em Rev. Mat. Iberoam.}, 27(3):919–952, 2011.

\bibitem{caponio2021a}
E.~Caponio, M.~Á. Javaloyes, and M.~Sánchez.
\newblock Wind {F}inslerian {S}tructures: From {Z}ermelo's {N}avigation to the
  {C}ausality of {S}pacetimes.
\newblock {\em Mem. Amer. Math. Soc.}, 300(2):1501, 2024.

\bibitem{Caratheodory1965}
C.~Carath\'eodory.
\newblock {\em Calculus of variations and partial differential equations of the
  first order. {P}art {I}: {P}artial differential equations of the first
  order}.
\newblock Holden-Day, Inc., San Francisco-London-Amsterdam, 1965.
\newblock Translated from the {G}erman by R. B. Dean and J. J. Brandstatter.

\bibitem{Caratheodory1967}
C.~Carath\'eodory.
\newblock {\em Calculus of variations and partial differential equations of the
  first order. {P}art {II}: {C}alculus of variations}.
\newblock Holden-Day, Inc., San Francisco-London-Amsterdam, 1967.
\newblock Translated from the {G}erman by R. B. Dean and J. J. Brandstatter.

\bibitem{Clay2023}
T.~A. Clay, P.~Hodum, E.~Hagen, and M.~de~L. Brooke.
\newblock Adjustment of foraging trips and flight behaviour to own and partner
  mass and wind conditions by a far-ranging seabird.
\newblock {\em Animal {B}ehaviour}, 198:165--179, 2023.

\bibitem{hays2014a}
G.~C. Hays, A.~Christensen, S.~Fossette, G.~Scofield, J.~Talbot, and
  P.~Mariani.
\newblock Route optimisation and solving {Z}ermelo’s navigation problem
  during long distance migration in cross flows.
\newblock {\em Ecology Letters}, 17(2):137--143, 2014.

\bibitem{JMPS}
M.~\'{A}. Javaloyes, S.~Markvorsen, E.~Pend\'{a}s-Recondo, and M.~S\'{a}nchez.
\newblock Generalized {F}ermat's principle and {S}nell's law for cone
  structures and applications.
\newblock In preparation.

\bibitem{JP}
M.~\'{A}. Javaloyes and E.~Pend\'{a}s-Recondo.
\newblock {L}ightlike {H}ypersurfaces and {T}ime-{M}inimizing {G}eodesics in
  {C}one {S}tructures.
\newblock In A.~L. Albujer, M.~Caballero, A.~García-Parrado, J.~Herrera, and
  R.~Rubio, editors, {\em {D}evelopments in {L}orentzian {G}eometry}, volume
  389 of {\em Springer Proceedings in Mathematics \& Statistics}, pages
  159--173. {S}pringer {N}ature {S}witzerland AG, Cham, 2023.

\bibitem{JPS2023}
M.~\'{A}. Javaloyes, E.~Pend\'{a}s-Recondo, and M.~S\'{a}nchez.
\newblock An {A}ccount on {L}inks {B}etween {F}insler and {L}orentz
  {G}eometries for {R}iemannian {G}eometers.
\newblock In A.~Alarc\'{o}n, V.~Palmer, and C.~Rosales, editors, {\em New
  Trends in Geometric Analysis}, volume~10 of {\em RSME Springer Series}, pages
  259--303. {S}pringer {N}ature {S}witzerland AG, Cham, 2023.

\bibitem{JPS2025}
M.~\'{A}. Javaloyes, E.~Pend\'{a}s-Recondo, and M.~S\'{a}nchez.
\newblock {G}ielis {S}uperformula and {W}ildfire {M}odels.
\newblock In J.~Gielis and S.~Brasili, editors, {\em Proceedings of the 2nd
  International Symposium on Square Bamboos and the Geometree, ISSBG 2023},
  pages 167--184. {G}eniaal B. V., Antwerp, 2025.

\bibitem{JS2014}
M.~\'{A}. Javaloyes and M.~S\'{a}nchez.
\newblock On the definition and examples of {F}insler metrics.
\newblock {\em Ann. Sc. Norm. Super. Pisa Cl. Sci.}, 13(3):813–858, 2014.

\bibitem{JS2020}
M.~\'{A}. Javaloyes and M.~S\'{a}nchez.
\newblock On the definition and examples of cones and {F}insler spacetimes.
\newblock {\em Rev. R. Acad. Cienc. Exactas Fís. Nat. Ser. A Mat. RACSAM},
  114:30, 2020.

\bibitem{javaloyes2021a}
M.~Á. Javaloyes, E.~Pendás-Recondo, and M.~Sánchez.
\newblock Applications of cone structures to the anisotropic rheonomic
  {H}uygens’ principle.
\newblock {\em Nonlinear Anal.}, 209:112337, 2021.

\bibitem{Javaloyes2023a}
M.~Á. Javaloyes, E.~Pendás-Recondo, and M.~Sánchez.
\newblock A {G}eneral {M}odel for {W}ildfire {P}ropagation with {W}ind and
  {S}lope.
\newblock {\em SIAM J. Appl. Algebra Geom.}, 7(2):414--439, 2023.

\bibitem{kingma2017adammethodstochasticoptimization}
D.~P. Kingma and J.~L. Ba.
\newblock Adam: {A} {M}ethod for {S}tochastic {O}ptimization.
\newblock {\em 3rd International Conference on Learning Representations, ICLR
  2015, Conference Track Proceedings}, 2015.

\bibitem{larsson2000a}
L.~Larsson and R.~E. Eliasson.
\newblock {\em Principles of {Y}acht {D}esign}.
\newblock Adlard Coles Nautical, 2000.

\bibitem{levi-civita1931a}
T.~Levi-Civita.
\newblock Über {Z}ermelo's {L}uftfahrtproblem.
\newblock {\em Z. Angew. Math. Mech.}, 11(4):314--322, 1931.

\bibitem{Garcia2025}
D.~D. Lujerio~Garcia, N.~M. Solórzano~Chávez, M.~A. Molina~Morales, and B.~M.
  Cerna~Maguiña.
\newblock Euclidean space perturbed by a constant vector field and its relation
  to a zermelo navigation problem.
\newblock {\em Selecciones Matemáticas}, 12(1):15--32, 2025.

\bibitem{mania1937}
B.~Mani\`{a}.
\newblock Sopra un problema di navigazione di {Z}ermelo.
\newblock {\em Math. Ann.}, 113:584--599, 1937.

\bibitem{marchaj1982a}
C.~A. Marchaj.
\newblock {\em Sailing {T}heory and {P}ractice}.
\newblock Dodd, Mead \& Company, 1982.

\bibitem{SM2016}
S.~Markvorsen.
\newblock A {F}insler geodesic spray paradigm for wildfire spread modelling.
\newblock {\em Nonlinear Anal. Real World Appl.}, 28:208--228, 2016.

\bibitem{MP2023}
S.~Markvorsen and E.~Pend\'{a}s-Recondo.
\newblock Snell’s law revisited and generalized via {F}insler geometry.
\newblock {\em Int. J. Geom. Methods Mod. Phys.}, 20(8):2350138, 2023.

\bibitem{mclaren2014a}
J.~D. McLaren, J.~Shamoun-Baranes, A.~M. Dokter, R.~H.~G. Klaassen, and
  W.~Bouten.
\newblock Optimal orientation in flows: providing a benchmark for animal
  movement strategies.
\newblock {\em Journal of the Royal Society Interface}, 11(99):0588, 2014.

\bibitem{miles2022a}
C.~Miles and A.~Vladimirsky.
\newblock Stochastic {O}ptimal {C}ontrol of a {S}ailboat.
\newblock {\em IEEE Control Syst. Lett.}, 6:2048--2053, 2022.

\bibitem{peal1880a}
S.~E. Peal.
\newblock Soaring of {B}irds.
\newblock {\em Nature}, 23(575):10--11, 1880.

\bibitem{EPR2024}
E.~Pend\'{a}s-Recondo.
\newblock On the application of {L}orentz-{F}insler geometry to model wave
  propagation.
\newblock {\em arXiv:2408.03206 [math.DG]}, 2024.

\bibitem{perlick2006a}
V.~Perlick.
\newblock Fermat principle in {F}insler spacetimes.
\newblock {\em Gen. Relativity Gravitation}, 38(2):365--380, 2006.

\bibitem{perlick1998a}
V.~Perlick and P.~Piccione.
\newblock A general-relativistic fermat principle for extended light sources
  and extended receivers.
\newblock {\em Gen. Relativity Gravitation}, 30(10):1461--1476, 1998.

\bibitem{pinti2020a}
J.~Pinti, A.~Celani, U.~H. Thygesen, and P.~Mariani.
\newblock Optimal navigation and behavioural traits in oceanic migrations.
\newblock {\em Theoretical Ecology}, 13(4):583--593, 2020.

\bibitem{piro2022a}
L.~Piro, B.~Mahault, and R.~Golestanian.
\newblock Optimal navigation of microswimmers in complex and noisy
  environments.
\newblock {\em New J. Phys.}, 24(9):093037, 2022.

\bibitem{piro2021a}
L.~Piro, E.~Tang, and R.~Golestanian.
\newblock Optimal navigation strategies for microswimmers on curved manifolds.
\newblock {\em Phys. Rev. Research}, 3(2):023125, 2021.

\bibitem{Pokhrel2023}
S.~Pokhrel and S.~A. Eisa.
\newblock A novel hypothesis for how albatrosses optimize their flight physics
  in real-time: an extremum seeking model and control for dynamic soaring.
\newblock {\em Bioinspiration \& {B}iomimetics}, 18(1):016014, 2023.

\bibitem{pueschl2018a}
W.~Püschl.
\newblock High-speed sailing.
\newblock {\em European J. Phys.}, 39(4):044002, 2018.

\bibitem{rayleigh1883a}
Lord Rayleigh.
\newblock The {S}oaring of {B}irds.
\newblock {\em Nature}, 27(701):534--535, 1883.

\bibitem{richards1993a}
G.~D. Richards.
\newblock The {P}roperties of {E}lliptical {W}ildfire {G}rowth for {T}ime
  {D}ependent {F}uel and {M}eteorological {C}onditions.
\newblock {\em Combust. Sci. and Tech.}, 95(1-6):357--383, 1993.

\bibitem{Richardson2011}
P.~L. Richardson.
\newblock How do albatrosses fly around the world without flapping their wings?
\newblock {\em Progress in Oceanography}, 88(1-4):46--58, 2011.

\bibitem{Richardson2017}
P.~L. Richardson.
\newblock Da {V}inci's observations of soaring birds.
\newblock {\em Phys. Today}, 70(11):78--79, 2017.

\bibitem{Richardson2019}
P.~L. Richardson.
\newblock Leonardo da {V}inci's discovery of the dynamic soaring by birds in
  wind shear.
\newblock {\em Notes and {R}ecords: {T}he {R}oyal {S}ociety {J}ournal of the
  {H}istory of {S}cience}, 73(3):285--301, 2019.

\bibitem{Richardson2018}
P.~L. Richardson, E.~D. Wakefield, and R.~A. Phillips.
\newblock Flight speed and performance of the wandering albatross with respect
  to wind.
\newblock {\em Movement {E}cology}, 6:3, 2018.

\bibitem{georce}
F.~M. Rygaard and S.~Hauberg.
\newblock {GEORCE}: {A} {F}ast {N}ew {C}ontrol {A}lgorithm for {C}omputing
  {G}eodesics.
\newblock {\em arXiv:2505.05961 [math.DG]}, 2025.

\bibitem{sachs2005a}
G.~Sachs.
\newblock Minimum shear wind strength required for dynamic soaring of
  albatrosses.
\newblock {\em Ibis}, 147(1):1--10, 2005.

\bibitem{sachs2016a}
G.~Sachs.
\newblock In-flight measurement of upwind dynamic soaring in albatrosses.
\newblock {\em Progress in Oceanography}, 142:47--57, 2016.

\bibitem{sachs2013a}
G.~Sachs, J.~Traugott, A.~P. Nesterova, and F.~Bonadonna.
\newblock Experimental verification of dynamic soaring in albatrosses.
\newblock {\em Journal of Experimental Biology}, 216(22):4222--4232, 2013.

\bibitem{serres2009}
U.~Serres.
\newblock On {Z}ermelo-like problems: {G}auss-{B}onnet inequality and {E}.
  {H}opf theorem.
\newblock {\em J. Dyn. Control Syst.}, 15(1):99--131, 2009.

\bibitem{shen2003a}
Z.~Shen.
\newblock Finsler {M}etrics with {K} = 0 and {S} = 0.
\newblock {\em Canad. J. Math.}, 55(1):112--132, 2003.

\bibitem{techy2011a}
L.~Techy.
\newblock Optimal navigation in planar time-varying flow: {Z}ermelo's problem
  revisited.
\newblock {\em Intelligent Service Robotics}, 4(4):271--283, 2011.

\bibitem{thorne2023a}
L.~H. Thorne, T.~A. Clay, R.~A. Phillips, L.~G. Silvers, and E.~D. Wakefield.
\newblock Effects of wind on the movement, behavior, energetics, and life
  history of seabirds.
\newblock {\em Marine Ecology Progress Series}, 723:73--117, 2023.

\bibitem{mises1931a}
R.~v.~Mises.
\newblock Zum {N}avigationsproblem der {L}uftfahrt.
\newblock {\em Z. Angew. Math. Mech.}, 11(5):373--381, 1931.

\bibitem{ventura2022a}
F.~Ventura, P.~Catry, M.~P. Dias, G.~A. Breed, A.~Folch, and J.~P. Granadeiro.
\newblock A central place foraging seabird flies at right angles to the wind to
  jointly optimize locomotor and olfactory search efficiency.
\newblock {\em Proceedings of the Royal Society B: Biological Sciences},
  289(1981):20220895, 2022.

\bibitem{ventura2020a}
F.~Ventura, J.~P. Granadeiro, O.~Padget, and P.~Catry.
\newblock Gadfly petrels use knowledge of the windscape, not memorized foraging
  patches, to optimize foraging trips on ocean-wide scales.
\newblock {\em Proceedings of the Royal Society B: Biological Sciences},
  287(1918):20191775, 2020.

\bibitem{yajima2009}
T.~Yajima and H.~Nagahama.
\newblock Finsler geometry of seismic ray path in anisotropic media.
\newblock {\em Proc. R. Soc. A}, 469:1763--1777, 2009.

\bibitem{Z1931}
E.~Zermelo.
\newblock {\"U}ber das {N}avigationsproblem bei ruhender oder
  ver{\"a}nderlicher {W}indverteilung.
\newblock {\em Z. Angew. Math. Mech.}, 11:114--124, 1931.

\end{thebibliography}

\clearpage
\begin{appendix}
    \section{Convergence results for GEORCE-H}
    We generalize the proofs in \citep{georce} for time-dependent Finsler metrics to show global and quadratic convergence. For notation, let $E$ denote the discretized squared travel time functional in \eqref{eq:disc_energy}, and let $\langle \cdot,\cdot \rangle$, $||\cdot||$ and $\nabla$ be the standard Euclidean metric, norm and gradient, respectively. 
    \subsection{Assumptions} \label{ap:assumptions}
For completeness, we restate the assumptions in \citep{georce} that we will use for the proof of local quadratic convergence in Appendix~\ref{ap:quadratic_convergence} below.
\begin{assumption} \label{assum:quad_conv_assumptions}
    We assume the following regarding the discretized squared travel time functional:
    \begin{itemize}
        \item[(a)] Any (local) minimum point $z^{*}=(t^{*},x^{*},v^{*})$ of the discretized squared travel time functional will be assumed to be a {\em strongly unique minimum point}, meaning that
        \begin{equation*}
            \exists \epsilon>0: \quad \exists K>0:\quad \forall z \in B_{\epsilon}(z^{*}): \quad E(z)-E(z^{*}) \geq K\norm{z-z^{*}},
        \end{equation*}
        where $B_{\epsilon}(z^{*})=\left\{z: \norm{z- z^{*}}<\epsilon\right\}$.
        \item[(b)] We assume that the discretized squared travel time functional $E(z)$ is locally Lipschitz.
        \item[(c)] The first-order Taylor approximation of the discretized squared travel time functional is
        \begin{equation*}
            \Delta E = \langle \nabla E(z_{0}), \Delta z \rangle + \mathcal{O}\left(\Delta z\right)\norm{\Delta z},
        \end{equation*}
        i.e., we assume that $\mathcal{O}\left(\Delta z\right)$ is independent of $z_{0}$, and the term $\mathcal{O}\left(\Delta z\right)\norm{\Delta z}$ can be rewritten as $\mathcal{\tilde{O}}\left(\norm{\Delta z}^{2}\right)$ locally (which follows from the Lipschitz condition).
    \end{itemize}
\end{assumption}

Note that, since the metric varies smoothly on the manifold, then the discretized squared travel time functional is also smooth and hence Lipschitz. Thus, all Finsler and Riemannian manifolds satisfy the above conditions, but an even larger class of squared travel time functionals that does not necessarily stems from a Riemannian or Finsler metric will satisfy the requirements.
    \subsection{Necessary Conditions} \label{ap:necessary_cond}
    \begin{proposition}
    Assume the following first-order Taylor approximation of $F_{t}$ for the state equation in time:
    \begin{equation*}
        F(t,x,v) = L^{(0)}+L^{(v)}v+L^{(x)}x+L^{(t)}t,
    \end{equation*}
    where $L^{(v)} \coloneqq \frac{\partial F}{\partial v}\left(t^{(0)},x^{(0)},v^{(0)}\right)$, $L_{s}^{(x)} \coloneqq \frac{\partial F}{\partial x}\left(t^{(0)},x^{(0)},v^{(0)}\right)$, $L^{(t)} \coloneqq \frac{\partial F}{\partial t}\left(t^{(0)},x^{(0)},v^{(0)}\right)$ and $L^{(0)} \coloneqq F\left(t^{(0)},x^{(0)},v^{(0)}\right)$ for some point $\left(t^{(0)},x^{(0)},v^{(0)}\right)$. The necessary conditions for a minimum in \eqref{eq:disc_energy} with the first-order Taylor approximation are then
    \begin{equation*}
        \begin{split}
            &2G(t_{s},x_{s},v_{s})v_{s}+\restr{\nabla_{y}\left[v_{s}^{\top}G(t_{s},x_{s},y)v_{s}\right]}{y=v_{s}}+\mu_{s}+\pi_{s}L_{s}^{(v)}=0, \quad s=0,\dots,T-1, \\
            &v_{s}^{\top}\frac{\partial G\left(t_{s}^{(i)},x_{s}^{(i)},v_{s}^{(i)}\right)}{\partial t}v_{s}+\pi_{s}L_{s}^{(t)}+\pi_{s}=\pi_{s-1}, \quad s=1,\dots,T-1, \\
            &0=\pi_{T-1}, \\
            &\restr{\nabla_{y}\left[v_{s}^{\top}G(t_{s},y,v_{s})v_{s}\right]}{y=x_{s}}+\pi_{s}L_{s}^{(x)}+\mu_{s}=\mu_{s-1}, \quad s=1,\dots,T-1, \\
            &x_{s+1}=x_{s}+v_{s}, \quad s=0,\dots,T-1, \\
            &t_{s+1}=t_{s}+F(t_{s},x_{s},v_{s}),\quad s=0,\dots,T-1, \\
            &\sum_{s=0}^{T-1}v_{s}=B-A, \\
            &t_{0}=0,x_{0}=A.
        \end{split}
    \end{equation*}
    where $\mu_{s} \in \mathbb{R}^{n}$ for $s=0,\dots,T-1$.
\end{proposition}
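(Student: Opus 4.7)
The problem is a finite-dimensional constrained optimization over the variables $\{(t_s,x_s,v_s)\}_{s=0}^{T-1}$ (together with $t_T$, $x_T$) subject to the two families of equality constraints
\begin{equation*}
x_{s+1}-x_s-v_s = 0, \qquad t_{s+1}-t_s-F(t_s,x_s,v_s) = 0,
\end{equation*}
plus the prescribed boundary conditions $t_0=0$, $x_0=A$, $x_T=B$. The plan is therefore to write down the Lagrangian, introduce costates $\mu_s\in\mathbb{R}^n$ and $\pi_s\in\mathbb{R}$ for the spatial and temporal dynamics respectively, and read off the Karush–Kuhn–Tucker stationarity equations componentwise. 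The summation condition $\sum v_s=B-A$ is not an independent constraint but the telescoping form of the $x$-dynamics combined with $x_0=A$ and $x_T=B$, and will appear automatically.

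First I would form
\begin{equation*}
\mathcal{L} \;=\; \sum_{s=0}^{T-1}\!\Bigl(v_s^{\top}G(t_s,x_s,v_s)v_s + \mu_s^{\top}(x_{s+1}-x_s-v_s) + \pi_s\bigl(t_{s+1}-t_s-F(t_s,x_s,v_s)\bigr)\!\Bigr),
\end{equation*}
and substitute the assumed Taylor form of $F$ so that $\partial F/\partial v = L_s^{(v)}$, $\partial F/\partial x = L_s^{(x)}$, $\partial F/\partial t = L_s^{(t)}$ at the expansion point. Stationarity in $v_s$ produces the first equation of the claim,
\begin{equation*}
2G(t_s,x_s,v_s)v_s + \nabla_y\!\left[v_s^{\top}G(t_s,x_s,y)v_s\right]\!\bigl|_{y=v_s} + \mu_s + \pi_s L_s^{(v)} = 0,
\end{equation*}
while stationarity in $x_s$ yields the backward recursion for $\mu_s$, because $x_s$ enters both constraint at index $s$ (with coefficient $-1$) and at index $s-1$ (with coefficient $+1$). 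Stationarity in $t_s$, with the same telescoping, gives the recursion
\begin{equation*}
v_s^{\top}\tfrac{\partial G}{\partial t}v_s + \pi_s L_s^{(t)} + \pi_s = \pi_{s-1},
\end{equation*}
where the $v_s^{\top}\partial_t G\, v_s$ term comes from differentiating the objective and the $\pi_s(1+L_s^{(t)})$ term comes from the constraint.

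The remaining step is to recover the two transversality (boundary) conditions. Since $t_T$ is unconstrained by the problem (the terminal time is free; only $t_0$ is fixed), differentiating $\mathcal{L}$ with respect to $t_T$ leaves only the single term $\pi_{T-1}\cdot 1$, forcing $\pi_{T-1}=0$. The primal forward equations reappear as the stationarity conditions in the multipliers themselves, together with the fixed initial data $t_0=0$, $x_0=A$, and the telescoped endpoint condition $\sum_{s=0}^{T-1}v_s = x_T-x_0 = B-A$. Collecting all the stationarity equations together with the primal dynamics yields exactly the system \eqref{eq:necessary_cond}.

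The only subtle point is bookkeeping: sign conventions and index shifts in the two backward recursions for $\mu_s$ and $\pi_s$ must be handled consistently, in particular noticing that the objective term $v_s^{\top}G(t_s,x_s,v_s)v_s$ contributes both to the $v_s$-equation (via the Hessian-like term and the $\nabla_y$ term) and to the $x_s$- and $t_s$-equations (via $\nabla_x[v_s^{\top}Gv_s]$ and $v_s^{\top}\partial_t G\,v_s$). No real analytic difficulty is expected, since the Taylor linearization has removed the nonlinearity from the time constraint and the remaining objective is smooth in each block of variables; the statement is essentially a direct KKT computation.
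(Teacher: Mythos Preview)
Your proposal is correct and follows essentially the same route as the paper: the paper phrases the argument as ``a discretized version of Pontryagin's maximum principle'' applied to the Hamiltonian $\tilde H(t_s,x_s,v_s)=v_s^\top G_s v_s+\mu_s^\top(x_s+v_s)+\pi_s(t_s+L_s^{(0)}+L_s^{(v)}v_s+L_s^{(x)}x_s+L_s^{(t)}t_s)$, which is exactly your Lagrangian/KKT computation rewritten in control-theoretic language. Your derivation of the transversality condition $\pi_{T-1}=0$ from the free terminal time and of $\sum v_s=B-A$ from telescoping is in fact more explicit than the paper's own proof; the only caution is the sign convention on the multipliers (your Lagrangian has $\mu_s^\top(x_{s+1}-x_s-v_s)$ whereas the paper's Hamiltonian has $\mu_s^\top(x_s+v_s)$), which you already flag as a bookkeeping point.
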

\begin{proof}
    Using the first-order approximation of the Finsler metric for state equation in time, the approximated Hamiltonian is
    \begin{equation*} 
        \tilde{H}(t_{s},x_{s},v_{s}) = v_{s}^{\top}G_{s}v_{s}+\mu_{s}^{\top}(x_{s}+v_{s})+\pi_{s}\left(t_{s}+L^{(0)}_{s}+L^{(v)}_{s}v_{s}+L^{(x)}_{s}x_{s}+L^{(t)}_{s}t_s\right).
    \end{equation*}
    Since $G$ is symmetric and positive definite, then the approximation of the Hamiltonian function in each iteration is convex, i.e. the stationary point with respect to $v_{s}$ is the global minimum. By a discretized version of Pontryagin's maximum principle we obtain the following necessary conditions for a local minimum:
    \begin{equation*}
        \begin{split}
            &2G(t_{s},x_{s},v_{s})v_{s}+\restr{\nabla_{y}\left[v_{s}^{\top}G(t_{s},x_{s},y)v_{s}\right]}{y=v_{s}}+\mu_{s}+\pi_{s}L_{s}^{(v)}=0, \quad s=0,\dots,T-1, \\
            &v_{s}^{\top}\frac{\partial G\left(t_{s}^{(i)},x_{s}^{(i)},v_{s}^{(i)}\right)}{\partial t}v_{s}+\pi_{s}L_{s}^{(t)}+\pi_{s}=\pi_{s-1}, \quad s=1,\dots,T-1, \\
            &0=\pi_{T-1}, \\
            &\restr{\nabla_{y}\left[v_{s}^{\top}G(t_{s},y,v_{s})v_{s}\right]}{y=x_{s}}+\pi_{s}L_{s}^{(x)}+\mu_{s}=\mu_{s-1}, \quad s=1,\dots,T-1, \\
            &x_{s+1}=x_{s}+v_{s}, \quad s=0,\dots,T-1, \\
            &t_{s+1}=t_{s}+F(t_{s},x_{s},v_{s}),\quad s=0,\dots,T-1, \\
            &\sum_{s=0}^{T-1}v_{s}=B-A, \\
            &t_{0}=0,x_{0}=A.
        \end{split}
    \end{equation*}
\end{proof}
    \subsection{Global Convergence} \label{ap:global_convergence}
We follow the same approach as in \citep{georce} to prove global convergence of \textit{GEORCE-H}. For completeness, we first restate a result used in the proof of global convergence in the original \textit{GEORCE}-paper almost ad verbatim \citep{georce}, but adapt it to time-dependent Finsler metrics.

\begin{lemma}\label{lemma:global_conv_feasible}
    Assume that $(t^{(i)}, x^{(i)}, v^{(i)})$ is a feasible solution (i.e., it has the correct start and end points). Then, the following properties hold:
    \begin{enumerate}
        \item There exists a unique solution $\left(t^{(i+1)}, x^{(i+1)}, v^{(i+1)}\right)$ to the system of equations in \eqref{eq:georceh_equations} based on $\left(t^{(i)}, x^{(i)}, v^{(i)}\right)$.
        \item All linear combinations $(1-\alpha)\left(x^{(i)},v^{(i)}\right)+\alpha\left(x^{(i+1)},v^{(i+1)}\right)$ for $0 < \alpha \leq 1$ are feasible solutions.
    \end{enumerate}
\end{lemma}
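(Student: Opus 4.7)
The plan is to verify both claims by direct inspection of the update formulas \eqref{eq:dual_prices}--\eqref{eq:georceh_update_scheme}, which were constructed as the unique solution of the linearized system \eqref{eq:georceh_equations} obtained after fixing the quantities in \eqref{eq:georceh_fixed_variables} at $(t^{(i)}, x^{(i)}, v^{(i)})$.

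For part~1, I would proceed in four stages. \emph{First}, the time dual prices $\{\pi_s\}$ are uniquely determined by the explicit scalar backward recursion $\pi_{T-1}=0$, $\pi_{s-1}=\xi_s+(L_s^{(t)}+1)\pi_s$, independently of the remaining unknowns. \emph{Second}, the first equation of \eqref{eq:georceh_equations} can be solved for $v_s$ in terms of $\mu_s$ because each $G_s$ is symmetric positive definite; this is the one structural ingredient, and it follows from the strong convexity condition (iv) on $F_t$ at $(t_s^{(i)}, x_s^{(i)}, v_s^{(i)})$. \emph{Third}, the backward $\mu$-recursion $\mu_{s-1}=\mu_s+\nu_s+\pi_s L_s^{(x)}$ expresses every $\mu_s$ as $\mu_{T-1}+\sum_{j>s}^{T-1}(\nu_j+\pi_j L_j^{(x)})$, reducing the problem to determining a single vector $\mu_{T-1}\in\mathbb{R}^n$. \emph{Fourth}, substituting the resulting expression for $v_s$ into the terminal constraint $\sum_{s=0}^{T-1} v_s=B-A$ yields a linear equation for $\mu_{T-1}$ whose coefficient matrix is (a scalar multiple of) $\sum_{s=0}^{T-1} G_s^{-1}$. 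Since each $G_s^{-1}$ is positive definite, so is this finite sum, so $\mu_{T-1}$ is uniquely determined and coincides with the formula displayed in \eqref{eq:georceh_update_scheme}. With all dual prices fixed, every $v_s$ is then uniquely determined, and $x_{s+1}$, $t_{s+1}$ follow uniquely from the forward state equations.

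For part~2, set $\tilde{v}_s=(1-\alpha)v_s^{(i)}+\alpha v_s^{(i+1)}$ and $\tilde{x}_s=(1-\alpha)x_s^{(i)}+\alpha x_s^{(i+1)}$. Both control sequences satisfy $\sum_{s=0}^{T-1} v_s=B-A$---the sequence $v^{(i)}$ by the feasibility assumption together with $x_0^{(i)}=A$, $x_T^{(i)}=B$, and $v^{(i+1)}$ by construction in part~1---so linearity of the sum gives $\sum_s \tilde{v}_s=B-A$. The discrete state equation $x_{s+1}=x_s+v_s$ is affine in $(x,v)$ and is therefore preserved under convex combinations, and $\tilde{x}_0=A$ trivially. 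A telescoping sum then delivers $\tilde{x}_T=A+\sum_s \tilde{v}_s=B$, so $(\tilde{x},\tilde{v})$ is feasible for every $\alpha\in(0,1]$; the associated $\tilde{t}_s$ is defined by integrating the (nonlinear) time equation forward from $\tilde{t}_0=0$, but this does not affect feasibility in the sense used here.

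The main obstacle is purely bookkeeping: one must verify that the explicit closed form for $\mu_{T-1}$ really does enforce $\sum_s v_s = B-A$ when substituted back, which is where the positive definiteness of the sum $\sum_s G_s^{-1}$ is essential. Apart from this, everything reduces to a discrete backward--forward sweep analogous to a linear--quadratic control problem, and the preservation of affine constraints under convex combinations is immediate.
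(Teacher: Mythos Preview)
Your proposal is correct and follows essentially the same approach as the paper: both arguments hinge on the positive definiteness of each $G_s$ (hence invertibility) and of the sum $\sum_s G_s^{-1}$ to get uniqueness in part~1, and on the affineness of the constraints $\sum_s v_s=B-A$ and $x_{s+1}=x_s+v_s$ to preserve feasibility under convex combination in part~2. Your write-up is simply more explicit than the paper's, which compresses the four stages of part~1 into a single sentence.
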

\begin{proof}
    Since the matrices $\left\{G\left(t_{s}^{(i)}, x_{s}^{(i)}, v_{s}^{(i)}\right)\right\}_{s=0}^{T}$ are positive definite, then they are regular with unique inverse, and so are the sum of the inverse matrices. This means that there exists a unique solution to the update scheme in \eqref{eq:georceh_update_scheme} proving 1.

    Since the solution $\left(x^{(i+1)}, v^{(i+1)}\right)$ is also a feasible solution for the update scheme in \eqref{eq:georceh_update_scheme}, then the linear combination of two feasible solutions will also be a feasible solution, i.e.
    \begin{equation*}
        \sum_{s=0}^{T-1}v_{s}^{(j)}=(B-A), \quad j=1,\dots,(i+1).
    \end{equation*}
    The new solution based on the linear combination gives
    \begin{equation*}
        \begin{split}
            (1-\alpha)\sum_{s=0}^{T-1}v_{s}^{(i)}+\alpha\sum_{s=0}^{T-1}v_{s}^{(i+1)} & =(1-\alpha)(B-A)+\alpha(B-A)=(B-A), \\
            (1-\alpha)x^{(i)}+\alpha x^{(i+1)} & = (1-\alpha)\left(\alpha, x_{1}^{(i)}, \dots, x_{T-1}^{(i)},B\right) +\alpha \left(A, x_{1}^{(i+1)}, \dots, x_{T-1}^{(i+1)},B\right) \\
            & = \left(A, (1-\alpha)x_{1}^{(i)}+\alpha x_{1}^{(i+1)},\dots,(1-\alpha)x_{T-1}^{(i)}+\alpha x_{T-1}^{(i+1)},B\right),
        \end{split}
    \end{equation*}
    where for any $s=0,\dots,T-1$,
    \begin{equation*}
        \begin{split}
            (1-\alpha)x_{s}^{(i)}+\alpha x_{s}^{(i+1)} &= (1-\alpha)\left(A+\sum_{j=0}^{s-1}v_{j}^{(i)}\right)+\alpha\left(A+\sum_{j=0}^{s-1}v_{j}^{(i+1)}\right) \\
            &= A+\sum_{j=0}^{s-1}\left((1-\alpha)v_{j}^{(i)}+\alpha v_{j}^{(i+1)}\right).
        \end{split}
    \end{equation*}
    This shows that the linear combination in the state is feasible as it produces consistent state variables in terms of start and end points. Furthermore, each state variable is feasible, if they were determined from the linear combination of the control vectors, which proves that the linear combinations of the state and control variables are also feasible solutions.
\end{proof}

We then generalize the following lemma to a time-dependent Finsler metric with affine state equation in time using the same approach as in \citep{georce}.

\begin{lemma}\label{lemma:global_taylor}
    Let $\left\{t_{s}^{(i)}, x_{s}^{(i)}, v_{s}^{(i)}\right\}_{s=0}^{T}$ denote the feasible solution after iteration $i$ in \textit{GEORCE-H}. If $\left\{t_{s}^{(i)}, x_{s}^{(i)}, v_{s}^{(i)}\right\}_{s=0}^{T}$ is not a (local) minimum point, then the feasible solution from iteration $\left\{t_{s}^{(i+1)}, x_{s}^{(i+1)}, v_{s}^{(i+1)}\right\}_{s=0}^{T}$ will decrease the objective function in the sense that there exists an $\eta>0$ such that for all $\alpha$ with $0<\alpha\leq\eta\leq 1$, then
    \begin{equation*}
        E\left(t^{(i+1)}(\alpha), x^{(i)}+\alpha \left(x^{(i+1)}-x^{(i)}\right), v^{(i)}+\alpha \left(v^{(i+1)}-v^{(i)}\right)\right) < E\left(t^{(i)}, x^{(i)}, v^{(i)}\right),
    \end{equation*}
    where $x^{(i)}=\left(A,x_{1}^{(i)}, \dots, x_{T-1}^{(i)}, B\right)$ and $v^{(i)} = \left(v_{0}^{(i)}, v_{1}^{(i)}, \dots, v_{T-2}^{(i)}, v_{T-1}^{(i)}\right)$.
\end{lemma}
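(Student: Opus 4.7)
The plan is to show that the update direction $d := z^{(i+1)} - z^{(i)}$ generated by one iteration of \textit{GEORCE-H} is a descent direction for $E$ at the current iterate, and then to apply a standard Taylor / backtracking argument to produce the step-size $\eta$. By Lemma~\ref{lemma:global_conv_feasible}, the whole segment $\alpha \mapsto (x^{(i)} + \alpha \Delta x, v^{(i)} + \alpha \Delta v)$, $\alpha\in[0,1]$, is feasible in state and control, so $\phi(\alpha) := E\bigl(t^{(i+1)}(\alpha),\, x^{(i)} + \alpha \Delta x,\, v^{(i)} + \alpha \Delta v\bigr)$ is well-defined, with $t^{(i+1)}(\alpha)$ determined by the (nonlinear) forward time recursion driven by $F_t$. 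Smoothness of $F$ transfers to $G$ and hence to $\phi$, so it suffices to prove $\phi'(0) < 0$.

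The crucial observation is that, by the derivation leading to \eqref{eq:georceh_update_scheme}, $z^{(i+1)}$ is the unique global minimum of a convex quadratic surrogate $Q(v,x;z^{(i)})$: the objective $\sum_s v_s^\top G_s v_s$ augmented by the linear correction terms involving the frozen coefficients \eqref{eq:georceh_fixed_variables}, subject to the affine state constraints obtained from the first-order Taylor linearization of the time equation together with the fixed boundary conditions. Each $G_s$ is positive definite (as it is the fundamental tensor of a Finsler norm), so $Q$ is strictly convex in $v$. Since $z^{(i)}$ is a feasible point of this surrogate, the stationarity characterisation \eqref{eq:necessary_cond} together with strict convexity gives
\begin{equation*}
Q(z^{(i+1)};z^{(i)}) < Q(z^{(i)};z^{(i)}) \quad \text{whenever}\quad z^{(i)}\neq z^{(i+1)},
\end{equation*}
and the equality $z^{(i)} = z^{(i+1)}$ is precisely the assertion that $z^{(i)}$ already satisfies the necessary conditions for a local minimum of $E$, which is excluded by hypothesis.

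The final step is to link this surrogate decrease to $\phi'(0)$. By construction, the quantities $\zeta_s, \nu_s, \xi_s$ and $L_s^{(v)}, L_s^{(x)}, L_s^{(t)}$ in \eqref{eq:georceh_fixed_variables} are exactly the coefficients appearing in the first-order Taylor expansion of $E$ around $z^{(i)}$ along feasible variations (after eliminating $t$ via the state equation and using the dual prices \eqref{eq:dual_prices} to account for the time constraint). This yields the gradient-matching identity $\nabla_{(v,x)} Q(z^{(i)};z^{(i)}) = \nabla_{(v,x)} E(z^{(i)})$ on the feasible tangent space. Convexity of $Q$ then provides
\begin{equation*}
\phi'(0) = \langle \nabla E(z^{(i)}), d\rangle = \langle \nabla Q(z^{(i)};z^{(i)}), d\rangle \leq Q(z^{(i+1)};z^{(i)}) - Q(z^{(i)};z^{(i)}) < 0,
\end{equation*}
and continuity of $\phi'$ supplies $\eta > 0$ with $\phi(\alpha) < \phi(0)$ for all $\alpha \in (0,\eta]$, which is the stated descent.

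The main obstacle I anticipate is the gradient-matching step, because the state variable $t$ is determined dynamically from a nonlinear recursion rather than being a free variable. One must verify that replacing the recursion by its first-order linearization \eqref{eq:f_metric_taylor} combined with the backward sweep \eqref{eq:dual_prices} for the dual prices $\pi_s$ produces exactly the coefficients that would result from differentiating the original $E$ along a feasible perturbation: the $\pi_s L_s^{(t)}$ contribution must absorb the propagation of the perturbation in $t_s$ through $G(t_s,x_s,v_s)$ in all later stages, and the $v_s^\top (\partial G/\partial t) v_s$ terms encoded in $\xi_s$ must account for the direct dependence of the energy on time. This bookkeeping mirrors the position-only case treated in \cite{georce} but requires careful discrete integration-by-parts in the time index; once it is established, the remaining convexity and line-search arguments are standard.
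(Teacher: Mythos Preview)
Your strategy is sound and in fact isomorphic to the paper's proof, just phrased in the language of surrogate minimisation rather than carried out as an explicit Taylor calculation. The paper expands $\Delta E$ to first order in $(\Delta t_s,\Delta x_s,\Delta v_s)$, substitutes the optimality relations \eqref{eq:georceh_equations}, and then performs exactly the discrete integration-by-parts in the state and time indices that you flag as the ``main obstacle''. The outcome of that bookkeeping is the concrete identity
\[
\Delta E \;=\; \sum_{s=0}^{T-1}\bigl\langle -2G_s\,\Delta v_s,\;\Delta v_s\bigr\rangle \;+\; \text{higher order},
\]
from which positivity of $G_s$ gives descent after rescaling by $\alpha$. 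Your convexity inequality $\langle\nabla Q(z^{(i)}),d\rangle\le Q(z^{(i+1)})-Q(z^{(i)})<0$ is the abstract version of the same conclusion; the explicit formula above is what your gradient-matching step would deliver once carried out. So there is no wrong idea in your proposal, but the part you defer \emph{is} the proof: the paper's calculation is precisely the verification that the frozen coefficients $\zeta_s,\nu_s,\xi_s,L_s^{(\cdot)}$ together with the backward sweep \eqref{eq:dual_prices} reproduce the reduced gradient of $E$ at $z^{(i)}$.

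Two further remarks. First, the paper's explicit first-order formula is reused verbatim in the local quadratic-convergence argument (Proposition~\ref{prop:quad_conv}), so the concrete form buys more than your convexity bound would. Second, your claim that $z^{(i)}=z^{(i+1)}$ forces $z^{(i)}$ to be a \emph{local minimum} is slightly stronger than what the first-order conditions give (a stationary point); the paper makes the same elision, so this is a shared imprecision in the statement rather than a flaw in your argument.
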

\begin{proof}
    Since $E(t,x,v)$ is a smooth function, then the first-order Taylor approximation in $(t,x,v)$ of the discretized squared travel time functional $E(t,x,v)$ in \eqref{eq:disc_energy} is
    \begin{equation} \label{eq:georceh_taylor_proof}
        \begin{split}
            \Delta E(t,x,v) &= \sum_{s=1}^{T-1}\left(\left\langle\restr{\partial_{t_{s}}E(t_{s},x_{s},v_{s})}{(t_{s}, x_{s},v_{s})=\left(t_{s}^{(i)}, x_{s}^{(i)}, v_{s}^{(i)}\right)}, \Delta t_{s}\right\rangle +\mathcal{O}(\Delta t_{s})\norm{\Delta t_{s}}\right) \\
            &+ \sum_{s=1}^{T-1}\left(\left\langle\restr{\nabla_{x_{s}}E(t_{s},x_{s},v_{s})}{(t_{s},x_{s},v_{s})=\left(t_{s}^{(i)}, x_{s}^{(i)}, v_{s}^{(i)}\right)}, \Delta x_{s}\right\rangle +\mathcal{O}(\Delta x_{s})\norm{\Delta x_{s}}\right) \\
            &+\sum_{s=0}^{T-1}\left(\left\langle \restr{\nabla_{v_{s}}E(t_{s}, x_{s},v_{s})}{(t_{s},x_{s},v_{s})=\left(t_{s}^{(i)}, x_{s}^{(i)}, v_{s}^{(i)}\right)}, \Delta v_{s}\right\rangle+\mathcal{O}(\Delta v_{s})\norm{\Delta v_{s}}\right),
        \end{split}
    \end{equation}
    where $\Delta t_{s} \coloneqq t_{s}^{(i+1)}-t_{s}^{(i)}$, $\Delta v_{s} \coloneqq v_{s}^{(i+1)}-v_{s}^{(i)}$ and $\Delta x_{s} \coloneqq x_{s}^{(i+1)}-x_{s}^{(i)}$. For \textit{GEORCE-H}, the optimality conditions are
    \begin{equation} \label{eq:f_cond}
        \begin{split}
            \restr{\partial_{t_{s}}E(t_{s},x_{s},v_{s})}{(t_{s},x_{s},v_{s})=\left(t_{s}^{(i)},x_{s}^{(i)},v_{s}^{(i)}\right)} &= \pi_{s-1}-\pi_{s}L_{s}^{(t)}-\pi_{s}, \quad s=1,\dots,T-1, \\
            \restr{\nabla_{x_{s}}E(t_{s},x_{s},v_{s})}{(t_{s},x_{s},v_{s})=\left(t_{s}^{(i)},x_{s}^{(i)},v_{s}^{(i)}\right)} &= \mu_{s-1}-\pi_{s}L_{s}^{(x)}-\mu_{s}, \quad s=1,\dots,T-1, \\
            \restr{\nabla_{v_{s}}E(t_{s},x_{s},v_{s})}{(t_{s},x_{s},v_{s})=\left(t_{s}^{(i)},x_{s}^{(i)}, v_{s}^{(i+1)}\right)} &= 2\nu_{s}v_{s}^{(i+1)}+\zeta_{s}, \quad s=0,\dots,T-1.
        \end{split}
    \end{equation}
    Applying \eqref{eq:f_cond} to \eqref{eq:georceh_taylor_proof}, we get
    \begin{equation*}
        \begin{split}
            \Delta E(t,x,v) &= \sum_{s=1}^{T-1}\left(\langle \pi_{s-1}-\pi_{s}L_{s}^{(t)}-\pi_{s}, \Delta t_{s}\rangle + \mathcal{O}\left(\Delta t_{s}\right)\norm{\Delta t_{s}}\right) \\
            &+ \sum_{s=1}^{T-1}\left(\langle\mu_{s-1}-\pi_{s}L_{s}^{(x)}-\mu_{s}, \Delta x_{s}\rangle +\mathcal{O}(\Delta x_{s})\norm{\Delta x_{s}}\right) \\
            &+\sum_{s=0}^{T-1}\left(\left\langle \restr{\nabla_{v_{s}}E(t_{s},x_{s},v_{s})}{(t_{s},x_{s},v_{s})=\left(t_{s}^{(i)}x_{s}^{(i)},v_{s}^{(i)}\right)}, \Delta v_{s}\right\rangle+\mathcal{O}(\Delta v_{s})\norm{\Delta v_{s}}\right).
        \end{split}
    \end{equation*}
    Since $\Delta x_{s} = \sum_{j=0}^{s-1}\Delta v_{j}$, it follows that
    \begin{equation*}
        \begin{split}
            \Delta E(t,x,v) &= \sum_{s=1}^{T-1}\left(\langle \pi_{s-1}-\pi_{s}L_{s}^{(t)}-\pi_{s}, \Delta t_{s}\rangle + \mathcal{O}\left(\Delta t_{s}\right)\norm{\Delta t_{s}}\right) \\
            &+ \sum_{s=1}^{T-1}\left(\left\langle \mu_{s-1}-\pi_{s}L_{s}^{(x)}-\mu_{s}, \sum_{j=0}^{s-1}\Delta v_{j}\right\rangle +\mathcal{O}\left(\sum_{j=0}^{s-1}\Delta v_{j}\right)\norm{\sum_{j=0}^{s-1}\Delta v_{j}}\right) \\
            &+\sum_{s=0}^{T-1}\left(\left\langle \restr{\nabla_{v_{s}}E(t_{s},x_{s},v_{s})}{(t_{s},x_{s},v_{s})=\left(t_{s}^{(i)},x_{s}^{(i)},v_{s}^{(i)}\right)}, \Delta v_{s}\right\rangle+\mathcal{O}(\Delta v_{s})\norm{\Delta v_{s}}\right).
        \end{split}
    \end{equation*}
    Rearranging the terms, we get
    \begin{equation*}
        \begin{split}
            \Delta E(t,x,v) &= \sum_{s=0}^{T-2}\pi_{s}\Delta t_{s+1}-\sum_{s=1}^{T-1}\left(1+L_{s}^{(t)}\right)\pi_{s}\Delta t_{s} + \sum_{s=1}^{T-1}\mathcal{O}\left(\Delta t_{s}\right)\norm{t_{s}}+\sum_{s=0}^{T-2}\left\langle \mu_{s}, \Delta x_{s+1} \right \rangle \\
            &- \sum_{s=1}^{T-1}\left \langle \pi_{s}L_{s}^{(x)}+\mu_{s}, \Delta x_{s} \right\rangle + \sum_{s=1}^{T-1}\mathcal{O}\left(\Delta x_{s}\right)\norm{\Delta x_{s}} \\
            &+\sum_{s=0}^{T-1}\left(\left\langle \nabla_{v} \restr{E(t,x,v)}{(t,x,v)=\left(t_{s}^{(i)},x_{s}^{(i)},v_{s}^{(i)}\right)}, \Delta v_{s}\right\rangle + \mathcal{O}\left(\Delta v_{s}\right)\norm{\Delta v_{s}}\right) \\
            &= \pi_{0}\Delta t_{1} + \sum_{s=1}^{T-2}\pi_{s}\left(\Delta t_{s+1}-\Delta t_{s}\right)+\pi_{T-1}\Delta t_{T-1}-\sum_{s=1}^{T-1}L_{s}^{(t)}\pi_{s}\Delta t_{s} \\
            &+ \sum_{s=1}^{T-1}\mathcal{O}\left(\Delta t_{s}\right)\norm{t_{s}}+\langle u_{0}, \Delta x_{1} \rangle + \sum_{s=1}^{T-1}\left\langle \mu_{s}, \Delta x_{s+1}-\Delta x_{s} \right\rangle \\
            &- \left\langle \mu_{T-1}, \Delta x_{T-1} \right \rangle - \sum_{s=1}^{T-1}\left\langle \pi_{s}L_{s}^{(x)}, \Delta x_{s} \right\rangle+\sum_{s=1}^{T-1}\mathcal{O}\left(\Delta x_{s}\right)\norm{\Delta x_{s}} \\
            &+ \sum_{s=0}^{T-1} \left(\left\langle\nabla_{v}\restr{E(t,x,v)}{(t,x,v)=\left(t_{s}^{(i)},x_{s}^{(i)},v_{s}^{(i)}\right)}, \Delta v_{s} \right\rangle + \mathcal{O}\left(\Delta v_{s}\right)\norm{\Delta v_{s}}\right).
        \end{split}
    \end{equation*}
    We observe that
    \begin{equation*}
        \begin{split}
            \Delta t_{s+1}-\Delta t_{s} &= \left(t_{s+1}^{(i+1)}-t_{s+1}^{(i)}\right)-\left(t_{s}^{(i+1)}-t_{s}^{(i)}\right) = \left(t_{s+1}^{(i+1)}-t_{s}^{(i)}\right)-\left(t_{s+1}^{(i+1)}-t_{s}^{(i)}\right) \\
            &= E\left(t_{s}^{(i+1)}, x_{s}^{(i+1)}, v_{s}^{(i+1)}\right)-E\left(t_{s}^{(i)}, x_{s}^{(i)}, v_{s}^{(i)}\right) \\
            &= L_{s}^{(t)}\Delta t_{s} + L_{s}^{(x)}\Delta x_{s} + L_{s}^{(v)}\Delta v_{s} + \mathcal{O}\left(\Delta E\right)\norm{\Delta E}.
        \end{split}
    \end{equation*}
    Similarly, we see that
    \begin{equation*}
        \begin{split}
            \Delta x_{s+1}-\Delta x_{s} &= \left(x_{s+1}^{(i+1)}-x_{s+1}^{(i)}\right)-\left(x_{s}^{(i+1)}-x_{s}^{(i)}\right) = \left(x_{s+1}^{(i+1)}-x_{s}^{(i)}\right)-\left(x_{s+1}^{(i+1)}-x_{s}^{(i)}\right) \\
            &= v_{s}^{(i+1)}-v_{s}^{(i)} = \Delta v_{s},
        \end{split}
    \end{equation*}
    which implies that
    \begin{equation*}
        \left\langle \mu_{T-1}, \Delta x_{T-1} \right\rangle = \left\langle \mu_{T-1}, 0-\sum_{s=0}^{T-2}\Delta v_{s}\right\rangle
        = \left\langle \mu_{T-1}, \sum_{s=0}^{T-1}\Delta v_{s}-\sum_{s=0}^{T-2}\Delta v_{s}\right\rangle = \left\langle \mu_{T-1}, \Delta \mu_{T-1}\right\rangle.
    \end{equation*}
    The latter is due to the fact that the solution in iteration $i$ and $i+1$ are both feasible, i.e. $\sum_{s=0}^{T-1}v_{s}=B-A$, which means that $\sum_{s=0}^{T-1}\Delta v_{s}=0$. Further note that $\Delta t_{1}=\Delta t_{1}-\Delta t_{0}$ as $\Delta t_{0}=0$, since the starting point is fixed. Similarly, $\Delta x_{1}= \Delta x_{1}-\Delta x_{0}$, since $\Delta x_{0}=0$, and $\pi_{T-1}=0$. This implies that
    \begin{equation*}
        \begin{split}
            \Delta E(t,x,v) &= \sum_{s=0}^{T-1}\pi_{s}\left(L_{s}^{(t)}\Delta t_{s} + L_{s}^{(x)}\Delta x_{s} + L_{s}^{(v)}\Delta v_{s}+\mathcal{O}\left(\Delta E\right)\norm{\Delta E}\right) \\
            &- \sum_{s=1}^{T-1}L_{s}^{(t)}\pi_{s}\Delta t_{s} + \sum_{s=1}^{T-1}\mathcal{O}\left(\Delta t_{s}\right)\norm{\Delta t_{s}}+\sum_{s=0}^{T-1}\left(\left\langle \mu_{s}, \Delta v_{s}\right\rangle + \mathcal{O}\left(\Delta x_{s}\right)\norm{\Delta x_{s}}\right) - \sum_{s=0}^{T}\left\langle \pi_{s}L_{s}^{(x)}, \Delta x_{s}\right\rangle \\
            &+ \sum_{s=0}^{T-1}\left(\left\langle \nabla_{v}\restr{E(t,x,v)}{(t,x,v)=\left(t_{s}^{(i+1)},x_{s}^{(i+1)},v_{s}^{(i+1)}\right)}, \Delta v_{s}\right\rangle+\mathcal{O}\left(\Delta x_{s}\right)\norm{\Delta x_{s}}+\pi_{s}\mathcal{O}\left(\Delta E\right)\right) \\
            &+ \sum_{s=0}^{T-1}\mathcal{O}\left(\Delta v_{s}\right)\norm{v_{s}}.
        \end{split}
    \end{equation*}
    Since for $s=0,\dots,T-1$,
    \begin{equation*}
        \begin{split}
            &\pi_{s}L_{s}^{(v)}+\mu_{s}+\nabla_{v}\restr{E(t,x,v)}{(t,x,v)=\left(t_{s}^{(i)},x_{s}^{(i)}, v_{s}^{(i)}\right)} \\
            &= -\nabla_{v}\restr{E(t,x,v)}{(t,x,v)=\left(t_{s}^{(i)},x_{s}^{(i)}, v_{s}^{(i+1)}\right)}+\nabla_{v}\restr{E(t,x,v)}{(t,x,v)=\left(t_{s}^{(i)},x_{s}^{(i)}, v_{s}^{(i)}\right)} \\
            &= -2G\left(t_{s}^{(i)}, x_{s}^{(i)},v_{s}^{(i)}\right)-\zeta_{s}+2G\left(t_{s}^{(i)},x_{s}^{(i)},v_{s}^{(i)}\right)v_{s}^{(i+1)}+\zeta_{s} \\
            &= -2G\left(t_{s}^{(i)}, x_{s}^{(i)}, v_{s}^{(i)}\right)\left(v_{s}^{(i+1)}-v_{s}^{(i)}\right),
        \end{split}
    \end{equation*}
    then, the first-order Taylor approximation of the discretized squared travel time functional becomes
    \begin{equation*}
        \Delta E(t,x,v) = \sum_{s=0}^{T-1}\left(\left\langle -2G\left(t_{s}^{(i)},x_{s}^{(i)}, v_{s}^{(i)}\right)\Delta v_{s}, \Delta v_{s}\right\rangle + \mathcal{O}\left(\Delta t_{s}\right)\norm{\Delta t_{s}}+\mathcal{O}\left(\Delta x_{s}\right)\norm{\Delta x_{s}}+\mathcal{O}\left(\Delta v_{s}\right)\norm{\Delta v_{s}}\right).
    \end{equation*}
    Since $G\left(t_{s}^{(i)},x_{s}^{(i)},v_{s}^{(i)}\right)$ is positive definite for $s=0,\dots,T-1$, the first summation is negative assuming at least one non-zero $\{\Delta v_{s}\}_{s=0}^{T-1}$, which is the case, since otherwise $\left\{t_{s}^{(i)},x_{s}^{(i)},v_{s}^{(i)}\right\}_{s=0}^{T-1}$ would be a local minimum.

    Note that $\left\{\norm{\Delta v_{s}}\right\}_{s=0}^{T-1}$ converging to $0$ implies that $\norm{\Delta x_{s}}$ also converges to $0$ for $s=0,\dots,T-1$. Since $t_{s+1}=t_{s}+E(t_{s},x_{s},v_{s})$ and $E$ is smooth, then $\Delta t_{s}$ will also converge to $0$ for $s=0,\dots,T-1$. The $\mathcal{O}$-terms can therefore be simplified to
    \begin{equation} \label{eq:energy_taylor}
        \Delta E(t,x,v) = \sum_{s=0}^{T-1}\left(\left\langle -2G\left(t_{s}^{(i)},x_{s}^{(i)}, v_{s}^{(i)}\right)\Delta v_{s}, \Delta v_{s}\right\rangle + \mathcal{O}\left(\Delta v_{s}\right)\norm{\Delta v_{s}}\right).
    \end{equation}
    Now scale all $\{\Delta v_{s}\}_{s=0}^{T-1}$ with a scalar $ \alpha \leq 1$. Note that the term $ -2G\left(t_{s}^{(i)},x_{s}^{(i)}, v_{s}^{(i)}\right)\Delta v_{s}$ is unaffected by this, since this term is equal to $\pi_{s}L_{s}^{(v)}+\mu_{s}+\nabla_{v}\restr{E(t,x,v)}{(t,x,v)=\left(t_{s}^{(i)},x_{s}^{(i)},v_{s}^{(i)}\right)}$, which is independent of $\alpha$. The first-order Taylor expansion is then
    \begin{equation*}
        \Delta E(t,x,v) = \sum_{s=0}^{T-1}\left(\left\langle -2G\left(t_{s}^{(i)},x_{s}^{(i)}, v_{s}^{(i)}\right)\Delta v_{s}, \alpha \Delta v_{s}\right\rangle + \mathcal{O}\left(\alpha\Delta v_{s}\right)\norm{\alpha\Delta v_{s}}\right).
    \end{equation*}
    In the limit it follows that there exists a scalar $\eta>0$ such that $\frac{E(t,x,v)}{\alpha}<0$ for some $\alpha$ with $0<\alpha \leq \eta \leq 1$, i.e. $\Delta E(t,x,v)<0$.
\end{proof}

With the generalized version of Lemma~\ref{lemma:global_taylor} for a time-dependent Finsler metric, we prove the main theorem equivalent to the original theorem for global convergence in \citep{georce}, which follows the version in \citep{georce} almost ad verbatim adapted to a time-dependent Finsler metric.

\begin{theorem}
    Let $E^{(i)}$ be the value of the discretized squared travel time functional for the solution after iteration $i$ (with line search) in \textit{GEORCE-H}. If the starting point $\left(t^{(0)}, x^{(0)}, v^{(0)}\right)$ is feasible, then the series $\left\{E^{(i)}\right\}_{i>0}$ will converge to a (local) minimum. 
\end{theorem}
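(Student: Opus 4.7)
The plan is to establish the result in three stages: feasibility maintenance, monotone decrease of $\{E^{(i)}\}$, and characterization of the limit as a local minimum. First I would invoke Lemma~\ref{lemma:global_conv_feasible} inductively: the feasibility of $(t^{(0)}, x^{(0)}, v^{(0)})$ together with part~1 guarantees that at each iteration the linear system \eqref{eq:georceh_equations} has a unique solution, and part~2 ensures that every convex combination $(1-\alpha)(x^{(i)},v^{(i)}) + \alpha(x^{(i+1)},v^{(i+1)})$ used during the backtracking line search with $\rho = 0.5$ is again feasible. Hence the whole iterate sequence lies in the feasible set, which is precisely the domain on which $E$ is well defined.

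Next I would use Lemma~\ref{lemma:global_taylor} to control the line search. If $(t^{(i)}, x^{(i)}, v^{(i)})$ is not already a local minimum, the lemma provides an $\eta \in (0,1]$ such that $E$ strictly decreases along the search direction for every step size $\alpha \in (0, \eta]$. Since backtracking with factor $\rho = 0.5$ necessarily tries step sizes in $\{1, 1/2, 1/4, \ldots\}$, after finitely many reductions it produces an admissible $\tilde{\alpha} \leq \eta$, so the inner \texttt{while} terminates and $E^{(i+1)} < E^{(i)}$. Thus $\{E^{(i)}\}$ is strictly monotonically decreasing as long as no iterate is a local minimum (if one is, the iteration stabilizes trivially). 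Because $E(t,x,v) = \sum_{s} v_{s}^{\top} G(t_s,x_s,v_s)\,v_s \geq 0$ by positive definiteness of $G$, the sequence is bounded below, and the monotone convergence theorem gives $E^{(i)} \to E^{\ast}$ for some $E^{\ast} \geq 0$.

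To identify $E^{\ast}$ with a local minimum value, I would argue by contradiction using the Taylor expansion \eqref{eq:energy_taylor} derived in the proof of Lemma~\ref{lemma:global_taylor}. Suppose a subsequence $(t^{(i_k)}, x^{(i_k)}, v^{(i_k)})$ converges to a limit point $(t^{\ast}, x^{\ast}, v^{\ast})$ that is not a local minimum. By continuity of the maps $(t,x,v) \mapsto G(t_s,x_s,v_s)$ and of the coefficients $\{\xi_s, \nu_s, \zeta_s, L_s^{(\cdot)}\}$, the Newton-type directions $\Delta v^{(i_k)}$ computed from \eqref{eq:dual_prices}--\eqref{eq:georceh_update_scheme} converge to the corresponding direction at the limit point, which by non-stationarity is nontrivial. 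Then \eqref{eq:energy_taylor} yields a uniform lower bound on the per-iteration decrease $E^{(i_k)} - E^{(i_k+1)} \geq \delta > 0$ for all large $k$, contradicting $E^{(i)} \to E^{\ast}$. Hence every cluster point of the iterates is a local minimum, and $E^{\ast}$ equals the common objective value at such cluster points.

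The main obstacle is the last step: passing from a monotone decrease of the scalar sequence $\{E^{(i)}\}$ to a statement about the iterates themselves being close to a stationary configuration. The delicate point is that the constants hidden in the $\mathcal{O}$-terms of \eqref{eq:energy_taylor} and in the line-search bound $\eta$ must be shown to be uniform on a neighborhood of any cluster point, which requires the local boundedness and continuity of the fundamental tensor $G$ and its $(t,x)$-derivatives along the feasible iterates. Given the smoothness of $F_t$ assumed throughout, this uniformity holds on any compact set containing the iterate trajectory, so the argument closes cleanly.
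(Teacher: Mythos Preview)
Your proposal follows essentially the same three-part strategy as the paper: feasibility is preserved by Lemma~\ref{lemma:global_conv_feasible}, strict monotone decrease follows from Lemma~\ref{lemma:global_taylor} together with nonnegativity of $E$, and the limit is identified with a local minimum by contradiction. The only notable difference is in the final step: the paper simply posits a ``convergence point'' $(\hat{t},\hat{x},\hat{v})$ of the iterates and applies Lemma~\ref{lemma:global_taylor} there directly, whereas you work more carefully through cluster points of a subsequence and argue for a uniform lower bound $\delta>0$ on the per-iteration decrease via continuity of the update map and uniformity of the $\mathcal{O}$-constants in \eqref{eq:energy_taylor}. Your version is in fact more rigorous on this point, since the paper tacitly assumes the iterates themselves converge rather than merely the scalar values $E^{(i)}$; your discussion of the ``main obstacle'' correctly identifies exactly the gap that the paper's short argument leaves implicit.
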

\begin{proof}
    If the starting point $\left(t^{(0)}, x^{(0)}, v^{(0)}\right)$ in \textit{GEORCE-H} is feasible, then the points $\left(t^{(i)}, x^{(i)}, v^{(i)}\right)$ after each iteration (with line search) in \textit{GEORCE-H} will also be feasible solutions by Lemma~\ref{lemma:global_conv_feasible}.

    The discretized squared travel time functional is a positive function, i.e. a lower bound is at least 0. Assume that the series $\left\{E^{(i)}\right\}_{i>0}$ is not converging to a (local) minimum. From Lemma~\ref{lemma:global_taylor}, it follows that $\left\{E^{(i)}\right\}_{i>0}$ is decreasing for increasing $i$, and since $E^{(i)}$ has a lower bound, the series $\left\{E^{(i)}\right\}_{i}$ cannot be non-converging. Assume now that the convergence value for $\left\{E^{(i)}\right\}_{i}$, denoted $\hat{E}$, is not a (local) minimum, and denote the convergence point $(\hat{t}, \hat{x}, \hat{v})$. According to Lemma~\ref{lemma:global_taylor}, \textit{GEORCE-H} will from the solution $(\hat{t}, \hat{x}, \hat{v})$ in the following iteration produce a new point $(\tilde{t}, \tilde{x},\tilde{v})$ such that $E(\tilde{t}, \tilde{x},\tilde{v})<\hat{E}$, which is a contradiction as the series $\left\{E^{(i)}\right\}_{i}$ is assumed to be converging to $\hat{E}$. The series, $\left\{E^{(i)}\right\}_{i}$, will therefore converge, and the convergence point is a (local) minimum.
\end{proof}
    \subsection{Local Quadratic Convergence} \label{ap:quadratic_convergence}
    In this section we generalize the proof of local quadratic convergence in \citep{georce}, showing that \textit{GEORCE-H} has locally quadratic convergence to a (local) minimum under certain assumptions satisfied by the discretized squared travel time functional. We will work under the assumptions stated in Appendix~\ref{ap:assumptions}. We first recall the following result from \citep{georce}.
\begin{lemma}\label{lemma:quad_conv_bound}
    Assume $f(x)$ is a smooth function. Let $z^{*}$ be a strongly unique (local) minimum point for $f$. Then:
    \begin{equation*}
        \exists \epsilon>0:\quad \forall z \in B_{\epsilon}(z^{*})\setminus\{z^{*}\}:\quad \langle \nabla f(z), z^{*}-z \rangle < 0. 
    \end{equation*}
\end{lemma}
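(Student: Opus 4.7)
The goal is to show that, near a strongly unique minimum $z^{*}$, the gradient $\nabla f(z)$ has a strictly negative component in the direction $z^{*}-z$. The plan is to combine the linear lower bound from strong uniqueness (Assumption~\ref{assum:quad_conv_assumptions}(a)) with the mean value theorem along the segment joining $z$ to $z^{*}$, and then use the local Lipschitz continuity of $\nabla f$ (which follows from smoothness of $f$ on a compact neighborhood of $z^{*}$) to transfer the inequality from an unknown intermediate point to $z$ itself.

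\textbf{Steps.} First, fix $\epsilon_{0}>0$ and $K>0$ as in the strong uniqueness assumption, and let $L>0$ be a Lipschitz constant for $\nabla f$ on the closed ball $\overline{B_{\epsilon_{0}}(z^{*})}$. Take any $z\in B_{\epsilon_{0}}(z^{*})\setminus\{z^{*}\}$ and consider
\begin{equation*}
h(t) \;:=\; f\bigl(z+t(z^{*}-z)\bigr), \qquad t\in[0,1],
\end{equation*}
which is smooth in $t$ with $h(0)=f(z)$, $h(1)=f(z^{*})$, and $h'(t)=\langle \nabla f(z+t(z^{*}-z)),\,z^{*}-z\rangle$. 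By the mean value theorem there exists $t_{0}\in(0,1)$ such that
\begin{equation*}
\bigl\langle \nabla f(z+t_{0}(z^{*}-z)),\,z^{*}-z\bigr\rangle \;=\; h'(t_{0}) \;=\; f(z^{*})-f(z) \;\leq\; -K\,\|z-z^{*}\|,
\end{equation*}
where the last inequality is exactly the strong uniqueness hypothesis.

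Next, I would transfer this bound from the intermediate point $z+t_{0}(z^{*}-z)$ to $z$. Writing
\begin{equation*}
\langle \nabla f(z),\, z^{*}-z\rangle \;=\; \bigl\langle \nabla f(z+t_{0}(z^{*}-z)),\,z^{*}-z\bigr\rangle \;+\; \bigl\langle \nabla f(z)-\nabla f(z+t_{0}(z^{*}-z)),\,z^{*}-z\bigr\rangle,
\end{equation*}
and bounding the second term via Cauchy--Schwarz together with the Lipschitz estimate $\|\nabla f(z)-\nabla f(z+t_{0}(z^{*}-z))\|\leq L\,t_{0}\,\|z^{*}-z\|\leq L\,\|z^{*}-z\|$, I obtain
\begin{equation*}
\langle \nabla f(z),\, z^{*}-z\rangle \;\leq\; -K\,\|z-z^{*}\| + L\,\|z-z^{*}\|^{2} \;=\; \|z-z^{*}\|\bigl(L\,\|z-z^{*}\| - K\bigr).
\end{equation*}
Setting $\epsilon := \min\{\epsilon_{0},\, K/(2L)\}$ makes the factor in parentheses strictly negative for every $z\in B_{\epsilon}(z^{*})\setminus\{z^{*}\}$, which is the desired conclusion.

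\textbf{Main obstacle.} The only delicate point is the transfer step: we know the inner product is negative at the (unknown) mean value point, but we need it at $z$. The linear lower bound $K\|z-z^{*}\|$ from strong uniqueness is first order in $\|z-z^{*}\|$, while the Lipschitz correction is second order, so for sufficiently small $\|z-z^{*}\|$ the former dominates and the strict inequality survives. In practice this balance is what forces $\epsilon$ to be no larger than $K/L$.
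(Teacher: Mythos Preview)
Your argument is correct. The combination of the mean value theorem along the segment from $z$ to $z^{*}$, the strong-uniqueness lower bound $f(z)-f(z^{*})\geq K\|z-z^{*}\|$, and the local Lipschitz continuity of $\nabla f$ (available because $f$ is smooth on a compact ball) yields exactly the estimate
\[
\langle \nabla f(z),\, z^{*}-z\rangle \;\leq\; \|z-z^{*}\|\bigl(L\,\|z-z^{*}\|-K\bigr),
\]
and shrinking $\epsilon$ below $K/L$ makes the right-hand side strictly negative. Each step is justified, and the identification of the ``transfer step'' as the only place where an extra smallness condition on $\epsilon$ is needed is accurate.

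As for comparison with the paper: the paper does \emph{not} supply its own proof of this lemma. It merely states the result and attributes it to the earlier \textit{GEORCE} reference (``We first recall the following result from \cite{georce}''), then proceeds to use it in the proof of local quadratic convergence. So there is no in-paper argument to compare against; your self-contained proof fills that gap and is entirely adequate for the purpose.
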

We now generalize the following proposition from \citep{georce} to time-dependent Finsler metrics, following the same approach as in \citep{georce} almost ad verbatim, showing local quadratic convergence.
\begin{proposition}
    If the discretized squared travel time functional has a strongly unique (local) minimum point $z^{*}$ and locally $\alpha^{*}=1$, i.e. no line search, then \textit{GEORCE-H} has local quadratic convergence, i.e.
    \begin{equation*}
        \exists \epsilon>0:\quad \exists c>0: \quad \forall z^{(i)} \in B_{\epsilon}(z^{*}): \quad \norm{z^{(i+1)}-z^{*}} \leq c \norm{z^{(i)}-z^{*}}^{2},
    \end{equation*}
    where $z^{(i)}=(t^{(i)},x^{(i)}, v^{(i)})$ is the solution from \textit{GEORCE-H} at iteration $i$.
\end{proposition}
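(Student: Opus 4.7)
The plan is to show that GEORCE-H's update scheme, when the step size is exactly $1$, coincides with a Newton step on a suitable system whose zeros are critical points of the discretized squared travel time functional, and then to invoke the standard local quadratic convergence theorem for Newton's method. Concretely, define the map $\Psi: \mathbb{R}^{N} \to \mathbb{R}^{N}$ whose components are the residuals of the full (non-linearized) KKT system in \eqref{eq:necessary_cond} evaluated on the primal-dual variables $z = (t_{0:T}, x_{0:T}, v_{0:T-1}, \mu_{0:T-1}, \pi_{0:T-1})$, with the boundary equations $t_0 = 0$, $x_0 = A$, $\sum_s v_s = B - A$, and $\pi_{T-1} = 0$ appended. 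By Proposition~\ref{prop:disc_energy}, any local minimum $z^{*}$ of $E$ satisfies $\Psi(z^{*}) = 0$.

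First, I would show that the GEORCE-H iteration, as described in \eqref{eq:georceh_fixed_variables}–\eqref{eq:georceh_update_scheme}, is exactly the Newton step for $\Psi = 0$ at the current iterate. The key observation is that freezing $\xi_s, \nu_s, \zeta_s, L_s^{(v)}, L_s^{(x)}, L_s^{(t)}, L_s^{(0)}, G_s$ at $z^{(i)}$ linearizes $\Psi$ around $z^{(i)}$ in precisely the variables $(v, \mu, \pi)$ that GEORCE-H solves for, while the forward equations for $(t, x)$ propagate the linearized updates correctly (the key algebraic identity being $\Delta x_{s+1} - \Delta x_s = \Delta v_s$, already exploited in the proof of Lemma~\ref{lemma:global_taylor}). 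I would verify this by differentiating each equation of $\Psi$ and matching terms with \eqref{eq:georceh_equations}. The reverse recursion for $\pi$ in \eqref{eq:dual_prices} mirrors the linearization of the time costate equation in the presence of $L^{(t)}$, and the Schur-complement formula for $\mu_{T-1}$ in \eqref{eq:georceh_update_scheme} is precisely the reduced Newton solve obtained by eliminating the stationarity block with respect to $v$.

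Next, I would verify the hypotheses of the classical Newton convergence theorem. Smoothness of $\Psi$ follows from Assumption~\ref{assum:quad_conv_assumptions}(b)–(c) applied to $F$ and $G$, which are assumed smooth in $(t, x, v)$. The invertibility of $D\Psi(z^{*})$ is the crucial step: it corresponds to the stationarity block $2 G_s$ being positive definite (which holds because $F_t$ is a Finsler metric), together with the strong uniqueness assumption (a), which rules out the degenerate case where the reduced Hessian of $E$ on the feasible manifold has a nontrivial kernel. This is the main obstacle, since strong uniqueness is a first-order condition (linear growth of $E - E(z^{*})$), whereas Newton convergence typically requires a second-order condition; however, combined with the smoothness of $\Psi$ and the fact that GEORCE-H's freezing procedure is a true first-order Taylor expansion (Assumption (c) makes the remainder genuinely quadratic), one obtains the standard bound
\begin{equation*}
    \|z^{(i+1)} - z^{*}\| \leq \|D\Psi(z^{*})^{-1}\| \cdot \|\Psi(z^{(i)}) - \Psi(z^{*}) - D\Psi(z^{(i)})(z^{(i)} - z^{*})\| \leq c \|z^{(i)} - z^{*}\|^{2}
\end{equation*}
for all $z^{(i)}$ in a sufficiently small neighborhood of $z^{*}$.

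Finally, the assumption $\alpha^{*} = 1$ guarantees that no line search is performed near $z^{*}$, so the iterate produced by GEORCE-H is exactly the pure Newton iterate. To close the argument, I would note that for $z^{(i)}$ close enough to $z^{*}$, the backtracking condition in Algorithm~\ref{al:georceh} is satisfied at $\tilde{\alpha} = 1$ (this follows from Lemma~\ref{lemma:global_taylor} applied with $\alpha = 1$ and Assumption (c), which makes the higher-order remainder dominated by the strictly negative quadratic term $-2 \sum_s \langle G_s \Delta v_s, \Delta v_s \rangle$ for small $\|\Delta v_s\|$). Projecting the quadratic estimate on $\Psi$ back onto the primal variables $(t, x, v)$ then yields the stated inequality, completing the proof.
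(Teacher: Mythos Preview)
Your approach---casting GEORCE-H as Newton's method on the KKT system $\Psi=0$ and invoking the textbook quadratic-convergence theorem---is genuinely different from the paper's, but it has two gaps that you do not close.

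First, the identification of the GEORCE-H update with a Newton step is asserted rather than verified, and in fact it is not correct. In the stationarity equation the full Jacobian with respect to $v_s$ of the term $2G(t_s,x_s,v_s)v_s+\nabla_y[v_s^{\top}G(t_s,x_s,y)v_s]\big|_{y=v_s}$ is the Hessian of $v_s^{\top}G(t_s,x_s,v_s)v_s$, which contains the derivatives of $G$ in its third (direction) argument. The freezing in \eqref{eq:georceh_fixed_variables} replaces this block by $2G_s^{(i)}$ alone, dropping exactly those ``Cartan-tensor'' terms. So GEORCE-H is a quasi-Newton/fixed-point scheme, not a pure Newton iteration, and the standard Newton theorem does not apply as stated. (In the Riemannian case $G$ is direction-independent and the two coincide; in the genuinely Finslerian case they do not.)

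Second, even granting a Newton interpretation, you correctly flag that strong uniqueness in the sense of Assumption~\ref{assum:quad_conv_assumptions}(a) is a \emph{first-order} growth condition and does not by itself imply invertibility of $D\Psi(z^{*})$; you then leave this ``main obstacle'' unresolved. The paper's argument avoids both issues. It never invokes Jacobian invertibility: it uses strong uniqueness directly as a lower bound $K\|z^{(i+1)}-z^{*}\|\le E(z^{(i+1)})-E(z^{*})$, splits the right-hand side as $(E(z^{(i+1)})-E(z^{(i)}))+(E(z^{(i)})-E(z^{*}))$, and then applies the first-order Taylor identity already derived in the proof of Lemma~\ref{lemma:global_taylor} (namely that the linear part equals $\sum_s\langle -2G_s^{(i)}\Delta v_s,\,\cdot\,\rangle$) twice, once toward $z^{(i+1)}$ and once toward $z^{*}$. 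The two linear terms combine to $\sum_s\langle -2G_s^{(i)}\Delta v_s,\,v_s^{(i+1)}-v_s^{*}\rangle$, which is nonpositive by Lemma~\ref{lemma:quad_conv_bound}, leaving only the $\mathcal{O}(\|z^{(i)}-z^{*}\|^{2})$ remainders. That route exploits the \emph{specific} algebraic form of the GEORCE-H update (the identity $-2G_s^{(i)}\Delta v_s$ equals a gradient difference) rather than any abstract Newton structure, and it is precisely this that lets the first-order strong-uniqueness hypothesis suffice.
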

\begin{proof}
    Assume that $z^{*}$ is a (local) minimum point. Since \textit{GEORCE-H} has global convergence to a (local) minimum point by Theorem~\ref{prop:global_convergence}, we will assume that the algorithm has a convergence point $F(z^{*})$. Consider iteration $i$ with $z^{(i)} = (t^{(i)},x^{(i)},v^{(i)})$ from \textit{GEORCE-H}, and assume that all the following iterations in \textit{GEORCE-H} are in the ball $B_{\epsilon_{1}}(z^{*})$, and fulfill Assumption~\ref{assum:quad_conv_assumptions}(a). Note that as the series $\left\{z^{(i)}\right\}$ converges to $z^{*}$, then $\left\{z^{(i)}\right\}_{i \geq k} \subset B_{\epsilon_{1}}(z^{*})$ for some $k>0$. It then follows from Assumption~\ref{assum:quad_conv_assumptions}(a) that for some $K>0$:
    \begin{equation*}
        E\left(z^{(i+1)}\right)-E\left(z^{(i)}\right) \geq K \norm{z^{(i+1)}-z^{*}},
    \end{equation*}
    since $z^{(i+1)} \in B_{\epsilon_{1}}(z^{*})$. The left-hand term above can be rearranged into
    \begin{equation*}
        E\left(z^{(i+1)}\right)-E\left(z^{*}\right) = \left(E\left(z^{(i+1)}\right)-E\left(z^{(i)}\right)\right)+\left(E\left(z^{(i)}\right)-E\left(z^{*}\right)\right).
    \end{equation*}
    By \eqref{eq:energy_taylor} in the proof of Lemma~\ref{lemma:global_taylor}, and using $\alpha^{*}=1$, it follows that
    \begin{equation*}
        \begin{split}
            &E\left(z^{(i+1)}\right)-E\left(z^{*}\right) \\
            &= \left(E\left(z^{(i+1)}\right)-E\left(z^{(i)}\right)\right)+\left(E\left(z^{(i)}\right)-E\left(z^{*}\right)\right) \\
            &= \sum_{s=0}^{T-1}\left(\left\langle -2G\left(t_{s}^{(i)}, x_{s}^{(i)}, v_{s}^{(i)}\right)\Delta v_{s}, \Delta v_{s} \right\rangle + \mathcal{O}\left(\Delta v_{s}\right)\norm{\Delta v_{s}}\right) \\
            &- \sum_{s=0}^{T-1}\left(\left\langle -2G\left(t_{s}^{(i)}, x_{s}^{(i)}, v_{s}^{(i)}\right)\Delta v_{s}, \left(v_{s}^{*}-v_{s}^{(i)}\right) \right\rangle + \mathcal{O}\left(v_{s}^{*}-v_{s}^{(i)}\right)\norm{v_{s}^{*}-v_{s}^{(i)}}\right) \\
            &= \sum_{s=0}^{T-1}\left(\left\langle -2G\left(t_{s}^{(i)}, x_{s}^{(i)}, v_{s}^{(i)}\right)\Delta v_{s}, \Delta v_{s}-\left(v_{s}^{*}-v_{s}^{(i)}\right) \right\rangle + \mathcal{O}\left(\Delta v_{s}\right)\norm{\Delta v_{s}}+\mathcal{O}\left(v_{s}^{*}-v_{s}^{(i)}\right)\norm{v_{s}^{*}-v_{s}^{(i)}}\right) \\
            &= \sum_{s=0}^{T-1}\left(\left\langle -2G\left(t_{s}^{(i)}, x_{s}^{(i)}, v_{s}^{(i)}\right)\Delta v_{s}, v_{s}^{*}-v_{s}^{(i+1)} \right\rangle + \mathcal{O}\left(\Delta v_{s}\right)\norm{\Delta v_{s}}+\mathcal{O}\left(v_{s}^{*}-v_{s}^{(i)}\right)\norm{v_{s}^{*}-v_{s}^{(i)}}\right). \\
        \end{split}
    \end{equation*}
    Observe now that the term $2G\left(t_{s}^{(i)}, x_{s}^{(i)}, v_{s}^{(i)}\right)\Delta v_{s}$ is the gradient for $z_{s}^{(i)}$. Define $\epsilon_{2}$ such that the $\mathcal{O}$-terms are sufficiently close to zero for all $z \in B_{\epsilon_{2}}(z^{*})$. Thus, for $\epsilon=\min\left\{\epsilon_{1}, \epsilon_{2}\right\}$ we have from Lemma~\ref{lemma:quad_conv_bound} that
    \begin{equation*}
        \sum_{s=0}^{T-1}\left\langle 2G\left(t_{s}^{(i)},x_{s}^{(i)},v_{s}^{(i)}\right)\Delta v_{s}, v_{s}^{*}-v_{s}^{(i+1)}\right\rangle < 0.
    \end{equation*}
    Combining the inequalities and utilizing that $E(z)$ is locally Lipschitz continuous by Assumption~\ref{assum:quad_conv_assumptions}(b), then
    \begin{equation*}
        K\norm{z^{(i+1)}-z^{(i)}} \leq E\left(z^{(i+1)}\right)-E\left(z^{*}\right) \leq \mathcal{O}\left(\norm{z^{(i)}-z^{*}}^{2}\right),
    \end{equation*}
    which implies that there exists an $\epsilon>0$ such that
    \begin{equation*}
        \forall z^{(i)} \in B_{\epsilon}\left(z^{*}\right): \quad \norm{z^{(i+1)}-z^{(i)}} \leq c\norm{z^{(i)}-z^{*}}^{2},
    \end{equation*}
    for $c>0$.
\end{proof}
The assumption in the above proof that the line search parameter satisfies $\alpha^{*}=1$ locally at the minimum point is based on the fact that, locally, the approximation of the discretized squared travel time functional becomes close to the true function.

    \section{Experiments} \label{ap:experiments}
    The constraints used in the numerical experiments presented in the article are described below. The code is available at \url{https://github.com/FrederikMR/zermelo_tacking}.
    \subsection{Hardware} \label{ap:hardware}
    All plots and figures in Section~\ref{sec:examples} have been computed on a \textit{HP} computer with Intel Core i9-11950H 2.6 GHz 8C, 15.6'' FHD, 720P CAM, 32 GB (2$\times$16GB) DDR4 3200 So-Dimm, Nvidia Quadro TI2004GB Discrete Graphics, 1TB PCle NVMe SSD, backlit Keyboard, 150W PSU, 8cell, W11Home 64 Advanced, 4YR Onsite NBD.

The tacking curves and pregeodesics have been computed on a GPU for at most 24 hours with a maximum memory of $10$ GB. The GPU consists of $4$ nodes on a \textit{Tesla A100 PCIE}.
    \subsection{Hyper-Parameters} \label{ap:hyper_parameters}
For \textit{GEORCE-H} we set the backtracking parameter $\rho=0.5$ similar to \cite{georce} and tolerance $0.0001$. Each pregeodesic is computed with $T=1,000$ grid points and a maximum of $10,000$ iterations, except for the position-only dependent bi-metric, where we use $T=100$ to reduce runtime.

For computing the tack curves, we use Algorithm~\ref{al:tacking_optimization} with $10,000$ iterations with a tolerance of $0.0001$ for $10$ sub-iterations using \textit{GEORCE-H} in Algorithm~\ref{al:georceh}. We use the \textit{ADAM}-optimizer \citep{kingma2017adammethodstochasticoptimization} with learning rate $0.01$ to optimize the tack points.
\end{appendix}

\end{document}